\documentclass{amsart}

\usepackage{latexsym}
\usepackage{amsmath}
\usepackage{amssymb}
\usepackage{amsfonts}
\usepackage{amsxtra}
\usepackage[all, 2cell, knot]{xy} 
\UseAllTwocells 
\SilentMatrices
\usepackage{xspace}
\usepackage{amsmath}
\usepackage{amstext}
\usepackage{amsfonts}
\usepackage[mathscr]{euscript}
\usepackage{amscd}
\usepackage{hyperref}
\hypersetup{
    colorlinks=false,
    pdfborder={0 0 0}
}

\newcommand{\bbC}{\mathbb{C}}
\newcommand{\bbD}{\mathbb{D}}
\newcommand{\bbE}{\mathbb{E}}
\newcommand{\bbX}{\mathbb{X}}
\newcommand{\bbY}{\mathbb{Y}}
\newcommand{\bbComp}{\mathbb{C}\mbox{\rm omp}}

\newcommand{\Comp}{\mbox{\rm Comp}}

\newcommand{\calB}{\mathcal{B}}
\newcommand{\calC}{\mathcal{C}}
\newcommand{\calD}{\mathcal{D}}

\newcommand{\calJ}{\mathcal{J}}
\newcommand{\calU}{\mathcal{U}}
\newcommand{\calV}{\mathcal{V}}

\newcommand{\bfC}{\mathbf{C}}
\newcommand{\bfD}{\mathbf{D}}

\newcommand{\Hom}{\mbox{\rm Hom}}

\newcommand{\Bic}{\mbox{\bf Bic}}
\newcommand{\smBic}{\mbox{\bf\scriptsize Bic}}

\newcommand{\Bicat}{\mbox{\sf Bicat}}
\newcommand{\NHom}{\Bicat_{\mbox{\scriptsize\rm icon}}}
\newcommand{\Dbl}{\mbox{\bf Dbl}}
\newcommand{\W}{\{W\}}
\newcommand{\CW}{\bfC\W}

\newcommand{\DblCat}{\mbox{\sf DblCat}}

\newcommand{\ps}{\mbox{\scriptsize\rm ps}}

\newcommand{\st}{\mbox{\scriptsize\rm st}}

\newcommand{\WGDbl}{\mbox{\sf WGDbl}}
\newcommand{\smWGDbl}{\mbox{\sf\scriptsize WGDbl}}

\newcommand{\Cat}{\mbox{\sf Cat}}

\newcommand{\tiund}[1]{{\times}_{#1}}

\newcommand{\dop}{\Delta^{\mbox{\scriptsize\rm op}}}

\newcommand{\pro}[3]{#1\tiund{#2}\overset{#3}{\cdots}\tiund{#2}#1}

\newcommand{\JC}{\calJ_W}

\newcommand{\rw}{\rightarrow}

\newtheorem{thm}{Theorem}[section]
\newtheorem{cor}[thm]{Corollary}

\newtheorem{prop}[thm]{Proposition}

\newtheorem{lma}[thm]{Lemma}

\theoremstyle{definition} 
\newtheorem{dfn}[thm]{Definition}

\newtheorem{eg}[thm]{Example}

\newtheorem{rmks}[thm]{Remarks}
\newtheorem{rmk}[thm]{Remark}

\begin{document}
\title {The Weakly Globular Double Category of Fractions of a Category}

\author{Simona Paoli}
\address{Department of Mathematics, 
University of Leicester,
LE17RH, UK}
\email{sp424@le.ac.uk}

\author{Dorette Pronk}
\address{Department of Mathematics and Statistics, Dalhousie University, Halifax, NS, B3H 4R2, Canada}
\email{pronk@mathstat.dal.ca}

\keywords{double categories, strict 2-categories, bicategories, pseudo-functors, companions, conjoints, 
localizations, bicategories of fractions, double categories of fractions}

\begin{abstract}
This paper introduces the construction of a weakly globular double category of fractions for a category
and studies its universal properties. It shows that this double category is locally small and considers 
a couple of concrete examples.
\end{abstract}

\maketitle

\section{Introduction}
Localization of categories or bicategories is the process of freely adding inverses or pseudo inverses
to a class of arrows to turn them into isomorphisms or internal equivalences respectively.
This has turned out to be important in homotopy theory where one only considers maps up to homotopy.
In doing this we need to add inverses to those maps that are equivalences up to homotopy, but don't have a 
continuous inverse. And in geometry, it can be useful to view some well-known categories as localizations of 
simpler categories. For instance, when objects are defined using an atlas of local charts, 
we can define maps between these objects as (smooth) maps defined on charts, also called `atlas maps', but 
we need to add inverses to those atlas maps that correspond to the identity 
on the underlying objects and may not be invertible as atlas maps.
As a further example, the homotopy theory of orbifolds and stacks  
is best studied in terms of groupoid representations \cite{MP, Adem, Noohi},
but in order to obtain the correct notion of maps between orbifolds we need 
to invert the Morita equivalences between groupoids 
(which in general do not have inverse maps as smooth/continuous groupoid homomorphisms).

Localizing a category  can be done in various ways. 
The simplest way to construct a localization is to simply add the required inverse arrows to the category and
take the free category of paths modulo the old composition structure.
However, this has the draw-back that the equivalence relation is rather unwieldy 
and one does not know a priori how long the paths need to be in order
to have a representative for all arrows in the localized category.
The first way to deal with this is to consider classes of arrows that satisfy the conditions
of a left or right category of fractions, as defined by Gabriel and Zisman \cite{GZ}, 
or its bicategorical generalization, the bicategory of fractions \cite{Pr-comp}.
In this situation we only need to consider spans (or cospans) of arrows where the left-hand (right-hand) side of the 
span (cospan) is an arrow that needed to be inverted. The equivalence relation becomes a lot easier to describe as well in this case.
However, there is still an issue left: we do not have a guarantee that the resulting 
category of fractions will have small hom-sets, unless the original category had only a set of objects.

In practice, we see that besides the canonical category of fractions 
there are often other concrete models of the localized category that clearly do have small hom-sets, 
and this guarantees then that the category of fractions has small hom-sets as well. 
For instance, we can view the bicategory of orbifolds as the bicategory of fractions of the category of orbifold groupoids
with respect to essential equivalences.
However, orbifolds  can also be viewed as  \'etendues (a special kind of toposes) with a proper diagonal, 
and orbifold morphisms correspond then to geometric morphisms between \'etendues. For more details on  
this, see \cite{Pr-comp}.  
Another example is that of 2-groups or crossed modules, where the arrows in the homotopy category 
with respect to weak equivalences can be described in terms of so-called butterflies \cite{Noohi}.

As a way to systematically deal with the size-problem Quillen model structures have been 
introduced \cite{Quillen}. However, one does not always have the extra structure of fibrations and cofibrations
to be able to use this theory. Also, generalizing the theory of Quillen model structures to higher categories
is highly non-trivial. An attempt that provides the fibrant half of a Quillen model structure for a bicategory was
given in \cite{PW}. To obtain a simplicially enriched localization, the hammock localization \cite{hammock} was developed by Dwyer and Kan.

In this paper we want to take a different approach, and use an alternate model for a weak 2-categorical 
structure to study localizations in terms of categories of fractions.
In \cite{PP1} we introduced the notion of weakly globular double category
to obtain a new model for weak 2-categories which is a subcategory of the category of double categories.
The main idea behind the definition of weakly globular double category is that we want to weaken 
the globularity condition on 2-categories rather than the unit or associativity conditions.
The globularity condition says that in each dimension the collection of cells is {\em discrete}, i.e., a set or class.
One way to describe a double category is as an internal category in the category of categories, i.e. a diagram
\begin{equation}\label{intcat}
\xymatrix{\bbX_1\times_{\bbX_0}\bbX_1 \ar[r]^-m & \bbX_1\ar@<1ex>[r]^s\ar@<-1ex>[r]_t & \ar[l]|u\bbX_0.}
\end{equation}
So a double category $\bbX$ has a category (rather than a set)  $\bbX_i$ in each dimension.
For weakly globular double categories, we require that $\bbX_0$ be a posetal groupoid (or, equivalence relation)
rather than a set. This is what we call our weak globularity condition. This comes then with additional requirements on $\bbX_1$,
which are spelled out in Definition \ref{wklglb} below. 
In \cite{PP1} we obtained a biequivalence of 2-categories 
$$
\Dbl\colon \NHom\leftrightarrows\WGDbl_{\ps,v}:\Bic.
$$
Here, $\NHom$ is the 2-category of bicategories, homomorphisms and icons, whereas $\WGDbl_{\ps,v}$
is the 2-category of weakly globular double categories, pseudo-functors and vertical transformations.

In this paper we will construct $\bfC\{W\}$, the weakly globular double category of fractions for a category $\bfC$ with respect to a
class $W$ of arrows. We will consider the category $\bfC$ here as a horizontal double category, which means that 
in the diagram (\ref{intcat}), $\bbX_0$ is the discrete category on the objects of $\bfC$ and $\bbX_1$ is the discrete category on the arrows of
$\bfC$.  We denote this double category by $H\bfC$.
The weakly globular double category $\CW$ has sets of horizontal arrows, vertical arrows and cells, so there are no
size-issues here. This can be explained by the fact that this construction works by adding additional objects to the category that represent the
arrows in $W$ and the fact that in some sense the new inverses are being added in the vertical direction.
For instance, for the category of atlases and atlas maps, the new objects correspond to atlas refinements 
$\calU'\stackrel{\nu}{\rightarrow}\calU$.
The horizontal arrows are given by atlas maps between the domains of these refinements and there is a vertical map between 
two refinements of the same atlas if they have a common refinement.  There is a double cell precisely when two atlas maps 
become the same when restricted to some common refinements for the domain and codomain.

Aside from this size property, what makes the weakly globular double category of fractions interesting is its universal property.
For the category of fractions $\bfC[W^{-1}]$, composition with the inclusion functor $I_W\colon \bfC\rightarrow \bfC[W^{-1}]$ 
gives an equivalence of categories,
$$
\Cat(\bfC[W^{-1}],\bfD)\simeq\Cat_W(\bfC,\bfD),
$$
where $\Cat_W(\bfC,\bfD)$ is the category of functors and natural transformations that send arrows in $W$ 
to isomorphisms. This determines the category of fractions up to an equivalence of categories.

For the bicategory of fractions $\bfC(W^{-1})$, we want an equivalence of categories or 2-categories
between appropriate hom categories or hom 2-categories.
Note that the hom-category $\mbox{Bicat}\,(\calB,\calC)$ 
of two bicategories $\calB$ and $\calC$ can be viewed as a 2-category in various ways, depending on the type of morphisms and 
transformations one is interested in.
In particular, one may take homomorphisms of bicategories (which preserve units and composition both up to
coherent isomorphisms) as objects and (op)lax transformations as morphisms. When one considers these transformations 
as homomorphisms into the cylinder bicategory on $\calC$, this leads the way to the next level of cells: the modifications 
which can be viewed as transformations between the first-level transformations when considered as homomorphisms. 
One can also restrict oneself to pseudo natural transformations, or to icons. And one can restrict the objects to only consider 
strict functors or relax them to consider lax or oplax functors.

For a bicategory $\calC$ with a class of arrows $W$ satisfying the conditions to form a bicategory of fractions,
the universal property of the bicategory of fractions $\bfC(W^{-1})$ is stated as follows.

\begin{thm}
Composition with the homomorphism $J_W\colon\mathbf{C}\rightarrow\mathbf{C}(W^{-1})$
induces an equivalence of bicategories
$$
\mbox{Bicat}(\mathbf{C}(W^{-1}))\simeq\mbox{Bicat}_W(\mathbf{C},\mathcal{D}),
$$
for any bicategory $\mathcal{D}$.
\end{thm}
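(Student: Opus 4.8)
The plan is to prove the theorem by showing that postcomposition with $J_W$,
$$
(-)\circ J_W\colon\mbox{Bicat}(\mathbf{C}(W^{-1}),\mathcal{D})\rw\mbox{Bicat}_W(\mathbf{C},\mathcal{D}),
$$
is a biequivalence, i.e.\ that it is essentially surjective on objects (every homomorphism in the target being connected by a pseudo-natural equivalence to one in the image) and induces an equivalence of categories on each hom-category. First I would record why this functor is even well defined: the construction of $\mathbf{C}(W^{-1})$, together with the axioms imposed on $W$, guarantees that $J_W$ sends every arrow of $W$ to an internal equivalence in $\mathbf{C}(W^{-1})$, and since homomorphisms of bicategories and pseudo-natural transformations preserve internal equivalences, $F\circ J_W$ lies in $\mbox{Bicat}_W(\mathbf{C},\mathcal{D})$ for every homomorphism $F$, and likewise on transformations and modifications.

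For essential surjectivity on objects, given $F\in\mbox{Bicat}_W(\mathbf{C},\mathcal{D})$ I would construct a homomorphism $\widetilde{F}\colon\mathbf{C}(W^{-1})\rw\mathcal{D}$ together with a pseudo-natural equivalence $\widetilde{F}\circ J_W\simeq F$. Fix, once and for all, for each $w\in W$ an adjoint equivalence $F(w)\dashv\widehat{F(w)}$ in $\mathcal{D}$ with chosen unit and counit (possible because $F(w)$ is an equivalence). Let $\widetilde{F}$ agree with $F$ on objects, send a $1$-cell of $\mathbf{C}(W^{-1})$ --- a span $A\suparle{w}C\supar{f}B$ with $w\in W$ --- to $F(f)\circ\widehat{F(w)}$, and send a $2$-cell to the evident composite built from the chosen data; since $2$-cells of $\mathbf{C}(W^{-1})$ are equivalence classes of spans of $2$-cells and the calculus-of-fractions axioms control these, one checks this assignment is well defined on equivalence classes. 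The unit and composition coherence isomorphisms of $\widetilde{F}$ are then assembled from those of $F$, from the triangle identities of the chosen adjoint equivalences, and from the chosen pullback-type square used to compose spans in $\mathbf{C}(W^{-1})$; the unit--counit data produce the required equivalence $\widetilde{F}\circ J_W\simeq F$.

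For the local equivalence, given homomorphisms $G,H\colon\mathbf{C}(W^{-1})\rw\mathcal{D}$ I would show that
$$
(-)\circ J_W\colon\mbox{Bicat}(\mathbf{C}(W^{-1}),\mathcal{D})(G,H)\rw\mbox{Bicat}_W(\mathbf{C},\mathcal{D})(GJ_W,HJ_W)
$$
is full, faithful and essentially surjective. The key observation is that every $1$-cell of $\mathbf{C}(W^{-1})$ is, up to the coherence isomorphisms, a composite of arrows of the form $J_W(f)$ and pseudo-inverses of arrows of the form $J_W(w)$; consequently, for a pseudo-natural transformation $\sigma\colon GJ_W\Rw HJ_W$ the component at an arbitrary object and the naturality $2$-cell at an arbitrary $1$-cell are forced --- the component at $A$ must be $\sigma_A$, and the naturality square at a pseudo-inverse of $J_W(w)$ is obtained by inverting the one at $J_W(w)$, which is legitimate because $GJ_W(w)$ and $HJ_W(w)$ are equivalences. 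Uniqueness of this reconstruction gives faithfulness, verifying that the reconstructed data satisfies the transformation axioms gives essential surjectivity (with $\overline{\sigma}\circ J_W=\sigma$), and the analogous statement for modifications is the same argument one categorical level down, and is easier since a modification carries no naturality data beyond its components.

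The step I expect to be the main obstacle is the coherence bookkeeping inside the essential-surjectivity argument: verifying that the span-wise formula for $\widetilde{F}$ really does satisfy the two coherence axioms of a homomorphism of bicategories. This forces one to propagate the coherence data of $F$ through the explicit, choice-dependent description of composition of spans in $\mathbf{C}(W^{-1})$ and to reconcile it with the triangle identities of the chosen adjoint equivalences, as well as to check that the construction is independent of those choices up to invertible modification. That verification is where essentially all the work lies; once it is in place, the well-definedness, the comparison equivalence $\widetilde{F}\circ J_W\simeq F$, and the local-equivalence argument are largely formal. A complete treatment is given in \cite{Pr-comp}.
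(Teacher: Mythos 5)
This theorem is quoted in the paper as a known result and is not proved there: both the introductory statement and the restatement as Theorem \ref{bicatuni} attribute the result and its proof to \cite{Pr-comp}. Your sketch correctly identifies the strategy of the proof given in that reference --- well-definedness of $(-)\circ J_W$ from preservation of internal equivalences by homomorphisms and transformations, essential surjectivity via a span-wise definition of $\widetilde F$ using chosen adjoint equivalences for each $F(w)$, and local equivalence from the observation that every $1$-cell of $\mathbf{C}(W^{-1})$ is generated, up to coherence isomorphism, by arrows $J_W(f)$ together with pseudo-inverses of arrows $J_W(w)$ --- and you defer the coherence bookkeeping to \cite{Pr-comp}, as the paper itself does. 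One imprecision worth flagging: the naturality constraint of the reconstructed $\overline{\sigma}$ at a pseudo-inverse of $J_W(w)$ is not literally the inverse of the constraint at $J_W(w)$; it is the mate of that constraint under the chosen adjoint-equivalence data for $GJ_W(w)$ and $HJ_W(w)$. This matters particularly for the oplax version of the theorem, which is the one actually proved in \cite{Pr-comp} (see the remarks after Theorem \ref{bicatuni}), since there the constraint $2$-cells are not invertible to begin with; the mate construction, by contrast, works uniformly across the oplax, pseudo, and icon variants.
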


In \cite{Pr-comp} this universal property was given in terms of homomorphisms and oplax transformations, 
but it can be restricted to homomorphisms with pseudo transformations or to homomorphisms with icons. 
There is also an analogous result  for homomorphisms with lax transformations.

For weakly globular double categories $\bbC$ and $\bbD$, the hom object $\WGDbl(\bbC,\bbD)$ 
can be given the structure of a category in two ways; namely, by using horizontal or vertical transformations.
We show that $\CW$ has a universal property with respect to both structures.
The inclusion of weakly globular double categories, $\calJ_W\colon H\bfC\rightarrow\CW$, induces two equivalences of categories,
$$
\WGDbl_{\ps,v}(\CW,\bbD)\simeq\WGDbl_{\ps,v,W}(H\bfC,\bbD)
$$
and
$$
\quad \WGDbl_{\st,h}(\CW,\bbD)\simeq\WGDbl_{\st,h,W}(H\bfC,\bbD).
$$
Note that the first equivalence, also called the {\em vertical universal property}, 
is with respect to pseudo-functors, whereas the second universal property,
also called the {\em horizontal universal property}, is with respect to strict functors. 
The vertical universal property determines $\CW$ up to 
vertical equivalence and the horizontal universal property determines $\CW$ up to horizontal equivalence. 
The two notions of equivalence are generally 
unrelated, so it is important to have both. 

One of the main purposes of this paper is to establish what the objects and arrows 
of the categories $\WGDbl_{\ps,v,W}(H\bfC,\bbD)$ 
and $\WGDbl_{\st,h,W}(H\bfC,\bbD)$ are.
One way to do this is to consider the image of $J_W\colon\mathbf{C}\rightarrow\mathbf{C}(W^{-1})$ under the 2-functor $\Dbl$.
However, this functor is only defined on icons, not on any other transformations
(since it needs to have a 2-category of bicategories as its domain). So we need apply it to the universal property
expressed in terms of icons.
This translates then into a universal property for weakly globular double categories in 
terms of pseudo-functors and vertical transformations. Composition with $\Dbl(J_W)$ gives rise to
an equivalence of categories,
$$\mbox{WGDbl}_{\ps,v}({\bf Dbl}({\bf C}(W^{-1})) , \Dbl\,\calD ) \simeq \WGDbl_{\ps,v,W}(\Dbl(\bfC),\Dbl\,\calD ).$$
Note that in this equation, $\WGDbl_{\ps,v,W}(\Dbl(\bfC),\Dbl\calD)$ is really a short-hand for 
$$\Dbl(\Bicat_{\mbox{\scriptsize\rm icon},W}(\bfC,\calD)).$$

In order to find a description of  $$\mbox{WGDbl}_{\ps,v,W}(\Dbl\bfC,\Dbl\mathcal{D})\subseteq\WGDbl_{\ps,v}(\Dbl\bfC,\Dbl\,\calD)$$
we need to describe the special properties of horizontal arrows in weakly globular
double categories that correspond to internal equivalences in bicategories (since this notion is 
used to describe  the universal property of a bicategory of fractions).
To find this we first study for a bicategory $\mathcal B$  what type of horizontal arrows in the weakly globular 
double category $\Dbl({\mathcal B})$ correspond to quasi units in $\mathcal B$ (where we call an arrow 
$f\colon A\rightarrow A$ in a bicategory a quasi unit when $f\cong 1_A$). We show that these are precisely the 
horizontal arrows in $\Dbl({\mathcal B})$ that have a vertical companion as defined in \cite{GP-companions}.
Furthermore, we also show that for a weakly globular double category $\mathbb X$, horizontal arrows which have
vertical companions in $\mathbb X$ correspond to quasi units in $\Bic({\mathbb X})$.

Corresponding to the bicategorical notion of internal equivalence, we define the notion of 
{\em precompanion} for horizontal arrows in a weakly globular double category.
And we prove that the objects of $\mbox{WGDbl}_{\ps,v,W}(\Dbl\bfC,\Dbl\mathcal{D})$
are indeed the pseudo-functors that send the arrows in the image of  $W$ 
to precompanions. The arrows in this category are the so-called $W$-transformations that 
respect the precompanion structure in an appropriate way (see Definition \ref{W-trafo}). 
Since every weakly globular double category is vertically equivalent to one of the form $\Dbl(\calB)$ 
for a bicategory $\calB$, the notion of $W$-pseudo-functor and $W$-transformation can be extended 
to pseudo-functors with an arbitrary weakly globular double category as domain,
and the weakly globular double category $\Dbl(\bfC(W^{-1}))$ 
has the following (vertical) universal property.

\begin{thm}
Composition with the functor $\Dbl(J)\colon\Dbl(\bfC)\rightarrow\Dbl(\bfC(W^{-1}))$ induces 
an equivalence of categories
$$
\WGDbl_{\ps,v}(\Dbl(\bfC(W^{-1})),\bbD)\simeq\WGDbl_{\ps,v,W}(\Dbl(\bfC),\bbD),
$$
where the objects of $\WGDbl_{\ps,v,W}(\Dbl(\bfC),\bbD)$ are pseudo-functors that send arrows related to those in $W$ to 
precompanions.
\end{thm}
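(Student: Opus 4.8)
The plan is to derive this statement by pushing the (icon version of the) universal property of the bicategory of fractions through the biequivalence $\Dbl\colon\NHom\leftrightarrows\WGDbl_{\ps,v}\colon\Bic$ of \cite{PP1} — first when $\bbD$ has the special form $\Dbl(\calD)$, and then, since every weakly globular double category is vertically equivalent to one of this form, for an arbitrary $\bbD$. So fix a bicategory $\calD$. By the universal property recalled above, restricted to homomorphisms and icons, composition with $J\colon\bfC\to\bfC(W^{-1})$ is an equivalence of categories $\Bicat_{\mathrm{icon}}(\bfC(W^{-1}),\calD)\simeq\Bicat_{\mathrm{icon},W}(\bfC,\calD)$, the right-hand side being the full subcategory of homomorphisms sending every arrow of $W$ to an internal equivalence. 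Because $\Dbl$ is a biequivalence, $\Dbl\colon\Bicat_{\mathrm{icon}}(\calB,\calB')\to\WGDbl_{\ps,v}(\Dbl\calB,\Dbl\calB')$ is an equivalence of categories for all $\calB,\calB'$; taking $\calB\in\{\bfC(W^{-1}),\bfC\}$, $\calB'=\calD$, and pasting gives
$$
\WGDbl_{\ps,v}(\Dbl(\bfC(W^{-1})),\Dbl\calD)\;\simeq\;\Bicat_{\mathrm{icon}}(\bfC(W^{-1}),\calD)\;\simeq\;\Bicat_{\mathrm{icon},W}(\bfC,\calD)\;\simeq\;\Dbl\bigl(\Bicat_{\mathrm{icon},W}(\bfC,\calD)\bigr),
$$
and a routine check shows the composite is naturally isomorphic to composition with $\Dbl(J)$. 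Since by definition $\WGDbl_{\ps,v,W}(\Dbl\bfC,\Dbl\calD)=\Dbl(\Bicat_{\mathrm{icon},W}(\bfC,\calD))$, this proves the theorem for $\bbD=\Dbl(\calD)$.

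To make the description of the objects explicit I would invoke the characterizations proved earlier in the paper: a homomorphism $F\colon\bfC\to\calD$ sends the arrows of $W$ to internal equivalences if and only if $\Dbl(F)$ sends the corresponding horizontal arrows of $\Dbl(\bfC)$ to precompanions — this is the internal-equivalence analogue of the quasi-unit/vertical-companion correspondence for the double categories $\Dbl(\calB)$. In the same way, the morphisms of $\Dbl(\Bicat_{\mathrm{icon},W}(\bfC,\calD))$ are exactly the $W$-transformations of Definition \ref{W-trafo}. To remove the restriction on $\bbD$, note that the biequivalence furnishes, for every $\bbD$, a vertical equivalence $\bbD\to\Dbl(\Bic\,\bbD)$, so post-composition induces an equivalence $\WGDbl_{\ps,v}(\bbX,\bbD)\simeq\WGDbl_{\ps,v}(\bbX,\Dbl(\Bic\,\bbD))$, natural in $\bbX$. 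Instantiating $\bbX$ at $\Dbl(\bfC(W^{-1}))$ and $\Dbl(\bfC)$ and using associativity of composition, the resulting square is compatible with composition by $\Dbl(J)$, so the case already proven transfers; it remains only to check that these equivalences restrict to the $W$-labelled subcategories, i.e.\ that the property of sending the arrows related to $W$ to precompanions — and the property of being a $W$-transformation — is preserved and reflected by post-composition with a vertical equivalence. This yields the equivalence for all $\bbD$, with objects the pseudo-functors sending the arrows related to those in $W$ to precompanions.

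The main obstacle is exactly this last stability statement: one must show that vertical equivalences of weakly globular double categories preserve and reflect precompanions (and $W$-transformations), so that ``$W$-pseudo-functor'' is a well-defined notion independent of the chosen presentation $\bbD\simeq\Dbl(\calB)$. This is where the theory of companions and conjoints of \cite{GP-companions} is needed — a companion pair must be shown to be the sort of structure a vertical equivalence, being vertically essentially surjective and suitably full and faithful, can transport both ways. Once that is established, the rest is bookkeeping: assembling the chain of equivalences above and verifying, via the coherence of composition for pseudo-functors, that the composite is induced by $\Dbl(J)$.
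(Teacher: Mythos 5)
Your proposal matches the paper's own argument: it restricts the bicategory-of-fractions universal property to icons, applies the biequivalence $\Dbl$ to obtain the equivalence for $\bbD=\Dbl(\calD)$, identifies the image subcategory via the quasi-unit/companion and internal-equivalence/precompanion correspondences (this is Lemma \ref{W-hom}, built on Propositions \ref{comp-quasi} and \ref{D:precomp-equiv}), and then passes to an arbitrary weakly globular $\bbD$ by replacing it with $\Dbl(\Bic\,\bbD)$. The ``transport both ways'' issue you flag as the main obstacle is resolved in the paper by the stronger fact that \emph{every} pseudo-functor between weakly globular double categories preserves precompanions (Proposition \ref{comp-pres} and the proposition closing Section \ref{pre-comps}), from which reflection along a vertical equivalence follows by composing with a vertical pseudo-inverse — so your outline is essentially the paper's proof.
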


So this seems to be the right vertical universal property for a weakly globular double category of fractions.
However, this is as far as $\Dbl(J_W)\colon\Dbl(\bfC)\rightarrow \Dbl(\bfC(W^{-1}))$
can help us.
 This functor is strict, 
but it has no obvious universal property in terms of horizontal transformations. 
This is not surprising, since $\Dbl$ is part of an equivalence that involves only the 
pseudo-functors and vertical transformations.
So the functor $\Dbl(J_W)\colon\Dbl(\bfC)\rightarrow \Dbl(\bfC(W^{-1}))$  cannot 
be our prototypical example of a universal arrow into a weakly globular double category of fractions.
Now we could try to just adjust its codomain - as long as the new codomain is vertically equivalent
to  $\Dbl(\bfC(W^{-1}))$ it will have the correct vertical universal property.
However, as an example of a weakly globular double category with a class of horizontal arrows
$W$, $\Dbl(\bfC)$ does not look general enough (the arrows corresponding to those in $W$ occur in 
many different guises, namely as composites of paths that are not necessarily in $W$, 
and we do need to take all of them into account when constructing the category of fractions). 

Now note that there is  another embedding
of categories into weakly globular double categories (which does not extend to all bicategories), namely 
the horizontal embedding $H\colon \mbox{\bf Cat}\rightarrow\mbox{\bf WGDbl}_{\mathrm{st},h}$,
which sends a category $\mathbf{C}$ to the weakly globular double category 
$H(\mathbf{C})$ with the objects of $\mathbf{C}$ as objects, the arrows of $\mathbf{C}$ as 
horizontal arrows, only identity vertical arrows, and only vertical identity cells on the arrows of 
$\mathbf{C}$ as double cells. There is a vertical weak equivalence between the weakly globular 
double categories $H(\mathbf{C})$ and $\Dbl(\mathbf{C})$,
with a strict double functor $\Dbl(\mathbf{C})\rightarrow H(\mathbf{C})$, but its vertical 
pseudo-inverse is only a pseudo-functor.
As a consequence, the composed pseudo-functor $H(\mathbf{C})\rightarrow \Dbl(\mathbf{C}(W^{-1}))$
has still the correct vertical universal property for the weakly globular double category of fractions.

So we are looking for a strict double functor $\mathcal{J}_{W}\colon H(\mathbf{C})\rightarrow \mathbf{C}\{W\}$
such that $\mathbf{C}\{W\}$ is vertically weakly equivalent to $\Dbl(\mathbf{C}(W^{-1}))$.
It turns out that this can be done by turning the arrows in $W$ into precompanions in a very specific way.
The resulting weakly globular double category has both a horizontal universal property and a vertical universal property.

\begin{thm}
The double functor $\mathcal{J}_{W}\colon H(\mathbf{C})\rightarrow \mathbf{C}\{W\}$
induces equivalences of categories
$$
\WGDbl_{\ps,v}(\CW,\bbD)\simeq\WGDbl_{\ps,v,W}(H\bfC,\bbD)
$$
and
$$
\WGDbl_{\st,h}(\CW,\bbD)\simeq\WGDbl_{\st,h,W}(H\bfC,\bbD),
$$
where $\WGDbl_{\ps,v,W}(H\bfC,\bbD)$ consists of pseudo-functors that send arrows in $W$ to precompanions and vertical transformations
that respect this structure, and $\WGDbl_{\st,h,W}(H\bfC,\bbD)$ consists of strict functors and horizontal transformations with a $W$-friendly structure. 
\end{thm}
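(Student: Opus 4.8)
The plan is to prove the two equivalences by different routes. The vertical one is obtained formally from the vertical universal property of $\Dbl(\bfC(W^{-1}))$ established above, by transporting it along vertical weak equivalences; the horizontal one, for which $\Dbl$ and $\Dbl(J)$ provide no information about horizontal transformations, must be established directly from the explicit construction of $\CW$ --- with $W$-refinements as objects, spans with left leg in $W$ as horizontal arrows, common refinements as vertical arrows, and common-refinement identifications as cells.

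For the vertical universal property, recall that by construction there is a strict double functor $\CW\to\Dbl(\bfC(W^{-1}))$ which is a vertical weak equivalence, that there is a vertical weak equivalence between $H\bfC$ and $\Dbl(\bfC)$, and that these fit into a square with $\mathcal{J}_W$ and $\Dbl(J)$ commuting up to a vertical transformation. Since $\WGDbl_{\ps,v}$ is a $2$-category in which vertical weak equivalences are equivalences, precomposition with them induces equivalences between the hom-categories $\WGDbl_{\ps,v}(-,\bbD)$. It then remains to check that these equivalences restrict to equivalences between the full subcategories of $W$-pseudo-functors and $W$-transformations on either side, which holds because precompanions are preserved and reflected by vertical weak equivalences, because a $W$-transformation structure transports along them, and because the comparison functors match up, up to the vertical transformation, the horizontal arrows related to $W$. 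Composing the three resulting equivalences and identifying the composite with precomposition by $\mathcal{J}_W$ yields the first statement.

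For the horizontal universal property one argues by hand. Given a strict functor $F\colon H\bfC\to\bbD$ equipped with its $W$-friendly structure --- a choice, for each $w\in W$, of the extra (companion/conjoint) data on $F(w)$ that exhibits $F$ as an object of $\WGDbl_{\st,h,W}(H\bfC,\bbD)$, cf.\ Definition~\ref{W-trafo} --- I would define a strict double functor $\widehat F\colon\CW\to\bbD$ with $\widehat F\circ\mathcal{J}_W=F$ by prescribing its value on each kind of generator of $\CW$: an object of $\CW$ (a refinement) is sent using the companion data; a horizontal arrow (a span whose left leg $w$ lies in $W$) is sent to the horizontal composite of the image of the right leg with the arrow obtained by inverting $F(w)$ through its companion structure; and a cell (a common-refinement identification) is sent to the evident pasting of companion cells. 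One then verifies that this prescription respects horizontal composition, vertical composition and identities \emph{strictly}, that $\widehat F$ is the unique strict functor with $\widehat F\circ\mathcal{J}_W=F$, and finally that horizontal transformations between strict functors out of $\CW$ correspond bijectively and naturally to $W$-friendly horizontal transformations between their restrictions. Together with the extension result this shows that precomposition with $\mathcal{J}_W$ is essentially surjective, full and faithful, hence an equivalence.

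The crux is the strictness asserted in the third paragraph. Because all the weakening in $\CW$ has been placed in the vertical direction, horizontal composition of spans is rigid; one must nonetheless verify that the two ways of evaluating $\widehat F$ on a composable pair of horizontal arrows --- via the composite span computed in $\CW$, versus the composite of the two images computed in $\bbD$ --- literally coincide, and similarly for the pasting of cells, rather than merely agreeing up to isomorphism. This is exactly the identity that the construction of $\CW$ and the notion of $W$-friendly structure are designed to force, and it is where the bulk of the work lies; by comparison, the vertical weak equivalence $\CW\simeq_v\Dbl(\bfC(W^{-1}))$ used in the second paragraph, while also requiring care, is proved along the same lines as the construction of $\CW$ itself.
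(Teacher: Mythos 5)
Your treatment of the vertical universal property follows the same route as the paper: apply $\Dbl$ to the universal property of $\bfC(W^{-1})$, then transport along the vertical weak equivalences $\CW\simeq_v\Dbl(\bfC(W^{-1}))$ and $H\bfC\simeq_v\Dbl(\bfC)$; this is sound.

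For the horizontal universal property, however, there is a genuine overcommitment that would sink the argument as stated. You propose to extend a $W$-friendly functor $(G,\Gamma,\gamma)\colon H\bfC\to\bbD$ to a strict $\widehat F\colon\CW\to\bbD$ with $\widehat F\circ\calJ_W=F$ \emph{on the nose}, and moreover to show $\widehat F$ is the \emph{unique} such extension. Neither is achievable. The $W$-friendly structure is not just ``companion data on $F(w)$''; it is a functor $\Gamma\colon V\nabla W\to\bbComp(\bbD)$ (assigning to each $w\in W$ an object $\Gamma(w)$ of $\bbD$, which is \emph{not} $G(d_0(w))$ in general) together with an invertible natural transformation $\gamma$ with components $\gamma_w$ relating $\Gamma(w)$ to $G(d_0(w))$. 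Any reasonable extension must satisfy $\widehat F((1_A))=\Gamma(1_A)$, since $(1_A)$ is the image of the object $1_A$ of $\nabla W$ and the companion structure has to match $\Gamma$; but $\Gamma(1_A)\ne G(A)$ whenever $\gamma_{1_A}\ne\mathrm{id}$. The paper's construction of $\tilde G$ has $\tilde G(w)=\Gamma(w)$ and $\tilde G(f)=\gamma_{v}^{-1}G(f)\gamma_{w}$, and it therefore produces only an invertible horizontal transformation $\bar\gamma\colon\tilde G\circ\calJ_W\Rightarrow G$ with components $\bar\gamma_A=\gamma_{1_A}$; these are isomorphisms, not identities. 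That is exactly what essential surjectivity requires for an equivalence of categories, but it is strictly weaker than your claim. Likewise, uniqueness of $\widehat F$ fails: different choices of $\gamma$ (for the same underlying $G$ and isomorphic $\Gamma$) give isomorphic but non-equal extensions. If you instead aim at essential surjectivity plus full faithfulness on $W$-friendly horizontal transformations (as the paper does), the argument goes through; your observation that the bulk of the work is in verifying that $\widehat F$ is \emph{strictly} functorial on $\CW$ is correct and aligns with the paper's reliance on Lemma~\ref{dblcellfac}, which expresses every cell of $\CW$ as a pasting of binding cells so that the value of $\widehat F$ on cells is forced and can be checked to respect both compositions.
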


The 2-functor $H$ is defined for all 2-categories, so one might wonder whether we can extend this 
to 2-categories with a class of arrows $W$. This is indeed possible, but the technicalities 
related to the 2-cells obscure the ideas of this construction. So we will leave this case for a sequel to this paper \cite{sequel}. 
We will then show that the additional ideas needed to handle arbitrary 2-categories are exactly what we need to define 
the weakly globular double category of fractions for an arbitrary weakly globular double category and a class of horizontal 
arrows with the appropiate properties.

The current paper is organized as follows. In Section \ref{dblbic} we give the background material 
on double categories and the equivalence between bicategories and weakly globular double categories.
We also explicitly describe pseudo-functors, and vertical and horizontal transformations between weakly globular double categories.
In Section \ref{comppre} we describe companions and precompanions and show how they are related to quasi units and 
internal equivalences respectively. 
In Section \ref{univ1} we describe the vertical universal property of $\Dbl(\bfC(W^{-1}))$.
In Section \ref{CW-con} we describe the weakly globular double category of fractions $\CW$ and in Section \ref{fracns} we give its universal properties.
Section \ref{conc} consists of concluding remarks and questions for further research.

\medskip

\textbf{Acknowledgements.} The first author is supported by a Marie Curie
International Reintegration Grant no. 256341. She would also like to thank the
Department of Mathematics and Statistics of Dalhousie University for their
hospitality during a visit in June 2012. The second author is supported by an NSERC Discovery grant.
She would also like to thank the Departments of Mathematics of the University of Leicester and Calvin College for their
hospitality during a visit in August 2012 and February 2014 respectively. Both authors thank the Department of Computing of Macquarie
University and CoAct for their hospitality during July 2013.

\bigskip

\section{Double categories and bicategories}\label{dblbic}
In this section we review the definition of weakly globular double category and 
the concepts and notation related to double categories and bicategories used in this paper.
We begin by introducing our notation for double categories.

\subsection{Double categories}
A double category $\bbX$ is an internal category in $\Cat$, i.e.,
a diagram of the form
\begin{equation}\label{dblcat}
\bbX=\left(\xymatrix{\bbX_1\times_{\bbX_0}\bbX_1\ar[r]|-m
    &\bbX_1\ar@<.7ex>[r]^{d_0}\ar@<-.7ex>[r]_{d_1}&\bbX_0\ar[l]|s}\right).
\end{equation}
The elements of $\bbX_{00}$, i.e., the objects of the category $\bbX_0$, are the {\em objects} of the double category.
The elements of $\bbX_{01}$, i.e., the arrows of the category $\bbX_0$, are the {\em vertical arrows} of the double category.
Their (vertical) domains, codomains, identities, and composition in the double category $\bbX$ are as in the category $\bbX_0$. 
For objects $A,B\in \bbX_{00}$,
we write $$\bbX_v(A,B)=\bbX_0(A,B)$$ for the set of vertical arrows from $A$ to $B$.
We denote a vertical identity arrow by $1_A$ and write $\cdot$ for vertical composition.
The elements of $\bbX_{10}$, i.e., the objects of the category $\bbX_1$, are the {\em horizontal arrows} of the double category, and their domain, codomain, identities
and composition are determined by  $d_0$, $d_1$, $s$, and $m$ in (\ref{dblcat}).
For objects $A,B\in\bbX_{00}$, we write
$$\bbX_h(A,B)=\{f\in\bbX_{10}| \,d_0(f)=A\mbox{ and }d_1(f)=B\}.$$
We use $\circ$ for the composition of horizontal arrows, $g\circ f=m(g,f)$.
In order to make a notational distinction between horizontal and vertical arrows, we denote the vertical arrows
by $\xymatrix@1{\ar[r]|\bullet&}$ and the horizontal arrows by $\xymatrix@1{\ar[r]&}$.
The elements of $\bbX_{11}$, i.e., the arrows of the category $\bbX_1$, are the {\em double cells} of the double category.
An element $\alpha\in \bbX_{11}$ has a vertical domain and codomain in $\bbX_{10}$ (since $\bbX_1$
is a category), which are horizontal arrows, say $h$ and $k$ respectively.
The cell $\alpha$ also has a horizontal domain, $d_0(\alpha)$, and a horizontal codomain, $d_1(\alpha)$, which are elements of $\bbX_{01}$, i.e., vertical arrows.
Furthermore, the horizontal and vertical domains and codomains of these arrows
match up in such a way that all this data fits together in a diagram
$$
\xymatrix@R=2.5em@C=2.5em{
\ar[r]^{h} \ar[d]|\bullet_{d_0(\alpha)} \ar@{}[dr]|\alpha
    & \ar[d]|\bullet^{d_1(\alpha)}
\\
\ar[r]_{k}& \rlap{\quad.}
}
$$
These double cells can be composed vertically by composition in $\bbX_1$  (again written as $\cdot$) and horizontally
by using $m$, and written as $\alpha_1\circ\alpha_2=m(\alpha_1,\alpha_2)$.
The identities in $\bbX_1$ give us vertical identity cells, denoted by
$$
\xymatrix{
A\ar[d]|\bullet_{1_A}\ar@{}[dr]|{1_f}\ar[r]^f &B\ar[d]|\bullet^{1_B}\ar@{}[drr]|{\mbox{or}}
  && A\ar@{=}[d] \ar@{}[dr]|{1_f}\ar[r]^f &B\ar@{=}[d]
\\
A\ar[r]_f &B && A\ar[r]_f &B\rlap{\,.}
}
$$
The image of $s$ gives us horizontal identity cells, denoted by
$$
\xymatrix{
A\ar[d]|\bullet_v\ar@{}[dr]|{\mbox{\scriptsize id}_{v}} \ar[r]^{\mbox{\scriptsize Id}_A}
  &A\ar[d]|\bullet^v \ar@{}[drr]|{\mbox{or}}
&& A\ar[d]|\bullet_v\ar@{}[dr]|{\mbox{\scriptsize id}_{v}} \ar@{=}[r]
  &A\ar[d]|\bullet^v
\\
B\ar[r]_{\mbox{\scriptsize Id}_B} & B && B\ar@{=}[r] &B\rlap{\,,}
}$$
where $\mbox{Id}_A=s(A)$ and $\mbox{id}_v=s(v)$.
Composition of squares satisfies horizontal and vertical associativity laws and functoriality of $m$ is equivalent to the middle four interchange law.
Further, $\mbox{id}_{1_A}=1_{\mbox{\scriptsize Id}_A}$ and we will denote this cell by $\iota_A$,
$$
\xymatrix{
A\ar[d]|\bullet_{1_A}\ar[r]^{\mbox{\scriptsize Id}_A} \ar@{}[dr]|{\iota_A}& A\ar[d]|\bullet^{1_A}
\\
A\ar[r]_{\mbox{\scriptsize Id}_A} & A\rlap{\,.}
}$$

For any double category $\bbX$, the {\em horizontal  nerve} $N_h\bbX$ is defined to be the functor
$N_h\bbX\colon \dop\rightarrow\Cat$ such that $(N_h\bbX)_0=\bbX_0$, $(N_h\bbX)_1=\bbX_1$
and $(N_h\bbX)_k=\bbX_1\times_{\bbX_0}\stackrel{k}{\cdots}\times_{\bbX_0}\bbX_1$ for $k\ge 2$.
So $N_h\bbX$ is given by the diagram
$$
\xymatrix{\ar@{}[r]|-\cdots&\bbX_1\times_{\bbX_0}\bbX_1\ar[r]|-m\ar@<.7ex>[r]^-{\pi_1}\ar@<-.7ex>[r]_-{\pi_2}
    &\bbX_1\ar@<.7ex>[r]^{d_0}\ar@<-.7ex>[r]_{d_1}&\bbX_0\ar[l]|s}
$$
The objects at level $k$ are composable paths of horizontal arrows of length $k$
and the arrows at level $k$ are horizontally composable paths of double cells,
$$
\xymatrix{
A_0\ar[d]|\bullet_{v_0}\ar[r]^{h_1}\ar@{}[dr]|{\alpha_1} & A_1 \ar[d]|\bullet_{v_1} \ar[r]^{h_2}\ar@{}[dr]|{\alpha_2} 
		& A_2 \ar[d]|\bullet_{v_1} \ar@{..}[r] & A_{k-1}\ar@{}[dr]|{\alpha_k} \ar[r]^{h_k} & A_k\ar[d]|\bullet^{v_k}
\\
A_0'	\ar[r]_{h_1'} & A_1' \ar[r]_{h_2'} & A_2' \ar@{..}[r] & A_{k-1}'\ar[r]_{h_k'} &A_k'\rlap{\,\,.}
}
$$

\subsection{Pseudo-functors and strict functors}
As maps between (weakly globular) double categories we consider those functors that correspond to 
natural transformations between their horizontal nerves.

\begin{dfn}\rm
\begin{enumerate}
 \item  A {\em strict functor} $F\colon \bbX\rightarrow \bbY$ between double categories 
is given by a natural transformation 
$$F\colon N_h\bbX\Rightarrow N_h\bbY\colon\Delta^{\mbox{\scriptsize op}}\rightarrow\Cat.$$
\item 
A {\em pseudo-functor} $F\colon \bbX\rightarrow \bbY$ between double categories 
is given by a pseudo-natural transformation 
$F\colon N_h\bbX\Rightarrow N_h\bbY\colon\Delta^{\mbox{\scriptsize op}}\rightarrow\Cat.$ 
\end{enumerate}
\end{dfn}

So strict functors send objects to objects, horizontal arrows to horizontal arrows, vertical arrows to vertical arrows, 
and double cells to double cells,
and preserve domains, codomains, identities and horizontal and vertical composition strictly.
Pseudo-functors preserve all of these up to coherent isomorphisms. 
Note that this notion of pseudo-functor between double categories is different 
from what is described in \cite{DPP-spans2}, for instance. The pseudo-functors of \cite{DPP-spans2} 
preserve domains and codomains strictly.

A pseudo-functor $(F,(\varphi_i),\sigma,\mu, (\theta_i))\colon\bbX\rightarrow\bbY$ consists of functors 
$F_0\colon \bbX_0\rightarrow\bbY_0$, $F_1\colon \bbX_1\rightarrow \bbY_1$
and 
$F_k\colon \bbX_1\times_{\bbX_0}\stackrel{k}{\cdots}\times_{\bbX_0}\bbX_1
\rightarrow \bbY_1\times_{\bbY_0}\stackrel{k}{\cdots}\times_{\bbY_0}\bbY_1$ for $k\ge 2$,
together with invertible natural transformations,
\begin{equation}\label{D:varphi}
\xymatrix{
\bbX_1\ar[d]_{d_i}\ar[r]^{F_1}\ar@{}[dr]|{\varphi_i\,\Downarrow}
  &\bbY_1\ar[d]^{d_i} & \bbX_0\ar[d]_s\ar[r]^{F_0}\ar@{}[dr]|{\sigma\,\Downarrow}
  &\bbY_0\ar[d]^s 
\\
\bbX_0\ar[r]_{F_0}&\bbY_0 & \bbX_1\ar[r]_{F_1}&\bbY_1 
\\
\bbX_1\times_{\bbX_0}\bbX_1\ar[d]_m \ar[r]^{F_2}\ar@{}[dr]|{\mu\,\Downarrow} 
  &\bbY_1\times_{\bbY_0}\bbY_1\ar[d]^m 
  &  \bbX_1\times_{\bbX_0}\bbX_1\ar[d]_{\pi_i}\ar[r]^{F_2} \ar@{}[dr]|{\theta_i\,\Downarrow}
  &\bbY_1\times_{\bbY_0}\bbY_1\ar[d]^{\pi_i}
\\
\bbX_1\ar[r]_{F_1} & \bbY_1 
  &\bbX_1\ar[r]_{F_1} & \bbY_1\rlap{\,,}
}
\end{equation}
(where $i=1,2$), and analogously for $F_k$ with $k\ge 2$.
These satisfy the usual naturality and coherence conditions, that can be derived from their simplicial description. 
We will list the ones that we will use in the remainder of this paper.
For instance,
\begin{equation}\label{identityeqn}
\xymatrix{
\bbX_0\ar[d]_s\ar[r]^{F_0}\ar@{}[dr]|{\sigma\,\Downarrow}
  &\bbY_0\ar[d]^s  & \bbX_0\ar@{=}[dd]\ar[r]^{F_0}\ar@{}[ddr]|{1_{F_0}} & \bbY_0\ar@{=}[dd]
\\
\bbX_1\ar[d]_{d_i}\ar[r]^{F_1}\ar@{}[dr]|{\varphi_i\,\Downarrow}
  &\bbY_1\ar[d]^{d_i} \ar@{}[r]|{\textstyle =} & &&\mbox{for }i=1,2.
\\
\bbX_0\ar[r]_{F_0}&\bbY_0& \bbX_0\ar[r]_{F_0}&\bbY_0
}\end{equation}
This means that vertical composition and domains and codomains are preserved strictly,
but horizontal composition and domains and codomains are only preserved up to a vertical isomorphism.
For the pseudo-morphisms we consider in this paper the typical image of a horizontal arrow 
$\xymatrix@1{A\ar[r]^f&B}$ under $F$ corresponds to the following diagram:
$$
\xymatrix@R=2.5em@C=3em{d_0F_1f\ar[r]^{F_1f}\ar[d]|\bullet_{(\varphi_0)_f} ^\sim& d_1F_1f\ar[d]|\bullet_\sim^{(\varphi_1)_f}
\\
F_0A&F_0B
}$$
where $(\varphi_0)_f$ and $(\varphi_1)_f$ are components of $\varphi_0$ and $\varphi_1$ given in (\ref{D:varphi}).
The equation (\ref{identityeqn}) means that 
the components of $\sigma$ have the following shape:
$$
\xymatrix@R=2.5em@C=3em{
F_0A\ar[r]^{\mbox{\scriptsize Id}_{F_0A}}\ar[d]|\bullet_{(\varphi_0)_{\mathrm{Id}_A}^{-1}}\ar@{}[dr]|{\sigma_A} 
  & F_0A\ar[d]|\bullet^{(\varphi_1)_{\mathrm{Id}_A}^{-1}}
\\
d_0F_1(\mathrm{Id}_A) \ar[r]_{F_1(\mbox{\scriptsize Id}_A)} & d_1F_1(\mathrm{Id}_A)
}$$

Naturality of $\mu$ means that for any pair of horizontally composable double cells in $\bbX$,
$$
\xymatrix{
A_1\ar[d]|\bullet_u\ar[r]^{f_1}\ar@{}[dr]|\alpha &B_1\ar[d]|\bullet_{v_1}\ar[r]^{g_1}\ar@{}[dr]|\beta 
	& C_1\ar[d]|\bullet^w
\\
A_2\ar[r]_{f_2} & B_2\ar[r]_{g_2} & C_2\rlap{\,,}
}$$
the following equation holds:
\begin{equation}\label{comp-nat}
 \xymatrix@C=5em@R=3em{
\ar[rrr]^{F_1(g_1\circ f_1)}\ar[d]|\bullet_\sim \ar@{}[drrr]|{\mu_{g_1,f_1}} &&& \ar[d]|\bullet^\sim
\\
\ar[r]^{\pi_2F_2(g_1,f_1)} \ar[d]|\bullet_\sim \ar@{}[dr]|{(\theta_2)_{g_1,f_1}} 
	&\ar[d]|\bullet^\sim \ar@{=}[r] \ar@{}[dddr]|=
	&\ar[r]^{\pi_1F_2(g_1,f_1)} \ar[d]|\bullet_\sim \ar@{}[dr]|{(\theta_1)_{g_1,f_1}} 
	&\ar[d]|\bullet^\sim
\\
\ar[r]|{F_1f_1}\ar[d]|\bullet_\sim \ar@{}[dr]|{F_1\alpha} & \ar[d]|\bullet^\sim 
	& \ar[d]|\bullet_\sim\ar@{}[dr]|{F_1\beta}\ar[r]|{F_1g_1} & \ar[d]|\bullet^\sim \ar@{}[drr]|{\textstyle =}
	&& \ar[r]^{F_1(g_1\circ f_1)}\ar@{}[dr]|{F_1(\beta\circ\alpha)} \ar[d]|\bullet_\sim &\ar[d]|\bullet^\sim
\\
\ar[d]|\bullet_\sim \ar[r]_{F_1f_2} \ar@{}[dr]|{(\theta_2)_{g_2,f_2}^{-1}} &\ar[d]|\bullet^\sim 
	& \ar[d]|\bullet_\sim \ar[r]|{F_1g_2} \ar@{}[dr]|{(\theta_1)_{g_2,f_2}^{-1}} &\ar[d]|\bullet^\sim 
	&& \ar[r]_{F_1(g_2\circ f_2)} &\rlap{\qquad.}
\\
\ar[r]_{\pi_2F_2(g_2,f_2)} \ar[d]|\bullet_\sim \ar@{}[drrr]|{\mu_{g_2,f_2}^{-1}}
	& \ar@{=}[r] & \ar[r]_{\pi_1F_2(g_2,f_2)} &\ar[d]|\bullet^\sim
\\
\ar[rrr]_{F_1(g_2\circ f_2)} &&&
}
\end{equation}
The identity coherence axioms for $m$ and $s$ state that
for a horizontal arrow  $\xymatrix@1{A\ar[r]^f&B}$,
\begin{equation}\label{ass-id1}
\xymatrix@C=5em@R=3em{
 \ar[rrr]^{F_1(\mbox{\scriptsize Id}_B\circ f)} \ar@{}[drrr]|{\mu_{\mathrm{Id}_B, f}} \ar[d]|\bullet 
	&&& \ar[d]|\bullet \ar@{}[4,1]|{\textstyle =}
	& \ar[rr]^{F_1(f)}\ar[4,0]|\bullet\ar@{}[4,2]|{\mathrm{id}_{f}}
	&& \ar[4,0]|\bullet
\\
\ar[ddd]|\bullet\ar[r]^{\pi_2(F_2(\mathrm{Id}_B,f))} \ar@{}[dddr]|{(\theta_2)_{\mathrm{Id}_B,f}}
	&\ar[ddd]|\bullet\ar@{=}[r] 
	& \ar[r]^{\pi_1(F_2(\mathrm{Id}_B,f))} \ar@{}[dr]|{(\theta_1)_{\mathrm{Id}_B,f}}\ar[d]|\bullet 
	&\ar[d]|\bullet & 
\\
&  \ar@{}[dr]|{=}
      &\ar[r]|{F(\mbox{\scriptsize Id}_B)}\ar[d]|\bullet \ar@{}[dr]|{\sigma_B^{-1}} 
      & \ar[d]|\bullet 
\\
&&\ar[d]|\bullet \ar[r]|{\mbox{\scriptsize Id}_{F_0B}} \ar@{}[dr]|{\mbox{\scriptsize id}} &\ar[d]|\bullet 
\\
\ar[r]_{F_1(f)} & \ar@{=}[r] & \ar[r]_{\mbox{\scriptsize Id}_{d_1F_1(f)}} && \ar[rr]_{F_1(f)} &&}
\end{equation}
and
\begin{equation}\label{ass-id2}
\xymatrix@C=5em@R=3em{
 \ar[rrr]^{F_1(f\circ \mbox{\scriptsize Id}_A)} \ar@{}[drrr]|{\mu_{f,\mathrm{Id}_A}} \ar[d]|\bullet 
	&&& \ar[d]|\bullet \ar@{}[4,1]|{\textstyle =}
	& \ar[rr]^{F_1(f)}\ar[4,0]|\bullet\ar@{}[4,2]|{\mathrm{id}_{f}} 
	&& \ar[4,0]|\bullet
\\
\ar[d]|\bullet\ar[r]^{\pi_2(F_2(f,\mathrm{Id}))}\ar@{}[dr]|{(\theta_2)_{f,\mathrm{Id}_A}}
	&\ar[d]|\bullet\ar@{=}[r] \ar@{}[dddr]|{=}
	& \ar[r]^{\pi_1(F_2(f,\mathrm{Id}_A))} \ar@{}[dddr]|{(\theta_1)_{f,\mathrm{Id}_A}}\ar[ddd]|\bullet 
	&\ar[ddd]|\bullet 
\\
\ar[r]|{F_1(\mbox{\scriptsize Id}_A)} \ar[d]|\bullet \ar@{}[dr]|{\sigma_A^{-1}}&\ar[d]|\bullet  
\\
\ar[d]|\bullet \ar[r]|{\mbox{\scriptsize Id}_{F_0A}} \ar@{}[dr]|{\mbox{\scriptsize id}} &\ar[d]|\bullet &&
\\
\ar[r]_{\mbox{\scriptsize Id}_{d_0F_1(f)}}  & \ar@{=}[r] &\ar[r]_{F_1(f)}  & & \ar[rr]_{F_1(f)} &&\rlap{\,.}}
\end{equation}

\subsection{Transformations}
Since double categories have two types of arrows, there are two possible choices
for types of transformations between maps of double functors: vertical and horizontal transformations.
Vertical transformations correspond to modifications between natural transformations of 
functors from $\dop$ into $\Cat$.

\begin{dfn}\rm
A {\em vertical transformation} $\gamma\colon F\Rightarrow G\colon \bbX\rightrightarrows \bbY$ between 
strict double functors has components
vertical arrows $\xymatrix@1{\gamma_A\colon FA\ar[r]|-\bullet &GA}$
indexed by the objects of $\bbX$ and for each horizontal arrow $\xymatrix@1{A\ar[r]^h&B}$
in $\bbX$, a double cell
$$
\xymatrix{
FA\ar[r]^{Fh}\ar[d]|\bullet_{\gamma_A} \ar@{}[dr]|{\gamma_h}& FB\ar[d]|\bullet^{\gamma_B}
\\
GA\ar[r]_{Gh} & GB,
}$$
such that $\gamma$ is strictly functorial in the horizontal direction,
i.e., $\gamma_{h_2\circ h_1}=\gamma_{h_2}\circ\gamma_{h_1}$,
and natural in the vertical direction, i.e., 
\begin{equation}\label{vertnat}
\xymatrix{
FA\ar[r]^{Fh}\ar[d]|\bullet_{Fv}\ar@{}[dr]|{F\zeta} & FB\ar[d]|\bullet^{Fw} 
		&& FA\ar[d]|\bullet_{\gamma_A}\ar[r]^{Fh}\ar@{}[dr]|{\gamma_h} & FB\ar[d]|\bullet^{\gamma_B}
\\
FC\ar[d]|\bullet_{\gamma_C}\ar@{}[dr]|{\gamma_k} \ar[r]_{Fk} & FD\ar[d]|\bullet^{\gamma_D}
		& \equiv & GA\ar[d]|\bullet_{Gv}\ar@{}[dr]|{G\zeta}\ar[r]_{Gh} &GB\ar[d]|\bullet^{Gw}
\\
GC\ar[r]_{Gk} & GD && GC\ar[r]_{Gk} & GD\rlap{\,,}
}
\end{equation}
for any double cell $\zeta$ in $\bbX$.
\end{dfn}

To give a vertical transformation between pseudo-functors of double categories,
we need to require that the data above fits together with the structure cells of the
pseudo-functors.

\begin{dfn}\rm
A {\em vertical transformation} $\gamma\colon F\Rightarrow G\colon \bbX\rightrightarrows \bbY$ between 
pseudo double functors corresponds to a modification between their representations as simplicial natural transformations
between the horizontal nerves of the double categories. It 
 has components
vertical arrows $\xymatrix@1{\gamma_A\colon FA\ar[r]|-\bullet &GA}$
indexed by the objects of $\bbX$ and for each horizontal arrow $\xymatrix@1{A\ar[r]^h&B}$
in $\bbX$, a double cell
$$
\xymatrix{
d_0Fh\ar[r]^{Fh}\ar[d]|\bullet_{d_0\gamma_h} \ar@{}[dr]|{\gamma_h}& d_1Fh\ar[d]|\bullet^{d_1\gamma_h}
\\
d_0Gh\ar[r]_{Gh} & d_1Gh,
}$$
such that the following squares of vertical arrows commute:
$$
\xymatrix{
F_0A\ar[r]|\bullet\ar[d]|\bullet_{\gamma_A} & d_0Fh\ar[d]|\bullet^{d_0\gamma_h}
		&F_0B\ar[r]|\bullet\ar[d]|\bullet_{\gamma_B} & d_1F_1h\ar[d]|\bullet^{d_1\gamma_h}
\\
G_0A\ar[r]|\bullet&d_0G_1h & G_0B\ar[r]|\bullet &d_1G_1h
}
$$
where the unlabeled arrows are the structure isomorphisms corresponding to $F$ and $G$.

We require that $\gamma$ is natural in the vertical direction, in the sense that 
the following square of vertical arrows commutes for a 
vertical arrow $\xymatrix@1{A\ar[r]|\bullet ^v&B}$
in $\bbX$,
$$
\xymatrix{
F_0A\ar[r]|\bullet^{F_1v} \ar[d]|\bullet_{\gamma_A}& F_0B \ar[d]|\bullet^{\gamma_B}
\\
G_0A \ar[r]|\bullet^{G_1v}& G_0B
}$$
and furthermore, for any double cell $\zeta$ in $\bbX$,
$$
\xymatrix{
d_0F_1h\ar[r]^{Fh}\ar[d]|\bullet_{d_0F_1\zeta}\ar@{}[dr]|{F_1\zeta} & d_1F_1h\ar[d]|\bullet^{d_1F_1\zeta} 
		&& d_0F_1h\ar[d]|\bullet_{d_0\gamma_h}\ar[r]^{Fh}\ar@{}[dr]|{\gamma_h} 
		& d_1F_1h\ar[d]|\bullet^{d_1\gamma_h}
\\
d_0F_1k\ar[d]|\bullet_{d_0\gamma_k}\ar@{}[dr]|{\gamma_k} \ar[r]_{Fk} 
		& d_1F_1k\ar[d]|\bullet^{d_1\gamma_k}
		& \equiv & d_0G_1h\ar[d]|\bullet_{d_0G_1\zeta}\ar@{}[dr]|{G_1\zeta}\ar[r]_{Gh} 
		&d_1G_1h\ar[d]|\bullet^{d_1G_1\zeta}
\\
d_0G_1k\ar[r]_{Gk} &d_1G_1k && d_0G_1k\ar[r]_{Gk} & d_1G_1k\rlap{\,.}
}
$$

In the horizontal direction we require pseudo-functoriality, which means that
$$
\xymatrix@C=5em@R=3em{
\ar[rrr]^{F_1(gf)} \ar[d]|\bullet\ar@{}[drrr]|{\mu^F_{g,f}} &&& \ar[d]|\bullet 
		& \ar[rr]^{F_1(gf)} \ar[5,0]|\bullet \ar@{}[5,2]|{\gamma_{gf}} && \ar[5,0]|\bullet
\\
\ar[d]|\bullet \ar[r]^{\pi_2F_2(g,f)} \ar@{}[dr]|{(\theta_2)_{g,f}} & \ar[d]|\bullet \ar@{=}[r] 
	&\ar[d]|\bullet \ar[r]^{\pi_1F_2(g,f)} \ar@{}[dr]|{(\theta_1)_{g,f}} &\ar[d]|\bullet 
\\
\ar[r]|{F_1f} \ar[d]|\bullet_{d_0\gamma_f} \ar@{}[dr]|{\gamma_f} & \ar[d]|\bullet 
	&\ar[d]|\bullet \ar[r]|{F_1g}\ar@{}[dr]|{\gamma_g} 
	& \ar[d]|\bullet^{d_1\gamma_g} \ar@{}[dr]|{\textstyle =} 
	&&   
\\
\ar[d]|\bullet\ar@{}[dr]|{(\theta_2)_{g,f}^{-1}} \ar[r]|{G_1f} &\ar[d]|\bullet 
	&\ar[d]|\bullet \ar[r]|{G_1g}\ar@{}[dr]|{(\theta_1)_{g,f}^{-1}} 
	&\ar[d]|\bullet &&
\\
\ar[d]|\bullet\ar[r]_{\pi_2G_2(g,f)}\ar@{}[drrr]|{(\mu_{g,f}^G)^{-1}} &\ar@{=}[r] & \ar[r]_{\pi_2G_2(g,f)} 
	&\ar[d]|\bullet 
\\
\ar[rrr]_{G_1(gf)}&&& & \ar[rr]_{G_1(gf)}&& \rlap{\quad.}
}$$
\end{dfn}

In order to state the universal property  in Section \ref{fracns} below, 
we also need to consider horizontal 
transformations between strict functors of double categories.
The definition of a horizontal transformation is dual to that of a vertical transformation 
in that all mentions of vertical and horizontal
have been exchanged.

\begin{dfn} \rm A {\em horizontal transformation} $a\colon G\Rightarrow K\colon \bbD\rightrightarrows\bbE$
between strict functors of (weakly globular) double categories has components horizontal arrows 
$\xymatrix@1{GX\ar[r]^{a_X}&KX}$ indexed by the objects of $\bbD$
and for each vertical arrow $\xymatrix@1{X\ar[r]|\bullet^v&Y}$ in $\bbD$, a double cell
$$
\xymatrix@R=1.9em{
GX\ar[r]^{a_X}\ar[d]|\bullet_{Gv} \ar@{}[dr]|{a_v}& KX\ar[d]|\bullet^{Kv}
\\
GY\ar[r]_{a_Y} & KY,
}$$
such that $a$ is strictly functorial in the vertical direction, i.e., $a_{v_1\cdot v_2}=a_{v_1}\cdot a_{v_2}$
and natural in the horizontal direction,
i.e., the composition of
$$
\xymatrix{
GX\ar[d]|\bullet_{Gv}\ar@{}[dr]|{G\zeta}\ar[r]^{Gf} 
    & GX'\ar[d]|\bullet^{Gv'}\ar[r]^{a_{X'}} \ar@{}[dr]|{a_{v'}} 
    & KX'\ar[d]|\bullet^{Kv'}
\\
GY\ar[r]_{Gg} & GY'\ar[r]_{a_{Y'}} &KY'
}$$
is equal to the composition of
$$
\xymatrix{
GX\ar[d]|\bullet_{Gv}\ar@{}[dr]|{a_{v}}\ar[r]^{a_X} 
    & KX\ar[d]|\bullet^{Kv}\ar[r]^{Kf} \ar@{}[dr]|{K\zeta} 
    & KX'\ar[d]|\bullet^{Kv'}
\\
GY\ar[r]_{a_Y} & KY\ar[r]_{Kg} &KY'
}$$
for any double cell $\zeta$ in $\bbD$.
\end{dfn}

We write $\DblCat_{\st,v}$, respectively $\DblCat_{\st,h}$, for the 2-categories of double categories, strict functors,
and vertical transformations, respectively horizontal transformations.
We will write $\DblCat_{\ps,v}$ for the 2-category of double categories, pseudo-functors, and vertical transformations
of pseudo-functors.

There are 2-functors $H\colon \mathbf{2}\mbox{\bf -Cat}\to\DblCat_{\st,h}$ and $V\colon\mathbf{2}\mbox{\bf -Cat}\to\DblCat_{\st,v}$.
$H$ sends a 2-category $\calC$ to a double category $H\calC$ with $H\calC_0=\calC_0$, the discrete category on the objects of $\calC$
and $H\calC_1$ has the arrows of $\calC$ as objects and the 2-cells of $\calC$ as arrows 
(and composition in $H\calC_1$ corresponds to vertical composition in $\calC$). 
The horizontal structure of $H\calC$ corresponds to (horizontal) composition in $\calC$.
$V$ sends the 2-category $\calC$ to a double category $V\calC$ with the objects of $\calC$ as objects, 
the arrows of $\calC$ as vertical arrows, only identity horizontal arrows, and the double cells of $\calC$ are the 2-cells of $H\calC$.
$H$ has a right adjoint $h\colon\DblCat{\st,h}\to \mathbf{2}\mbox{\bf -Cat}$ which sends a double category to its 2-category of horizontal arrows
and special cells with identity arrows as vertical arrows. Analogously, $V$ has a right adjoint $v\colon \DblCat{\st,v}\to \mathbf{2}\mbox{\bf -Cat}$ 
which send a double category to its 2-category of vertical arrows and special cells with identity arrows as horizontal arrows.

\subsection{Weakly globular double categories}
In \cite{PP1} we introduced the notion of weakly globular double category. 
Weakly globular double categories were defined in such a way that there is a biequivalence
of 2-categories,
$$
 \Bic:\WGDbl_{\ps,v}\simeq \NHom\; : \Dbl.
$$
Here $\NHom$ denotes the 2-category of bicategories, homomorphisms and icons.

The adjunction $H\dashv h$ described above restricts to an adjoint equivalence 
between the 2-category of double categories $\bbD$ such that 
$\bbD_0$ is discrete 
and the 2-category of 2-categories.  And a similar result is true when we take pseudo double functors as in \cite{DPP-spans2} for instance and 
homomorphisms of 2-categories with oplax transformations.
The requirement that the vertical category $\bbD_0$ be discrete can be viewed
as a globularity condition: we require that the cells have vertical arrows that are identities, so they are globular in shape.

To obtain a kind of double categories that can model bicategories we choose to weaken this globularity condition
to obtain the first condition in the definition below.
This means that rather than having a set of objects we have a set with an equivalence relation, i.e., a category 
which is equivalent to a discrete one, a posetal groupoid.
In order to make sure that this interacts well with the structure of the horizontal arrows, we need 
the second condition in the definition below. 
To obtain an understanding of this condition, consider some arrangement of horizontal and vertical arrows of the form
\begin{equation}\label{staircase}
\xymatrix@C=3.5em@R=3.5em{
\ar[r]_{f_1} & \ar[d]|\bullet_{v_1} & \ar[d]|\bullet_{v_2}\ar[r]_{f_3} &\ar[d]|\bullet_{v_3} 
		&\ar@{}[r]|\cdots&\ar[d]|\bullet_{v_{n-2}} & \ar[d]|\bullet_{v_{n-1}}\ar[r]_{f_n}&
\\
                   &\ar[r]_{f_2}               &                 &\ar[r]_{f_4}                                 &          &\ar[r]_{f_{n-1}}&
}
\end{equation}
We will refer to  such an arrangement of alternating horizontal and vertical arrows as a {\em staircase path}.
Since we think of the vertical arrows as just indicating that two objects are equivalent,
we require that this diagram can be completed to
$$
\xymatrix@C=3.5em@R=3.5em{
\ar[r]^{f_1'}\ar[d]|\bullet_{u_0}\ar@{}[dr]|{\varphi_1}	&	\ar[d]|\bullet_{u_1}\ar@{}[ddr]|{\varphi_2}\ar[r]^{f_2'}
		&	\ar[d]|\bullet_{u_2} \ar[r]^{f_3'}\ar@{}[dr]|{\varphi_3}
		&	\ar[d]|\bullet^{u_3} \ar[r]^{f_4'} \ar@{}[ddr]|{\varphi_4} 
		& \ar@{}[r]|\cdots & \ar[d]|\bullet_{u_{n-2}}\ar[r]^{f_{n-1}'}\ar@{}[ddr]|{\varphi_{n-1}} 
		& \ar[d]|\bullet_{u_{n-1}}\ar[r]^{f_n'}\ar@{}[dr]|{\varphi_n} 
		& \ar[d]|\bullet^{u_{n}}
\\
\ar[r]_{f_1} & \ar[d]|\bullet_{v_1} & \ar[d]|\bullet_{v_2}\ar[r]_{f_3} &\ar[d]|\bullet_{v_3}&\ar@{}[r]|\cdots&\ar[d]|\bullet_{v_{n-2}} & \ar[d]|\bullet_{v_{n-1}}\ar[r]_{f_n}&
\\
                   &\ar[r]_{f_2}               &     &\ar[r]_{f_4}      &          &\ar[r]_{f_{n-1}}&
}
$$
where all the double cells are vertically invertible. (And so we can think of the top path as the corresponding path that is horizontally composable.)
Furthermore, we require that this correspondence is part of an equivalence of categories.

\begin{dfn}\label{wklglb}\rm
   A {\em weakly globular double category} $\bbX$ is a  double category
   which satisfies the following two conditions
\begin{itemize}
\item ({\it the weak globularity condition}) there is an equivalence of categories $\gamma:\bbX_0\rw \bbX_0^d$,
    where $\bbX_0^d$ is the discrete category of the path components of $\bbX_{0}$;
 \item ({\it the induced Segal maps condition}) $\gamma$ induces
    an equivalence of categories, for all $n\geq 2$,
    \begin{equation}\label{Segalmaps}
        \pro{\bbX_1}{\bbX_0}{n}\simeq\pro{\bbX_1}{\bbX_0^d}{n}\rlap{\,.}
    \end{equation}
    \end{itemize}
    Analogously to the notation introduced above we write $\WGDbl_{\ps,v}$
for the 2-category of weakly globular double categories, pseudo-functors, and vertical transformations
between them. We also write $\WGDbl_{\st,v}$ and $\WGDbl_{\st,h}$ for the 2-categories with strict functors and
vertical, respectively horizontal, transformations.
\end{dfn}
 
 \begin{rmk}{\rm
Since the vertical arrows in a weakly globular double category are unique and invertible, 
any object $(f_1,f_2,\ldots,f_n)\in \pro{\bbX_1}{\bbX_0^d}{n}$ corresponds uniquely to a diagram 
as in (\ref{staircase}). However, there are many other ways to represent this object as a staircase path.
For instance,
$$
\xymatrix@C=3.5em@R=3.5em{
 & & \ar[d]|\bullet_{v_2}\ar[r]_{f_3} &\ar[d]|\bullet_{v_3} 
		&\ar@{}[r]|\cdots&\ar[d]|\bullet_{v_{n-2}} & \ar[d]|\bullet_{v_{n-1}}\ar[r]_{f_n}&
\\
                   &\ar[d]|\bullet_{v_1^{-1}} \ar[r]_{f_2}               &                 &\ar[r]_{f_4}                                 &          &\ar[r]_{f_{n-1}}&
\\
\ar[r]_{f_1} &                    
}
$$
The induced Segal maps condition would then give us a diagram of vertically invertible double cells that looks slightly different:
$$
\xymatrix@C=3.5em@R=3.5em{
\ar[r]^{f_1'}\ar[ddd]|\bullet_{u_0'}\ar@{}[dddr]|{\varphi_1'}	&	\ar[dd]|\bullet_{u_1'}\ar@{}[ddr]|{\varphi_2}\ar[r]^{f_2'}
		&	\ar[d]|\bullet_{u_2} \ar[r]^{f_3'}\ar@{}[dr]|{\varphi_3}
		&	\ar[d]|\bullet^{u_3} \ar[r]^{f_4'} \ar@{}[ddr]|{\varphi_4} 
		& \ar@{}[r]|\cdots & \ar[d]|\bullet_{u_{n-2}}\ar[r]^{f_{n-1}'}\ar@{}[ddr]|{\varphi_{n-1}} 
		& \ar[d]|\bullet_{u_{n-1}}\ar[r]^{f_n'}\ar@{}[dr]|{\varphi_n} 
		& \ar[d]|\bullet^{u_{n}}
\\
 & & \ar[d]|\bullet_{v_2}\ar[r]_{f_3} &\ar[d]|\bullet_{v_3} 
		&\ar@{}[r]|\cdots&\ar[d]|\bullet_{v_{n-2}} & \ar[d]|\bullet_{v_{n-1}}\ar[r]_{f_n}&
\\
        &\ar[d]|\bullet_{v_1^{-1}} \ar[r]_{f_2}               &                 &\ar[r]_{f_4}                                 &          &\ar[r]_{f_{n-1}}&
\\
\ar[r]_{f_1} &                    
}
$$
However, note that the composable path of horizontal arrows will still be the same, since this is the path assigned to $(f_1,\ldots,f_n)$ 
by the equivalence. But then we realize that the composites of the vertical arrows also need to be the same, 
so $u_0=u_0'$ and $v_1\cdot u_1=v_1^{-1}\cdot u_1'$ and furthermore, $\varphi_1'=\varphi_1$. 
So we conclude that the choice of cells and horizontal arrows is completely determined by the 
equivalence in the induced Segal maps condition, but the way they are arranged can vary 
depending on which vertical arrows are used in the first staircase path.}
\end{rmk}
 
In the next two subsections,
we briefly review the explicit descriptions of the 2-functors involved in the biequivalence of 2-categories
$$
 \Bic:\WGDbl_{\ps,v}\simeq \NHom\; : \Dbl.
$$
For more details on these functors, see also \cite{PP1}. 

\subsection{The fundamental bicategory}\label{D:Bic}
Let $\bbX$ be a weakly globular double category.
The {\em objects} of $\Bic\bbX$ are obtained as the connected components $\pi_0\bbX_0$
of
the vertical arrow category $\bbX_0$.
When $A$ is an object of $\bbX$, i.e., an element of $\bbX_{00}$,
we write $\bar{A}$ for the corresponding object in $\Bic\bbX$.
Note that  $\bar{A}=\bar{B}$ if and only if there is a (unique) vertical arrow
$v\colon \xymatrix@1{{A}\ar[r]|\bullet&B}$ in $\bbX$
(since the vertical arrow category $\bbX_0$ is posetal and groupoidal).

For any two objects $\bar{A}$ and $\bar{B}$
in $\bbX$, the {\em set of arrows}, $\Bic\bbX(\bar{A},\bar{B})$
is obtained as a disjoint union of horizontal hom-sets in $\bbX$,
$$
\Bic\bbX(\bar{A},\bar{B})
=\coprod_{\scriptstyle \begin{array}{c}\bar{A'}=\bar{A}\\\bar{B'}=\bar{B}\end{array}}\bbX_h(A',B').
$$
Note that we do not put an equivalence relation on the horizontal arrows
of $\bbX$ to obtain the arrows of the fundamental bicategory; we will therefore use the same symbol to denote
a horizontal arrow in $\bbX$ and the corresponding arrow in $\Bic(\bbX)$.

For any two arrows $\xymatrix@1@C=2em{\bar{A}\ar@<.5ex>[r]^f\ar@<-.5ex>[r]_g&\bar{B}}$
in $\Bic\bbX$ represented by horizontal arrows $\xymatrix@1@C=2em{A_1\ar[r]^f&B_1}$
and $\xymatrix@1@C=2em{A_2\ar[r]^g&B_2}$ in $\bbX$, the {\em 2-cells} from $f$ to $g$
correspond to double cells of the form
$$
\xymatrix{
A_1\ar[d]|\bullet_{v}\ar[r]^f\ar@{}[dr]|\alpha &B_1\ar[d]|\bullet^{w}
\\
A_2\ar[r]_g & B_2\rlap{\quad.}
}
$$
Since $v$ and $w$ are unique, we will denote the corresponding 2-cell in $\Bic\bbX$ by
$\alpha\colon f\Rightarrow g$.

Let  $f\colon A_1\rightarrow B_1$ and $g\colon B_2\rightarrow C_2$  be horizontal arrows
such that there is an invertible vertical arrow $\xymatrix@1@C=1.5em{v\colon B_2\ar[r]|-\bullet &B_1}$.
Then the induced Segal maps condition (see Definition \ref{wklglb}) gives rise to a diagram
$$
\xymatrix@R=2em{
A_3\ar[dd]|\bullet_{x}\ar@{}[ddr]|{\varphi_{f_3,f}}\ar[r]^{f_3}
    & B_3\ar[d]|\bullet^y\ar@{}[dr]|{\varphi_{g_3,g}}\ar[r]^{g_3} & C_3\ar[d]|\bullet^z
\\
& B_2\ar[d]|\bullet^{v}\ar[r]_g& C_2
\\
A_1\ar[r]_f & B_1\rlap{\quad.}
}
$$
Then the composition of $f\colon \bar{A}_1\rightarrow\bar{B}_1$ and $g\colon\bar{B}_2\rightarrow\bar{C}_2$
(where $\bar{B}_1=\bar{B}_2$) in $\Bic\bbX$ is defined to be the horizontal composite
$g_3\circ f_3\colon \bar{A}_1=\bar{A}_3\rightarrow\bar{C}_3=\bar{C}_2$.

The horizontal composition of 2-cells is defined as follows:
Let
$$\xymatrix@R=2em{\bar{A}\ar@/^2ex/[r]^{f}\ar@{}[r]|{\Downarrow\alpha}\ar@/_2ex/[r]_g&
    \bar{B} \ar@/^2ex/[r]^{h}\ar@{}[r]|{\Downarrow\beta}\ar@/_2ex/[r]_k& \bar{C}}$$
be a diagram of arrows and cells in the fundamental bicategory, represented by
double cells in $\bbX$,
$$
\xymatrix@R=2em{
A_2 \ar[d]|\bullet_{u_{21}} \ar[r]^f \ar@{}[dr]|\alpha &
    B_2 \ar[d]|\bullet^{v_{21}} \ar@{}[drr]|{\mbox{and}} &&
    B_4 \ar[d]|\bullet_{v_{43}}\ar[r]^h\ar@{}[dr]|\beta & C_4\ar[d]|\bullet^{w_{43}}
\\
A_1\ar[r]_g & B_1 && B_3\ar[r]_k & C_3.}
$$
Let the composite of $g$ and $k$ in $\Bic\bbX$ be the arrow $k_5\circ g_5$
with a corresponding diagram (using the induced Segal maps condition from Definition \ref{wklglb} again),
$$
\xymatrix@R=2em{
A_5\ar[dd]|\bullet_{u_{51}}\ar@{}[ddr]|{\varphi_{g_5,g}}\ar[r]^{g_5}
    & B_5\ar[d]|\bullet_{v_{53}}\ar@{}[dr]|{\varphi_{k_5,k}}\ar[r]^{k_5}
    & C_5\ar[d]|\bullet^{w_{53}}
\\
& B_3\ar[d]|\bullet^{v_{31}}\ar[r]_k & C_3
\\
A_1\ar[r]_g & B_1
}
$$
and let the composite of $f$ and $h$ be the arrow $h_6\circ f_6$ as in the diagram
$$
\xymatrix@R=2em{
A_6\ar[dd]|\bullet_{u_{62}}\ar@{}[ddr]|{\varphi_{f_6,f}}\ar[r]^{f_6}
    & B_6\ar[d]|\bullet_{v_{64}}\ar@{}[dr]|{\varphi_{h_6,h}}\ar[r]^{h_6}
    & C_6\ar[d]|\bullet^{w_{64}}
\\
& B_4\ar[d]|\bullet^{v_{42}}\ar[r]_h & C_4
\\
A_2\ar[r]_f & B_2
}
$$
Then the horizontal composition of $\alpha$ and $\beta$ is represented by the following
pasting of double cells:
$$
\xymatrix@R=2em{
A_6 \ar[ddd]|\bullet_{u_{62}} \ar@{}[dddr]|{\varphi_{f_6,f}} \ar[r]^{f_6}
    & B_6\ar[d]|\bullet_{v_{64}} \ar@{}[dr]|{\varphi_{h_6,h}} \ar[r]^{h_6}
    & C_6\ar[d]|\bullet^{w_{64}}
\\
&B_4\ar[d]|\bullet^{v_{43}}\ar@{}[dr]|{\beta} \ar[r]_h & C_4\ar[d]|\bullet^{w_{43}}
\\
&B_3\ar[r]_k\ar[d]|\bullet^{v_{32}} \ar@{}[dddr]|{\varphi^{-1}_{k_5,k}} & C_3\ar[ddd]|\bullet^{v_{35}}
\\
A_2\ar[d]|\bullet_{u_{21}}\ar@{}[dr]|{\alpha} \ar[r]^f & B_2\ar[d]|\bullet_{v_{21}}
\\
A_1\ar[d]|\bullet_{u_{15}}\ar@{}[dr]|{\varphi_{g_5,g}^{-1}} \ar[r]_g & B_1\ar[d]|\bullet^{v_{15}}
\\
A_5\ar[r]_{g_5}&B_5\ar[r]_{k_5} & C_5.
}
$$
(Here, $u_{ij}=u_{ji}^{-1}$ and $u_{jk}\cdot u_{ij}=u_{ik}$, and analogous for $v$ and $w$,
since the vertical category is groupoidal posetal.
Furthermore, the same holds for the cells, because they are components of a vertical transformation.)

The units for the composition are obtained from the functor $\mu_0\colon\bbX_0^d\rightarrow\bbX_0$
which is part of the equivalence of
categories, $\bbX_0^d\simeq \bbX_0$. For an object $\bar{A}$ in $\Bic\bbX$, $1_{\bar{A}}$
is the horizontal arrow
$\mbox{Id}_{\mu_0(\bar{A})}$.

There are associativity and unit isomorphisms for this composition that satisfy the usual coherence conditions
by the results in \cite{tam} and \cite {lp}.

\subsection{The weakly globular double category of paths}
Let $\calB$ be a bicategory.
Before we begin the construction of $\Dbl(\calB)$, we first choose a composite $\varphi_{f_{1},\ldots,f_{n}}$ for each finite path
$\xymatrix@1@C=1.3em{A_0\ar[r]^{f_{1}}&A_1\ar[r]^{f_{2}}&\cdots\ar[r]^{f_{n}}&A_n}$ of
(composable) arrows in
$\calB$. If the path is empty, we take $\varphi_{A_0}=1_{A_0}$.
For each path of such paths,
$$
\xymatrix@C=3.85em{(\ar[r]^{f_{i_1}}&\cdots\ar[r]^{f_{i_{n_1}}}&)(\ar[r]^{f_{i_{n_1+1}}}
    &\cdots\ar[r]^{f_{i_{n_2}}}&)
    \quad\cdots\quad (\ar[r]^-{f_{i_{n_{m-1}+1}}} &\cdots\ar[r]^{f_{i_{n_m}}}&)}
$$
the associativity and unit cells give rise to unique invertible comparison 2-cells, which we denote by
$$
\xymatrix@C=20em{
\ar@/^5ex/[r]^{\varphi_{\varphi_{f_{i_1},\ldots,f_{i_{n_1}}},\varphi_{f_{i_{n_1+1}},\ldots,f_{i_{n_2}}},\ldots,
    \varphi_{f_{i_{n_{m-1}+1}},\ldots,f_{i_{n_m}}}}} \ar@/_5ex/[r]_{\varphi_{f_{i_1},\ldots,f_{i_{n_m}}}}
    \ar@{}[r]|{\Phi_{f_{i_1}\cdots f_{i_{n_1}},f_{i_{n_1+1}}\cdots f_{i_{n_2}},\ldots,f_{i_{n_{m-1}+1}}\cdots f_{i_{n_m}}}} &\rlap{\quad.}
}
$$
(The uniqueness of these cells follows from the associativity and unit coherence axioms.)

The {\em objects} of $\Dbl(\calB)$ are given as pairs of an arrow $\psi\colon[0]\rightarrow [n]$ in
$\Delta$ with a path, $\xymatrix@1{A_0\ar[r]^{f_1} & A_1 \ar[r]^{f_2} & \cdots\ar[r]^{f_n} & A_n}$,
of length $n$ in $\calB$, for all $n$.
Since the arrow $\psi$ is determined by its image $i_0=\psi(0)\in[n]$, we will denote this object in
$\Dbl(\calB)$ by $$(\xymatrix{A_0\ar[r]^{f_1} & A_1 \ar[r]^{f_2} & \cdots\ar[r]^{f_n}&A_n};i_0)$$
and think of $A_{i_0}$ as a marked object along the path. So we will also use the notation
$$\xymatrix{A_0\ar[r]^{f_1} & A_1\ar[r]^{f_2}&\cdots\ar[r]^{f_{i_0}}
  &[A_{i_0}]\ar[r]^{f_{i_0+1}}& \cdots\ar[r]^{f_n}&A_n\rlap{\,.}}$$

There is a unique {\em vertical arrow} from
$\xymatrix@1@C=1.8em{A_0\ar[r]^{f_1} & A_1\ar[r]^{f_2}&\cdots\ar[r]^{f_{i_0}}
  &[A_{i_0}]\ar[r]^{f_{i_0+1}}& \cdots\ar[r]^{f_n}&A_n}$ to
$\xymatrix@1@C=2em{B_0\ar[r]^{g_1} & B_1\ar[r]^{g_2}&\cdots\ar[r]^{g_{j_0}}
  &[B_{j_0}]\ar[r]^{g_{j_0+1}}& \cdots\ar[r]^{g_m}&B_m}$ if and only if $A_{i_0}=B_{j_0}$.
In diagrams we will include this vertical arrow in the following way
$$\xymatrix{
A_0\ar[r]^{f_1} & A_1\ar[r]^{f_2}&\cdots\ar[r]^{f_{i_0}}
  &[A_{i_0}]\ar@{=}[d]\ar[r]^{f_{i_0+1}}& \cdots\ar[r]^{f_n}&A_n
\\
B_0\ar[r]^{g_1} & B_1\ar[r]^{g_2}&\cdots\ar[r]^{g_{j_0}}
  &[B_{j_0}]\ar[r]^{g_{j_0+1}}& \cdots\ar[r]^{g_m}&B_m
}
$$

{\em Horizontal arrows} in $\Dbl(\calB)$ are given as  pairs of an arrow
$\psi\colon[1]\rightarrow [n]$ in $\Delta$ with a path of length $n$
in $\calB$, for all $n$. Analogous to what we did for objects we denote horizontal arrows by
$$(\xymatrix{A_0\ar[r]^{f_1} & A_1 \ar[r]^{f_2} & \cdots\ar[r]^{f_n}&A_n};i_0,i_1)\quad\mbox{ with }\quad i_0\le i_1,$$
or by
$$\xymatrix{A_0\ar[r]^{f_1} & A_1 \ar[r]^{f_2}&\cdots\ar[r]^{f_{i_0}}&[A_{i_0}]\ar[r]^{f_{i_0+1}}& \cdots
\ar[r]^{f_{i_1}}&[A_{i_1}]\ar[r]^{f_{i_1+1}}& \cdots\ar[r]^{f_n}&A_n\rlap{\,.}}$$
The domain of $(\xymatrix@C=1.2em@1{A_0\ar[r]^{f_1} & A_1 \ar[r]^{f_2}& \cdots\ar[r]^{f_n}&A_n};i_0,i_1)$
is $(\xymatrix@1@C=1.2em{A_0\ar[r]^{f_1} & A_1 \ar[r]^{f_2}& \cdots\ar[r]^{f_n}&A_n};i_0)$
and the codomain is $(\xymatrix@C=1.2em{A_0\ar[r]^{f_1} & A_1 \ar[r]^{f_2}& \cdots\ar[r]^{f_n}&A_n};i_1)$.
For a horizontal identity arrow,
$$
(\xymatrix{A_0\ar[r]^{f_1} & A_1\ar[r]^{f_2}& \cdots\ar[r]^{f_n}&A_n};i_0,i_0)
$$
we will also use the notation
$$
\xymatrix{A_0\ar[r]^{f_1} & A_1\ar[r]^{f_2}& \ar[r]^{f_{i_0}}&[[A_{i_0}]]
\ar[r]^{f_{i_0+1}}& \cdots\ar[r]^{f_n}&A_n}
$$
or
$$
\xymatrix{A_0\ar[r]^{f_1} & A_1\ar[r]^{f_2}& \ar[r]^{f_{i_0}}&[A_{i_0}]\ar@{=}[r] &[A_{i_0}]
\ar[r]^{f_{i_0+1}}& \cdots\ar[r]^{f_n}&A_n}
$$
when this makes it easier to fit such an arrow into a diagram representing a double cell as shown below.

A {\em double cell} consists of two horizontal arrows
$$(\xymatrix@C=1.7em{A_0\ar[r]^{f_1} & A_1\ar[r]^{f_2}& \cdots\ar[r]^{f_n}&A_n};i_0,i_1)\mbox{ and }
(\xymatrix@C=1.7em{B_0\ar[r]^{g_1} & B_1\ar[r]^{g_2}& \cdots\ar[r]^{g_m}&B_m};j_0,j_1)$$
(for the vertical domain and codomain respectively), such that
$A_{i_0}=B_{j_0}$ and $A_{i_1}=B_{j_1}$ (such that there are unique vertical arrows between the domains
of these arrows and between the codomains of these arrows),
together with a 2-cell in $\calB$ between the chosen composites,
$$
\xymatrix@C=5em{\ar@/^4ex/[r]^{\varphi_{f_{i_0+1},\ldots,f_{i_1}}}
      \ar@/_4ex/[r]_{\varphi_{g_{j_0+1},\ldots,g_{j_1}}}
      \ar@{}[r]|{\Downarrow\alpha}&\rlap{\quad.}}
$$
We combine all this information together in the following diagram
$$
\xymatrix@R=4em{
A_0\ar[r]^{f_1}&\cdots\ar[r]^{f_{i_0}}
  &[A_{i_0}]\ar@/_2ex/[rr]_{\varphi_{f_{i_0+1},\ldots,f_{i_1}}}\ar@{}[drr]|{\alpha}\ar@{=}[d]\ar[r]^{f_{i_0+1}}
  &\cdots\ar[r]^{f_{i_1}}
  &[A_{i_1}]\ar@{=}[d]\ar[r]^{f_{i_1+1}}
  &\cdots\ar[r]^{f_n}&A_n
\\
B_0\ar[r]_{g_1}&\cdots\ar[r]_{g_{j_0}}
  &[B_{j_0}] \ar@/^2ex/[rr]^{\varphi_{g_{j_0+1},\ldots,g_{j_1}}}\ar[r]_{g_{j_0+1}}
  &\cdots\ar[r]_{g_{j_1}}
  &[B_{j_1}]\ar[r]_{g_{j_1+1}}
  &\cdots\ar[r]_{g_m}&B_m
}
$$
So this represents a double cell in $\Dbl(\calB)$.

Two horizontal arrows, $$(\xymatrix@1{A_0\ar[r]^{f_1} & A_1\ar[r]^{f_2}& \cdots\ar[r]^{f_n}&A_n};i_0,i_1)
\mbox{ and }(\xymatrix@1{B_0\ar[r]^{g_1} & B_1\ar[r]^{g_2}& \cdots\ar[r]^{g_m}&B_m};j_0,j_1),$$ are composable if
and only if the two paths are the same, i.e., $m=n$, $A_i=B_i$ for $i=0,\ldots,n$, and $f _i=g_i$ for $i=1,\ldots,n$,
and furthermore, $i_1=j_0$. In that case, the {\em horizontal composition} of these arrows is given by
$(\xymatrix@1{A_0\ar[r]^{f_1} & A_1\ar[r]^{f_2}& \cdots\ar[r]^{f_n}&A_n};i_0,j_1)$.

The {\em horizontal composition of double cells}
$$
\xymatrix@R=4em{
A_0\ar[r]^{f_1}&\cdots\ar[r]^{f_{i_0}}
  &[A_{i_0}]\ar@/_2ex/[rr]_{\varphi_{f_{i_0+1},\ldots,f_{i_1}}}\ar@{}[drr]|{\alpha}\ar@{=}[d]\ar[r]^{f_{i_0+1}}
  &\cdots\ar[r]^{f_{i_1}}
  &[A_{i_1}]\ar@{=}[d]\ar[r]^{f_{i_1+1}}
  &\cdots\ar[r]^{f_n}&A_n
\\
B_0\ar[r]_{g_1}&\cdots\ar[r]_{g_{j_0}}
  &[B_{j_0}] \ar@/^2ex/[rr]^{\varphi_{g_{j_0+1},\ldots,g_{j_1}}}\ar[r]_{g_{j_0+1}}
  &\cdots\ar[r]_{g_{j_1}}
  &[B_{j_1}]\ar[r]_{g_{j_1+1}}
  &\cdots\ar[r]_{g_m}&B_m
}
$$
and
$$
\xymatrix@R=4em{
A_0\ar[r]^{f_1}&\cdots\ar[r]^{f_{i_1}}
  &[A_{i_1}]\ar@/_2ex/[rr]_{\varphi_{f_{i_1+1},\ldots,f_{i_2}}}\ar@{}[drr]|{\beta}\ar@{=}[d]\ar[r]^{f_{i_1+1}}
  &\cdots\ar[r]^{f_{i_2}}
  &[A_{i_2}]\ar@{=}[d]\ar[r]^{f_{i_2+1}}
  &\cdots\ar[r]^{f_n}&A_n
 \\
 B_0\ar[r]_{g_1}&\cdots\ar[r]_{g_{j_1}}
  &[B_{j_1}] \ar@/^2ex/[rr]^{\varphi_{g_{j_1+1},\ldots,g_{j_2}}}\ar[r]_{g_{j_1+1}}
  &\cdots\ar[r]_{g_{j_2}}
  &[B_{j_2}]\ar[r]_{g_{j_2+1}}
  &\cdots\ar[r]_{g_m}&B_m
 }
 $$
 is defined to be
 $$
\xymatrix@R=4em{
A_0\ar[r]^{f_1}&\cdots\ar[r]^{f_{i_0}}
  &[A_{i_0}]\ar@/_2ex/[rr]_{\varphi_{f_{i_0+1},\ldots,f_{i_2}}}
    \ar@{}[drr]|{\alpha\otimes\beta}\ar@{=}[d]\ar[r]^{f_{i_0+1}}
  &\cdots\ar[r]^{f_{i_2}}
  &[A_{i_2}]\ar@{=}[d]\ar[r]^{f_{i_2+1}}
  &\cdots\ar[r]^{f_n}&A_n
 \\
  B_0\ar[r]_{g_1}&\cdots\ar[r]_{g_{j_0}}
  &[B_{j_0}] \ar@/^2ex/[rr]^{\varphi_{g_{j_0+1},\ldots,g_{j_2}}}\ar[r]_{g_{j_0+1}}
  &\cdots\ar[r]_{g_{j_2}}
  &[B_{j_2}]\ar[r]_{g_{j_2+1}}
  &\cdots\ar[r]_{g_m}&B_m
 }
 $$
 where $\alpha\otimes\beta$ is the 2-cell in $\calB$ given by the following pasting diagram
 $$
\xymatrix@C=15em@R=8em{
 \ar@/^8ex/[rr]^{\varphi_{f_{i_0+1},\ldots,f_{i_2}}}
 \ar@/_8ex/[rr]_{\varphi_{g_{j_0+1},\ldots,g_{j_2}}}
 \ar@{}@<3ex>[rr]^{\Phi_{f_{i_0+1}\cdots f_{i_1},f_{i_1+1}\cdots f_{i_2}}}
\ar@{}@<-3ex>[rr]_{\Phi_{g_{j_0+1}\cdots g_{j_1},g_{j_1+1}\cdots g_{j_2}}}
\ar@/^4ex/[r]_{\varphi_{f_{i_0+1},\ldots,f_{i_1}}}
\ar@/_4ex/[r]^{\varphi_{g_{j_0+1},\ldots,g_{j_1}}}
\ar@{}[r]|\alpha
&\ar@/^4ex/[r]_{\varphi_{f_{i_1+1},\ldots,f_{i_2}}}
\ar@/_4ex/[r]^{\varphi_{g_{j_1+1},\ldots,g_{j_2}}}
\ar@{}[r]|\beta
&
}
$$

\begin{rmks}\rm
\begin{enumerate}
 \item Note that both the category of horizontal arrows and the category of vertical arrows of
$\Dbl(\calB)$ are posetal.
\item We call $\Dbl(\calB)$ the {\em double category of marked paths in $\calB$}.
\item
For a 2-category $\calC$,
the double category $\Dbl(\calC)$ is not isomorphic to the double category $H\calC$, but it is 2-equivalent to it (in the vertical direction).
And the same is true for a category $\bfC$: $H\bfC\not\cong\Dbl(\bfC)$, but $H\bfC\simeq_2\Dbl(\bfC)$.
\end{enumerate}
\end{rmks}

\section{Companions and precompanions}\label{comppre}

In general double categories, the notions of companion and conjoint have been recognized
as important concepts related to the notion of adjoint.
While adjoint arrows have to be of the same kind, i.e., both horizontal or both vertical, 
the relations of companionship and conjointship are for arrows of different types.
The notions of companion and conjoint were first introduced by Ehresmann in \cite{GP},
but companions in symmetric double categories (where the horizontal and vertical arrow categories are the same)
were studied by Brown and Mosa \cite{BM} under the name `connections'.
Connection pairs were first introduced by Spencer in \cite{spen}.
The existence of companions and conjoints for the vertical arrows in a double category is
related to Shulman's notion of an anchored bicategory \cite{Shul}, 
also called a gregarious double category in \cite{DPP-spans2}.
We will show that for weakly globular double categories horizontal companions are related to
so-called quasi units, i.e., arrows with an invertible 2-cell to a unit arrow, in their associated bicategories.
Then we will introduce a slightly weaker notion, that of a {\em precompanion}. 
We will show that precompanions in weakly globular double categories correspond to equivalences in bicategories.
In Section \ref{fracns} we will show that both companions and precompanions  play an important role in 
the description of the universal properties 
of a weakly globular double
category of fractions. 

We begin this section by repeating the definitions of companion and conjoint 
from \cite{GP} to set our notation, and then we will discuss their relationship to quasi units.
Then we will introduce both a category and a double category of companions. 
And in the last part we will study precompanions and their properties.

\subsection{Companions and conjoints}
Recall the definitions of companion and conjoint.

\begin{dfn}\label{friends}\rm
Let $\bbD$ be a double category and consider horizontal morphisms $f\colon A\rightarrow B$ 
and $u\colon B\rightarrow A$ and a vertical morphism 
$\xymatrix@1@C=1.5em{v\colon A\ar[r]|-{\scriptscriptstyle\bullet}&B}$.
We say that $f$ and $v$ are {\em companions} if there exist {\em binding cells}
$$
\xymatrix{
A\ar@{=}[d]\ar@{=}[r] \ar@{}[dr]|\psi & A\ar[d]|{\scriptscriptstyle\bullet}^v \ar@{}[drr]|{\mbox{and}} 
	&& A\ar[d]|{\scriptscriptstyle\bullet}_v \ar[r]^f \ar@{}[dr]|\chi & B\ar@{=}[d]
\\
A\ar[r]_f & B && B\ar@{=}[r] &B,
}
$$
such that
\begin{equation}\label{compcondns}
\xymatrix@R=.5em@C=2em{
	&&&&&&A\ar@{=}[dd]\ar@{=}[r]\ar@{}[ddr]|\psi & A\ar[dd]|{\scriptscriptstyle\bullet}^v 
\\
A \ar@{=}[dd]\ar@{=}[r] \ar@{}[ddr]|\psi
	& A\ar[dd]|{\scriptscriptstyle\bullet}^v \ar[r]^f \ar@{}[ddr]|\chi
	& B\ar@{=}[dd] \ar@{}[ddr]|{\textstyle =} & A\ar[r]^f\ar@{=}[dd] \ar@{}[ddr]|{1_f} 
	& B\ar@{=}[dd]
	&&&& A \ar[dd]|{\scriptscriptstyle\bullet}_v \ar@{}[ddr]|{\mbox{\scriptsize id}_v} \ar@{=}[r] 
	& A \ar[dd]|{\scriptscriptstyle\bullet}^v
\\
	&&&& \ar@{}[rr]|{\mbox{and}}&& A\ar[r]_f \ar[dd]|{\scriptscriptstyle\bullet}_v \ar@{}[ddr]|\chi 
	& B\ar@{=}[dd] \ar@{}[r]|{\textstyle{=}}&
\\
A\ar[r]_f & B\ar@{=}[r] & B & A\ar[r]_f & B&&	&&B\ar@{=}[r] & B
\\
	&&&&&&B\ar@{=}[r] & B&&.
}
\end{equation}
Dually, $u$ and $v$ are {\em conjoints} if there exist {\em binding cells}
$$
\xymatrix{
A\ar[d]|{\scriptscriptstyle\bullet}_v \ar@{}[dr]|\alpha \ar@{=}[r] & A\ar@{=}[d] \ar@{}[drr]|{\mbox{and}} 
	&& B\ar@{=}[d] \ar[r]^u\ar@{}[dr]|\beta & A \ar[d]|{\scriptscriptstyle\bullet}^v 
\\
B\ar[r]_u & A  && B\ar@{=}[r] & B,	
}
$$
such that
\begin{equation}\label{conjcondns}
\xymatrix@R=.5em{
	&&&&&& A\ar[dd]|{\scriptscriptstyle\bullet}_v\ar@{}[ddr]|\alpha \ar@{=}[r] & A\ar@{=}[dd]
\\
B\ar@{=}[dd]\ar@{}[ddr]|\beta \ar[r]^u 
	& A\ar[dd]|{\scriptscriptstyle\bullet}^v \ar@{}[ddr]|\alpha\ar@{=}[r] 
	& A\ar@{=}[dd] \ar@{}[ddr]|{\textstyle =} 
	& B\ar@{=}[dd]\ar[r]^u\ar@{}[ddr]|{1_u} & A\ar@{=}[dd]
	&&&&A\ar@{=}[r]\ar[dd]|{\scriptscriptstyle\bullet}_v \ar@{}[ddr]|{\mbox{\scriptsize id}_v} & A \ar[dd]|{\scriptscriptstyle\bullet}^v
\\
	&&&&\ar@{}[rr]|{\mbox{and}} 
	&&B\ar@{=}[dd]\ar@{}[ddr]|\beta \ar[r]_u & A\ar[dd]|{\scriptscriptstyle\bullet}^v \ar@{}[r]|{\textstyle =}&
\\
B\ar@{=}[r] & B\ar[r]_u & A &B\ar[r]_u &A, &&&&B\ar@{=}[r] & B
\\
&&&&&&B\ar@{=}[r] & B&&.
}
\end{equation}
\end{dfn}

\begin{rmk}\label{compconj}\rm
Since the vertical arrow category in a weakly globular double category is a posetal groupoid, it follows 
from  Proposition 3.5 in \cite{DPP-spans2} that 
the inverse of a vertical arrow which is a companion is a conjoint, and the binding cells for the conjoint pair can be taken to be the vertical inverses of the
binding cells for the companion pair. So a horizontal arrow in a weakly globular double category has a vertical conjoint if and 
only if it has a vertical companion. So it will often be sufficient to focus on the companion pairs.
\end{rmk}

\subsection{Units}
\label{compquasi}

The units in a bicategory are weak units in the sense that  
there are invertible 2-cells 
\begin{equation}\label{weakunits}
 \lambda_f\colon 1_B\circ f \stackrel{\sim}{\Rightarrow}f
\mbox{ and }\rho_f\colon f\circ 1_A\stackrel{\sim}{\Rightarrow} f
\end{equation}
for any arrow $f\colon A\rightarrow B$
(and these need to satisfy the coherence conditions). Such units are not necessarily unique: any arrow $g\colon A\rightarrow A$ with an invertible 2-cell 
$g\stackrel{\sim}{\Rightarrow} 1_A$ would satisfy (\ref{weakunits}).
Since we don't want to consider the coherence conditions that one normally requires of weak units, 
we introduce the notion of quasi units:

\begin{dfn}\rm
An endomorphism $f\colon A\rightarrow A$ in a bicategory is a {\em quasi unit}
if it satisfies the following equivalent conditions:
\begin{enumerate}
\item
$f\cong 1_A$;
\item
$f\circ g\cong g$ for all $g\colon C\rightarrow A$ and $h\circ f \cong h$ for all $h\colon A\rightarrow B$.
\end{enumerate}
\end{dfn} 

We want to characterize the quasi units in the fundamental bicategory $\Bic\bbX$.

\begin{lma}
Every arrow of the form $\mbox{\rm Id}_{A}\colon \bar{A}\rightarrow\bar{A}$
is a quasi unit in $\Bic\bbX$.
\end{lma}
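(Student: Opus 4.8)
The claim is that every horizontal arrow of the form $\mathrm{Id}_A\colon \bar A\to\bar A$ in $\Bic\bbX$ is a quasi unit, i.e. satisfies $\mathrm{Id}_A\cong 1_{\bar A}$. The plan is to produce an invertible 2-cell in $\Bic\bbX$ between $\mathrm{Id}_A$ and the chosen unit $1_{\bar A}$. Recall from Subsection~\ref{D:Bic} that $1_{\bar A}$ is by definition the horizontal arrow $\mathrm{Id}_{\mu_0(\bar A)}$, where $\mu_0\colon\bbX_0^d\to\bbX_0$ is the chosen section of the equivalence $\gamma\colon\bbX_0\to\bbX_0^d$; and that a 2-cell $\alpha\colon f\Rightarrow g$ in $\Bic\bbX$ between horizontal arrows $f\colon A_1\to B_1$ and $g\colon A_2\to B_2$ (with $\bar A_1=\bar A_2$, $\bar B_1=\bar B_2$) is precisely a double cell in $\bbX$ of the form
$$
\xymatrix{
A_1\ar[d]|\bullet_{v}\ar[r]^f\ar@{}[dr]|\alpha &B_1\ar[d]|\bullet^{w}
\\
A_2\ar[r]_g & B_2\rlap{\,,}
}
$$
and that such a 2-cell is invertible in $\Bic\bbX$ iff the underlying double cell is vertically invertible in $\bbX$.

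First I would set $A' = \mu_0(\bar A)$, so that $\bar{A'}=\bar A$ and hence there is a unique invertible vertical arrow $v\colon A\to A'$ in $\bbX$ (the vertical category $\bbX_0$ being a posetal groupoid). Then I would consider the horizontal identity cell $\mathrm{id}_v$ attached to $v$, namely the double cell coming from the image of $v$ under $s\colon\bbX_0\to\bbX_1$:
$$
\xymatrix{
A\ar[d]|\bullet_v\ar@{}[dr]|{\mathrm{id}_{v}} \ar[r]^{\mathrm{Id}_A} &A\ar[d]|\bullet^v
\\
A'\ar[r]_{\mathrm{Id}_{A'}} & A'\rlap{\,.}
}
$$
Its vertical domain is $\mathrm{Id}_A$ and its vertical codomain is $\mathrm{Id}_{A'}=\mathrm{Id}_{\mu_0(\bar A)}=1_{\bar A}$, so under the dictionary above it represents a 2-cell $\mathrm{id}_v\colon \mathrm{Id}_A\Rightarrow 1_{\bar A}$ in $\Bic\bbX$. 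It remains only to check that this double cell is vertically invertible in $\bbX$, which it is: since $v$ is an isomorphism in $\bbX_0$ and $s\colon\bbX_0\to\bbX_1$ is a functor, $s(v)=\mathrm{id}_v$ is an isomorphism in $\bbX_1$, with inverse $\mathrm{id}_{v^{-1}}=s(v^{-1})$, and an isomorphism in $\bbX_1$ is exactly a vertically invertible double cell. Hence $\mathrm{id}_v$ is an invertible 2-cell $\mathrm{Id}_A\cong 1_{\bar A}$ in $\Bic\bbX$, so $\mathrm{Id}_A$ is a quasi unit.

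The only subtlety — and the one point I would be careful to address explicitly — is the bookkeeping about which unit is the \emph{chosen} one: a priori $\mathrm{Id}_A$ need not literally equal $1_{\bar A}$ (they coincide only when $A$ is already in the image of $\mu_0$), so one genuinely needs the vertical arrow $v$ and cannot avoid it. Everything else is formal: the equivalence of the two definitions of quasi unit is supplied by the definition in Subsection~\ref{compquasi}, and the correspondence between (invertible) 2-cells of $\Bic\bbX$ and (vertically invertible) double cells of $\bbX$ is exactly the description of $\Bic\bbX$ recalled above, so no coherence computation is needed. I do not anticipate a real obstacle here; the proof is essentially the observation that $s$ is a functor and $\bbX_0$ is groupoidal.
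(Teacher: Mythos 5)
Your proof is correct and is essentially the same as the paper's: both exhibit the unique vertical isomorphism $v\colon A\to\mu_0(\bar A)$ and use the horizontal identity cell $\mathrm{id}_v$ (which is vertically invertible with inverse $\mathrm{id}_{v^{-1}}$) as the invertible 2-cell $\mathrm{Id}_A\cong 1_{\bar A}$ in $\Bic\bbX$. Your extra remark about being careful that $\mathrm{Id}_A$ need not literally equal the chosen unit is exactly the point the paper's proof is addressing.
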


\begin{proof}
Recall from the end of Section \ref{D:Bic} that the identity arrow on $\bar{A}$ is
$\mbox{\rm Id}_{\mu_0(\bar{A})}$ where $\mu_0\colon\bbX_0^d\rightarrow\bbX_0$ is part of the equivalence of categories 
in the weak globularity condition.  
There is an invertible 2-cell $\mbox{Id}_{A}\Rightarrow \mbox{Id}_{\mu_0(\bar{A})}$
given by the vertically invertible double cell
$$
\xymatrix{
A\ar[d]|\bullet_{x} \ar@{}[dr]|{\mbox{\scriptsize id}_{x}} \ar[r]^{\mbox{\scriptsize Id}_A} & A\ar[d]|\bullet^x
\\
\mu_0(\bar{A}) \ar[r]_{\mbox{\scriptsize Id}_{\mu_0(\bar{A})}} & \mu_0(\bar{A})
}
$$
(with vertical inverse $\mbox{id}_{x^{-1}}$).
\end{proof}

We can now characterize the quasi units in $\Bic\bbX$ as those horizontal arrows in $\bbX$ which have a companion.

\begin{prop}\label{quasi-comp}
Let $w\colon A\rightarrow B$ be a horizontal arrow in a weakly globular double category $\bbX$.
Then $w\colon \bar{A}\rightarrow\bar{B}$ is a quasi unit in $\Bic\bbX$ if and only if 
$w\colon  A\rightarrow B$ has a companion $w_* \colon \xymatrix@1{A\ar[r]|\bullet&B}$ in $\bbX$.
\end{prop}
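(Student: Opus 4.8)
The plan is to prove the two implications separately, using the explicit description of composition in $\Bic\bbX$ from Section~\ref{D:Bic} and the previous lemma that each $\mathrm{Id}_A$ is a quasi unit. First I would unwind what it means for $w\colon\bar A\to\bar B$ to be a quasi unit: by the equivalent characterization we may take the condition $g\circ w\cong g$ and $w\circ h\cong h$ for all composable $g,h$, but the cleanest route is to test against the units themselves. Specifically, I would compose $w$ with a chosen identity arrow to land in a situation where I can extract a $2$-cell in $\Bic\bbX$, which (by the description of $2$-cells in Section~\ref{D:Bic}) is exactly a vertically invertible double cell in $\bbX$ whose horizontal source is $w$ and whose horizontal target is an identity horizontal arrow. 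The heart of the argument is recognizing that such a cell, once its vertical legs are unravelled, is precisely the data of a binding cell $\chi$ for a companion pair, and that the second binding cell $\psi$ together with the companion identities (\ref{compcondns}) can be recovered using the invertibility of the cell and the posetal-groupoidal structure of $\bbX_0$.

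For the direction ($\Leftarrow$): assume $w\colon A\to B$ has a companion $w_*\colon A\to B$ in $\bbX$ with binding cells $\psi$ and $\chi$ as in Definition~\ref{friends}. I would first note that, since $\bbX_0$ is a posetal groupoid, the vertical arrow $w_*$ is simply the unique vertical arrow $A\to B$, so $\bar A=\bar B$ and it makes sense to ask whether $w$ is a quasi unit on $\bar A$. Then I would produce an invertible $2$-cell $w\Rightarrow \mathrm{Id}_{A}$ (or to the chosen unit $\mathrm{Id}_{\mu_0(\bar A)}$) in $\Bic\bbX$: the binding cell $\chi$, read as a double cell with horizontal source $w$ and horizontal target the identity horizontal arrow $\mathrm{Id}_B$ (using that the horizontal codomain of $\chi$ is an identity vertical arrow, hence $\chi$ descends to a $2$-cell $w\Rightarrow\mathrm{Id}$ in $\Bic\bbX$ after correcting by $w_*$), gives one direction, and $\psi$ gives the inverse; the companion equations (\ref{compcondns}) are exactly what is needed to check these two $2$-cells are mutually inverse in $\Bic\bbX$. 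Composing with the invertible $2$-cell $\mathrm{Id}_A\Rightarrow 1_{\bar A}=\mathrm{Id}_{\mu_0(\bar A)}$ from the previous lemma then shows $w\cong 1_{\bar A}$, so $w$ is a quasi unit.

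For the direction ($\Rightarrow$): assume $w\colon\bar A\to\bar B$ is a quasi unit in $\Bic\bbX$. From $w\cong 1_{\bar A}$ (note $\bar A=\bar B$ since the source and target of a quasi unit coincide, which forces a vertical arrow $A\to B$ in $\bbX$ and hence $w_*$ is at least available as a vertical arrow) together with the lemma, I get an invertible $2$-cell $w\Rightarrow\mathrm{Id}_A$ in $\Bic\bbX$. Tracing back through the definition of $2$-cells and of horizontal composition in $\Bic\bbX$, this invertible $2$-cell is represented by a vertically invertible double cell $\chi$ in $\bbX$ of the shape required for a binding cell (after composing with suitable components of the Segal-map equivalences $\varphi$), and its vertical inverse provides $\psi$. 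The remaining task is to verify the two companion axioms in (\ref{compcondns}); these will follow from the fact that $\chi$ and its inverse compose to identities in $\Bic\bbX$, combined with the uniqueness of vertical arrows in $\bbX$ (so that the vertical components appearing in the two pasting diagrams are forced to agree) and the middle-four interchange law.

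The main obstacle I anticipate is bookkeeping the Segal-map comparison cells $\varphi$ that intervene whenever one composes horizontal arrows in $\Bic\bbX$: the quasi-unit condition is naturally phrased in $\Bic\bbX$, where composites are only defined up to the chosen comparisons, whereas companions live literally in $\bbX$. So the real work is to show that after absorbing these $\varphi$'s (which are themselves vertically invertible double cells, hence harmless at the level of $2$-cells but present as concrete data) one obtains honest binding cells in $\bbX$ satisfying the strict companion equations, rather than equations that hold only up to the comparisons. The posetality and groupoidality of both $\bbX_0$ and — in the image of $\Dbl$ — the horizontal arrow category are what make this reduction go through, since they collapse any would-be coherence data to a point.
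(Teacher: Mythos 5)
Your overall strategy matches the paper's: both directions reduce to the preceding lemma (that $\mathrm{Id}_A$ is a quasi unit) and translate between an invertible $2$-cell $\mathrm{Id}_A\Rightarrow w$ in $\Bic\bbX$ and a companion pair with binding cells in $\bbX$, and the $(\Leftarrow)$ direction is essentially identical to the paper's.

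There is, however, a step that fails in your $(\Rightarrow)$ direction, where you take $\psi$ to be the vertical inverse of $\chi$. These have incompatible boundaries: if $\chi$ has shape $w\Rightarrow\mathrm{Id}_B$ with vertical sides $w_*$ and $1_B$, then $\chi^{-1}$ is a cell $\mathrm{Id}_B\Rightarrow w$ with vertical sides $w_*^{-1}$ and $1_B$ --- which by Remark~\ref{compconj} is a \emph{conjoint} binding cell --- whereas $\psi$ must be a cell $\mathrm{Id}_A\Rightarrow w$ with vertical sides $1_A$ and $w_*$. The paper avoids this by producing $\chi_w$ and $\psi_w$ \emph{separately} from the single vertically invertible cell $\zeta\colon w\Rightarrow\mathrm{Id}_{\theta(\bar A)}$ with vertical legs $x$ and $y$: it sets $w_*:=y^{-1}\cdot x$, $\chi_w:=\mathrm{id}_{y^{-1}}\cdot\zeta$ and $\psi_w:=\zeta^{-1}\cdot\mathrm{id}_x$, i.e.\ it pastes $\zeta$ and $\zeta^{-1}$ vertically with \emph{horizontal identity cells} $\mathrm{id}_{y^{-1}}$ and $\mathrm{id}_x$ (not with Segal-map comparison cells, as you suggest), yielding cells of exactly the required shapes. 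Once you replace ``take $\psi=\chi^{-1}$'' with this two-sided conjugation of $\zeta$, the verification of the equations (\ref{compcondns}) proceeds as you anticipate, using the posetal-groupoidal structure of $\bbX_0$.
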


\begin{proof}
If $w$ is a quasi unit, then $\bar{A}=\bar{B}$ and there is an invertible 2-cell 
$\zeta\colon w\Rightarrow 1_{\bar{A}}$ in $\Bic\bbX$, given by a vertically 
invertible double cell in $\bbX$,
$$
\xymatrix{
A\ar[r]^w\ar[d]|\bullet_x\ar@{}[dr]|\zeta &B\ar[d]|\bullet^y
\\
\theta(\bar{A})\ar[r]_{\mbox{\scriptsize Id}_{\theta(\bar{A})}} & \theta(\bar{A})
}
$$ 
We can compose this with the horizontal identity cell on $y^{-1}$ to obtain a double cell
$$
\xymatrix{
A\ar[r]^w\ar[d]|\bullet_x\ar@{}[dr]|\zeta &B\ar[d]|\bullet^y 
	&& A\ar[dd]|\bullet_{y^{-1}\cdot x} \ar[rr]^w \ar@{}[ddrr]|{\mbox{\scriptsize id}_{y^{-1}}\cdot\zeta} 
	&& B\ar[dd]|\bullet^{1_B}
\\
\theta(\bar{A})\ar[r]_{\mbox{\scriptsize Id}_{\theta(\bar{A})}} \ar[d]|\bullet_{y^{-1}} 
    \ar@{}[dr]|{\mbox{\scriptsize id}_{y^{-1}}} 
	& \theta(\bar{A})\ar[d]|\bullet^{y^{-1}} &=& 
\\
B\ar[r]_{\mbox{\scriptsize Id}_B} & B && B\ar[rr]_{\mbox{\scriptsize Id}_B} && B
}
$$ 
Furthermore, precomposing $\zeta^{-1}$ (the vertical inverse of $\zeta$) with 
the horizontal identity cell on $x$ gives
$$
\xymatrix{
A\ar[r]^{\mbox{\scriptsize Id}_A} \ar[d]|\bullet_{x} \ar@{}[dr]|{\mbox{\scriptsize id}_x} &A \ar[d]|\bullet^x 
	&& A\ar[dd]|\bullet_{1_A} \ar[rr]^{\mbox{\scriptsize Id}_A} \ar@{}[ddrr]|{\zeta^{-1}\cdot \mbox{\scriptsize id}_{x}} 
	&& A\ar[dd]|\bullet^{y^{-1}\cdot x}
\\
\theta(\bar{A})\ar[r]_{\mbox{\scriptsize Id}_{\theta(\bar{A})}} \ar[d]|\bullet_{x^{-1}} \ar@{}[dr]|{\zeta^{-1}} 
	& \theta(\bar{A})\ar[d]|\bullet^{y^{-1}} &=& 
\\
A\ar[r]_{w} & B && A\ar[rr]_{w} && B
}
$$
So we can define $w_*=y^{-1}\cdot x$ with binding cells $\chi_w= \mbox{id}_{y^{-1}}\cdot\zeta$
and $\psi_w=\zeta^{-1}\cdot \mbox{id}_{x}$. (It is straightforward to verify that these double cells satisfy the 
binding cell equations (\ref{compcondns}) for companions.)

Conversely, let $w\colon A\rightarrow B$ be a horizontal arrow with a companion 
$w_*\colon\xymatrix@1{A\ar[r]|\bullet &B}$
and binding cells 
$$
\xymatrix{
A\ar[d]|\bullet_{w_*}\ar@{}[dr]|{\chi_w}\ar[r]^w &B\ar[d]|\bullet^{1_B} &\ar@{}[d]|{\mbox{and}} 
	& A\ar[d]|\bullet_{1_A}\ar@{}[dr]|{\psi_w}\ar[r]^{\mbox{\scriptsize Id}_A} & A\ar[d]|\bullet^{w_*}
\\
B\ar[r]_{\mbox{\scriptsize Id}_B} & B && A\ar[r]_w & B.
}
$$
By the previous lemma it is sufficient to show that there is an invertible 2-cell
$\mbox{Id}_A \Rightarrow w$ in $\Bic\bbX$. Such a 2-cell is provided by the binding cell $\psi_w$.
\end{proof}

Now we start with a bicategory $\calB$ and consider its associated double category $\Dbl(\calB)$.
The following proposition describes the relationship between quasi units in $\calB$ and  companions in
$\Dbl(\calB)$.

\begin{prop}\label{comp-quasi}
A horizontal arrow $$\xymatrix{A_0\ar[r]^{f_1}&\cdots\ar[r]^{f_{i_0}}
  &[A_{i_0}]\ar[r]^{f_{i_0+1}}&\cdots\ar[r]^{f_{i_1}}&[A_{i_1}]\ar[r]^{f_{i_1+1}}&\cdots\ar[r]^{f_{n}}&A_n}$$
in $\Dbl(\calB)$ has a companion if and only if $A_{i_0}=A_{i_1}$ and the chosen composition 
$\varphi_{f_{i_0+1}\cdots f_{i_1}}$ is a quasi unit. 
\end{prop}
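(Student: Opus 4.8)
The plan is to reduce this statement to Proposition~\ref{quasi-comp} applied to the weakly globular double category $\Dbl(\calB)$, using the explicit description of its horizontal arrows, vertical arrows and double cells given in Section~\ref{dblbic}, together with the relationship between quasi units in $\calB$ and quasi units in $\Bic(\Dbl(\calB))$. First I would recall that a horizontal arrow $w$ in a weakly globular double category $\bbX$ has a companion if and only if $w\colon\bar A\to\bar B$ is a quasi unit in $\Bic\bbX$ (Proposition~\ref{quasi-comp}). So, writing $w$ for the given horizontal arrow, it suffices to show that $w\colon\bar A\to\bar B$ is a quasi unit in $\Bic(\Dbl(\calB))$ if and only if $A_{i_0}=A_{i_1}$ and $\varphi_{f_{i_0+1},\ldots,f_{i_1}}$ is a quasi unit in $\calB$.

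Next I would unwind what a quasi unit in $\Bic(\Dbl(\calB))$ amounts to. An arrow $w\colon\bar A\to\bar B$ is a quasi unit precisely when $\bar A=\bar B$ and there is an invertible $2$-cell $w\Rightarrow 1_{\bar A}$ in $\Bic(\Dbl(\calB))$. By the description of vertical arrows in $\Dbl(\calB)$, $\bar A=\bar B$ says exactly that the marked objects $A_{i_0}$ and $A_{i_1}$ of the domain and codomain of $w$ coincide, which is the first condition. By the description of $2$-cells in $\Bic\bbX$ as double cells with (unique, invertible) vertical sides, an invertible $2$-cell from $w$ to the unit $1_{\bar A}=\mbox{Id}_{\mu_0(\bar A)}$ corresponds to a vertically invertible double cell in $\Dbl(\calB)$ whose vertical domain is $w$ and whose vertical codomain is $\mbox{Id}_{\mu_0(\bar A)}$. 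By the description of double cells in $\Dbl(\calB)$, such a cell is precisely an invertible $2$-cell in $\calB$ between the chosen composite $\varphi_{f_{i_0+1},\ldots,f_{i_1}}$ (the composite along the ``active'' segment of $w$) and the chosen composite along the active segment of $\mbox{Id}_{\mu_0(\bar A)}$, which is an empty path and hence has chosen composite $\mathrm{id}_{\mu_0(\bar A)}=1_{\mu_0(\bar A)}$. Thus $w$ is a quasi unit in $\Bic(\Dbl(\calB))$ iff $A_{i_0}=A_{i_1}$ and there is an invertible $2$-cell $\varphi_{f_{i_0+1},\ldots,f_{i_1}}\Rightarrow 1_{\mu_0(\bar A)}$ in $\calB$.

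Finally I would check that ``there is an invertible $2$-cell $\varphi_{f_{i_0+1},\ldots,f_{i_1}}\Rightarrow 1_{\mu_0(\bar A)}$ in $\calB$'' is equivalent to ``$\varphi_{f_{i_0+1},\ldots,f_{i_1}}$ is a quasi unit in $\calB$''. Here $A_{i_0}=A_{i_1}$, so $\varphi_{f_{i_0+1},\ldots,f_{i_1}}$ is an endomorphism $A_{i_0}\to A_{i_0}$. Since $1_{A_{i_0}}$ and $1_{\mu_0(\bar A)}$ are both units on objects of $\calB$ connected by the canonical data, and since in any bicategory an endomorphism is isomorphic to one chosen unit on its object iff it is isomorphic to any such unit (units are quasi units of each other), the existence of the invertible $2$-cell above is exactly the statement that $\varphi_{f_{i_0+1},\ldots,f_{i_1}}\cong 1_{A_{i_0}}$, i.e.\ a quasi unit in $\calB$; one could also invoke the Lemma preceding Proposition~\ref{quasi-comp} at the level of $\calB$. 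Assembling the three equivalences yields the Proposition. The main obstacle I anticipate is purely bookkeeping: making sure the ``active segment'' of the horizontal arrow $w=(\,\cdots\,;i_0,i_1)$ and of the identity arrow $\mbox{Id}_{\mu_0(\bar A)}$ are correctly identified under the definition of double cells in $\Dbl(\calB)$, so that the relevant $2$-cell in $\calB$ really is between $\varphi_{f_{i_0+1},\ldots,f_{i_1}}$ and a unit, and that the vertical invertibility of the double cell matches invertibility of that $2$-cell; everything else is a direct translation through the dictionary already set up in Sections~\ref{D:Bic} and the subsection on $\Dbl$.
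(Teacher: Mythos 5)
Your proof is correct, but it takes a genuinely different route from the paper's. The paper gives a direct computation: it writes out the binding cells of a companion pair explicitly as double cells in $\Dbl(\calB)$, observes that (by the description of double cells as $2$-cells between chosen composites in $\calB$) the cells $\psi$ and $\chi$ are $2$-cells $\mbox{Id}_{A_{i_0}}\Rightarrow\varphi_{f_{i_0+1}\cdots f_{i_1}}$ and $\varphi_{f_{i_0+1}\cdots f_{i_1}}\Rightarrow\mbox{Id}_{A_{i_1}}$, and shows the binding-cell equations force them to be mutually inverse; conversely, from an invertible $2$-cell $\theta\colon\mbox{Id}_{A_{i_0}}\Rightarrow\varphi_{f_{i_0+1}\cdots f_{i_1}}$ it writes down the companion and its binding cells. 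You instead route through Proposition~\ref{quasi-comp} (companion $\Leftrightarrow$ quasi unit in the fundamental bicategory) applied to $\bbX=\Dbl(\calB)$, reducing the claim to identifying quasi units of $\Bic(\Dbl(\calB))$, which you then unwind via the dictionary between $2$-cells in $\Bic\bbX$ and vertically invertible double cells in $\bbX$, and between double cells in $\Dbl(\calB)$ and $2$-cells of $\calB$. Both arguments are sound; yours is more modular and avoids any direct verification of the binding-cell equations, while the paper's is more explicit (the binding cells it produces are the ones one actually wants to use in later constructions). Two small cosmetic points in your write-up: the object you call ``$1_{\mu_0(\bar A)}$'' is just $1_{A_{i_0}}$, since the active segment of $\mbox{Id}_{\mu_0(\bar A)}$ is the empty path at the marked object $A_{i_0}=B_{j_0}$, so the final paragraph's discussion of whether two units are isomorphic is unnecessary; and the lemma preceding Proposition~\ref{quasi-comp} concerns arrows $\mbox{Id}_A$ in $\Bic\bbX$, not arrows of $\calB$, so invoking it ``at the level of $\calB$'' is not quite the right reference (the fact you actually need is just the definition of quasi unit in $\calB$).
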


\begin{proof}
Suppose that $$\xymatrix{A_0\ar[r]^{f_1}&\cdots\ar[r]^{f_{i_0}}
  &[A_{i_0}]\ar[r]^{f_{i_0+1}}&\cdots\ar[r]^{f_{i_1}}&[A_{i_1}]\ar[r]^{f_{i_1+1}}&\cdots\ar[r]^{f_{n}}&A_n}$$
has a companion. Then there are double cells of the form
$$
\xymatrix@R=2.8em{
A_0\ar[r]^{f_1}&\cdots\ar[r]^{f_{i_0}}
  &[A_{i_0}]\ar@{=}[d] \ar@{}[1,4]|(.25)\psi\ar@{=}[rr] && [A_{i_0}]\ar@{=}[d]\ar[r]\ar[r]^{f_{i_0+1}}&\cdots\ar[r]^{f_n}&A_n
\\
A_0\ar[r]_{f_1}&\cdots\ar[r]_{f_{i_0}}
  &[A_{i_0}]\ar@/^3ex/[rr]^{\varphi_{f_{i_0+1}\cdots f_{i_1}}}\ar[r]_{f_{i_0+1}}
  &\cdots\ar[r]_{f_{i_1}}&[A_{i_1}]\ar[r]_{f_{i_1+1}}&\cdots\ar[r]_{f_{n}}&A_n
}
$$
$$
\xymatrix@R=2.8em{
A_0\ar[r]^{f_1}&\cdots\ar[r]^{f_{i_0}}
  &[A_{i_0}] \ar@/_3ex/[rr]_{\varphi_{f_{i_0+1}\cdots f_{i_1}}}\ar@{=}[d]\ar[r]^{f_{i_0+1}}&\cdots\ar[r]^{f_{i_1}}
  &[A_{i_1}]\ar@{=}[d]\ar[r]^{f_{i_1+1}}&\cdots\ar[r]^{f_{n}}&A_n
\\
A_0\ar[r]_{f_1}&\cdots\ar[r]_{f_{i_1}}
  &[A_{i_1}]\ar@{=}[rr]   \ar@{}[-1,4]|(.25)\chi &&[A_{i_1}]\ar[r]_{f_{i_1+1}}&\cdots\ar[r]_{f_{n}}&A_n\rlap{\,.}
}
$$
So $A_{i_0}=A_{i_1}$ and there are 2-cells $\psi\colon \mbox{Id}_{A_{i_0}}\Rightarrow \varphi_{f_{i_0+1}\cdots f_{i_1}}$
and $\chi\colon \varphi_{f_{i_0+1}\cdots f_{i_1}}\Rightarrow \mbox{Id}_{A_{i_1}}$
in $\calB$. The binding cell equations are equivalent to stating that these two 2-cells are inverse to each other, so 
$\varphi_{f_{i_0+1}\cdots f_{i_1}}$ is a quasi unit.

Conversely, if $A_{i_0}=A_{i_1}$ and  $\varphi_{f_{i_0+1}\cdots f_{i_1}}$ is a quasi unit with an invertible 2-cell
$\theta\colon \mbox{Id}_{A_{i_0}}\Rightarrow \varphi_{f_{i_0+1}\cdots f_{i_1}}$, then
$$\xymatrix{A_0\ar[r]^{f_1}&\cdots\ar[r]^{f_{i_0}}
  &[A_{i_0}]\ar[r]^{f_{i_0+1}}&\cdots\ar[r]^{f_{i_1}}&[A_{i_1}]\ar[r]^{f_{i_1+1}}&\cdots\ar[r]^{f_{n}}&A_n}$$
has a companion
$$
\xymatrix@R=1.5em{
A_0\ar[r]^{f_1}&\cdots\ar[r]^{f_{i_0}}
  &[A_{i_0}]\ar@{=}[d]\ar[r]^{f_{i_0+1}}&\cdots\ar[r]^{f_{n}}&A_n
\\
A_0\ar[r]^{f_1}&\cdots\ar[r]^{f_{i_1}}&[A_{i_1}]\ar[r]^{f_{i_1+1}}&\cdots\ar[r]^{f_{n}}&A_n
}
$$
with binding cells
$$
\xymatrix@R=2.8em{
A_0\ar[r]^{f_1}&\cdots\ar[r]^{f_{i_0}}
	&[A_{i_0}]\ar@{=}[d] \ar@{}[1,4]|(.25)\theta\ar@{=}[rr] 
	&& [A_{i_0}]\ar@{=}[d]\ar[r]\ar[r]^{f_{i_0+1}}&\cdots\ar[r]^{f_n}&A_n
\\
A_0\ar[r]_{f_1}&\cdots\ar[r]_{f_{i_0}}
	&[A_{i_0}]\ar@/^3ex/[rr]^{\varphi_{f_{i_0+1}\cdots f_{i_1}}}\ar[r]_{f_{i_0+1}}
	&\cdots\ar[r]_{f_{i_1}}&[A_{i_1}]\ar[r]_{f_{i_1+1}}&\cdots\ar[r]_{f_{n}}
	&A_n
}
$$
$$
\xymatrix@R=2.8em{
A_0\ar[r]^{f_1}&\cdots\ar[r]^{f_{i_0}}
  &[A_{i_0}] \ar@/_3ex/[rr]_{\varphi_{f_{i_0+1}\cdots f_{i_1}}}\ar@{=}[d]\ar[r]^{f_{i_0+1}}&\cdots\ar[r]^{f_{i_1}}
  &[A_{i_1}]\ar@{=}[d]\ar[r]^{f_{i_1+1}}&\cdots\ar[r]^{f_{n}}&A_n
\\
A_0\ar[r]_{f_1}&\cdots\ar[r]_{f_{i_1}}
  &[A_{i_1}]\ar@{=}[rr]   \ar@{}[-1,4]|(.25){\theta^{-1}} &&[A_{i_1}]\ar[r]_{f_{i_1}+1}&\cdots\ar[r]_{f_{n}}&A_n
}
$$
\end{proof}

\subsection{Preservation of companions}
It is clear that strict functors between double categories preserve companions and conjoints.
It is not immediately obvious that the same is true for pseudo-functors, given the fact that horizontal identities
and domains and codomains are not preserved strictly.

However, since companions in weakly globular 
double categories correspond to quasi units in bicategories, and we know that homomorphisms of bicategories 
preserve quasi units, pseudo-functors between weakly globular double categories should preserve companions.
Here we give a direct proof of this result
in terms of companions and binding cells.

\begin{prop}\label{comp-pres}
Let $F\colon \bbX\rightarrow\bbY$ be a pseudo-functor of weakly globular double categories.
If a horizontal arrow $f $ in $\bbX$ has a companion then so does $F(f)$ in $\bbY$.
\end{prop}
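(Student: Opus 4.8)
The plan is to produce binding cells for $F(f)$ explicitly, by applying $F$ to the binding cells of $f$ and then correcting with the structure cells of the pseudo-functor; this correction is where the subtlety lies, since $F$ preserves vertical composition, vertical identities and the horizontal source/target maps on $\bbX_1$ only up to the isomorphisms $\varphi_0,\varphi_1$, and preserves horizontal identities only up to $\sigma$. (We already know the statement abstractly --- a companion for $f$ amounts to $\bar f$ being a quasi unit in $\Bic\bbX$ by Proposition~\ref{quasi-comp}, and the homomorphism $\Bic(F)$ preserves quasi units --- but the point of this proof is to exhibit the binding cells directly.) Fix a companion pair $(f,f_*)$ in $\bbX$ with $f\colon A\to B$ horizontal and $f_*\colon A\to B$ vertical, with binding cells $\psi_f$ (vertical domain $\mathrm{Id}_A$, vertical codomain $f$, left edge $1_A$, right edge $f_*$) and $\chi_f$ (vertical domain $f$, vertical codomain $\mathrm{Id}_B$, left edge $f_*$, right edge $1_B$) satisfying~(\ref{compcondns}). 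Write $(\varphi_i)_g$ for the components of the structure isomorphisms of~(\ref{D:varphi}) and $\sigma_A,\sigma_B$ for the components of $\sigma$, whose shapes were recorded after~(\ref{identityeqn}).

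\emph{Step 1.} Define the candidate vertical companion of $F(f)$ to be the composite $(Ff)_*:=(\varphi_1)_f^{-1}\cdot F_0(f_*)\cdot(\varphi_0)_f$, a vertical arrow $d_0F_1f\to d_1F_1f$; this makes sense because $F_0$ sends vertical arrows to vertical arrows and $\varphi_0,\varphi_1$ are invertible. \emph{Step 2.} Define $\psi_{F(f)}$ as the vertical pasting, from the top, of the horizontal identity cell $\mathrm{id}_{(\varphi_0)_f}$, then $\sigma_A$, then $F_1(\psi_f)$. Using naturality of $\varphi_0,\varphi_1$ at the double cell $\psi_f$ together with strict preservation of vertical identities ($F_0(1_A)=1_{F_0A}$, cf.~(\ref{identityeqn})), one checks that this pasting has vertical domain $\mathrm{Id}_{d_0F_1f}$, vertical codomain $F_1f$, left edge $1_{d_0F_1f}$ and right edge $(Ff)_*$ --- exactly the shape required of a $\psi$-binding cell for $(F(f),(Ff)_*)$. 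Dually, set $\chi_{F(f)}$ equal to the vertical pasting, from the top, of $F_1(\chi_f)$, then $\sigma_B^{-1}$, then $\mathrm{id}_{(\varphi_1)_f^{-1}}$; it has vertical domain $F_1f$, vertical codomain $\mathrm{Id}_{d_1F_1f}$, left edge $(Ff)_*$ and right edge $1_{d_1F_1f}$.

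\emph{Step 3: the binding-cell equations.} The second (vertical) equation of~(\ref{compcondns}) is straightforward: since $F_1\colon\bbX_1\to\bbY_1$ is a functor it preserves the vertical composite $\psi_f\cdot\chi_f=\mathrm{id}_{f_*}$ on the nose, and naturality of $\sigma$ along the vertical arrow $f_*$ rewrites $F_1(\mathrm{id}_{f_*})$ as $\sigma_B\cdot\mathrm{id}_{F_0f_*}\cdot\sigma_A^{-1}$; in the stack computing $\chi_{F(f)}\cdot\psi_{F(f)}$ the two copies each of $\sigma_A^{\pm1}$ and $\sigma_B^{\pm1}$ then cancel, leaving $\mathrm{id}_{(\varphi_1)_f^{-1}}\cdot\mathrm{id}_{F_0f_*}\cdot\mathrm{id}_{(\varphi_0)_f}=\mathrm{id}_{(Ff)_*}$ by functoriality of $s$. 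The first (horizontal) equation $\psi_{F(f)}\circ\chi_{F(f)}=1_{F_1f}$ is the real content and the expected main obstacle. Applying the middle four interchange law to the row decompositions of $\psi_{F(f)}$ and $\chi_{F(f)}$, the core of the composite is $F_1(\psi_f)\circ F_1(\chi_f)$; this is \emph{not} literally $F_1(\psi_f\circ\chi_f)=F_1(1_f)=1_{F_1f}$, because $F_1$ respects horizontal composition only up to the comparison cells $\mu$ and $\theta_i$, as spelled out in~(\ref{comp-nat}). The discrepancy is measured by $\mu_{f,\mathrm{Id}_A}$, $\mu_{\mathrm{Id}_B,f}$ and $\theta_1,\theta_2$, and the identity coherence axioms~(\ref{ass-id1}) and~(\ref{ass-id2}) --- precisely the statements about $f\circ\mathrm{Id}_A$ and $\mathrm{Id}_B\circ f$ --- identify these comparison cells with the appropriate composites of $\sigma_A^{\pm1}$, $\sigma_B^{\pm1}$ and horizontal identity cells, so that everything cancels against the $\sigma$- and $\varphi$-rows already built into $\psi_{F(f)}$ and $\chi_{F(f)}$ and the composite collapses to $1_{F_1f}$. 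This last cancellation is a diagram chase in $\bbY_1$ of the same flavour as~(\ref{comp-nat}), and it is the only step where the full coherence data of the pseudo-functor is genuinely used; the rest is formal manipulation of pastings. Taking vertical inverses of the binding cells (Remark~\ref{compconj}) then also gives a vertical conjoint of $F(f)$.
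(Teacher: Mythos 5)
Your proof is correct and follows essentially the same route as the paper's: you build $\psi_{F(f)}$ and $\chi_{F(f)}$ as the same three-cell vertical pastings (horizontal identity, $\sigma_A$ or $\sigma_B^{-1}$, and $F_1$ of the original binding cell), and you invoke precisely (\ref{comp-nat}), (\ref{ass-id1}), (\ref{ass-id2}) for the horizontal binding-cell equation. The only deviations are cosmetic, justified by posetality of the vertical category: your formula $(\varphi_1)_f^{-1}\cdot F_0(f_*)\cdot(\varphi_0)_f$ for the companion and your choice $(\varphi_0)_f$ for the left edge of the top identity cell coincide with the paper's $d_1F_1\psi\cdot(\varphi_1)^{-1}_{\mathrm{Id}_A}\cdot(\varphi_0)_{\mathrm{Id}_A}\cdot(d_0F_1\psi)^{-1}$ and $(\varphi_0)_{\mathrm{Id}_A}\cdot(d_0F_1\psi)^{-1}$ respectively, and your order of composition of binding cells ($\psi_f\cdot\chi_f$, $\psi\circ\chi$) is the reverse of the paper's but denotes the same pasting.
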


\begin{proof}
 Let $\xymatrix@1{A\ar[r]^f&B}$ be a horizontal arrow with vertical 
companion $\xymatrix@1{A\ar[r]|\bullet^{v}&B}$ and binding cells
$$
\xymatrix{
A\ar@{=}[d]\ar@{=}[r]\ar@{}[dr]|\psi & A\ar[d]|\bullet^v 
	& A\ar[d]|\bullet_v \ar[r]^f  \ar@{}[dr]|\chi & B\ar@{=}[d]
\\
A\ar[r]_f & B&B\ar@{=}[r] & B\rlap{\quad.}
}$$
Consider the following pastings of double cells in $\bbY$:
$$
\xymatrix@C=4em{
& d_0F_1f \ar[r]^{\mathrm{Id}_{d_0F_1f}} \ar[d]|\bullet_{(\varphi_0)_{\mathrm{Id}_A}\cdot(d_0F_1\psi)^{-1}}
	    \ar@{}[dr]|{\mathrm{id}} 
& d_0F_1f  \ar[d]|\bullet^{(\varphi_0)_{\mathrm{Id}_A}\cdot(d_0F_1\psi)^{-1}}
\\
\ar@{}[d]|{\textstyle\psi_{F_1f}:=}&F_0A\ar[r]_{\mathrm{Id}_{F_0A}}\ar[d]|\bullet_{(\varphi_0)_{\mathrm{Id}_A}^{-1}}\ar@{}[dr]|{\sigma_A}
&F_0A\ar[d]|\bullet^{(\varphi_1)_{\mathrm{Id}_A}^{-1}}
\\
&d_0F_1(\mathrm{Id}_A) \ar[d]|\bullet_{d_0F_1\psi} \ar@{}[dr]|{F_1\psi} \ar[r]_{F_1(\mathrm{Id}_A)}
& d_1F_1(\mathrm{Id}_A) \ar[d]|\bullet^{d_1F_1\psi} 
\\
&d_0F_1f\ar[r]_{F_1f} & d_1F_1f
}
$$
and
$$
\xymatrix@C=4em{
& d_0F_1f \ar[r]^{F_1f} \ar[d]|\bullet_{d_0F_1\chi} \ar@{}[dr]|{F_1\chi} 
& d_1F_1f  \ar[d]|\bullet^{d_1F_1\chi}
\\
\ar@{}[d]|{\textstyle\chi_{F_1f}:=}
&d_0F_1(\mathrm{Id}_B)\ar[r]_{F_1(\mathrm{Id}_{B})}\ar[d]|\bullet_{(\varphi_0)_{\mathrm{Id}_B}}\ar@{}[dr]|{\sigma_B^{-1}}
&d_1F_1(\mathrm{Id}_B)\ar[d]|\bullet^{(\varphi_1)_{\mathrm{Id}_B}}
\\
&F_0B\ar[d]|\bullet_{(d_1F_1\chi)^{-1}\cdot(\varphi_1)^{-1}_{\mathrm{Id}_B}} \ar@{}[dr]|{\mathrm{id}} 
	    \ar[r]_{\mathrm{Id}_{F_0B}}
&F_0B\ar[d]|\bullet^{(d_1F_1\chi)^{-1}\cdot(\varphi_1)_{\mathrm{Id}_B}^{-1}} 
\\
&d_1F_1f\ar[r]_{\mathrm{Id}_{d_1F_1f}} & d_1F_1f\rlap{\quad.}
}
$$
Note that $d_1F_1\psi\cdot(\varphi_1)^{-1}_{\mathrm{Id}_A}\cdot(\varphi_0)_{\mathrm{Id}_A}\cdot(d_0F_1\psi)^{-1}=
(d_1F_1\chi)^{-1}\cdot(\varphi_1)^{-1}_{\mathrm{Id}_B}\cdot(\varphi_0)_{\mathrm{Id}_B}\cdot d_0F_1\chi$
since the vertical arrow category is posetal, and we claim that this arrow is the vertical 
companion of $F_1f$ with binding cells $\psi_{F_1f}$ and $\chi_{F_1f}$.
So we need to check that the equations in (\ref{compcondns}) are satisfied.
It is relatively easy to check that the second equation, involving the vertical composition of these cells, 
is satisfied. For the first equation we need to refer to the coherence conditions in 
(\ref{comp-nat}), (\ref{ass-id1}), and (\ref{ass-id2}).
The horizontal composition considered in the binding cell equation is
$$
\xymatrix@C=4em{
d_0F_1f \ar[r]^{\mathrm{Id}_{d_0F_1f}} \ar[d]|\bullet_{(\varphi_0)_{\mathrm{Id}_A}\cdot(d_0F_1\psi)^{-1}}
	    \ar@{}[dr]|{\mathrm{id}} 
& d_0F_1f  \ar[d]|\bullet^(.35){(\varphi_0)_{\mathrm{Id}_A}\cdot(d_0F_1\psi)^{-1}}\ar@{=}[r] \ar@{}[dddr]|{=} 
& d_0F_1f \ar[r]^{F_1f} \ar[d]|\bullet_(.6){d_0F_1\chi}
	    \ar@{}[dr]|{F_1\chi} 
& d_0F_1f  \ar[d]|\bullet^{d_1F_1\chi}
\\
FA\ar[r]|{\mathrm{Id}_{F_0A}}\ar[d]|\bullet_{(\varphi_0)_{\mathrm{Id}_A}^{-1}}\ar@{}[dr]|{\sigma_A}
&FA\ar[d]|\bullet^{(\varphi_1)_{\mathrm{Id}_A}^{-1}} 
& d_0F_1(\mathrm{Id}_B) \ar[r]|{F_1(\mathrm{Id}_{B})} \ar[d]|\bullet_{(\varphi_0)_{\mathrm{Id}_B}} \ar@{}[dr]|{\sigma_B^{-1}}
&d_1F_1(\mathrm{Id}_B)\ar[d]|\bullet^{(\varphi_1)_{\mathrm{Id}_B}}
\\
d_0F_1(\mathrm{Id}_A) \ar[d]|\bullet_{d_0F_1\psi} \ar@{}[dr]|{F_1\psi} \ar[r]|{F_1(\mathrm{Id}_A)}
& d_1F_1(\mathrm{Id}_A) \ar[d]|\bullet^(.35){d_1F_1\psi} 
& FB\ar[d]|\bullet_(.6){(d_1F_1\chi)^{-1}\cdot(\varphi_1)^{-1}_{\mathrm{Id}_B}} \ar@{}[dr]|{\mathrm{id}} 
	    \ar[r]|{\mathrm{Id}_{F_0B}}
&FB\ar[d]|\bullet^{(d_1F_1\chi)^{-1}\cdot(\varphi_1)_{\mathrm{Id}_B}^{-1}} 
\\
d_0F_1f\ar[r]_{F_1f} & d_1F_1f\ar@{=}[r]&d_1F_1f\ar[r]_{\mathrm{Id}_{d_1F_1f}} & d_1F_1f\rlap{\quad.}
}$$
By (\ref{ass-id1}), and (\ref{ass-id2})
this is equal to
$$
\xymatrix@C=6em@R=1.8em{
 \ar[rrr]^{F_1(f\circ \mbox{\scriptsize Id}_A)} \ar@{}[drrr]|{\mu_{f,\mathrm{Id}_A}} \ar[d]|\bullet 
	&&& \ar[d]|\bullet 
\\
\ar[d]|\bullet\ar[r]^{\pi_2(F_2(f,\mathrm{Id}))}\ar@{}[dr]|{(\theta_2)_{f,\mathrm{Id}_A}}
	&\ar[d]|\bullet\ar@{=}[r] \ar@{}[dddr]|{=}
	& \ar[r]^{\pi_1(F_2(f,\mathrm{Id}_A))} \ar@{}[dddr]|{(\theta_1)_{f,\mathrm{Id}_A}}\ar[ddd]|\bullet 
	&\ar[ddd]|\bullet 
\\
\ar[r]|{F_1(\mbox{\scriptsize Id}_A)} \ar[d]|\bullet \ar@{}[dr]|{\sigma_A^{-1}}&\ar[d]|\bullet  &&
\\
\ar[d]|\bullet \ar[r]|{\mbox{\scriptsize Id}_{F_0A}} \ar@{}[dr]|{\mbox{\scriptsize id}} &\ar[d]|\bullet &&
\\
\ar[r]|{\mathrm{Id}_{d_0F_1f}} \ar[d]|\bullet
	    \ar@{}[dr]|{\mathrm{id}} 
&  \ar[d]|\bullet\ar@{=}[r] \ar@{}[dddr]|{=} 
&  \ar[r]|{F_1f} \ar[d]|\bullet
	    \ar@{}[dr]|{F_1\chi} 
&  \ar[d]|\bullet
\\
\ar[r]|{\mathrm{Id}_{F_0A}}\ar[d]|\bullet\ar@{}[dr]|{\sigma_A}
&\ar[d]|\bullet
& \ar[r]|{F_1(\mathrm{Id}_{B})}\ar[d]|\bullet\ar@{}[dr]|{\sigma_B}
&\ar[d]|\bullet
\\
 \ar[d]|\bullet \ar@{}[dr]|{F_1\psi} \ar[r]|{F_1(\mathrm{Id}_A)}
&  \ar[d]|\bullet
& \ar[d]|\bullet \ar@{}[dr]|{\mathrm{id}} 
	    \ar[r]|{\mathrm{Id}_{F_0B}}
&\ar[d]|\bullet
\\
\ar[r]|{F_1f} \ar[ddd]|\bullet\ar@{}[dddr]|{(\theta_2)^{-1}_{\mathrm{Id}_B,f}}
& \ar[ddd]|\bullet \ar@{=}[r]\ar@{}[dddr]|= 
&\ar[r]|{\mathrm{Id}_{d_1F_1f}}\ar[d]|\bullet \ar@{}[dr]|{\mathrm{id}} 
& \ar[d]|\bullet
\\
&&\ar[r]|{\mathrm{Id}_{F_0B}}\ar[d]|\bullet \ar@{}[dr]|{\sigma_B^{-1}} &\ar[d]|\bullet
\\
&&\ar[r]|{F_1(\mathrm{Id}_{B})}\ar[d]|\bullet\ar@{}[dr]|{(\theta_1)^{-1}_{\mathrm{Id}_B,f}}
&\ar[d]|\bullet 
\\
\ar[d]|\bullet \ar[r]_{\pi_2F_2(\mathrm{Id}_B,f)} \ar@{}[drrr]|{\mu^{-1}_{\mathrm{Id}_B,f}}
&\ar@{=}[r]& \ar[r]_{\pi_1F_2(\mathrm{Id}_B,f)} & \ar[d]|\bullet
\\
\ar[rrr]_{F_1(f)} &&&
\rlap{\quad.}
}$$
Cancellation of vertical inverses gives that this is equal to
$$
 \xymatrix@C=4em{
\ar[rrr]^{F_1(f\circ \mathrm{Id}_A)}\ar[d]|\bullet \ar@{}[drrr]|{\mu_{f,\mathrm{Id}_A}} 
	&&& \ar[d]|\bullet
\\
\ar[r]^{\pi_2F_2(f,\mathrm{Id}_A)} \ar[d]|\bullet \ar@{}[dr]|{(\theta_2)_{f,\mathrm{Id}_A}} 
	&\ar[d]|\bullet \ar@{=}[r] \ar@{}[dddr]|=
	&\ar[r]^{\pi_1F_2(f,\mathrm{Id}_A)} \ar[d]|\bullet_\sim \ar@{}[dr]|{(\theta_1)_{f,\mathrm{Id}_A}} 
	&\ar[d]|\bullet
\\
\ar[r]|{F_1\mathrm{Id}_A}\ar[d]|\bullet \ar@{}[dr]|{F_1\psi} & \ar[d]|\bullet 
	& \ar[d]|\bullet\ar@{}[dr]|{F_1\chi}\ar[r]|{F_1f} & \ar[d]|\bullet \ar@{}[drr]|{\textstyle =}
	&& \ar[r]^{F_1(f)}\ar@{}[dr]|{F_1(\chi\circ\psi)} \ar[d]|\bullet &\ar[d]|\bullet
\\
\ar[d]|\bullet \ar[r]|{F_1f} \ar@{}[dr]|{(\theta_2)_{\mathrm{Id}_B,f}^{-1}} &\ar[d]|\bullet
	& \ar[d]|\bullet \ar[r]|{F_1\mathrm{Id}_B} \ar@{}[dr]|{(\theta_1)_{\mathrm{Id}_B,f}^{-1}} 
	&\ar[d]|\bullet
	&& \ar[r]_{F_1(\mathrm{Id}_B\circ f)} &
\\
\ar[r]_{\pi_2F_2(\mathrm{Id}_B,f)} \ar[d]|\bullet \ar@{}[drrr]|{\mu_{\mathrm{Id}_B,f}^{-1}}
	& \ar@{=}[r] & \ar[r]_{\pi_1F_2(\mathrm{Id}_B,f)} &\ar[d]|\bullet
\\
\ar[rrr]_{F_1(\mathrm{Id}_B\circ f)} &&&
}
$$
where the last equality is by (\ref{comp-nat}).
This gives us the required identity double cell.
\end{proof}

\begin{rmk}{\rm
The proof of this proposition uses the fact that we are working with weakly globular double categories rather than arbitary double categories 
in an essential way.
In fact, the result is not true in this generality for arbitrary double categories: it was shown in \cite{DPP-spans2} 
that pseudo-functors between arbitrary double categories preserve companions if and only if they are normal.
}\end{rmk}

\subsection{Companion categories}
The family of horizontal/vertical arrows that have a vertical/horizontal companion is closed under horizontal/vertical composition and
the binding cells of the composite can be expressed as composites  of the binding cells of the individual arrows.
So for an arbitrary double category we can construct a category of companions. This category can furthermore be viewed as the 
category of vertical arrows of a double category which has the same horizontal arrows as the original double category, but its cells are only those that 
interact well with the binding cells of the companion pairs. This is made precise in the following definition.

\begin{dfn}\rm
Let $\bbD$ be an arbitrary double category. Then $\bbComp(\bbD)$, the {\em double category 
of companions in $\bbD$}, is defined as follows. It has the 
same objects and horizontal arrows as $\bbD$, but its vertical arrows are companion pairs 
(with their binding cells)
in $\bbD$. So a vertical arrow $\theta\colon\xymatrix@1{A\ar[r]|\bullet & B}$ in $\bbComp(\bbD)$ is given 
by a quadruple $\theta=(h_\theta,v_\theta,\psi_\theta,\chi_\theta)$, where $v_\theta$ and $h_\theta$ 
are companions with binding cells
$$
\xymatrix{
A\ar[r]^{\mathrm{Id}_A} \ar@{}[dr]|{\psi_\theta} \ar[d]|\bullet_{1_A} & A\ar[d]|\bullet^{v_\theta}\ar@{}[drr]|{\mbox{and}}
    &&  A\ar[d]|\bullet_{v_\theta} \ar[r]^{h_\theta} \ar@{}[dr]|{\chi_\theta}& B\ar[d]|\bullet^{1_A}
\\
A\ar[r]_{h_\theta} & B && B\ar[r]_{\mathrm{Id}_A}  &B.
}$$
The vertical identity arrow $1_A$ is given by $(\mathrm{Id}_A,1_A, \iota_A,\iota_A)$.
Vertical composition is defined by $\theta'\cdot\theta=(h_{\theta'}h_{\theta},v_{\theta'}\cdot v_\theta, 
(\psi_{\theta'}\circ 1_{h_\theta})\cdot(\mbox{id}_{v_{\theta'}}\circ\psi_\theta),
(\chi_{\theta'}\circ\mbox{id}_{v_\theta})\cdot(1_{h_{\theta'}}\circ\chi_\theta))$.
A double cell 
$$
\xymatrix{
A\ar[r]^f\ar[d]|\bullet_\theta\ar@{}[dr]|\Theta & A'\ar[d]|\bullet^{\theta'}
\\
B\ar[r]_g&B'
}
$$
in $\bbComp(\bbD)$ consists of a double cell
$$
\xymatrix@R=2em{
A\ar[r]^f\ar[d]|\bullet_{v_\theta}\ar@{}[dr]|\Theta & A'\ar[d]|\bullet^{v_{\theta'}}
\\
B\ar[r]_g&B'
}
$$ 
in $\bbD$ (involving just the vertical parts of the companion pairs in the cell we are defining) 
which has the following properties:
\begin{enumerate}
 \item the square of horizontal arrows 
$$
\xymatrix@R=2em{
A\ar[d]_{h_\theta} \ar[r]^f& A'\ar[d]^{h_{\theta'}}
\\
B\ar[r]_g & B'
}$$
commutes in $\bbD$;
\item 
$$
\xymatrix@C=3.5em@R=2em{
A\ar[r]^f\ar[d]|\bullet_{v_\theta}\ar@{}[dr]|\Theta 
    & A'\ar[d]|\bullet^{v_{\theta'}}\ar@{}[dr]|{\chi_{\theta'}} \ar[r]^{h_{\theta'}} & B' \ar@{=}[d]
    \ar@{}[dr]|{\textstyle =} & A\ar[d]|\bullet_{v_{\theta}}\ar[r]^{h_\theta}\ar@{}[dr]|{\chi_\theta}
    & B\ar@{=}[d] \ar[r]^g\ar@{}[dr]|{1_g} & B'\ar@{=}[d]
\\
B\ar[r]_g&B' \ar@{=}[r] & B' & B\ar@{=}[r] &B\ar[r]_g & B'
}
$$ 
\item
$$
\xymatrix@C=3.5em@R=2em{
A\ar@{=}[r]\ar@{=}[d] \ar@{}[dr]|{\psi_\theta}
    & A\ar[d]|\bullet^{v_{\theta}}\ar@{}[dr]|{\Theta} \ar[r]^{f} & A' \ar[d]|\bullet^{v_{\theta'}}
    \ar@{}[dr]|{\textstyle =} & A\ar@{=}[d] \ar[r]^{f}\ar@{}[dr]|{1_f}
    & A'\ar@{=}[d] \ar@{=}[r]\ar@{}[dr]|{\psi_\theta} & B'\ar[d]|\bullet^{v_{\theta'}}
\\
A\ar[r]_{h_{\theta}}&B \ar[r]_g & B' & A\ar[r]_f &A'\ar[r]_{h_{\theta'}} & B'
}
$$ 
\end{enumerate}
\end{dfn}

\begin{rmks}\rm
\begin{enumerate}
 \item We write $\Comp(\bbD)$ for the category of vertical arrows, $v\bbComp(\bbD)$, i.e., 
the category of companion pairs (with chosen binding cells).
\item If we have a functorial choice of companions and binding cells  and we only use the 
horizontal arrows that are taken in these choices in the construction of $\bbComp(\bbD)$,
the result is a double category with a thin structure as defined in \cite{BM}.
\end{enumerate}
\end{rmks}

\subsection{Precompanions}\label{pre-comps}
We saw before that companions in weakly globular double categories correspond to quasi units in 
bicategories. 
We now introduce a class of horizontal arrows in weakly globular double categories that will correspond to  internal
equivalences in bicategories.

\begin{dfn}\label{precomp}\rm
Let $\bbX$ be a weakly globular double category.
\begin{enumerate}
 \item A horizontal arrow $\xymatrix@1{A\ar[r]^f&B}$  in $\bbX$ is a {\em left precompanion}
if there are horizontal arrows $\xymatrix@1{A'\ar[r]^{f'} &B'}$ and $\xymatrix@1{B'\ar[r]^{r_f}&C}$
with a vertically invertible double cell
$$
\xymatrix{
A\ar[d]|\bullet\ar@{}[dr]|\varphi \ar[r]^f&B\ar[d]|\bullet\\
A'\ar[r]_{f'}&B'}
$$
such that $r_f\circ f'$ is a companion in $\bbX$.
\item
Dually, a horizontal arrow $\xymatrix@1{A\ar[r]^f&B}$  in $\bbX$ is a {\em right precompanion}
if there are horizontal arrows $\xymatrix@1{A''\ar[r]^{f''} &B''}$ and $\xymatrix@1{D\ar[r]^{l_f}&A''}$
with a vertically invertible double cell
$$
\xymatrix{
A\ar[d]|\bullet\ar@{}[dr]|{\varphi'} \ar[r]^f&B\ar[d]|\bullet\\
A''\ar[r]_{f''}&B''}
$$
such that $f''\circ l_f$ is a companion in $\bbX$.  
\item
A horizontal arrow $\xymatrix@1{A\ar[r]^f&B}$  in $\bbX$ is a {\em precompanion}
if it is both a left and a right precompanion.
\end{enumerate}
\end{dfn}

\begin{eg}{\rm
Every horizontal isomorphism $f\colon A\to B$ is a precompanion with $f'=f=f''$ and $l_f=r_f=f^{-1}$.}
\end{eg}

We first prove some basic properties of precompanions.

\begin{lma}\label{lrprecomps}
If a horizontal arrow $\xymatrix@1{A\ar[r]^f&B}$ is a precompanion in a weakly globular double category, 
then there exist horizontal arrows
$\xymatrix@1{D\ar[r]^{\bar{l}_f}&A'}$, $\xymatrix@1{A'\ar[r]^{\bar{f}} &B'}$ and $\xymatrix@1{B'\ar[r]^{\bar{r}_f}&C}$
with a vertically invertible double cell
$$
\xymatrix{
A\ar[d]|\bullet\ar@{}[dr]|\psi \ar[r]^f&B\ar[d]|\bullet\\
A'\ar[r]_{\bar{f}}&B'}
$$
such that $\bar{r}_f\circ \bar{f}$ is a companion and $\bar{f}\circ \bar{l}_f$ is a companion.
\end{lma}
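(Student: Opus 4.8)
\emph{Proof proposal.} The plan is to merge the left and right precompanion data, which a priori are carried by two different (though vertically isomorphic) representatives of $f$, into a single \emph{genuinely composable} staircase by invoking the induced Segal maps condition, and then to exploit the fact that in a weakly globular double category being a companion is invariant under vertically invertible double cells. First I would record the hypotheses in the notation of Definition \ref{precomp}: being a left precompanion gives a vertically invertible cell $\varphi\colon f\Rightarrow f'$ and a horizontal arrow $r_f$ with $r_f\circ f'$ a companion; being a right precompanion gives a vertically invertible cell $\varphi'\colon f\Rightarrow f''$ and a horizontal arrow $l_f$ with $f''\circ l_f$ a companion. Since $\varphi$ and $\varphi'$ have invertible vertical legs, the (co)domains of $f,f',f''$ lie in common path components of $\bbX_0$; combined with the fact that $f'$ and $r_f$ are literally composable, this shows that $(l_f,f',r_f)$ — with $\mathrm{cod}(l_f)$ and $\mathrm{dom}(f')$ identified over $\bbX_0^d$ — is an object of $\pro{\bbX_1}{\bbX_0^d}{3}$.

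Next I would apply the induced Segal maps condition (\ref{Segalmaps}) with $n=3$: the functor $\pro{\bbX_1}{\bbX_0}{3}\rightarrow\pro{\bbX_1}{\bbX_0^d}{3}$ is an equivalence, hence essentially surjective, so there is a triple of horizontal arrows $\bar l_f,\bar f,\bar r_f$ in $\bbX$ that \emph{is} composable on the nose ($\mathrm{cod}(\bar l_f)=\mathrm{dom}(\bar f)$ and $\mathrm{cod}(\bar f)=\mathrm{dom}(\bar r_f)$) together with vertically invertible double cells connecting $\bar l_f$ to $l_f$, $\bar f$ to $f'$, and $\bar r_f$ to $r_f$. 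Taking $\psi\colon f\Rightarrow\bar f$ to be $\varphi$ followed by the inverse of the comparison cell $\bar f\Rightarrow f'$ provides the required vertically invertible cell, and $\bar l_f,\bar f,\bar r_f$ are the arrows asked for in the statement.

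It then remains to check that $\bar r_f\circ\bar f$ and $\bar f\circ\bar l_f$ are companions. For the first, horizontally compose the comparison cells $\bar f\Rightarrow f'$ and $\bar r_f\Rightarrow r_f$; this is legitimate because their common middle edge, a priori two vertical arrows with the same source $\mathrm{cod}(\bar f)=\mathrm{dom}(\bar r_f)$ and the same target ($=\mathrm{cod}(f')=\mathrm{dom}(r_f)$), must coincide by posetality of $\bbX_0$. The result is a vertically invertible cell $\bar r_f\circ\bar f\Rightarrow r_f\circ f'$. For the second, first build a vertically invertible cell $\bar f\Rightarrow f''$ by composing $\bar f\Rightarrow f'$ with $\varphi^{-1}$ and then with $\varphi'$, and horizontally compose it with $\bar l_f\Rightarrow l_f$ (again the middle edges agree by posetality), obtaining a vertically invertible cell $\bar f\circ\bar l_f\Rightarrow f''\circ l_f$. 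Since $r_f\circ f'$ and $f''\circ l_f$ are companions, and a horizontal arrow admitting a vertically invertible double cell to a companion is itself a companion — immediate from Proposition \ref{quasi-comp}, as such a cell is an isomorphism in $\Bic\bbX$ and quasi units are closed under isomorphism (or, if one prefers, by a direct pasting of binding cells) — both $\bar r_f\circ\bar f$ and $\bar f\circ\bar l_f$ are companions.

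The step I expect to be the main obstacle is precisely the reconciliation: the left precompanion structure lives on $f'$ while the right one lives on $f''$, and the naive composite ``$f'\circ l_f$'' is not even defined in $\bbX$. The induced Segal maps condition is exactly what lets us pass to a single composable representative $\bar f$, and posetality of the vertical arrow category plays a double role — it lets us transfer $f'$ to $f''$ (so that $\bar f\Rightarrow f''$ exists) and it guarantees that all the horizontal composites of comparison cells above are well-defined. The remaining points (that vertical and horizontal composites of vertically invertible cells are vertically invertible, and the pasting behind invariance of companionship under such cells) are routine and I would not spell them out.
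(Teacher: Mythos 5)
Your proof follows essentially the same route as the paper: assemble the left and right precompanion data into a staircase, use the induced Segal maps condition to replace it by a strictly composable triple $(\bar l_f,\bar f,\bar r_f)$ with vertically invertible comparison cells, and transfer companionhood of $r_f\circ f'$ and $f''\circ l_f$ to the new composites. The only point of divergence is the final step: the paper pastes the comparison cells with the original binding cells to exhibit new binding cells explicitly (doing one composite in full and leaving the other to the reader), whereas you invoke Proposition~\ref{quasi-comp} together with the observation that quasi units in $\Bic\bbX$ are closed under isomorphism. Your version is slicker; the paper's produces the binding cells explicitly, which fits the hands-on style of that section. You also make explicit a point the paper leaves tacit, namely that posetality of $\bbX_0$ is what guarantees the comparison cells delivered by the Segal condition are horizontally composable.
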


\begin{proof}
Since $f$ is both a left and a right precompanion there are horizontal arrows $l_f$, $r_f$, $f'$ and $f''$ with double cells 
$\varphi$ and $\varphi'$ as in Definition \ref{precomp}. These can be combined in the following diagram.
$$
\xymatrix{
&A'\ar[d]|\bullet\ar@{}[dr]|\varphi\ar[r]^{f'} & B'\ar[d]|\bullet\ar[r]^{r_f} &C
\\
&A\ar[d]|\bullet\ar@{}[dr]|{\varphi'}\ar[r]^f & B\ar[d]|\bullet
\\
D\ar[r]_{l_f}&A''\ar[r]_{f''}&B''
}
$$
By the weak globularity condition,
we can complete this staircase diagram with arrows and vertically invertible cells as in
the following diagram,
$$
\xymatrix{
\bar{D}\ar[3,0]|\bullet \ar[r]^{\bar{l}_f}\ar@{}[3,1]|{\gamma_1} & \bar{A}\ar[r]^{\bar{f}}\ar[d]|\bullet\ar@{}[dr]|{\gamma_2} 
		& \bar{B} \ar[d]|\bullet\ar@{}[dr]|{\gamma_3}\ar[r]^{\bar{r}_f} & \bar{C}\ar[d]|\bullet
\\
&A'\ar[d]|\bullet\ar@{}[dr]|\varphi\ar[r]^{f'} & B'\ar[d]|\bullet\ar[r]^{r_f} &C
\\
&A\ar[d]|\bullet\ar@{}[dr]|{\varphi'}\ar[r]^f & B\ar[d]|\bullet
\\
D\ar[r]_{l_f}&A''\ar[r]_{f''}&B''		
}
$$
Now the composites $\bar{f}\circ \bar{l}_f$ and $\bar{r}_f\circ \bar{f}$
are both companions, since they are vertically isomorphic to companions.
We will give an explicit proof in terms of binding cells for one of them.
Let the binding cells for $ f'\circ r_f$ with its companion be as in the following diagram.
$$
\xymatrix{
A'\ar@{=}[rr]\ar@{=}[d]\ar@{}[drr]|{\psi_f^r} && A'\ar[d]|\bullet&A'\ar[d]|\bullet \ar[r]^{f'} \ar@{}[drr]|{\chi_f^r} & B'\ar[r]^{r_f} & C\ar@{=}[d]
\\
A'\ar[r]_{f'}&B'\ar[r]_{r_f} & C & C\ar@{=}[rr] && C
}
$$
Then the binding cells for $\bar{f}\circ \bar{r}_f$ with its companion are
$$
\xymatrix{
\bar{A}\ar[d]|\bullet\ar@{=}[rr]\ar@{}[drr]|{\mbox{\scriptsize id}} && \bar{A}\ar[d]|\bullet 
		& \bar{A}\ar[d]|\bullet\ar@{}[dr]|{\gamma_2}\ar[r]^{\bar{f}} 
		& \bar{B}\ar[d]|\bullet\ar@{}[dr]|{\gamma_3} \ar[r]^{\bar{r}_f} & \bar{C}\ar[d]|\bullet
\\
A'\ar@{=}[rr]\ar@{=}[d]\ar@{}[drr]|{\psi_f^r} && A'\ar[d]|\bullet 
		&A'\ar[d]|\bullet \ar[r]^{f'} \ar@{}[drr]|{\chi_f^r} & B'\ar[r]^{r_f} & C\ar@{=}[d]
\\
A'\ar[d]|\bullet \ar@{}[dr]|{\gamma_2^{-1}}\ar[r]_{f'}& B'\ar[r]_{r_f}\ar@{}[dr]|{\gamma_3^{-1}} \ar[d]|\bullet& C \ar[d]|\bullet
		& C\ar[d]|\bullet\ar@{}[drr]|{\mbox{\scriptsize id}}\ar@{=}[rr] && C\ar[d]|\bullet
\\
\bar{A}\ar[r]_{\bar{f}}&\bar{B}\ar[r]_{\bar{r}_f} & \bar{C} & \bar{C}\ar@{=}[rr] && \bar{C}.
}
$$
The rest of the details are left to the reader.
\end{proof}

\begin{lma}\label{lr}
 If $f$ is a precompanion with arrows $l_f$ and $r_f$ as in Definition \ref{precomp}, then 
there is a vertically invertible double cell
of the form
$$
\xymatrix@R=3em@C=3em{
\ar[d]|\bullet\ar@{}[dr]|{\nu_f}\ar[r]^{r_f} & \ar[d]|\bullet
\\
\ar[r]_{l_f} &\rlap{\quad.}
}
$$
\end{lma}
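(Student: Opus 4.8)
The plan is to pass to the fundamental bicategory $\Bic\bbX$ and reduce the statement to the standard fact that a left inverse and a right inverse of an internal equivalence are isomorphic. Recall from Section~\ref{D:Bic} that a $2$-cell $g\Rightarrow h$ in $\Bic\bbX$ between horizontal arrows $g,h$ of $\bbX$ is precisely a double cell of $\bbX$ with $g$ on top and $h$ on the bottom (with the uniquely determined vertical arrows on the sides), and that such a $2$-cell is invertible in $\Bic\bbX$ exactly when the corresponding double cell is vertically invertible. Thus it is enough to produce an invertible $2$-cell $r_f\Rightarrow l_f$ in $\Bic\bbX$; this makes sense because, as the computation below shows, $r_f\colon B'\to C$ and $l_f\colon D\to A''$ have the same source and the same target when interpreted in $\Bic\bbX$.

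First I would record the consequences of the precompanion hypotheses inside $\Bic\bbX$. The vertically invertible double cells $\varphi$ and $\varphi'$ of Definition~\ref{precomp} give invertible $2$-cells $f\Rightarrow f'$ and $f\Rightarrow f''$; in particular $\overline{A'}=\overline{A''}$ and $\overline{B'}=\overline{B''}$, and composing them yields an invertible $2$-cell $\theta\colon f'\Rightarrow f''$. Since $r_f\circ f'$ and $f''\circ l_f$ are companions in $\bbX$ (Definition~\ref{precomp}), Proposition~\ref{quasi-comp} shows that they are quasi units in $\Bic\bbX$; in particular $\overline{A'}=\overline C$ and $\overline D=\overline{B''}$ (which already follows from the existence of their vertical companions). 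Writing $P=\overline{A'}=\overline{A''}=\overline C$ and $Q=\overline{B'}=\overline{B''}=\overline D$, we are left in $\Bic\bbX$ with arrows $f',f''\colon P\to Q$, $r_f\colon Q\to P$, $l_f\colon Q\to P$, a quasi unit $u:=r_f\circ f'\colon P\to P$, a quasi unit $u':=f''\circ l_f\colon Q\to Q$, and the invertible $2$-cell $\theta$.

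Next I would run the computation inside the bicategory $\Bic\bbX$, using the defining property of a quasi unit (an arrow $v$ with $v\circ g\cong g$ and $h\circ v\cong h$) together with the associativity and whiskering isomorphisms of $\Bic\bbX$:
$$
l_f\;\cong\;u\circ l_f\;=\;(r_f\circ f')\circ l_f\;\cong\;r_f\circ(f'\circ l_f)\;\cong\;r_f\circ(f''\circ l_f)\;=\;r_f\circ u'\;\cong\;r_f .
$$
Here the first isomorphism is the quasi-unit property of $u$, the third is the associator, the fourth is $\theta$ whiskered on the right by $l_f$, and the last is the quasi-unit property of $u'$. Being a composite of invertible $2$-cells, this is an invertible $2$-cell $l_f\Rightarrow r_f$ in $\Bic\bbX$; its inverse is the required invertible $2$-cell $r_f\Rightarrow l_f$, which is exactly a vertically invertible double cell $\nu_f$ of $\bbX$ of the stated shape.

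The main thing to watch is the bookkeeping of sources and targets — which is precisely what the identifications $P=\overline{A'}=\overline{A''}=\overline C$ and $Q=\overline{B'}=\overline{B''}=\overline D$ settle — together with the (standard) observation that the composite in $\Bic\bbX$ of two arrows that happen to be horizontally composable in $\bbX$ agrees, up to the canonical structure isomorphism, with their horizontal composite in $\bbX$; this is what lets one treat ``$r_f\circ f'$ is a companion in $\bbX$'' and ``$r_f\circ f'$ is a quasi unit of $\Bic\bbX$'' as statements about the same arrow, and it is what legitimises the appearances of the associator in the displayed chain. A more hands-on route, avoiding $\Bic\bbX$, would instead use Lemma~\ref{lrprecomps} to reduce to the case $f'=f''=\bar f$ with $\bar r_f\circ\bar f$ and $\bar f\circ\bar l_f$ companions, build a vertically invertible cell between $\bar l_f$ and $\bar r_f$ by horizontally composing the identity cells on $\bar r_f$ and $\bar l_f$ with the vertically invertible cells witnessing (via Proposition~\ref{quasi-comp}) that these companions are quasi units, and then transport along the vertically invertible cells $\gamma_1,\gamma_3$ produced in the proof of Lemma~\ref{lrprecomps}; but the argument through $\Bic\bbX$ is cleaner.
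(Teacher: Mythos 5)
Your proof is correct, but it takes a genuinely different route from the paper's. The paper stays inside $\bbX$ and constructs $\nu_f$ explicitly: it introduces the vertical arrows $x=(v_f^l)^{-1}\cdot d_1\varphi'\cdot(d_1\varphi)^{-1}$ and $y=(d_0\varphi')\cdot(d_0\varphi)^{-1}\cdot(v_f^r)^{-1}$ to line up the boundaries, and then exhibits $\nu_f$ as a concrete pasting of the binding cell $\psi_f^l$, the precompanion cells $(\varphi')^{-1}$ and $\varphi$, the binding cell $\chi_f^r$, and several horizontal/vertical identity cells. You instead pass to the fundamental bicategory $\Bic\bbX$, translate the two companion hypotheses into quasi-unit statements via Proposition \ref{quasi-comp}, and run the standard argument that a left inverse and a right inverse of an arrow with both are canonically isomorphic, then translate the resulting invertible $2$-cell $r_f\Rightarrow l_f$ back into a vertically invertible double cell. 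The gain is conceptual compactness; the cost is that the argument leans on the fully coherent bicategory structure of $\Bic\bbX$ (associators, unitors, whiskering, and the identification of $\bbX$-horizontal composites with $\Bic\bbX$-composites up to canonical isomorphism), which the paper establishes only by citation in Section~\ref{D:Bic}, whereas the pasting proof is self-contained. You correctly flag the one genuinely delicate point — that $r_f\circ f'$ as a horizontal composite in $\bbX$ and as a composite in $\Bic\bbX$ differ by a structure isomorphism — which is exactly where an uncareful version of this argument would break. In effect, the paper's pasting diagram is your $\Bic\bbX$ chain of isomorphisms unwound cell by cell, so the two proofs have the same mathematical content presented at different levels of abstraction.
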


\begin{proof}
 Let $r_f\circ f'$ have vertical companion $v^r_f$ with binding cells
$$
\xymatrix{
\ar@{=}[d]\ar@{=}[rr]\ar@{}[drr]|{\psi_f^r} && \ar[d]^{v^r_f}|\bullet \ar@{}[drr]|{\mbox{and}} 
	&& \ar[r]^{f'}\ar[d]|\bullet_{v^r_f} \ar@{}[drr]|{\chi_f^r} & \ar[r]^{r_f} & \ar@{=}[d]
\\
\ar[r]_{f'}&\ar[r]_{r_f} & && \ar@{=}[rr] &&\rlap{\quad,}
}
$$
and let $f''\circ l_f$ have vertical companion $v^l_f$ with binding cells
$$
\xymatrix{
\ar@{=}[d]\ar@{=}[rr]\ar@{}[drr]|{\psi_f^l} && \ar[d]^{v^l_f}|\bullet \ar@{}[drr]|{\mbox{and}} 
	&& \ar[r]^{l_f}\ar[d]|\bullet_{v^l_f} \ar@{}[drr]|{\chi_f^l} & \ar[r]^{f''} & \ar@{=}[d]
\\
\ar[r]_{l_f}&\ar[r]_{f''} & && \ar@{=}[rr] &&\rlap{\quad.}
}
$$
Further, let $x=(v_f^l)^{-1}\cdot d_1\varphi'\cdot(d_1\varphi)^{-1}$
and $y=(d_0\varphi')\cdot(d_0\varphi)^{-1}\cdot (v^r_f)^{-1}$.
Then $\nu_f$ can be obtained 
as the following pasting of double cells:
$$\xymatrix@C=4em@R=2em{
\ar[d]|\bullet_x\ar@{=}[rr]\ar@{}[drr]|{\mathrm{id}} && \ar[d]|\bullet_x\ar[r]^{r_f}\ar@{}[4,1]|{1_{r_f}} & \ar@{=}[4,0]
\\
\ar@{=}[d]\ar@{=}[rr]\ar@{}[drr]|{\psi_f^l} && \ar[d]|\bullet^{v_f^l}
\\
\ar[r]|{l_f}\ar@{=}[4,0]\ar@{}[4,1]|{1_{l_f}} & \ar[d]|\bullet_{d_0\varphi'^{-1}}\ar[r]|{f''}\ar@{}[dr]|{\varphi'^{-1}} 
	& \ar[d]|\bullet^{d_1\varphi'^{-1}}
\\
& \ar[d]|\bullet_{d_0\varphi}\ar[r]|{f}\ar@{}[dr]|\varphi & \ar[d]|\bullet^{d_1\varphi}
\\
&\ar[d]_{v^r_f}\ar@{}[drr]|{\chi^r_f}\ar[r]|{f'} & \ar[r]|{r_f} &\ar@{=}[d]
\\
&\ar[d]|\bullet_y\ar@{=}[rr]\ar@{}[drr]|{\mathrm{id}} &&\ar[d]|\bullet^y
\\
\ar[r]_{l_f} & \ar@{=}[rr]&&\rlap{\quad .}
}$$
\end{proof}

It is well-known that companions, just like adjoints, are unique up to special invertible double cells.
A similar result applies to precompanions and the proof is a 2-dimensional version of the proof
that the pseudo-inverse of an equivalence in a bicategory is unique up to invertible 2-cell.
 
\begin{lma}\label{pre-comp-uniq}
 For any precompanion $f$ in a weakly globular double category, the
precompanion structure given in Definition \ref{precomp} is unique
up to vertically invertible double cells.
\end{lma}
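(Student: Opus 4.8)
The plan is to carry out, one dimension up, the standard argument that a pseudo-inverse of an equivalence in a bicategory is unique up to invertible $2$-cell, using Proposition~\ref{quasi-comp} to pass between companions in a weakly globular double category and quasi units in its fundamental bicategory.

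First I would start from two precompanion structures on $f\colon A\rightarrow B$ and apply Lemma~\ref{lrprecomps} to each, obtaining for $i=1,2$ horizontal arrows $\bar l_i$, $\bar f_i$, $\bar r_i$ and a vertically invertible double cell $\psi_i\colon f\Rightarrow\bar f_i$ with $\bar r_i\circ\bar f_i$ and $\bar f_i\circ\bar l_i$ companions. Since the vertical arrow category of $\bbX$ is a posetal groupoid, $\psi_1$ and $\psi_2$ compose vertically to a vertically invertible double cell $\bar\varphi\colon\bar f_1\Rightarrow\bar f_2$; this already settles uniqueness of the underlying horizontal arrow of the precompanion structure.

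Next I would compare the $\bar r_i$ with one another and the $\bar l_i$ with one another, working in $\Bic\bbX$. By Proposition~\ref{quasi-comp} the companions $\bar r_i\circ\bar f_i$ and $\bar f_i\circ\bar l_i$ are quasi units in $\Bic\bbX$, so there are invertible $2$-cells $\bar r_i\circ\bar f_i\cong 1$ and $\bar f_i\circ\bar l_i\cong 1$; together with the invertible $2$-cell $\bar\varphi\colon\bar f_1\cong\bar f_2$ this exhibits $\bar f_1,\bar f_2$ as internal equivalences with $\bar r_i,\bar l_i$ as one-sided pseudo-inverses. Then the usual chain of isomorphisms
\[
\bar r_1\;\cong\;\bar r_1\circ(\bar f_1\circ\bar l_1)\;\cong\;(\bar r_1\circ\bar f_1)\circ\bar l_1\;\cong\;\bar l_1\;\cong\;(\bar r_2\circ\bar f_1)\circ\bar l_1\;\cong\;\bar r_2\circ(\bar f_1\circ\bar l_1)\;\cong\;\bar r_2,
\]
built from the unitors, the associator, the $2$-cells $\bar r_i\circ\bar f_i\cong 1$ and $\bar f_1\circ\bar l_1\cong 1$, and $\bar\varphi$ (used to identify $\bar r_2\circ\bar f_1$ with $\bar r_2\circ\bar f_2\cong 1$), gives $\bar r_1\cong\bar r_2$, and symmetrically $\bar l_1\cong\bar l_2$. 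Transporting these invertible $2$-cells back to $\bbX$ produces the required vertically invertible double cells between the two precompanion structures.

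The hard part will be the compatibility of the cells so produced with the remaining precompanion data — that $\bar\varphi$, the $\psi_i$, and the binding cells of $\bar r_i\circ\bar f_i$ and $\bar f_i\circ\bar l_i$ all match up under these isomorphisms, so that one obtains a genuine equivalence of precompanion structures and not merely an isomorphism of the underlying arrows in $\Bic\bbX$. This is the $2$-dimensional analogue of checking the coherence identities in the bicategorical uniqueness argument, and I would handle it either by realising the chain above directly as a pasting of vertically invertible double cells in $\bbX$, completing the non-composable staircases via the induced Segal maps condition exactly as in the proofs of Lemmas~\ref{lrprecomps} and~\ref{lr}, and then invoking that companions together with their binding cells are themselves unique up to special invertible double cells; or by transporting the corresponding coherence from $\Bic\bbX$. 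The remaining bookkeeping is routine.
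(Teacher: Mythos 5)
Your proof is correct, and it reaches the same destination as the paper's, but by a genuinely different technical route. The paper stays entirely inside $\bbX$: it observes that the two candidate $f'_i$ (and $f''_i$) are vertically isomorphic to each other simply because each is vertically isomorphic to $f$, and then it invokes Lemma~\ref{lr} to get cells $\nu_{f,ij}\colon r_{f,i}\Rightarrow l_{f,j}$ for all $i,j$; the desired cells $r_{f,1}\cong r_{f,2}$ and $l_{f,1}\cong l_{f,2}$ are then just the composites $\nu_{f,21}^{-1}\cdot\nu_{f,11}$ and $\nu_{f,22}\cdot\nu_{f,21}^{-1}$. You instead descend to $\Bic\bbX$ via Proposition~\ref{quasi-comp} and run the standard bicategorical chain for uniqueness of pseudo-inverses; in effect your chain $\bar r_1\cong\bar l_1\cong\bar r_2$ is a re-derivation, inside $\Bic\bbX$, of exactly the content of Lemma~\ref{lr} plus its pairwise assembly. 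What the paper's route buys is brevity and concreteness — Lemma~\ref{lr} already produced the cells as explicit pastings in $\bbX$, so nothing needs to be transported back. What your route buys is that the argument is visibly the one-dimension-up version of the familiar fact about equivalences in a bicategory, which is conceptually transparent. Also note two small points of divergence: you first normalise to $f'=f''=\bar f$ via Lemma~\ref{lrprecomps}, whereas the paper handles the general $f'_i, f''_i$ directly (they are compared by vertically composing $\varphi_1^{-1}$ with $\varphi_2$); and your concluding paragraph about compatibility of the binding cells goes beyond what the lemma actually asserts — the statement only claims the arrows of the precompanion structure are unique up to vertically invertible double cells, and the paper's proof stops there. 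That extra compatibility is not needed (and one of the two remedies you sketch for it is essentially the paper's own method of assembling explicit pastings in $\bbX$).
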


\begin{proof}
 Suppose that $\varphi_i$, $r_{f,i}$, $v_{f,i}^r$, $f'_i$, $\psi_{f,i}^r$, and $\chi_{f,i}^r$ give two 
 right precompanion structures for $i=1,2$,
and $\varphi'_i$,
$l_{f,i}, f''_i, v_{f,i}^l$, $\psi_{f,i}^l$ and $\chi_{f,i}^l$ give two left precompanion structures for $i=1,2$.
Then there are vertically invertible double cells as follows:
$$
\xymatrix@C=4em{
\ar[r]^{f'_1}\ar[d]|\bullet\ar@{}[dr]|{\varphi_1^{-1}} & \ar[d]|\bullet &\ar[d]|\bullet\ar[r]^{f''_1}\ar@{}[dr]|{(\varphi'_1)^{-1}}& \ar[d]|\bullet
\\
\ar[r]|f \ar[d]|\bullet \ar@{}[dr]|{\varphi_2} & \ar[d]|\bullet & \ar[r]|f  \ar[d]|\bullet \ar@{}[dr]|{\varphi'_2} & \ar[d]|\bullet
\\
\ar[r]_{f'_2} & & \ar[r]_{f''_2} &
}$$
According to Lemma \ref{lr} there are vertically invertible double cells
$$
\xymatrix{
\ar[d]|\bullet\ar[r]^{r_{f,1i}}\ar@{}[dr]|{\nu_{f,ij}} &
\ar[d]|\bullet \ar@{}[drrr]|{\textstyle \mbox{for }i,j\in\{1,2\}.}
\\
\ar[r]_{l_{f,j}}&&&&
}$$
So we obtain vertically invertible cells
$$
\xymatrix@C=5em{
\ar[r]^{r_{f,1}} \ar[d]|\bullet \ar@{}[dr]|{\nu_{f,21}^{-1}\cdot\nu_{f,11}} 
	&\ar[d]|\bullet\ar@{}[dr]|{\mbox{and}} & \ar[r]^{l_{f,1}}\ar[d]|\bullet \ar@{}[dr]|{\nu_{f,22}\cdot\nu_{f,21}^{-1}}& \ar[d]|\bullet
\\
\ar[r]_{r_{f,2}} && \ar[r]_{l_{f,2}}&
}$$
\end{proof}

The next two propositions establish the relationship between 
precompanions in weakly globular double categories and equivalences in bicategories.

\begin{prop}\label{D:precomp-equiv}
 Let $\calB$ be a bicategory.
A horizontal arrow
\begin{equation}\label{arrow}
\xymatrix{A_0\ar[r]^{f_1}&\cdots\ar[r]^{f_{i_0}}
  &[A_{i_0}]\ar[r]^{f_{i_0+1}}&\cdots\ar[r]^{f_{i_1}}&[A_{i_1}]\ar[r]^{f_{i_1}+1}&\cdots\ar[r]^{f_{n}}&A_n}
\end{equation}
in $\Dbl(\calB)$ is a precompanion if and only if the chosen composition $\varphi_{f_{i_0+1}\cdots f_{i_1}}$
is an equivalence in $\calB$.
\end{prop}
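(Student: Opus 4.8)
The plan is to reduce both implications to Proposition~\ref{comp-quasi}, which identifies the companions in $\Dbl(\calB)$ as the horizontal arrows whose two marked objects coincide and whose chosen composite is a quasi unit, together with Lemma~\ref{lrprecomps}.

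For the ``if'' direction, suppose $e:=\varphi_{f_{i_0+1}\cdots f_{i_1}}$ is an equivalence in $\calB$, and pick a pseudo-inverse $\bar e\colon A_{i_1}\to A_{i_0}$ with invertible $2$-cells $\bar e\circ e\cong 1_{A_{i_0}}$ and $e\circ\bar e\cong 1_{A_{i_1}}$. To exhibit a left precompanion structure I would use the length-$2$ path $Q$ in $\calB$ consisting of $e$ followed by $\bar e$ (these are composable), and set $f'=(Q;0,1)$ and $r_f=(Q;1,2)$. The marked objects of $f$ and of $f'$ agree (at $A_{i_0}$ and at $A_{i_1}$), and the chosen composite of the one-arrow inner segment of $f'$ is canonically isomorphic to $e$, so a suitable invertible $2$-cell gives a vertically invertible double cell $\varphi\colon f\Rightarrow f'$. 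Then $r_f\circ f'=(Q;0,2)$ has both marked objects equal to $A_{i_0}$, and its chosen composite $\varphi_{e,\bar e}$ is canonically isomorphic, via the comparison cells, to $\bar e\circ e\cong 1_{A_{i_0}}$, hence is a quasi unit; so $r_f\circ f'$ is a companion by Proposition~\ref{comp-quasi} and $f$ is a left precompanion. The mirror-image construction, using the path consisting of $\bar e$ followed by $e$ and the isomorphism $e\circ\bar e\cong 1_{A_{i_1}}$, shows $f$ is a right precompanion, and hence a precompanion.

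For the ``only if'' direction, suppose $f$ is a precompanion. By Lemma~\ref{lrprecomps} there are horizontal arrows $\bar l_f,\bar f,\bar r_f$ in $\Dbl(\calB)$ with $\bar r_f\circ\bar f$ and $\bar f\circ\bar l_f$ both defined and both companions, together with a vertically invertible cell $\psi\colon f\Rightarrow\bar f$. Composability forces all three arrows to lie on a common path $Q$, with objects $C_0,\dots,C_m$ and arrows $g_1,\dots,g_m$, say $\bar l_f=(Q;a,b)$, $\bar f=(Q;b,c)$, $\bar r_f=(Q;c,d)$; since $\psi$ is vertically invertible its underlying $2$-cell is an isomorphism $e\cong\bar e$ where $\bar e:=\varphi_{g_{b+1}\cdots g_c}$, so it suffices to prove $\bar e$ is an equivalence. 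Applying Proposition~\ref{comp-quasi} to the companion $\bar r_f\circ\bar f=(Q;b,d)$ gives $C_b=C_d$ and $\varphi_{g_{b+1}\cdots g_d}\cong 1_{C_b}$, and applying it to $\bar f\circ\bar l_f=(Q;a,c)$ gives $C_a=C_c$ and $\varphi_{g_{a+1}\cdots g_c}\cong 1_{C_a}$. Splitting these chosen composites at the index $c$, respectively at the index $b$, by means of the comparison cells, I get arrows $\bar r:=\varphi_{g_{c+1}\cdots g_d}\colon C_c\to C_b$ and $\bar l:=\varphi_{g_{a+1}\cdots g_b}\colon C_c\to C_b$ with $\bar r\circ\bar e\cong 1_{C_b}$ and $\bar e\circ\bar l\cong 1_{C_c}$.

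It then remains only to run the standard bicategorical argument: the associativity and unit isomorphisms give $\bar l\cong(\bar r\circ\bar e)\circ\bar l\cong\bar r\circ(\bar e\circ\bar l)\cong\bar r$, so $\bar r$ is a two-sided pseudo-inverse of $\bar e$, whence $\bar e$, and therefore $e$, is an equivalence in $\calB$. I expect the main nuisance to be the bookkeeping with the chosen composites $\varphi_{\ldots}$ and their comparison cells $\Phi_{\ldots}$ — in particular checking that $\bar l$ and $\bar r$ genuinely have matching domain and codomain and sit on the correct sides — rather than anything conceptually hard; once the passage through Proposition~\ref{comp-quasi} and Lemma~\ref{lrprecomps} is set up, both directions are essentially forced.
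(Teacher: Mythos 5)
Your proof is correct and close in spirit to the paper's, so let me just flag where you diverge. In the ``if'' direction you are doing exactly what the paper does: your length-$2$ path $Q$ is the paper's path $[A_{i_0}]\stackrel{\varphi_{f_{i_0+1}\cdots f_{i_1}}}{\longrightarrow}[A_{i_1}]\stackrel{g}{\to}A_{i_0}$, with $f'=(Q;0,1)$ and $r_f=(Q;1,2)$, and the appeal to Proposition~\ref{comp-quasi} is the same. In the ``only if'' direction the paper does \emph{not} pass through Lemma~\ref{lrprecomps}: it unwinds Definition~\ref{precomp} directly, getting two separate auxiliary paths (the paper's $g$-path from the left precompanion datum and $h$-path from the right one), applies Proposition~\ref{comp-quasi} to each companion composite, and then asserts that the resulting segment $\varphi_{g_{j_1+1}\cdots g_{j_2}}$ is a two-sided pseudo-inverse. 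You instead invoke Lemma~\ref{lrprecomps} first, which forces $\bar l_f$, $\bar f$, $\bar r_f$ onto a single path $Q$, so that the left and right inverses $\bar l$ and $\bar r$ come pre-aligned. That buys you cleaner bookkeeping (one path rather than two, with the composability constraints automatically forcing $a\le b\le c\le d$), and it lets you spell out the standard ``$\bar l\cong\bar r$'' step which the paper leaves implicit. Both routes reduce to the same two ingredients — Proposition~\ref{comp-quasi} plus the elementary left-and-right-inverse argument — so this is an organizational refinement rather than a new proof, but it does make the ``only if'' half more self-contained.
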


\begin{proof}
Suppose that the chosen composition $\varphi_{f_{i_0+1}\cdots f_{i_1}}$ 
is an equivalence in $\calB$. 
Denote the arrow (\ref{arrow}) in $\Dbl(\calB)$ by $f$.
Let $g\colon A_{i_1}\rightarrow A_{i_0}$ be a pseudo-inverse of $\varphi_{f_{i_0+1}\cdots f_{i_1}}$ in $\calB$.
Then we may take $f'$ in Definition \ref{precomp} to be the arrow
$$
\xymatrix@C=5em{
[A_{i_0}]\ar[r]^{\varphi_{f_{i_0+1}\cdots f_{i_1}}}&[A_{i_1}]\ar[r]^{g}&A_{i_0}
}$$
with vertically invertible cell 
$$
\xymatrix{
A_0\ar[r]^{f_1} & \cdots \ar[r]^{f_{i_0}}&[A_{i_0}]\ar@{=}[d] \ar@/_2ex/[rr]_{\varphi_{f_{i_0+1}\cdots f_{i_1}}}\ar[r]^{f_{i_0+1}} 
			    & \cdots \ar[r]^{f_{i_1}}\ar@{}[d]|(.8){=}
			    &[A_{i_1}] \ar[r]^{f_{i_1+1}}\ar@{=}[d] &\cdots\ar[r]^{f_{i_n}}&A_n
\\
&& [A_{i_0}]\ar[rr]_{\varphi_{f_{i_0+1}\cdots f_{i_1}}} && [A_{i_1}]\ar[rr]_{g} && A_{i_0}
}$$
and we may take $r_f$ in Definition \ref{precomp} to be the arrow
$$
\xymatrix@C=4em{
A_{i_0}\ar[r]^{\varphi_{f_{i_0+1}\cdots f_{i_1}}}&[A_{i_1}]\ar[r]^{g}&[A_{i_0}]\rlap{\,.}
}$$
The horizontal composition $r_f\circ f'$ is given by
$$
\xymatrix@C=4em{
[A_{i_0}]\ar[r]^{\varphi_{f_{i_0+1}\cdots f_{i_1}}}&A_{i_1}\ar[r]^{g}&[A_{i_0}]\rlap{\,,}
}$$
and this is a companion by Proposition \ref{comp-quasi}, since $g\circ \varphi_{f_{i_0+1}\cdots f_{i_1}}$ 
is a quasi unit in $\calB$.
So (\ref{arrow}) is a left precompanion.
The proof that it is a right precompanion goes similarly.

Now suppose that (\ref{arrow}) is a precompanion in $\Dbl(\calB)$.
So there are diagrams with vertically invertible double cells of the form
$$
  \xymatrix@R=4em{
A_0\ar[r]^{f_1} & \cdots \ar[r]^{f_{i_0}}
      &[A_{i_0}]\ar@/_2ex/[rr]_{\varphi_{f_{i_0+1}\cdots f_{i_1}}}\ar@{=}[d]\ar[r]^{f_{i_0+1}} 
      & \cdots \ar[r]^{f_{i_1}}\ar@{}[d]|{\varphi}
      &[A_{i_1}]\ar@{=}[d]\ar[r]^{f_{i_1+1}}&\cdots\ar[r]^{f_{n}}&A_n
\\
B_0\ar[r]_{g_1} &\cdots\ar[r]_{g_{j_0}} &[A_{i_0}]\ar@/^2ex/[rr]^{\varphi_{g_{j_0+1}\cdots g_{j_1}}}
	    \ar[r]_{g_{j_0+1}} & \cdots\ar[r]_{g_{j_1}} & [A_{i_1}]\ar[r]_{g_{j_1+1}} 
      & \cdots\ar[r]_{g_{j_2}} & C \ar[r]_{g_{j_2+1}}&\cdots\ar[r]_{g_{m}} & B_{m}
}$$
and
$$
  \xymatrix@R=4em{
&&A_0\ar[r]^{f_1} & \cdots \ar[r]^{f_{i_0}}
      &[A_{i_0}]\ar@/_2ex/[rr]_{\varphi_{f_{i_0+1}\cdots f_{i_1}}}\ar@{=}[d]\ar[r]^{f_{i_0+1}} 
      & \cdots \ar[r]^{f_{i_1}}\ar@{}[d]|{\varphi'}
      &[A_{i_1}]\ar@{=}[d]\ar[r]^{f_{i_1+1}}&\cdots\ar[r]^{f_{n}}&A_n
\\
B'_0\ar[r]_{h_1} &\cdots\ar[r]_{h_{k_0}} & D\ar[r]_{h_{k_0+1}} & \cdots \ar[r]_{h_{k_1}} 
      & [A_{i_0}]\ar@/^2ex/[rr]^{\varphi_{h_{k_1+1}\cdots h_{k_2}}} \ar[r]_{h_{k_1+1}}
      &\cdots \ar[r]_{h_{k_2}} & [A_{i_1}]\ar[r]_{h_{k_2+1}}&\cdots\ar[r]_{h_p} &B'_p
}
$$
such that the arrows 
$$
\xymatrix{
B_0\ar[r]_{g_1} &\cdots\ar[r]_{g_{j_0}} &[A_{i_0}]
	    \ar[r]_{g_{j_0+1}} & \cdots\ar[r]_{g_{j_1}} & A_{i_1}\ar[r]_{g_{j_1+1}} 
      & \cdots\ar[r]_{g_{j_2}} & [C] \ar[r]_{g_{j_2+1}}&\cdots\ar[r]_{g_{m}} & B_{m}
}$$
and
$$
\xymatrix{
B'_0\ar[r]_{h_1} &\cdots\ar[r]_{h_{k_0}} & [D]\ar[r]_{h_{k_0+1}} & \cdots \ar[r]_{h_{k_1}} 
      & A_{i_0} \ar[r]_{h_{k_1+1}}
      &\cdots \ar[r]_{h_{k_2}} & [A_{i_1}]\ar[r]_{h_{k_2+1}}&\cdots\ar[r]_{h_p} &B'_p
}$$
have companions in $\Dbl(\calB)$.
By Proposition \ref{comp-quasi} this implies that $A_{i_0}=C$, $D=A_{i_1}$ and
both chosen composites $\varphi_{g_{j_0+1}\cdots g_{j_2}}$ and $\varphi_{h_{k_0+1}\cdots h_{k_2}}$
are quasi units in $\calB$. So the chosen composite $\varphi_{g_{j_1+1}\cdots g_{j_2}}$
is a pseudo-inverse for $\varphi_{f_{i_0+1}\cdots f_{i_1}}$, making this arrow an equivalence in $\calB$. 
\end{proof}

\begin{prop}\label{D:equiv-precomp}
 Let $\bbX$ be a weakly globular double category with a horizontal arrow $f\colon A\rightarrow B$.
Then the arrow $f\colon \bar{A}\rightarrow\bar{B}$
in $\Bic(\bbX)$ is an equivalence if and only if  $f$ is a precompanion in $\bbX$.
\end{prop}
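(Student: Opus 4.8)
The plan is to derive both implications from Proposition~\ref{quasi-comp}, which identifies the horizontal arrows of $\bbX$ with a companion as exactly the quasi units of $\Bic\bbX$, together with the explicit description of composition in $\Bic\bbX$ from Section~\ref{D:Bic}. The only delicate point is the relation between horizontal composites formed in $\bbX$ and composites formed in $\Bic\bbX$ via the induced Segal maps, so first I would record the following observation: if $p\colon P\to Q$ and $q\colon Q\to R$ are horizontal arrows of $\bbX$ that are composable on the nose, then the composite of $q$ and $p$ computed in $\Bic\bbX$ is vertically isomorphic to the horizontal composite $q\circ p$ formed in $\bbX$. Indeed, in the Segal construction of that composite one chooses an invertible vertical arrow between two copies of $Q$; since vertical arrows in a weakly globular double category are unique this arrow is $1_Q$, so the comparison cells $\varphi_{q_3,q}$ and $\varphi_{p_3,p}$ are horizontally composable and their pasting is a vertically invertible double cell from the $\Bic\bbX$-composite $q_3\circ p_3$ to $q\circ p$. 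Combined with Proposition~\ref{quasi-comp}, this gives: $q\circ p$ formed in $\bbX$ has a companion if and only if the composite of $q$ and $p$ in $\Bic\bbX$ is a quasi unit.

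Then, for the ``if'' direction, I would take an equivalence $f\colon\bar A\to\bar B$ in $\Bic\bbX$ with a pseudo-inverse $g$, represented by a horizontal arrow $g\colon B_1\to A_1$ of $\bbX$, and apply the induced Segal maps condition to $f$ and $g$ (which are composable in $\Bic\bbX$) as in Section~\ref{D:Bic}. This produces horizontal arrows $f_3,g_3$ of $\bbX$, composable on the nose, with vertically invertible double cells $\varphi_{f_3,f}\colon f_3\Rightarrow f$ and $\varphi_{g_3,g}\colon g_3\Rightarrow g$, and such that the $\Bic\bbX$-composite of $f$ and $g$ is the horizontal composite $g_3\circ f_3$ in $\bbX$. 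Since this is isomorphic to $1_{\bar A}$ it is a quasi unit, so by Proposition~\ref{quasi-comp} $g_3\circ f_3$ has a companion in $\bbX$; taking $f'=f_3$, $\varphi=\varphi_{f_3,f}^{-1}$ and $r_f=g_3$ in Definition~\ref{precomp} makes $f$ a left precompanion. The symmetric argument applied to the pair $g,f$, using $f\circ g\cong 1_{\bar B}$, makes $f$ a right precompanion, hence a precompanion.

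For the converse, suppose $f$ is a precompanion. By Lemma~\ref{lrprecomps} there are horizontal arrows $\bar l_f\colon D\to A'$, $\bar f\colon A'\to B'$, $\bar r_f\colon B'\to C$ and a vertically invertible double cell $\psi\colon f\Rightarrow\bar f$ such that $\bar r_f\circ\bar f$ and $\bar f\circ\bar l_f$ are companions in $\bbX$. Both composites are on the nose, so by the observation of the first paragraph and Proposition~\ref{quasi-comp} the composites of $\bar r_f,\bar f$ and of $\bar f,\bar l_f$ in $\Bic\bbX$ are quasi units; in particular $\bar C=\bar{A'}$, $\bar D=\bar{B'}$ and $\bar r_f\circ\bar f\cong 1_{\bar{A'}}$, $\bar f\circ\bar l_f\cong 1_{\bar{B'}}$ in $\Bic\bbX$. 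Thus $\bar r_f$ is a left and $\bar l_f$ a right pseudo-inverse of $\bar f$, and the standard manipulation $\bar r_f\cong\bar r_f\circ(\bar f\circ\bar l_f)\cong(\bar r_f\circ\bar f)\circ\bar l_f\cong\bar l_f$ (using the associativity and unit isomorphisms of $\Bic\bbX$) shows $\bar l_f$ is a two-sided pseudo-inverse, so $\bar f$ is an equivalence; since $\psi$ witnesses $f\cong\bar f$ in $\Bic\bbX$, so is $f$. Alternatively one could argue via the vertical equivalence $\bbX\simeq\Dbl(\Bic\bbX)$, using that pseudo-functors preserve companions by Proposition~\ref{comp-pres} and hence precompanions, together with Proposition~\ref{D:precomp-equiv}. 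The main obstacle is the comparison observation of the first paragraph; once on-the-nose composites in $\bbX$ are matched with composites in $\Bic\bbX$, the remaining steps are bookkeeping with Proposition~\ref{quasi-comp}, Lemma~\ref{lrprecomps} and Definition~\ref{precomp}.
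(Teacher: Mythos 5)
Your proof is correct and follows essentially the same strategy as the paper: the forward direction reads the precompanion structure off the Segal-map diagrams for $gf$ and $fg$ combined with Proposition~\ref{quasi-comp}, and the converse applies Proposition~\ref{quasi-comp} to the companion composites and finishes with the standard left-inverse/right-inverse manipulation. The only cosmetic differences are that you state the comparison between on-the-nose $\bbX$-composites and $\Bic\bbX$-composites as an explicit preliminary observation (the paper leaves this implicit), and that you route the converse through Lemma~\ref{lrprecomps} to unify $f'$ and $f''$ into $\bar f$, whereas the paper works directly with the cells $\varphi$, $\varphi'$ to exhibit $r_f$ and $l_f$ as one-sided pseudo-inverses of $f$ itself.
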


\begin{proof}
 Suppose that $f\colon \bar{A}\rightarrow\bar{B}$ has pseudo inverse $g\colon \bar{B}\rightarrow\bar{A}$.
The two  compositions of these arrows in $\Bic(\bbX)$ are given by the horizontal compositions
$\tilde{g}\circ \tilde{f}$  and $\hat{f}\circ\hat{g}$ as in the following diagrams of double cells
\begin{equation}\label{firstcomp}
\xymatrix{
\tilde{A} \ar[dd]|\bullet\ar[r]^{\tilde{f}}\ar@{}[ddr]|\cong 
	&\tilde{B}\ar[d]|\bullet\ar[r]^{\tilde{g}}\ar@{}[dr]|\cong 
	& \tilde{A}'\ar[d]|\bullet 
\\
&B'\ar[d]|\bullet\ar[r]_g&A'
\\
A\ar[r]_f & B
}\end{equation}
and
\begin{equation}\label{secondcomp}
\xymatrix{
\hat{B}' \ar[dd]|\bullet\ar[r]^{\hat{g}}\ar@{}[ddr]|\cong 
	&\hat{A}\ar[d]|\bullet\ar[r]^{\hat{f}}\ar@{}[dr]|\cong 
	& \hat{B}\ar[d]|\bullet 
\\
&A\ar[d]|\bullet\ar[r]_f&B
\\
B'\ar[r]_g & A'\rlap{\quad.}
}\end{equation}
By Proposition \ref{quasi-comp}, the compositions $\tilde{g}\circ\tilde{f}$ and $\hat{f}\circ\hat{g}$
are companions in $\bbX$ and the diagrams (\ref{firstcomp}) and (\ref{secondcomp}) show 
that $f$ is a precompanion
with $f'=\tilde{f}$, $r_f=\tilde{g}$, $f''=\hat{f}$ and $l_f=\hat{g}$.

Now suppose that $f$ is a precompanion in $\bbX$ with additional arrows and cells as in Definition \ref{precomp}:
$$
\xymatrix{
\ar[r]^f\ar[d]|\bullet\ar@{}[dr]|\varphi &\ar[d]|\bullet &&&\ar[r]^f\ar[d]|\bullet\ar@{}[dr]|{\varphi'}&\ar[d]|\bullet
\\
\ar[r]_{f'} &\ar[r]_{r_f} & &\ar[r]_{l_f} & \ar[r]_{f''} &\rlap{\quad.}
}$$
By Proposition \ref{quasi-comp}, $r_f\circ f'$ and $f''\circ l_f$ are quasi units in $\Bic(\bbX)$, say with 
invertible 2-cells $\alpha\colon r_f\circ f'\Rightarrow\mbox{Id}_{\bar{A}}$ and
$\beta\colon  f''\circ l_f\Rightarrow\mbox{Id}_{\bar{B}}$.
So we have the composites,
$r_f f\stackrel{r_f\varphi}{\Rightarrow} r_f\circ f'\stackrel{\alpha}{\Rightarrow}\mbox{Id}_{\bar{A}}$
and $f\,l_f\stackrel{\varphi' \,l_f}{\Rightarrow} f''\circ l_f\stackrel{\beta}{\Rightarrow}\mbox{Id}_{\bar{B}}$.
All these 2-cells are invertible and this implies that $f\colon \bar{A}\rightarrow\bar{B}$ in $\Bic(\bbX)$ is 
an equivalence.
\end{proof}

\begin{prop}
 A pseudo-functor between weakly globular double categories preserves precompanions.
\end{prop}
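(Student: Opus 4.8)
The plan is to reduce the statement to results already established in the excerpt, using the chain of equivalences between precompanions in weakly globular double categories and equivalences in the associated bicategories. Concretely, let $F\colon \bbX\rightarrow\bbY$ be a pseudo-functor of weakly globular double categories and let $f\colon A\rightarrow B$ be a precompanion in $\bbX$. By Proposition \ref{D:equiv-precomp}, $f$ is a precompanion in $\bbX$ if and only if $f\colon\bar{A}\rightarrow\bar{B}$ is an equivalence in $\Bic(\bbX)$. Now the pseudo-functor $F$ corresponds, under the biequivalence $\Bic\colon\WGDbl_{\ps,v}\simeq\NHom$, to a homomorphism of bicategories $\Bic(F)\colon\Bic(\bbX)\rightarrow\Bic(\bbY)$. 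Since homomorphisms of bicategories preserve internal equivalences (this is standard: the pseudo-inverse, the unit and counit 2-cells are all carried over and remain invertible under $\Bic(F)$), the arrow $\Bic(F)(f)$ is an equivalence in $\Bic(\bbY)$. Applying Proposition \ref{D:equiv-precomp} in the other direction then shows that the corresponding horizontal arrow is a precompanion in $\bbY$.

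The one point that needs care is matching up $\Bic(F)(f)$ with $F_1(f)$. On objects and on horizontal arrows the functor $\Bic$ is essentially the identity (the objects of $\Bic(\bbX)$ are the path components of $\bbX_0$ and the arrows are horizontal arrows of $\bbX$ without any quotient), so the horizontal arrow $F_1(f)$ of $\bbY$ represents precisely the arrow $\Bic(F)(f)\colon\overline{F_0A}\rightarrow\overline{F_0B}$ in $\Bic(\bbY)$ — up to the coherence isomorphisms $(\varphi_0)_f$, $(\varphi_1)_f$ of the pseudo-functor, which are vertically invertible double cells and hence identified in $\Bic(\bbY)$. Thus $\Bic(F)(f)$ being an equivalence is the same statement as $F_1(f)$ being a precompanion, and the argument closes.

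Alternatively — and this gives a proof that does not invoke the biequivalence machinery — one can argue directly. Unwind Definition \ref{precomp}: there are horizontal arrows $f'$, $r_f$, $f''$, $l_f$ and vertically invertible cells $\varphi$, $\varphi'$ with $r_f\circ f'$ and $f''\circ l_f$ companions in $\bbX$. Apply $F$. By Proposition \ref{comp-pres}, pseudo-functors between weakly globular double categories preserve companions, so $F_1(r_f\circ f')$ and $F_1(f''\circ l_f)$ have companions; using the structure isomorphism $\mu$ of $F$, which is a vertically invertible cell relating $F_1(r_f\circ f')$ to $F_1(r_f)\circ F_1(f')$, and the fact that a horizontal arrow vertically isomorphic to a companion is again a companion, we get that $F_1(r_f)\circ F_1(f')$ is a companion, and similarly $F_1(f'')\circ F_1(l_f)$ is a companion. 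Finally, $F_1(\varphi)$ and $F_1(\varphi')$ are vertically invertible cells (images of vertically invertible cells under a functor) exhibiting $F_1(f)$ as vertically isomorphic to $F_1(f')$ with codomain composable into $F_1(r_f)$, and dually; this is exactly the data of Definition \ref{precomp}, so $F_1(f)$ is a left and a right precompanion, hence a precompanion.

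**Main obstacle.** The routine bookkeeping is in the second approach: one must keep track of which vertical arrows and cells are being composed when passing from $F_1(r_f\circ f')$ to $F_1(r_f)\circ F_1(f')$, and verify that "vertically isomorphic to a companion is a companion" — a small lemma (the binding cells of the isomorphic companion are obtained by pasting with the isomorphism, exactly as in the last paragraph of the proof of Lemma \ref{lrprecomps}). In the first approach the only subtlety is the identification of $\Bic(F)(f)$ with $F_1(f)$ modulo the pseudo-functor coherence cells, which is immediate once one recalls how $\Bic$ acts on horizontal arrows. I would present the short conceptual argument via Proposition \ref{D:equiv-precomp} as the main proof, since all the hard work has already been done in establishing that proposition and Proposition \ref{comp-pres}.
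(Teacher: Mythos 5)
Your proposal is correct, and it actually contains the paper's own proof as your ``alternative'' direct argument: the paper fixes the data of a left precompanion $f$ with $\varphi$, $f'$, $g=r_f$ and the companion $v_f$, applies $F$, invokes Proposition~\ref{comp-pres} to see that $F_1(gf')$ has a companion, and then pastes $F\varphi$, the coherence cells $(\varphi_i)$, and $\mu_{g,f'}^{-1}$ to exhibit $\pi_1F_2(g,f')\circ\pi_2F_2(g,f')$ as a companion with $F_1(f)$ vertically isomorphic to $\pi_2F_2(g,f')$. The one inaccuracy in your sketch of this route is writing the composite as $F_1(r_f)\circ F_1(f')$: these two horizontal arrows need not be composable (the codomain of $F_1(f')$ and the domain of $F_1(r_f)$ agree only up to the vertical isomorphisms $(\varphi_i)$), which is precisely why the paper uses $\pi_iF_2(g,f')$ and the $\theta_i$-cells instead. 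That is a small bookkeeping issue, not a gap.

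Your preferred conceptual route via $\Bic$ is a genuinely different argument from the paper's, and it is also sound. It transports the problem across the biequivalence $\Bic\colon\WGDbl_{\ps,v}\simeq\NHom$, reduces to the standard fact that homomorphisms of bicategories preserve internal equivalences, and then translates back using Proposition~\ref{D:equiv-precomp} in both directions; the only care needed, as you flag, is that $\Bic(F)(f)$ and $F_1(f)$ agree only up to an invertible 2-cell in $\Bic(\bbY)$ (equivalently, up to a vertical isomorphism in $\bbY$), which does not affect being an equivalence or a precompanion. What this route buys is brevity and conceptual clarity; what it costs is an appeal to the action of the 2-functor $\Bic$ on pseudo-functors, which the paper describes only implicitly via the biequivalence. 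The paper opts for the direct double-categorical computation, presumably to keep the argument self-contained and because it wants an explicit precompanion structure in $\bbY$ (which it then uses later, e.g.\ in the discussion following Lemma~\ref{W-hom}).
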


\begin{proof}
 Let $F\colon \bbX\rightarrow\bbY$ be a pseudo-functor between two weakly globular double 
categories $\bbX$ and $\bbY$, and let $\xymatrix@1{A\ar[r]^f&B}$ be a left precompanion in 
$\bbX$, with 
$$
\xymatrix{
A\ar[r]^f\ar[d]|\bullet\ar@{}[dr]|\varphi &B\ar[d]|\bullet
\\
A'\ar[r]_{f'}&B'\ar[r]_g & C
}$$ 
as in Definition \ref{precomp}, companion $v_f$, and binding cells
$$
\xymatrix{
A'\ar@{=}[d]\ar@{=}[rr] \ar@{}[drr]|{\psi_f} && A'\ar[d]|\bullet^{v_f}\ar@{}[drr]|{\mbox{and}}
	&& A'\ar[r]^{f'}\ar[d]|\bullet_{v_f} \ar@{}[drr]|{\chi_f} &B'\ar[r]^g & C\ar@{=}[d]
\\
A'\ar[r]^{f'}&B'\ar[r]^g & C && C\ar@{=}[rr]&& C\rlap{\,.}
}$$

Then we have the following diagram in $\bbY$ 
$$
\xymatrix@C=4em@R=2.5em{
\ar[r]^{Ff} \ar[d]|\bullet\ar@{}[dr]|{F\varphi}&\ar[d]|\bullet
\\
\ar[r]_{Ff'}\ar[dd]|\bullet\ar@{}[ddr]|\cong & \ar[d]|\bullet
\\
&\ar[r]^{Fg}\ar@{}[dr]|\cong\ar[d]|\bullet &\ar[d]|\bullet 
\\
\ar[d]|\bullet  \ar[r]_{\pi_2F_2(g,f')}&\ar@{}[d]|(.65){\mu_{g,f'}^{-1}} \ar[r]_{\pi_1F_2(g,f')}&\ar[d]|\bullet
\\
\ar[rr]_{F_1(gf')} &&\rlap{\quad.}
}$$
By Proposition \ref{comp-pres}, $F_1(gf')$ is a companion in $\bbY$, so  $\pi_1F_2(g,f')\circ\pi_2F_2(g,f')$
is a companion as well.

The fact that pseudo-functors preserve right precompanions goes similarly.
\end{proof}

\section{The weakly globular double category $\Dbl(\bfC(W^{-1}))$}\label{univ1}

Given a category $\bfC$ with a class $W$ of arrows which satisfies the conditions 
to define a category of fractions $\bfC[W^{-1}]$ as given by Gabriel and Zisman (and repeated in Section \ref{GZ-condns} below),
it is possible to not just define the category of fractions $\bfC[W^{-1}]$, but also the bicategory of fractions denoted by $\bfC(W^{-1})$.
Furthermore, the bicategory of fractions is defined precisely when the category of fractions is defined.
The bicategory of fractions has a universal property in terms of hom categories rather than just hom sets,
but the fundamental category of this bicategory is the original category of fractions and the bicategory of fractions 
is even biequivalent to (as bicategories) the category of fractions. 
This shows us that the hom categories in the bicategory of fractions will just consist 
of the equivalence relation on the
arrows of the category of fractions, as described in \cite{GZ}, i.e., there is 2-cell between any two parallel arrows in the
bicategory of fractions precisely when the two arrows are equivalent in the category of fractions and this 2-cell is unique.
In the remainder of this paper we want to study the weakly globular double categories that 
can be used as a counterpart of the bicategory of fractions $\bfC(W^{-1})$.
We will describe their universal properties and finally find a weakly globular double category that can be viewed as the weakly globular double
category of fractions. The absence of non-identity 2-dimensional cells in $\bfC$ simplifies the calculations and helps us to see what 
the basic properties of such weakly globular double categories are. 
In the sequel \cite{sequel} to this paper we will show how the cells of a bicategory or weakly globular double category 
can be added to this construction in a very natural way to give the weakly globular double category of 
fractions of a weakly globular double category with respect to a class of horizontal arrows.

\subsection{Categories of fractions}\label{GZ-condns}
Let ${\bfC}$ be a category with a class $W\subseteq \bfC_1$ of arrows satisfying the conditions 
for a calculus of fractions
given by Gabriel and Zisman \cite{GZ} (which imply the existence of a category of fractions ${\bfC}[W^{-1}]$):
\begin{itemize}
\item{\bf CF1} $W$ contains all isomorphisms in ${\bfC}$ and is closed under composition;
\item{\bf CF2} For any diagram $\xymatrix@1{C\ar[r]^f&B&\ar[l]_w A}$ in $\bfC$ 
with $w\in W$, there exist arrows $\xymatrix@1{D \ar[r]^{\overline{f}} & A}$ and 
$\xymatrix@1{D\ar[r]^{\overline{w}} &C}$ with $\overline{w}\in W$ such that
$$
\xymatrix@R=1.8em@C=1.8em{
D\ar[r]^{\overline{f}}\ar[d]_{\overline{w}} & A\ar[d]^w
\\
C\ar[r]_f & B}
$$
commutes;
\item{\bf CF3} For any diagram $\xymatrix@1{A\ar@<-.5ex>[r]_f\ar@<.5ex>[r]^g &B\ar[r]^w& C}$ 
with $w\in W$, such that $wf=wg$, there exists an arrow $\xymatrix@1{X\ar[r]^{\tilde{w}}&A}$
in $W$ such that $f\tilde{w}=g\tilde{w}$.
\end{itemize}
Note that the conditions on a class of arrows $W$ in a bicategory ${\bfC}$ 
to form a bicategory of fractions coincide with these conditions when $\bfC$ is a category (cf.~\cite{Pr-comp}).
So when a class $W$ of arrows satisfies these conditions, both the category ${\bfC}[W^{-1}]$
and the bicategory ${\bfC}(W^{-1})$ are defined (and the former can be obtained as a quotient of the latter).

\begin{eg}
 {\rm{\em Manifold Atlases} 
Consider manifolds with atlases such that the intersection of two atlas charts is 
either itself a chart or covered by charts.
For any smooth map between two manifolds $f\colon M\rightarrow N$ 
there are such atlases $\calU$ for $M$ and $\calV$ for $N$ such that for each 
chart $U\in{\calU}$, there is a chart $V\in\calV$ with a smooth map $f_U\colon U\rightarrow V$
and such that for any pair of charts with a chart embedding $\lambda\colon U_1\hookrightarrow U_2$,  
there are charts $V_1$ and $V_2$
in $\calV$ with a chart embedding $\mu\colon V_1\hookrightarrow V_2$ and such that $f_{U_i}\colon U_i\rightarrow V_i$
for $i=1,2$ and the diagram
$$\xymatrix@R=1.8em@C=2em{
U_1\ar[d]_\lambda\ar[r]^{f_{U_1}} & V_1\ar[d]^\mu
\\
U_2\ar[r]_{f_{U_2}} & V_2
}
$$
commutes.
We may choose to start with manifolds represented by such atlases (without explicitly keeping track of 
the underlying spaces)
and consider atlas maps to be such families of smooth maps that are locally compatible as described above.
Two such atlas maps $(f_U)_{U\in\mathcal{U}}$  and $(g_U)_{U\in\mathcal{U}}$ are equivalent when 
for each $U\in\mathcal{U}$ with $f_U\colon U\rightarrow V_{f,U}$ and $g_U\colon U\rightarrow V_{g,U}$, 
there is a chart $V\in \mathcal{V}$ with a smooth map $h_U\colon U\rightarrow V$ and chart
embeddings $\lambda_f\colon V\hookrightarrow V_{f,U}$ and $\lambda_{g}\colon V\hookrightarrow V_{g,U}$
such that the following diagram commutes,
$$
\xymatrix{
&V_{f,U}\\
U\ar[ur]^{f_U}\ar[r]^{h_U}\ar[dr]_{g_U} & V\ar[u]_{\lambda_f}\ar[d]^{\lambda_g}
\\
&V_{g,U}\rlap{\,.}
}
$$
The category {\sf Atlases} consists of atlases and equivalence classes of atlas maps.

To obtain the usual category {\sf Mfds} of manifolds from {\sf Atlases} we do the following.
First, note that if $\calU$ and $\calU'$ are two atlases for the same manifold and $\calU'$ is a refinement 
of $\calU$, then there is  a further refinement $\calU''$ of $\calU'$ with an atlas map $\calU''\rightarrow\calU$.
We will call this a refinement atlas map.
Furthermore, the class $W$ of (equivalence classes of) refinement atlas maps satisfies the conditions {\bf CF1}, {\bf CF2} and {\bf CF3} above.
The corresponding category of fractions $\mbox{\sf Atlases}[W^{-1}]$ is categorically equivalent to {\sf Mfds}.
Arrows in this category can be thought of as first taking an atlas refinement and then mapping out.
In the category of fractions we consider two such maps to be the same if there is a common refinement
on which they would become the same (and this is the case precisely when they induce the same maps on the
underlying manifolds.) If we instead consider the bicategory of fractions, we consider two maps defined by 
different refinements as distinct but there is a unique invertible 2-cell between two such maps if
there is a common refinement where they become the same. }
\end{eg}

\subsection{Universal properties}
Both the category and bicategory of fractions have a universal property as a (weak) coinverter \cite{KLW}.
Specifically, a version of the universal property of the bicategory of fractions is given in the following theorem.

\begin{thm} \label{bicatuni} For a bicategory $\calB$ and a class $W$ of arrows satisfying the conditions to admit a bicategory of fractions,
there is a homomorphism of bicategories $J_W\colon\calB\rightarrow\calB(W^{-1})$
such that composition with $J_W$ induces an equivalence of categories
$$
\Bicat(\calB(W^{-1}),\calD)\simeq\Bicat_W(\calB,\calD),
$$
for any bicategory $\calD$.
\end{thm}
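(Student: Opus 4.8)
The plan is to recall the argument from \cite{Pr-comp}. First I would set up the construction of $\calB(W^{-1})$: it has the objects of $\calB$; a $1$-cell $A\to B$ is a span $A\xleftarrow{w}C\xrightarrow{f}B$ with $w\in W$; a $2$-cell from $(w,C,f)$ to $(w',C',f')$ is an equivalence class of data $(D,u,v,\alpha,\beta)$ consisting of arrows $u\colon D\to C$, $v\colon D\to C'$ with $wu\in W$, together with an invertible $2$-cell $\alpha\colon wu\Rar w'v$ and a $2$-cell $\beta\colon fu\Rar f'v$; composition of spans is obtained by filling a cospan using \textbf{CF2} (and \textbf{CF1} to keep the left leg in $W$); and $J_W$ sends $A\mapsto A$ and $f\colon A\to B$ to the span $A\xleftarrow{1_A}A\xrightarrow{f}B$. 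Since $J_W(w)$ has the span $B\xleftarrow{w}C\xrightarrow{1_C}C$ as an equivalence inverse whenever $w\in W$, precomposition with $J_W$ does define a functor $(-)\circ J_W\colon\Bicat(\calB(W^{-1}),\calD)\to\Bicat_W(\calB,\calD)$, and the task is to prove it is an equivalence of categories by checking essential surjectivity and full faithfulness.

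For essential surjectivity, given $G\in\Bicat_W(\calB,\calD)$ I would choose for each $w\in W$ an adjoint equivalence inverse $\overline{G(w)}$ of $G(w)$ in $\calD$, together with invertible unit and counit, and then define $\tilde G\colon\calB(W^{-1})\to\calD$ by: $\tilde G$ agrees with $G$ on objects; $\tilde G$ sends a span $(w,C,f)$ to $G(f)\circ\overline{G(w)}$; and $\tilde G$ sends a $2$-cell $(D,u,v,\alpha,\beta)$ to the $2$-cell in $\calD$ obtained by pasting $G(\alpha)$, $G(\beta)$, the coherence constraints of $G$, and the chosen units and counits. The composition constraint of $\tilde G$ is assembled from the square produced by \textbf{CF2} in the definition of composition of spans, together with the associativity and unit isomorphisms of $\calD$. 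One then checks that $\tilde G$ is a homomorphism and that the chosen counits form an invertible transformation $\tilde G\circ J_W\cong G$ (using that $G$ preserves identities up to coherent isomorphism, so $\overline{G(1_A)}\cong 1_{G(A)}$).

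For full faithfulness the key remark is that in $\calB(W^{-1})$ every span $(w,C,f)$ is canonically isomorphic to $J_W(f)\circ\bigl(J_W(w)\bigr)^{-1}$, where $\bigl(J_W(w)\bigr)^{-1}$ is the equivalence inverse described above. Hence a transformation $\eta\colon H\Rar K$ between homomorphisms $H,K\colon\calB(W^{-1})\to\calD$ is determined by its object components and its naturality $2$-cells at the $1$-cells $J_W(f)$ and $J_W(w)$ --- the latter determine the naturality cell at $\bigl(J_W(w)\bigr)^{-1}$ via the adjoint-equivalence mate construction, and these then determine $\eta$ on all spans by the transformation axioms; in other words, $\eta$ is recovered from $\eta J_W$. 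Conversely, given a transformation $\sigma\colon HJ_W\Rar KJ_W$, I would define its extension on a general span by transporting $\sigma_f$ along the canonical isomorphism above and combining it with the mate of $\sigma_w$, and then verify the transformation axioms, using \textbf{CF1}--\textbf{CF3} to compare the different choices involved. Together with the previous step this yields the claimed equivalence of categories.

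The main obstacle, and the place where essentially all the work of \cite{Pr-comp} is spent, is verifying that $\tilde G$ really is a homomorphism --- in particular that its composition constraint satisfies the associativity pentagon and the unit triangles. This is a diagram chase in $\calD$ combining the triangle identities for the chosen adjoint equivalences $\bigl(G(w),\overline{G(w)}\bigr)$, the coherence axioms for $G$, and the coherence of $\calD$, and it must be carried out while keeping careful track of the \textbf{CF2} choices used to define composites; the subsidiary point that different choices of square fillers, and of adjoint inverses, yield the same maps up to the canonical isomorphisms is handled by the fact that any two parallel composites in $\calB(W^{-1})$ are related by a unique invertible $2$-cell.
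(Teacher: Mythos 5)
The paper does not give its own proof of this theorem: it is imported from \cite{Pr-comp}, and the remark that follows the statement only explains that the $2$-equivalence established there restricts to an equivalence of categories once modifications are discarded. Your sketch is therefore an unpacking of that cited reference rather than something to compare with an argument in this paper. As such, the overall shape you describe --- the span presentation of $\calB(W^{-1})$, the functor $J_W$, lifting $G$ to $\tilde G$ via chosen adjoint-equivalence inverses $\overline{G(w)}$, decomposing a general span as $J_W(f)\circ\bigl(J_W(w)\bigr)^{-1}$, and extending transformations by mates --- is indeed the structure of Pronk's proof.

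There is, however, one genuine error in your closing paragraph. You dismiss the choice-independence issues by appealing to ``the fact that any two parallel composites in $\calB(W^{-1})$ are related by a unique invertible $2$-cell.'' That is false for a general bicategory $\calB$: the hom-categories of $\calB(W^{-1})$ are posetal only when $\calB$ itself is locally posetal (e.g.\ a $1$-category, which is the special case the rest of this paper works in via the lemma in Section~5.4, but not the generality in which this theorem is stated). For arbitrary $\calB$, different CF2 fillers yield composites related by a \emph{canonical} comparison isomorphism which must be constructed explicitly from the unit/counit data of the adjoint equivalences and the coherence constraints of $\calD$; uniqueness does not come for free, and the pentagon and triangle axioms for $\tilde G$ must be checked against those specific comparison cells. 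This is exactly where the bulk of the work in \cite{Pr-comp} lies, so invoking a uniqueness principle that does not hold leaves the central verification of your sketch unsupported. A smaller point: for oplax (rather than pseudonatural) transformations you should make explicit that the objects of $\Bicat_W(\calB,\calD)$ on the transformation level are the $W$-transformations --- those whose naturality $2$-cell at each $w\in W$ is invertible --- since your mate argument silently uses this, and it is precisely the condition singled out in the paper's remark following the theorem.
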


\begin{rmks}{\rm
\begin{enumerate}
\item
In this theorem, we may take $\Bicat(\calB,\calD)$ to be the category of homomorphisms and oplax transformations,
and $\Bicat_W(\calB,\calD)$ the 2-category of homomorphism and  oplax transformations
which send elements of $W$ to internal equivalences. We will also call these $W$-homomorphisms and $W$-transformations.
(Note that we can make sense of this statement by representing a transformation in $\Bicat(\calB,\calD)$
by a homomorphism $\calB\rightarrow \mbox{Cyl}\,(\calD)$.) There is also a version for lax transformations.
\item
Note that \cite{Pr-comp} gives this universal property as an equivalence of 2-categories, which involves also the modifications.
However, the proof given there also shows that this restricts to an equivalence of categories when we just ignore the modifications.
\end{enumerate}}
\end{rmks}

So when we consider the notion of weakly globular double category of fractions, we would like to get a similar universal 
property in a 2-category of weakly globular double categories. However, there are two choices for such a 2-category, 
$\WGDbl_{\st,h}$ and $\WGDbl_{\ps,v}$, which are generally very different.
Ideally, we would like to have a weakly globular double category of fractions which has a universal property in both
of these 2-categories. We will begin by applying the 2-functor $\Dbl$ to the bicategory of fractions $\bfC(W^{-1})$ of
a category $\bfC$ and study the universal property of the resulting weakly globular double category.

In order to be able to translate the universal property in Theorem \ref{bicatuni}
into a universal property involving weakly globular bicategories, we note that
the equivalence of categories given there restricts to the categories with  icons as arrows instead of general oplax transformations,
\begin{equation}\label{univequiv}
\NHom(\bfC(W^{-1}),\calD)\simeq\Bicat_{\mbox{\scriptsize\rm icon},W}(\bfC,\calD).
\end{equation}
When we apply the 2-functor $\Dbl$ to this equivalence, we obtain
$$
\WGDbl_{\ps,v}(\Dbl(\bfC(W^{-1})),\Dbl(\calD))\simeq\WGDbl_{\ps,v,W}(\Dbl(\bfC),\Dbl(\calD)),
$$
where the righthand side is for now just defined as the image of $\Bicat_{\mbox{\scriptsize\rm icon},W}(\bfC,\calD)$ 
under the 2-functor $\Dbl$.
In the next sections we will study $\Dbl(\bfC(W^{-1}))$ in more detail and give a description of the pseudo-functors
and vertical transformations in $\WGDbl_{\ps,v,W}(\Dbl(\bfC),\Dbl(\calD))$, so that we can extend this to an
equivalence
$$
\WGDbl_{\ps,v}(\Dbl(\bfC(W^{-1})),\bbX)\simeq\WGDbl_{\ps,v,W}(\Dbl(\bfC),\bbX),
$$
for an arbitrary weakly globular double category $\bbX$.

\subsection{The weakly globular double category $\Dbl(\bfC(W^{-1}))$}
The objects of the weakly globular double category $\Dbl(\bfC(W^{-1}))$ are of the form
$$\xymatrix@C=3em{&\ar[l]_-{q_1}\ar[r]^-{p_1} &\ar@{..}[r]&&\ar[l]_-{q_{i_0}}\ar[r]^-{p_{i_0}}&[A_{i_0}]
	&\ar[l]_-{q_{{i_0}+1}}\ar[r]^-{p_{{i_0}+1}}&\ar@{..}[r]&&\ar[l]_-{q_n}\ar[r]^-{p_n}&}$$
and the arrows are of the form
$$
\xymatrix{&\ar[l]_-{q_1}\ar[r]^-{p_1} &\ar@{..}[r]&&\ar[l]_{q_{i_0}}\ar[r]^-{p_{i_0}}&[A_{i_0}]
\ar@{..}[r]&&\ar[l]_-{q_{i_1}}\ar[r]^-{p_{i_1}}&[A_{i_1}]\ar@{..}[r]&&\ar[l]_-{q_n}\ar[r]^-{p_n}&}
$$
To define 2-cells we choose a composite $\xymatrix{&\ar[l]_{q}\ar[r]^p&}$ 
for each path $$\xymatrix@C=3em{&\ar[l]_-{q_{i_0+1}}\ar[r]^-{p_{i_0+1}}&\ar@{..}[r]&&\ar[l]_-{q_{i_1}}\ar[r]^-{p_{i_1}}&\rlap{\quad.}}$$
Then 2-cells have the form
$$
\xymatrix@C=3em{
A_0 &\ar[l]_-{q_1}\ar[r]^-{p_1} & \cdots [A_{i_0}]\ar@{=}[4,0] &\ar[l]_-{q_{i_0+1}}\ar[r]^-{p_{i_0+1}}&\cdots & \ar[l]\ar[r]& [A_{i_1}]\ar@{=}[4,0]&\ar[l]\ar[r]&\cdots A_n
\\
&&&&\ar[ull]^{q}\ar[urr]_p
\\
&&&&\ar[u]_s\ar[d]^t
\\
&&&&\ar[dll]_{q'}\ar[drr]^{p'}
\\
B_0 &\ar[l]^-{q'_1}\ar[r]_-{p'_1} & \cdots [B_{j_0}] &\ar[l]^-{q'_{j_0+1}}\ar[r]_-{p'_{j_0+1}}&\cdots 
		& \ar[l]\ar[r]& [B_{j_1}]&\ar[l]\ar[r]&\cdots B_m\rlap{\,\,.}
}
$$

We introduce special terminology for the arrows in $\Dbl(\calC)$ which are related to the elements of $W$.

\begin{dfn}{\rm
A horizontal arrow in $\Dbl(\bfC)$ is a called a {\em $W$-doubly marked path} if it is of the form 
$$
 \xymatrix{
A_0\ar[r]^{f_1} & \cdots \ar[r]^{f_{i_0}}&[A_{i_0}]\ar[r]^{f_{i_0+1}} & \cdots \ar[r]^{f_{i_1}}
      &[A_{i_1}]\ar[r]^{f_{i_1+1}}&\cdots\ar[r]^{f_{i_n}}&A_n
}
$$
with $f_{i_1}\circ\cdots\circ f_{i_0+1}\in W$.}
\end{dfn}

Since precompanions in weakly globular double categories correspond to equivalences in bicategories
as shown in Proposition \ref{quasi-comp} and Proposition \ref{comp-quasi}, we can characterize the 
arrows in the image of the $W$-homomorphisms under $\Dbl$ as follows.

\begin{lma}\label{W-hom}
 Let $F\colon \bfC\rightarrow \calD$ be a homomorphism in $\Bicat$.
Then $F$ is a $W$-homomorphism if and only if $\Dbl(F)$ sends a $W$-doubly marked path in $\Dbl(\bfC)$ 
to a precompanion in $\Dbl(\calD)$.
\end{lma}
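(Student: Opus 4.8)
The plan is to reduce the statement to what we already know about precompanions, equivalences, and the functor $\Dbl$. First I would unwind the definition of $\Dbl(F)$ on a $W$-doubly marked path
$$p=(\xymatrix@1{A_0\ar[r]^{f_1} & \cdots\ar[r]^{f_{i_0}} & [A_{i_0}]\ar[r]^{f_{i_0+1}} & \cdots\ar[r]^{f_{i_1}} & [A_{i_1}]\ar[r]^{f_{i_1+1}} & \cdots\ar[r]^{f_n} & A_n}),$$
where $w:=f_{i_1}\circ\cdots\circ f_{i_0+1}\in W$. By the construction of $\Dbl$ recalled in the subsection on the weakly globular double category of paths, a horizontal arrow of $\Dbl(\calD)$ is determined (up to the posetal structure) by the chosen composite of its "inner" segment, and the arrow $\Dbl(F)(p)$ has as its inner segment the image under $F$ of the path $(f_{i_0+1},\ldots,f_{i_1})$; its chosen composite is therefore canonically isomorphic (via the coherence 2-cells $\Phi$) to $F(w)$ composed with the structural comparison 2-cells of the homomorphism $F$. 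In particular $\Dbl(F)(p)$ is a horizontal arrow of $\Dbl(\calD)$ whose inner chosen composite is isomorphic in $\calD$ to $F(w)$.

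Next I would invoke Proposition \ref{D:precomp-equiv}: a horizontal arrow of $\Dbl(\calD)$ is a precompanion if and only if its chosen inner composite is an equivalence in $\calD$. Since $\Dbl(F)(p)$ has inner chosen composite isomorphic to $F(w)$, and equivalences in a bicategory are closed under isomorphism (and composition with the invertible structure 2-cells of $F$), $\Dbl(F)(p)$ is a precompanion in $\Dbl(\calD)$ if and only if $F(w)$ is an equivalence in $\calD$. Now recall that $F$ is a $W$-homomorphism precisely when it sends every arrow of $W$ to an internal equivalence in $\calD$. So: if $F$ is a $W$-homomorphism, then $F(w)$ is an equivalence for every $w\in W$ arising as the inner composite of a $W$-doubly marked path, hence $\Dbl(F)$ sends every $W$-doubly marked path to a precompanion. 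Conversely, if $\Dbl(F)$ sends every $W$-doubly marked path to a precompanion, then for every $w\in W$ we may realize $w$ as the inner composite of a $W$-doubly marked path — for instance the length-one path $\xymatrix@1{[A]\ar[r]^{w}&[B]}$ with both markings at the endpoints — and conclude $F(w)$ is an equivalence, so $F$ is a $W$-homomorphism.

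The one point that needs a little care — and which I expect to be the main (though still routine) obstacle — is the bookkeeping identifying the chosen inner composite of $\Dbl(F)(p)$ with $F(w)$ up to coherent invertible 2-cell. This requires chasing the chosen composites $\varphi_{f_{i_0+1},\ldots,f_{i_1}}$ and the comparison cells $\Phi$ through the definition of the 2-functor $\Dbl$ on homomorphisms, using the coherence axioms of $F$; but since all cells involved are invertible and the associativity/unit coherence pins them down uniquely, this is a direct verification rather than a genuine difficulty. A clean way to organize it is to note that $F(w)$ is an equivalence iff $\varphi_{F(f_{i_0+1}),\ldots,F(f_{i_1})}$ is (they differ by invertible comparison cells), and the latter is exactly the inner chosen composite of $\Dbl(F)(p)$, so Proposition \ref{D:precomp-equiv} applies verbatim.
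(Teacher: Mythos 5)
Your proof is correct and is essentially the argument the paper has in mind: the paper itself remarks that the lemma ``follows immediately from the results mentioned'' (namely Propositions \ref{comp-quasi} and \ref{D:precomp-equiv}), and your proposal is exactly a careful spelling-out of that immediate reduction, covering both directions of the biconditional. The only difference is cosmetic: the paper's written-out proof restricts to the left-to-right implication and instead of invoking Proposition \ref{D:precomp-equiv} abstractly, it explicitly constructs the precompanion data $(f', r_f, f'', l_f, \varphi, \varphi')$ in $\Dbl(\calD)$ from a chosen pseudo-inverse $(Fw)^*$, since (as the authors note) this explicit structure is what will be reused in Definitions \ref{W-ps-functor} and \ref{W-trafo}. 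Your more abstract argument and the paper's more constructive one are two presentations of the same underlying reduction; the paper's buys a concrete description of the precompanion data, yours buys a shorter and symmetric treatment of both implications.
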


Although this lemma follows immediately from the results mentioned, it is 
illuminating to consider the precompanion structures required in the proof of the left to right implication, so we include it here.

\begin{proof} (of Lemma \ref{W-hom})
Suppose that $F$ is a $W$-homomorphism.
Let $f$ be the horizontal arrow
$$
 \xymatrix@C=3em{
FA_0\ar[r]^{Ff_1} & \cdots \ar[r]^-{Ff_{i_0}}&[FA_{i_0}]\ar[r]^-{Ff_{i_0+1}} & \cdots \ar[r]^-{Ff_{i_1}}
      &[FA_{i_1}]\ar[r]^-{Ff_{i_1+1}}&\cdots\ar[r]^-{Ff_{i_n}}&FA_n
}
$$
in $\Dbl(\calD)$ with
$w=f_{i_1}\circ\cdots\circ f_{i_0+1}$ an arrow in $W$. 
Let $g\in \calD$ be the chosen composite for $F(f_{i_1})\circ\cdots\circ F(f_{i_0+1})$,
and $\phi\colon g\Rightarrow Fw$, the invertible coherence cell from $F$.
Furthermore, let $(Fw)^*$ be a chosen pseudo-inverse for $Fw$ (which exists since $F$ is a $W$-homomorphism)
with invertible unit and counit cells $\varepsilon_f\colon (Fw)^*\circ Fw\Rightarrow \mbox{Id}_{FA_{i_1}}$ and 
$\eta_f\colon \mbox{Id}_{FA_{i_0}}\Rightarrow Fw\circ(Fw)^*$. 
Then we take  
$f'$ in Definition \ref{precomp} to be the horizontal arrow
$$
\xymatrix{[FA_{i_0}]\ar[r]^-{Fw} & [FA_{i_1}]\ar[r]^-{(Fw)^*} & F_{i_0}}
$$
and $r_f$ to be the horizontal arrow
$$
\xymatrix{FA_{i_0}\ar[r]^-{Fw} & [FA_{i_1}]\ar[r]^-{(Fw)^*} & [F_{i_0}}].
$$
Then the double cell $\varphi$ in Definition \ref{precomp}, Part 1, is
$$
\xymatrix@R=4em{
FA_0\ar[r]^-{Ff_1} & \cdots \ar[r]^-{Ff_{i_0}}&[FA_{i_0}]\ar@{=}[d]\ar@/_2ex/[rr]_g\ar[r]^-{Ff_{i_0+1}} & \cdots \ar[r]^-{Ff_{i_1}}
      &[FA_{i_1}]\ar@{=}[d]\ar[r]^-{Ff_{i_1+1}}&\cdots\ar[r]^-{Ff_{i_n}}&FA_n\\
      &&[FA_{i_0}]\ar[rr]_-{Fw} \ar@{}[urr]|\phi&& [FA_{i_1}]\ar[r]_-{(Fw)^*} & F_{i_0}}
$$
Furthermore, the composition $r_f\circ f'$ is the horizontal arrow
$$
\xymatrix{[FA_{i_0}]\ar[r]^{Fw} & FA_{i_1}\ar[r]^{(Fw)^*} & [F_{i_0}}],
$$
and it is straightforward to check that $\varepsilon_f$ and its inverse give rise to the cells to make this a companion.

Analogously, $f''$ can be taken to be the horizontal arrow
$$
\xymatrix{
FA_{i_1}\ar[r]^{(Fw)^*}&[FA_{i_0}]\ar[r]^{Fw}& [FA_{i_1}]
}
$$
with
$l_f$ the arrow,
$$
\xymatrix{
[FA_{i_1}]\ar[r]^{(Fw)^*}&[FA_{i_0}]\ar[r]^{Fw}& FA_{i_1}.
}
$$
\end{proof}

Since every weakly globular double category is vertically equivalent to one of the form $\Dbl(\calD)$ 
for some bicategory $\calD$ and pseudo-functors of weakly globular double categories preserve precompanions, 
this leads us to the following definition.

\begin{dfn}\label{W-ps-functor}
{\rm A pseudo-functor $F\colon \Dbl(\bfC)\rightarrow \bbX$  is called a {\em $W$-pseudo-functor}
when it sends all $W$-doubly marked paths in $\Dbl(\bfC)$
to precompanions in $\bbX$.}
\end{dfn}

\begin{rmk}{\em
It is clear that we have some choice for the $f'$ and $f''$ in the proof of Lemma \ref{W-hom}. However, once they are chosen, 
$r_f$ and $l_f$ are fixed, since they need to be composable with $f'$ and $f''$ respectively.
We saw in Lemma \ref{lrprecomps} already that it is possible to choose $f'$ and $f''$ as the same horizontal arrow, $\bar{f}$.
One of the questions one may ask about the precompanion structure is whether one could require $l_f$ and $r_f$ to be equal as well.
The case of the proof above shows that one cannot ask this much for arbitrary precompanions.
We see here that we could have made $\bar{f}$ to be equal to
$$\xymatrix{FA_{i_1}\ar[r]^{(Fw)^*} & [FA_{i_0}] \ar[r]^{Fw}& [FA_{i_1}]\ar[r]^{(Fw)^*}&FA_{i_0}}.$$
However, that would have caused us to define $r_f$ to be
$$
\xymatrix{FA_{i_1}\ar[r]^{(Fw)^*} & FA_{i_0} \ar[r]^{Fw}& [FA_{i_1}]\ar[r]^{(Fw)^*}&[FA_{i_0}]}$$ 
and $l_f$  to be
$$
\xymatrix{[FA_{i_1}]\ar[r]^{(Fw)^*} & [FA_{i_0}] \ar[r]^{Fw}& FA_{i_1}\ar[r]^{(Fw)^*}&FA_{i_0}}$$
which are not equal.
}\end{rmk}

We also need a description of the vertical transformations in the image of the $W$-transformations
under $\Dbl$. 
We claim that when $\bbX=\Dbl(\calD)$ they are the following class of vertical transformations.

\begin{dfn}\label{W-trafo}
{\rm Let $W$ in $\bfC$ be a class of arrows to be inverted which satisfies the Gabriel Zisman conditions.
Let $F,G\colon \Dbl(\bfC)\rightrightarrows \bbX$ be  $W$-pseudo-functors and  for any $W$-doubly marked path $w$
in $\Dbl(\bfC)$, choose precompanion structures in $\bbX$,
$$\xymatrix{
& \ar[r]^{Fw}\ar[d]|\bullet \ar@{}[dr]|{\varphi_w^l} & \ar[d]|\bullet
& \ar[r]^{Fw}\ar[d]|\bullet \ar@{}[dr]|{\varphi_w^r} & \ar[d]|\bullet
\\
\ar[r]_{l_{Fw}} & \ar[r]_{(Fw)'} & & \ar[r]_{(Fw)''} & \ar[r]_{r_{Fw}} &}
$$
and
$$\xymatrix{
& \ar[r]^{Gw} \ar[d]|\bullet \ar@{}[dr]|{\psi_w^l} & \ar[d]|\bullet 
& \ar[r]^{Gw} \ar[d]|\bullet \ar@{}[dr]|{\psi_w^r} & \ar[d]|\bullet
\\
\ar[r]_{l_{Gw}} & \ar[r]_{(Gw)'} & & \ar[r]_{(Gw)''} &  \ar[r]_{r_{Gw}}& \rlap{\quad.}}
$$
For a vertical transformation
$\alpha\colon F\Rightarrow G$ we write 
$$
(\alpha_w)'=\raisebox{3.9em}{$
\xymatrix@C=3em{
\ar[r]^{(Fw)'}\ar[d]|\bullet\ar@{}[dr]|{(\varphi_w^l)^{-1}} & \ar[d]|\bullet
\\
\ar[r]^{Fw}\ar[d]|\bullet \ar@{}[dr]|{\alpha_w} &  \ar[d]|\bullet
\\
\ar[r]^{Gw}\ar[d]|\bullet \ar@{}[dr]|{\psi_w^l} & \ar[d]|\bullet
\\
\ar[r]_{(Gw)'}&
}$}
\quad\mbox{ and }\quad (\alpha_w)''= \raisebox{3.9em}{$
\xymatrix@C=3em{
\ar[r]^{(Fw)''}\ar[d]|\bullet\ar@{}[dr]|{(\varphi_w^r)^{-1}} & \ar[d]|\bullet
\\
\ar[r]^{Fw}\ar[d]|\bullet \ar@{}[dr]|{\alpha_w} &  \ar[d]|\bullet
\\
\ar[r]^{Gw}\ar[d]|\bullet \ar@{}[dr]|{\psi_w^r} & \ar[d]|\bullet
\\
\ar[r]_{(Gw)''}&
}$}
$$
Then $\alpha$   is a {\em (vertical) $W$-transformation} if 
there are double cells 
\begin{equation}\label{precompcells}
\xymatrix{
\ar[r]^{l_{Fw}}\ar[d]|\bullet \ar@{}[dr]|{\alpha_w^l}& \ar[d]|\bullet
\ar@{}[drr]|{\mbox{and}} && \ar[r]^{r_{Fw}}\ar[d]|\bullet \ar@{}[dr]|{\alpha_w^r} & \ar[d]|\bullet
\\
\ar[r]_{l_{Gw}}&&& \ar[r]_{r_{Gw}}&
}
\end{equation}
which are compatible with the companion cells in the sense
that the following four equations hold:
$$
\xymatrix{
\ar[d]|\bullet\ar[r]^{l_{Fw}}\ar@{}[dr]|{\alpha_w^l}
	&\ar[d]|\bullet\ar@{}[dr]|{(\alpha_w)'} \ar[r]^{(Fw)'}
	&\ar[d]|\bullet^{d_1\alpha_w'}
	&& \ar[d]|\bullet_{v^l_{Fw}} \ar[r]^{l_{Fw}}\ar@{}[drr]|{\chi^l_{Fw}}
	&\ar[r]^{(Fw)'} 
	&\ar@{=}[d]
\\
\ar[r]_{l_{Gw}}\ar[d]|\bullet_{v_{Gw}^l}\ar@{}[drr]|{\chi^l_{Gw}} & \ar[r]_{(Gw)'} & \ar@{=}[d]
&=& \ar@{=}[rr]\ar[d]|\bullet_{d_1(\alpha_w)'} \ar@{}[drr]|{\mathrm{id}} && \ar[d]|\bullet^{d_1(\alpha_w)'}
\\
\ar@{=}[rr] && && \ar@{=}[rr] &&
\\
\ar@{=}[rr]\ar[d]|\bullet_{d_0\alpha_w^l}\ar@{}[drr]|{\mathrm{id}} &&\ar[d]|\bullet^{d_0\alpha_w^l} 
	&& \ar@{=}[d]\ar@{=}[rr]\ar@{}[drr]|{\psi^l_{Fw}} && \ar[d]|\bullet^{v^l_{Fw}}
\\
\ar@{=}[rr]\ar@{=}[d]\ar@{}[drr]|{\psi^l_{Gw}} && \ar[d]|\bullet^{v^l_{Gw}} 
	&=&\ar[d]|\bullet_{d_0\alpha^l_w} \ar@{}[dr]|{\alpha_w^l}\ar[r]_{l_{Fw}} 
	& \ar[d]|\bullet \ar[r]_{(Fw)'}\ar@{}[dr]|{(\alpha_w)'} & \ar[d]|\bullet 
\\
\ar[r]_{l_{Gw}}&\ar[r]_{Gw} & && \ar[r]_{l_{Gw}}&\ar[r]_{(Gw)'} &
\\
\ar[r]^{(Fw)'}\ar[d]|\bullet \ar@{}[dr]|{(\alpha_w)'} & \ar[d]|\bullet \ar[r]^{r_{Fw}}\ar@{}[dr]|{\alpha^r_w}
	& \ar[d]|\bullet^{d_1\alpha^r_w} && \ar[r]^{(Fw)'}\ar[d]|\bullet_{v^r_{Fw}}\ar@{}[drr]|{\chi^r_{Fw}} 
	& \ar[r]^{r_{Fw}} & \ar@{=}[d]
\\
\ar[d]|\bullet_{v^r_{Gw}}\ar@{}[drr]|{\chi^r_{Gw}} \ar[r]_{(Gw)'} & \ar[r]_{r_{Gw}} & \ar@{=}[d] 
	&=& \ar[d]|\bullet_{d_1\alpha^r_w}\ar@{}[drr]|{\mathrm{id}} \ar@{=}[rr] && \ar[d]|\bullet^{d_1\alpha^r_w}
\\
\ar@{=}[rr] && && \ar@{=}[rr] &&
\\
\ar@{=}[d]\ar@{=}[rr]\ar@{}[drr]|{\psi^r_{Fw}} && \ar[d]|\bullet^{v_{Fw}^r} 
	&& \ar[d]|\bullet_{d_0(\alpha_w)'}\ar@{}[drr]|{\mathrm{id}} \ar@{=}[rr]&& \ar[d]|\bullet^{d_0(\alpha_w)'}
\\
\ar[d]|\bullet_{d_0(\alpha_w)'}\ar[r]_{(Fw)'}\ar@{}[dr]|{(\alpha_w)'} 
	& \ar[d]|\bullet \ar[r]_{r_{Fw}}\ar@{}[dr]|{\alpha^r_w} & \ar[d]|\bullet 
	&=& \ar@{=}[d]\ar@{=}[rr]\ar@{}[drr]|{\psi^r_{Gw}} && \ar[d]|\bullet^{v^r_{Gw}}
\\
\ar[r]_{(Gw)'}&\ar[r]_{r_{Gw}} & &&\ar[r]_{(Gw)'}&\ar[r]_{r_{Gw}} &
}$$}
\end{dfn}

\begin{rmk}{\rm
Note that since the precompanion structures for a given horizontal arrow are unique up to vertical isomorphism,
the existence of the cells (\ref{precompcells}) does not depend on the choice of the precompanions structure.
Furthermore, when we have a collection of these cells that satisfy the conditions of the definition above,
conjugation by the isomorphisms of Lemma \ref{pre-comp-uniq} will give another set of cells that satisfy the conditions.}
\end{rmk}

Recall that an icon $\psi\colon F\Rightarrow G$ between $W$-homomorphisms is a $W$-icon
if and only if for each $w\in W$ and choice of pseudo-inverses $(Fw)^*$ and $(Gw)^*$ of $Fw$ and $Gw$ respectively
there is a 2-cell $\alpha_w^*\colon (Fw)^*\Rightarrow(Gw)^*$ such that the pastings of the diagrams
$$
\xymatrix{
FA\ar@{=}[d]\ar@{}[dr]|{\alpha_w}\ar[r]^{Fw}\ar@{=}@/^5ex/[rr] \ar@<2.5ex>@{}[rr]|{\eta_{Fw}}
		& FB\ar@{=}[d]\ar@{}[dr]|{\alpha_w^*} \ar[r]^{(Fw)^*}
		& FA\ar@{=}[d] \ar@{}[dr]|{\mbox{and}}
		& FB \ar@{=}[d] \ar@{}[dr]|{\alpha_w^*} \ar[r]^{(Fw)^*} \ar@{=}@/^5ex/[rr] \ar@<2.5ex>@{}[rr]|{\varepsilon_{Fw}^{-1}}
		& FA\ar@{=}[d]\ar@{}[dr]|{\alpha_w} \ar[r]^{Fw}
		& FB\ar@{=}[d]
\\
GA\ar[r]_{Gw}\ar@{=}@/_5ex/[rr] \ar@<-2.5ex>@{}[rr]|{\eta_{Gw}^{-1}}  & GB \ar[r]_{(Gw)^*}  & GA
		& GB\ar[r]_{(Gw)^*}\ar@{=}@/_5ex/[rr] \ar@<-2.5ex>@{}[rr]|{\varepsilon_{Gw}} & GA \ar[r]_{Gw} & GB
}
$$ 
are identity 2-cells. It is now straightforward to prove the following result.

\begin{lma}
An icon $\psi\colon F\Rightarrow G$ between $W$-homomorphisms is a $W$-icon if and only if  the corresponding vertical transformation
$\Dbl(\psi)\colon \Dbl(F)\Rightarrow\Dbl(G)$ is a $W$-transformation.
\end{lma}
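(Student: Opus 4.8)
The plan is to transport the two defining equations of a $W$-icon through the action of $\Dbl$ on icons, using the explicit precompanion structures exhibited in the proof of Lemma \ref{W-hom}, and to recognise them as precisely the four compatibility equations of Definition \ref{W-trafo}. First I would fix, for every $w\in W$, the precompanion structures on $\Dbl(F)(w)$ and $\Dbl(G)(w)$ constructed in the proof of Lemma \ref{W-hom}: there $l_{Fw},r_{Fw},l_{Gw},r_{Gw}$ are the horizontal arrows of $\Dbl(\calD)$ represented by the chosen pseudo-inverses $(Fw)^*$ and $(Gw)^*$ sitting in the appropriate marked positions, and the binding cells are built from the unit and counit cells $\eta_{Fw},\varepsilon_{Fw}$ (resp.\ $\eta_{Gw},\varepsilon_{Gw}$) and the coherence cell $\phi$ of $F$ (resp.\ $G$). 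By the Remark following Definition \ref{W-trafo}, fixing these particular structures is harmless, since whether $\Dbl(\psi)$ admits cells as in (\ref{precompcells}) satisfying the four equations does not depend on the chosen precompanion structures.

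Since $F$ and $G$ agree on objects and $\psi$ is an icon, $\Dbl(F)$ and $\Dbl(G)$ agree on objects of $\Dbl(\bfC)$ and $\Dbl(\psi)$ has identity vertical components. Consequently a double cell of $\Dbl(\calD)$ whose top edge is $l_{Fw}$ and whose bottom edge is $l_{Gw}$ (and likewise for $r_{Fw},r_{Gw}$) is, directly from the definition of the cells of $\Dbl(\calD)$, the same datum as a $2$-cell $(Fw)^*\Rightarrow(Gw)^*$ in $\calD$. Under this bijection the cells $\alpha_w^l$ and $\alpha_w^r$ of Definition \ref{W-trafo} correspond exactly to candidate $2$-cells $\alpha_w^*\colon(Fw)^*\Rightarrow(Gw)^*$, that is, to the data appearing in the definition of a $W$-icon.

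The main part of the argument is then to expand the four equations of Definition \ref{W-trafo}, using the explicit description of $\Dbl$ on horizontal composites of double cells (which inserts the comparison cells $\Phi$) together with the explicit binding cells of Lemma \ref{W-hom}. After cancelling the coherence isomorphisms that appear symmetrically on both sides (the cell $\phi$, the cells $\Phi$, and the structure isomorphisms of $\Dbl(F)$ and $\Dbl(G)$) and invoking the naturality of the icon $\psi$, the two equations attached to the left precompanion structure reduce to the statement that the first pasting in the definition of a $W$-icon --- the one built from $\eta_{Fw}$ and $\eta_{Gw}$ --- is the identity $2$-cell, and the two equations attached to the right precompanion structure reduce in the same way to the statement that the second pasting, built from $\varepsilon_{Fw}$ and $\varepsilon_{Gw}$, is the identity. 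This yields both implications simultaneously: given a $W$-icon one takes $\alpha_w^l=\alpha_w^r=\alpha_w^*$; conversely the given cells $\alpha_w^l,\alpha_w^r$ furnish a $2$-cell satisfying the $W$-icon equations, after passing to adjoint equivalences so that a $2$-cell satisfying the unit triangle identity is the unique such and automatically satisfies the counit triangle identity, forcing $\alpha_w^l=\alpha_w^r$.

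I expect the only genuine difficulty to lie in this last expansion: it is pure bookkeeping --- carrying the marking data of $\Dbl(\calD)$ through the horizontal composites of double cells and checking that the auxiliary coherence cells cancel to leave exactly the two bicategorical triangle identities. Everything else is a straightforward translation of definitions, which is why the result was announced as such.
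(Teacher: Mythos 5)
The paper states this lemma without proof, calling it ``straightforward,'' so there is no official argument to compare against. Your plan is correct and is almost certainly the intended verification: fix the precompanion structures produced in the proof of Lemma~\ref{W-hom}, invoke the Remark after Definition~\ref{W-trafo} to justify that this choice is harmless, observe that since $\Dbl(\psi)$ has identity vertical components the candidate cells $\alpha_w^l$ and $\alpha_w^r$ live over the chosen composites $(Fw)^*$ and $(Gw)^*$ and hence are literally the same data as $2$-cells $(Fw)^*\Rightarrow(Gw)^*$ in $\calD$, and then expand the four compatibility equations of Definition~\ref{W-trafo} into the two $W$-icon pasting identities by cancelling the coherence cells of $\Dbl$ and the structure cells of the precompanions. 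That is the whole content of the lemma, and your account captures it.

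One phrase you should tighten: you say ``a $2$-cell satisfying the unit triangle identity is the unique such and automatically satisfies the counit triangle identity.'' The triangle identities are conditions on the pair $(\eta,\varepsilon)$ that make $Fw\dashv(Fw)^*$ an adjoint equivalence; they are not properties a $2$-cell $\alpha_w^*$ can have. What you mean, and what is correct, is this: after improving the equivalences $Fw\dashv(Fw)^*$ and $Gw\dashv(Gw)^*$ to adjoint equivalences (which does not change the $W$-homomorphism or $W$-icon notions, by the uniqueness-up-to-isomorphism of precompanion data), the mate calculus says that the $2$-cell determined by the $\eta$-compatibility pasting coincides with the one determined by the $\varepsilon$-compatibility pasting. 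That single fact is what forces $\alpha_w^l=\alpha_w^r$ as $2$-cells of $\calD$ in the $W$-transformation-to-$W$-icon direction, and what makes it consistent to set $\alpha_w^l=\alpha_w^r=\alpha_w^*$ in the converse direction. With that correction, the argument is sound.
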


Note that the horizontal composition of a $W$-transformation with a pseudo-functor is again a $W$-transformation.
We conclude that, since every weakly globular double category is vertically equivalent to one of the form $\Dbl(\calD)$
for some bicategory $\calD$, the vertical universal property of 
$\Dbl(\bfC(W^{-1}))$ is as stated in the following theorem.

\begin{thm}\label{univ1-thm}
For any category $\bfC$ with a collection $W$ of arrows satisfying the conditions to form a bicategory of
fractions, composition with the functor $\Dbl(J_W)\colon \Dbl\bfC\rightarrow\Dbl(\bfC(W^{-1}))$ 
induces an equivalence of categories
$$
\WGDbl_{\ps,v}(\Dbl(\bfC(W^{-1})),\bbX)\simeq\WGDbl_{\ps,v,W}(\Dbl(\bfC),\bbX)
$$
where $\WGDbl_{\ps,v,W}(\Dbl(\bfC),\bbX)$ is the category of $W$-pseudo-functors and vertical $W$-transformations.
\end{thm}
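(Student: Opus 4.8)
The plan is to prove the theorem in two stages. \textbf{Stage 1} will establish the equivalence in the special case $\bbX=\Dbl(\calD)$ for a bicategory $\calD$, by pushing the bicategorical universal property of $\bfC(W^{-1})$ through the $2$-functor $\Dbl$. \textbf{Stage 2} will deduce the general case from Stage 1 by transporting along a vertical equivalence $\bbX\simeq\Dbl(\Bic\bbX)$ supplied by the biequivalence $\Bic\colon\WGDbl_{\ps,v}\simeq\NHom\colon\Dbl$.

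For Stage 1 I would begin with the equivalence of categories (\ref{univequiv}), $\NHom(\bfC(W^{-1}),\calD)\simeq\Bicat_{\mbox{\scriptsize\rm icon},W}(\bfC,\calD)$, which is induced by composition with $J_W$. Because $\Dbl$ is a biequivalence, each of the hom-functors $\Dbl\colon\NHom(\bfC(W^{-1}),\calD)\to\WGDbl_{\ps,v}(\Dbl(\bfC(W^{-1})),\Dbl\calD)$ and $\Dbl\colon\NHom(\bfC,\calD)\to\WGDbl_{\ps,v}(\Dbl\bfC,\Dbl\calD)$ is an equivalence of categories; since $\Dbl(F\circ J_W)=\Dbl(F)\circ\Dbl(J_W)$, transporting (\ref{univequiv}) through these equivalences produces an equivalence $\WGDbl_{\ps,v}(\Dbl(\bfC(W^{-1})),\Dbl\calD)\simeq\Dbl(\Bicat_{\mbox{\scriptsize\rm icon},W}(\bfC,\calD))$ induced by composition with $\Dbl(J_W)$. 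It then remains to identify $\Dbl(\Bicat_{\mbox{\scriptsize\rm icon},W}(\bfC,\calD))$ with the category of $W$-pseudo-functors and vertical $W$-transformations from $\Dbl\bfC$ to $\Dbl\calD$. Here I would invoke Lemma \ref{W-hom} (a homomorphism $F$ is a $W$-homomorphism iff $\Dbl(F)$ sends $W$-doubly marked paths to precompanions) and the lemma that an icon $\psi$ is a $W$-icon iff $\Dbl(\psi)$ is a vertical $W$-transformation, together with the fact that $\Dbl$ is essentially surjective and fully faithful on the relevant hom-categories and that being a precompanion is preserved under vertically invertible double cells. These ingredients let one check that $\Dbl$ carries the $W$-subcategory on the bicategorical side onto precisely the $W$-subcategory on the double-categorical side, which proves the theorem when $\bbX=\Dbl\calD$.

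For Stage 2, set $\calD=\Bic\bbX$ and let $e\colon\Dbl\calD\to\bbX$ be a vertical equivalence in $\WGDbl_{\ps,v}$ with pseudo-inverse $e'$. Post-composition with $e$ gives an equivalence $e_*\colon\WGDbl_{\ps,v}(\bbY,\Dbl\calD)\to\WGDbl_{\ps,v}(\bbY,\bbX)$ for every $\bbY$, and because precomposition with $\Dbl(J_W)$ strictly commutes with $e_*$, the two cases $\bbY=\Dbl(\bfC(W^{-1}))$ and $\bbY=\Dbl\bfC$ assemble into a commuting square. Since pseudo-functors of weakly globular double categories preserve precompanions, $e_*$ and $e'_*$ send $W$-pseudo-functors to $W$-pseudo-functors; and since the horizontal composite of a vertical $W$-transformation with a pseudo-functor is again a vertical $W$-transformation, they send vertical $W$-transformations to vertical $W$-transformations. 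Hence $e_*$ restricts to a functor $\WGDbl_{\ps,v,W}(\bbY,\Dbl\calD)\to\WGDbl_{\ps,v,W}(\bbY,\bbX)$ and $e'_*$ to one in the other direction. Granting that these restricted functors are mutually quasi-inverse, a diagram chase in the commuting square---using that the horizontal arrows are equivalences and that, by Stage 1, composition with $\Dbl(J_W)$ is an equivalence of $\WGDbl_{\ps,v}(\Dbl(\bfC(W^{-1})),\Dbl\calD)$ onto $\WGDbl_{\ps,v,W}(\Dbl\bfC,\Dbl\calD)$---shows that composition with $\Dbl(J_W)$ is an equivalence $\WGDbl_{\ps,v}(\Dbl(\bfC(W^{-1})),\bbX)\simeq\WGDbl_{\ps,v,W}(\Dbl\bfC,\bbX)$, which is the assertion of the theorem.

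The hard part will be the clause I left ungranted, namely that $e_*$ and $e'_*$ really do restrict to an \emph{equivalence} between the $W$-subcategories---equivalently, that the unit and counit of $e$, whiskered with $W$-pseudo-functors, are vertical $W$-transformations. This reduces to showing that \emph{every vertically invertible vertical transformation $\alpha\colon F\Rightarrow G$ between $W$-pseudo-functors is a vertical $W$-transformation}. My plan here is to use the uniqueness of precompanion structures, Lemma \ref{pre-comp-uniq}: for a $W$-doubly marked path $w$, the invertible cell $\alpha_w$ transports a chosen left/right precompanion structure of $Fw$ to one of $Gw$, which by Lemma \ref{pre-comp-uniq} differs from the chosen structure of $Gw$ by vertically invertible double cells; those comparison cells furnish the cells $\alpha_w^l,\alpha_w^r$ of (\ref{precompcells}), and the four compatibility equations of Definition \ref{W-trafo} then follow after cancelling vertical inverses from the defining equations of the transported structure. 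This is a routine but somewhat lengthy bookkeeping exercise; one also wants the easy observations that identity vertical transformations are vertical $W$-transformations and that vertical $W$-transformations are closed under vertical composition, so that $\WGDbl_{\ps,v,W}(\Dbl\bfC,\bbX)$ is genuinely a subcategory.
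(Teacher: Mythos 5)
Your two-stage plan follows exactly the route the paper takes: Stage 1 transports the equivalence (\ref{univequiv}) through the biequivalence $\Dbl$ and identifies the resulting subcategory via Lemma \ref{W-hom} and the $W$-icon/$W$-transformation lemma, and Stage 2 bootstraps to general $\bbX$ by the vertical equivalence $\bbX\simeq\Dbl(\Bic\bbX)$ together with the observation that horizontal composition of a $W$-transformation with a pseudo-functor is again a $W$-transformation. The paper itself supplies essentially just those two remarks and immediately states the theorem, with no further justification.

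What you add---and this is the valuable part of your write-up---is the explicit identification of the step the paper glosses over: for $e_*$ (post-composition with the vertical equivalence $e$) to restrict to an \emph{equivalence} between the $W$-subcategories, the whiskered unit and counit of the pair $(e,e')$ must themselves be vertical $W$-transformations, and the two stated ingredients do not by themselves supply this. Your reduction of this to the claim that vertically invertible vertical transformations between $W$-pseudo-functors are automatically $W$-transformations is correct, and your plan to extract the cells $\alpha_w^l,\alpha_w^r$ of (\ref{precompcells}) by transporting a precompanion structure for $Fw$ across $\alpha_w$ and then comparing with the chosen structure for $Gw$ via Lemma \ref{pre-comp-uniq} is the right idea; the remark immediately after Definition \ref{W-trafo} (that the existence of the cells (\ref{precompcells}) is independent of the choice of precompanion structure and that conjugation by the comparison isomorphisms preserves the compatibility equations) is exactly the piece of the paper that makes the bookkeeping tractable, and you should cite it. So your proposal is both correct and a genuine refinement of the paper's own terse argument; be aware that the lengthy verification you have flagged in your final paragraph---checking the four compatibility equations of Definition \ref{W-trafo} for the transported cells---is where all the actual work lies, and it is not trivial since $\bbX_1$ (unlike $\bbX_0$) need not be posetal in a weakly globular double category.
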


We would like to have a horizontal universal property for $\Dbl(\bfC(W^{-1}))$ as well, but there is no obvious one
at this point.
Furthermore, the description of the arrows to be inverted in $\Dbl(\bfC)$ is rather cumbersome.
So in the next section we will introduce a new candidate for the weakly globular double category of fractions.
In order to get a better handle on the horizontal arrows that will be inverted and to keep 
the complexity of the construction down, we will just consider the case of a class of horizontal arrows in a weakly globular 
double category of the form $H\bfC$ where $\bfC$ is a category.
The resulting double category $\bfC\{W\}$ will be vertically equivalent to the image $\Dbl(\bfC(W^{-1}))$ of the bicategory of fractions and will hence 
inherit its vertical universal property from the one given above.
However, the horizontal universal property will be a completely new one.

\section{The construction of $\bfC\{W\}$}\label{CW-con}

In this section we introduce a weakly globular double category $\CW$ 
which is (vertically) 2-equivalent to $\Dbl(\bfC(W^{-1}))$, and hence shares its vertical universal property.
Furthermore, we will show that it has an additional horizontal universal property as a double category
with companions and conjoints. 
The two properties together determine $\CW$ up to vertical 2-equivalence and horizontal equivalence of 
weakly globular double categories.
 
Furthermore, one draw-back of both 
the category and bicategory of fractions constructions is that 
the resulting category or bicategory does not necessarily have small hom-sets. 
The same thing is true for $\Dbl(\bfC(W^{-1}))$, but 
not for $\CW$: it has small homsets in both directions, both for arrows and double cells.

The reason we can achieve this is as follows.
For categories and bicategories of fractions we kept the same objects, but added new arrows and 2-cells, in order to obtain 
(pseudo) inverses for the arrows in the class $W$.
However, for the weakly globular double category of fractions, we are not looking for the structure of a pseudo inverse, but rather 
for the structure of a precompanion, since we saw in Propositions \ref{D:precomp-equiv} and 
\ref{D:equiv-precomp} that precompanions in weakly globular double categories
correspond to equivalences in bicategories and we established the universal property of $\Dbl(\bfC(W^{-1}))$ 
(Theorem \ref{univ1-thm}, with Definitions \ref{W-ps-functor} and \ref{W-trafo}) in terms of 
precompanions. Intuitively we can think of precompanions as having a horizontal pseudo inverse up to a vertical isomorphism.
So we can add new objects with our new arrows, as long as the new objects are vertically isomorphic to the old ones. 
The construction of $\Dbl(\bfC(W^{-1}))$ also added new objects, but there 
the idea was that the new objects were dictated by the new paths. In this section we want to give a free construction 
that starts by adding new objects and lets the arrows and double cells follow from what is needed to obtain the precompanion structure.
We will add an additional object for each arrow $w$ in $W$, with a horizontal arrow to the domain of the original arrow.
The idea is that this additional arrow will be $l_w$, so that we need to give $w\circ l_w$ the structure of a companion.
We will not take everything as free as possible, because the vertical universal property is in terms of pseudo-functors, and this gives us 
a bit of flexibility to make additional assumptions. For instance, we will make $l_w$ a horizontal isomorphism. This tightening in the horizontal direction
is what will give us eventually a horizontal universal property for the resulting weakly globular double category.

The idea of the construction comes from a kind of symmetrization of the original construction of a (bi)category of (right) fractions.
An arrow in a (bi)category of fractions is given as a span $A\stackrel{w}{\leftarrow}A'\stackrel{f}{\rightarrow}B$ where $w$ is in the class of
arrows to be inverted. Sometimes these arrows are also called generalized arrows.
We could think of all objects that are connected to each other by arrows that are to be inverted as different representations
of the same object in a quotient of the category. The Gabriel-Zisman conditions assure us that an arrow in the quotient category can be given by
taking a different representation followed by 
an arrow out of the new representation in the original category. The (bi)category of fractions construction does not go as far as to take the quotient,
but rather keeps track of how the new representation is related to the domain of the generalized arrow.
For the weakly globular double category of fractions, we will allow for this type of change of representation in both the domain and the codomain
of an arrow. Furthermore, rather than using generalized arrows, it turns out that we can do this by adding new objects.
The new objects will correspond to the arrows in $W$, so they will be objects with a second representation (which may be thought of as some type of refinement). Horizontal arrows will then be given by arrows between the refinements.
In the traditional constructions of (bi)categories of fractions one always needs to consider an equivalence relation 
for the cells at the highest level and make two of those cells equal if they agree on a further refinement.
In the case of weakly globular double categories we can use the vertical structure to deal with this as we will show below.

\subsection{The construction}\label{construction}
We define the weakly globular double category $\bfC\W$ as follows:
\begin{itemize}
\item {\em Objects} are arrows in $W$, $(w)=\left(\xymatrix@1{A\ar[r]^w&B}\right)$.
\item A {\em vertical arrow} $(u_1,C,u_2)\colon \xymatrix@1{(A_1\ar[r]^{w_1}&B)
\ar[r]|-\bullet &(A_2\ar[r]^{w_2}&B)}$ 
is an equivalence class of 
commutative diagrams
$$
\xymatrix@R=1.3em{A_1\ar[r]^{w_1} &B\ar@{=}[dd]
\\
C\ar[u]^{u_1}\ar[d]_{u_2}
\\
A_2\ar[r]_{w_2} &B}
$$
with $w_1u_1=w_2u_2$ in $W$.
Two such diagrams, $(u_1,C,u_2)$ and $(v_1,D,v_2)$, are equivalent (i.e., represent the same vertical arrow) 
when there are arrows $\xymatrix@1{C &E\ar[l]_{r_1}\ar[r]^{r_2}& D}$ 
such that 
$$
\xymatrix@C=1.5em{
&A_1
\\
C\ar[ur]^{u_1} \ar[dr]_{u_2} & E\ar[l]_{r_1}\ar[r]^{r_2} & D\ar[ul]_{v_1}\ar[dl]^{v_2}
\\
&A_2
}
$$
commutes and $w_1u_1r_1=w_1v_1r_2=w_2v_2r_2=w_2u_2r_1$ is in $W$.
\item A {\em horizontal arrow} $\xymatrix@1{(A\ar[r]^w&B)\ar[r]^f&(A'\ar[r]^{w'}&B')}$
is given by an arrow $\xymatrix@1{A\ar[r]^f&A'}$ in $\bfC$.
We will usually draw this as $\xymatrix@1{(B&\ar[l]_w A)\ar[r]^f&(A'\ar[r]^{w'}&B')}$.
\item
A {\em double cell} 
$$
\xymatrix{
(w_1)\ar[d]|\bullet_{[u_1,C,u_2]}\ar[r]^{f_1}\ar@{}[dr]|{(\varphi)} & (w_1')\ar[d]|\bullet^{[u_1',C',u_2']}\\
(w_2)\ar[r]_{f_2} & (w_2').
}$$
is an equivalence class of commutative diagrams of the form
\begin{equation}\label{square}
\xymatrix@C=5em@R=2em{
B\ar@{=}[dd] &A_1\ar[l]_{w_1}\ar[r]^{f_1} & A_1'\ar[r]^{w_1'} & B'\ar@{=}[dd]
\\
& C\ar[u]_{u_1}\ar[d]^{u_2}\ar[r]_{\varphi} &C' \ar[u]_{u_1'}\ar[d]^{u_2'} &
\\
B & A_2\ar[l]^{w_2}\ar[r]_{f_2} & A_2'\ar[r]_{w_2'} & B'
}
\end{equation}
The diagram (\ref{square}) is equivalent to the diagram
$$\xymatrix@C=5em@R=2em{
B\ar@{=}[dd] &A_1\ar[l]_{w_1}\ar[r]^{f_1} & A_1'\ar[r]^{w_1'} & B'\ar@{=}[dd]
\\
& D\ar[u]_{v_1}\ar[d]^{v_2}\ar[r]_{\psi} &D' \ar[u]_{v_1'}\ar[d]^{v_2'} &
\\
B & A_2\ar[l]^{w_2}\ar[r]_{f_2} & A_2'\ar[r]_{w_2'} & B'
}
$$
if and only if there are arrows $r,s,r',s'$ and $\chi$ 
as in 
$$
\xymatrix{
C\ar[rr]^\varphi&&C'\\
E\ar[u]^r\ar[d]_s\ar[rr]^\chi && E'\ar[u]_{r'}\ar[d]^{s'}\\
D\ar[rr]_\psi&&D'
}$$
such that $w_1u_1r\in W$, $w'_1u_1'r'\in W$,
and making the following diagram commute:
\begin{equation}\label{dblcellequiv}
\xymatrix@C=7ex{
\ar@{=}[4,0] && \ar[ll]_{w_1} \ar[rr]^{f_1} && \ar[rr]^{w_1'} && \ar@{=}[4,0]
\\
&&& \ar@/_1ex/[ul]^{v_1} \ar[rr]|(.22)\hole^(.58){\psi} \ar@/^1.5ex/[3,-1]_{v_2}|(.3)\hole|(.63)\hole 
		&& \ar@/_1ex/[ul]^{v_1'} \ar@/^1.5ex/[3,-1]^{v_2'} &&
\\
&& \ar[ur]_{s}\ar[dl]^r \ar[rr]_\chi|(.59)\hole &&\ar[ur]_{s'}\ar[dl]^{r'}
\\
& \ar@/^1.5ex/[-3,1]^{u_1}\ar@/_.5ex/[dr]_{u_2} \ar[rr]_{\varphi} 
		&&\ar@/^1.5ex/[-3,1]_{u_1'} \ar@/_.5ex/[dr]_{u_2'}
\\
&&\ar[ll]^{w_2}\ar[rr]_{f_2} && \ar[rr]_{w_2'}&&
}
\end{equation}
\end{itemize}

Note that a double cell may not have a representative for each combination of representatives of each vertical arrows,
however, there is always a representative with the given representative of the codomain vertical arrow  
as in the following lemma.

\begin{lma}\label{reps}
 Given a representative of a double cell,
\begin{equation}\label{square1}
\xymatrix@C=5em@R=2em{
B\ar@{=}[dd] &A_1\ar[l]_{w_1}\ar[r]^{f_1} & A_1'\ar[r]^{w_1'} & B'\ar@{=}[dd]
\\
& C\ar[u]_{u_1}\ar[d]^{u_2} \ar[r]^\varphi &C' \ar[u]_{u_1'}\ar[d]^{u_2'} &
\\
B & A_2\ar[l]^{w_2}\ar[r]_{f_2} & A_2'\ar[r]_{w_2'} & B',
}
\end{equation}
and another representative $(v_1,D,v_2)$ of the codomain vertical arrow, then there are arrows $r\colon E\to C$ and $\psi\colon E\to D$
such that
\begin{equation}\label{square2}
\xymatrix@C=5em@R=2em{
B\ar@{=}[dd] &A_1\ar[l]_{w_1}\ar[r]^{f_1} & A_1'\ar[r]^{w_1'} & B'\ar@{=}[dd]
\\
& E\ar[u]_{u_1r}\ar[d]^{u_2r} \ar[r]^\psi &D \ar[u]_{v_1}\ar[d]^{v_2} &
\\
B & A_2\ar[l]_{w_2}\ar[r]_{f_2} & A_2'\ar[r]_{w_2'} & B'
}
\end{equation}
and (\ref{square1}) represent the same double cell.
\end{lma}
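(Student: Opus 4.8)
The plan is to manufacture the arrows $r\colon E\to C$ and $\psi\colon E\to D$ by a ``pullback up to $W$'' construction that uses only the Gabriel--Zisman axioms {\bf CF1}--{\bf CF3} of Section \ref{GZ-condns}, and then to verify both that the resulting diagram (\ref{square2}) is a legitimate representative of a double cell of $\CW$ and that it is equivalent to (\ref{square1}) in the sense of (\ref{dblcellequiv}). To set up, I would first unpack the hypotheses: since $(v_1,D,v_2)$ and $(u_1',C',u_2')$ represent the same (codomain) vertical arrow $(w_1')\to(w_2')$, the definition of vertical arrows in Section \ref{construction} supplies an object $E_0$ with arrows $c\colon E_0\to C'$ and $d\colon E_0\to D$ such that $u_1'c=v_1d$, $u_2'c=v_2d$ and $w_1'u_1'c\in W$; moreover $w_1'u_1'=w_2'u_2'\in W$ because $(u_1',C',u_2')$ is a vertical arrow, and from (\ref{square1}) we have $u_1'\varphi=f_1u_1$, $u_2'\varphi=f_2u_2$, $w_1u_1=w_2u_2\in W$.

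The central step is to ``intersect'' $\varphi\colon C\to C'$ with $c\colon E_0\to C'$ over $C'$. Neither arrow need lie in $W$, but $w_1'u_1'\colon C'\to B'$ does, so I would apply {\bf CF2} to the cospan $C\xrightarrow{\,w_1'u_1'\varphi\,}B'\xleftarrow{\,w_1'u_1'c\,}E_0$, whose right leg is in $W$, obtaining an object $E_1$ with $g\colon E_1\to C$ in $W$ and $h\colon E_1\to E_0$ such that $w_1'u_1'\varphi g=w_1'u_1'ch$. Then $\varphi g$ and $ch$ are parallel arrows $E_1\to C'$ which become equal after postcomposition with $w_1'u_1'\in W$, so {\bf CF3} produces $k\colon E\to E_1$ in $W$ with $\varphi gk=chk$. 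Setting $r=gk\colon E\to C$ (which lies in $W$ by {\bf CF1}), $m=hk\colon E\to E_0$ and $\psi=dm\colon E\to D$, we obtain the key relation $\varphi r=cm$.

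The remaining verifications are routine diagram chasing against the definitions in Section \ref{construction}. For (\ref{square2}) to be a double-cell representative one needs $v_1\psi=f_1u_1r$ and $v_2\psi=f_2u_2r$, which follow from $v_1d=u_1'c$, $v_2d=u_2'c$, $u_1'\varphi=f_1u_1$, $u_2'\varphi=f_2u_2$ together with $\varphi r=cm$ and $\psi=dm$; the left vertical arrow $(u_1r,E,u_2r)$ is legitimate because $w_1u_1r=w_2u_2r$ and $w_1u_1r\in W$ by {\bf CF1} (using $w_1u_1\in W$ and $r\in W$), and the right vertical arrow $(v_1,D,v_2)$ is legitimate by hypothesis. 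To see that (\ref{square2}) and (\ref{square1}) represent the same double cell, I would instantiate the data of (\ref{dblcellequiv}) taking the first diagram to be (\ref{square1}) and the second to be (\ref{square2}), with the two connecting objects chosen to be $E$ itself (with $r\colon E\to C$ on the $C$-side and the identity $\id_{E}$ on the other side) and $E_0$ (with $c\colon E_0\to C'$ and $d\colon E_0\to D$), glued by $\chi=m\colon E\to E_0$; the commutativities demanded by (\ref{dblcellequiv}) are then exactly $\varphi r=cm$, $\psi=dm$, $u_1'c=v_1d$, $u_2'c=v_2d$ together with the trivial identities coming from $\id_E$, and the two required $W$-memberships are $w_1u_1r\in W$ and $w_1'u_1'c\in W$, both already established.

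The step needing the most care is the interaction of {\bf CF2} and {\bf CF3}: {\bf CF2} on its own only closes the square after postcomposition with the $W$-arrow $w_1'u_1'$, and it is precisely {\bf CF3} that upgrades this to the honest equality $\varphi r=cm$ on which the rest of the argument rests; ensuring that $r$ (and hence $w_1u_1r$) stays inside $W$ at the end relies on $g$ and $k$ both lying in $W$ and on closure of $W$ under composition. Once this ``pullback up to $W$'' is arranged correctly, all the remaining checks are purely formal.
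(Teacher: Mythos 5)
Your proof is correct and follows essentially the same route as the paper's: both unpack the equivalence of $(u_1',C',u_2')$ and $(v_1,D,v_2)$ to get a common refinement (your $E_0$, the paper's $F$), apply \textbf{CF2} over $B'$ using the $W$-arrow $w_1'u_1'c$ to produce $g\in W$, then use \textbf{CF3} against $w_1'u_1'\in W$ to replace the agreement-after-$B'$ by an honest equality $\varphi r=cm$, and finally exhibit the same witnessing data in (\ref{dblcellequiv}) with connecting objects $E$ and $E_0$. The only difference is expository: you make explicit which $W$-arrow drives the \textbf{CF3} step, where the paper leaves that implicit (and has a small notational slip, writing $w_2'v_2r_1$ where $w_2'u_2'r_1$ is meant).
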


\begin{proof}
 Since 
 $$
 \xymatrix{
 A_1'\ar[r]^{w_1'}& B'\ar@{=}[dd] \ar@{}[2,2]|{\mbox{and}} && A_1'\ar[r]^{w_1'} & B'\ar@{=}[dd]
 \\
 C'\ar[u]_{u_1'}\ar[d]^{u_2'} &&&D\ar[u]_{v_1}\ar[d]^{v_2}
 \\
 A_2'\ar[r]_{w_2'}&B' && A_2'\ar[r]_{w_2'}&B'
 }
 $$
 represent the same vertical arrow, there are arrows $r_1$ and $r_2$ such that
 $w_1'u_1'r_1,w_1'v_1r_2\in W$ and
 $$
 \xymatrix{
 &A_1'
 \\
 C'\ar[ur]^{u_1'}\ar[dr]_{u_2'} & F\ar[l]_{r_1}\ar[r]^{r_2} & D\ar[ul]_{v_1}\ar[dl]^{v_2}
 \\
 &A_2'
 }
 $$
 commutes.
 By condition {\bf CF2} there are arrows $\bar{r}_1\in W$ and $\bar{\varphi}$  that make the following
 square commute,
 $$
 \xymatrix{
 \bar{F}\ar[d]_{\bar{r}_1}\ar[rr]^{\bar\varphi} && F\ar[d]^{w_2'v_2r_1}
 \\
 C\ar[r]_{\varphi} & C'\ar[r]_{w_2'v_2} & B'.
 }
 $$
 By condition {\bf CF3} there exists an arrow $(\tilde{v}\colon E\rightarrow\bar{F})\in W$
 such that 
 $$
 \xymatrix{
 E\ar[d]_{\bar{r}_1\tilde{v}}\ar[r]^{\bar{\varphi}\tilde{v}}&F\ar[d]^{r_1}
 \\
 C\ar[r]_\varphi & C'
 }
 $$
 commutes.
 We can use all this to construct the following commutative diagram
 $$
 \xymatrix{
 B\ar@{=}[4,0] && A_1\ar[ll]_{w_1} \ar[r]^{f_1} & A_1'\ar[rr]^{w_1'} && B'\ar@{=}[4,0]
 \\
 &&C\ar[u]^{u_1}\ar[r]^{\varphi} &C'\ar[u]^{u_1'}
 \\
 &&E\ar[u]^{\bar{r}_1\tilde{v}}\ar[r]^{\bar{\varphi}\tilde{v}}\ar[d]_{\bar{r}_1\tilde{v}} 
 			& F\ar[u]^{r_1}\ar[d]_{r_1}\ar[r]^{r_2}&D\ar[uul]_{v_1}\ar[ddl]^{v_2}
 \\
 &&C\ar[r]_\varphi\ar[d]^{u_2} & C'\ar[d]_{u_2'}
 \\
 B&&A_2\ar[ll]^{w_2}\ar[r]_{f_2} & A_2'\ar[rr]_{w_2'}&& B'
 }
 $$
 So we obtain a double cell 
 $$
 \xymatrix{
 B\ar@{=}[dd]&A_1\ar[l]_{w_1}\ar[r]^{f_1} & A_1'\ar[r]^{w_1'} &B'\ar@{=}[dd]
 \\
 &E\ar[r]^{r_2\bar{\varphi}\tilde{v}} \ar[u]^{u_1\bar{r}_1\tilde{v}}\ar[d]_{u_2\bar{r}_1\tilde{v}}&D\ar[u]_{v_1}\ar[d]^{v_2}
 \\
 B&A_2\ar[l]^{w_2}\ar[r]_{f_2}& A_2'\ar[r]_{w_2'}& B'. 
 }
 $$
 This cell is equivalent to (\ref{square1}) since we have a commutative diagram
 $$
 \xymatrix{
 E\ar@{=}[d]\ar[r]^{r_2\bar{\varphi}\tilde{v}}&D
 \\
 E\ar[r]^{\bar{\varphi}\tilde{v}}\ar[d]_{\bar{r}_1\tilde{v}} & F\ar[u]_{r_2}\ar[d]^{r_1}
 \\
 C\ar[r]_{\varphi}& C'
 }
 $$
 which fits in a diagram like (\ref{dblcellequiv}).
 By taking $r=\bar{r}_1\tilde{v}$ and $\psi=r_2\bar{\varphi}\tilde{v}$ we see that we have indeed
  a cell as in (\ref{square2}).
\end{proof}

\begin{eg}
{\rm In the example where $\bfC=\mbox{\sf Atlases}$ and $W$ the class of atlas refinements,
the situation is  as follows.
The objects of $\CW$ are atlases with a refinement. There is a vertical arrow between any two such objects 
if there is a common refinement, and a horizontal arrow corresponds to an atlas map between the refinements.
Double cells correspond to common refinements where the two atlas maps corresponding to the horizontal arrows
agree.}
\end{eg}
 
 \subsection{Some basic properties}
 We will now study some of the basic properties of $\CW$ and in particular, we will show that $\CW$ is a weakly globular double category.
 
The following lemma is equivalent to stating that the codomain map $d_1\colon\CW_1\rightarrow \CW_0$,
sending squares and horizontal arrows to their horizontal codomains is an isofibration.

\begin{lma}\label{isofib}
For every pair of a horizontal arrow $\xymatrix@1{(w_2)\ar[r]^g&(w_2')}$ and a vertical arrow 
$\xymatrix@1@C=3.5em{(w_1')\ar[r]|\bullet^{[v_1,D,v_2]}&(w_2')}$ in $\CW$, there is a double cell 
$$
\xymatrix{
(w_1)\ar[r]^f\ar[d]|\bullet_{[u_1,C,u_2]}\ar@{}[dr]|{(\varphi)} & (w_1')\ar[d]|\bullet^{[v_1,D,v_2]}
\\
(w_2)\ar[r]_g & (w_2')\rlap{\,.}
}
$$
\end{lma}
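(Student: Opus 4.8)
The plan is to construct the required double cell by hand, in the familiar calculus-of-fractions style, applying the Gabriel--Zisman conditions twice. Write the given horizontal arrow $g\colon(w_2)\to(w_2')$ as an arrow $g\colon A_2\to A_2'$ of $\bfC$, where $w_2\colon A_2\to B$ and $w_2'\colon A_2'\to B'$ are the objects $(w_2)$ and $(w_2')$, and represent the given vertical arrow $[v_1,D,v_2]\colon(w_1')\to(w_2')$ by a commutative diagram with legs $v_1\colon D\to A_1'$ and $v_2\colon D\to A_2'$ satisfying $w_1'v_1=w_2'v_2\in W$, where $w_1'\colon A_1'\to B'$ is the object $(w_1')$.

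First I would apply \textbf{CF2} to the cospan $A_2\xrightarrow{w_2'g}B'\xleftarrow{w_2'v_2}D$, which is legitimate since $w_2'v_2=w_1'v_1\in W$; this yields an object $P$ with arrows $\bar g\colon P\to D$ and $\bar w\colon P\to A_2$, $\bar w\in W$, such that $w_2'g\bar w=w_2'v_2\bar g$. The one point that needs care --- and essentially the only genuine step --- is that \textbf{CF2} only produces commutativity \emph{after} postcomposing with $w_2'$. To remove $w_2'$ I would then apply \textbf{CF3} to the parallel pair $g\bar w,\,v_2\bar g\colon P\rightrightarrows A_2'$ together with $w_2'\colon A_2'\to B'$ (which lies in $W$, since $(w_2')$ is by definition an object of $\CW$), obtaining an arrow $\tilde w\colon Q\to P$ in $W$ with $g\bar w\tilde w=v_2\bar g\tilde w$ in $\bfC$.

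Finally I would assemble the data: take the object $(w_1)$ to have underlying $W$-arrow $w_1:=w_2\bar w\tilde w\colon Q\to B$ (this lies in $W$ by \textbf{CF1}, being a composite of $w_2,\bar w,\tilde w\in W$), take the horizontal arrow to be $f:=v_1\bar g\tilde w\colon Q\to A_1'$, take the vertical arrow $(w_1)\to(w_2)$ to be $[\mathrm{id}_Q,Q,\bar w\tilde w]$, and take $\varphi:=\bar g\tilde w\colon Q\to D$. It then remains only to check that the square-diagram defining a double cell commutes and that the relevant composites lie in $W$: the two vertical-arrow relations are $w_1\cdot\mathrm{id}_Q=w_2(\bar w\tilde w)$ and $w_1'v_1=w_2'v_2$, the top comparison is $f\cdot\mathrm{id}_Q=v_1\varphi$, and the bottom comparison is $g(\bar w\tilde w)=v_2\varphi$; the first three hold on the nose by the definitions of $w_1$ and $f$ and by hypothesis, while the fourth is precisely the identity supplied by \textbf{CF3}. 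The side conditions $w_1\cdot\mathrm{id}_Q=w_1\in W$ and $w_1'v_1\in W$ hold for the same reasons. Since the lemma only asks for the existence of \emph{some} double cell with horizontal codomain $g$ and vertical codomain $[v_1,D,v_2]$, this finishes the construction. There is no deep obstacle here beyond the bookkeeping; the bridge from \textbf{CF2} (commutativity after $w_2'$) to \textbf{CF3} (commutativity on the nose) is the only delicate point.
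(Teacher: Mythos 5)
Your proof is correct and follows essentially the same route as the paper's: apply CF2 to the cospan $A_2\xrightarrow{w_2'g}B'\xleftarrow{w_2'v_2}D$, then CF3 to strip off the postcomposed $w_2'$, and assemble the resulting data into the required double cell (in the paper's notation, your $P,\bar{g},\bar{w},Q,\tilde{w}$ are $E,\varphi_E,\bar{v}_2,C,\tilde{w}_2'$ respectively). The only notational quirk is that you write the vertical arrow $(w_1)\to(w_2)$ as $[\mathrm{id}_Q,Q,\bar{w}\tilde{w}]$ while the paper uses $[\mathrm{id}_C,C,u_2]$; these coincide.
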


\begin{proof}
Let $\xymatrix@1{A_2\ar[r]^{w_2}&B}$ and $\xymatrix@1{A_i'\ar[r]^{w_i'}&B'}$, 
for $i=1,2$ in $W$.
By condition {\bf CF2} there exists a commutative square 
$$
\xymatrix{
E\ar[r]^{\varphi_E}\ar[d]_{\overline{v}_2}&D\ar[d]^{w_2'v_2}
\\
A_2\ar[r]_{w_2'g} & B'}
$$
in $\bfC$.
Since $w_2'\in W$, there is an arrow $(\xymatrix@1{C\ar[r]^{\tilde{w}'_2}&E})\in W$ 
such that
$$
\xymatrix{
C\ar[d]_{\overline{v}_2\tilde{w}_2'}\ar[r]^{\varphi_E\tilde{w}_2'} & D\ar[d]^{v_2}
\\
A_2\ar[r]_g&A_2'
}
$$
also commutes (by condition {\bf CF3}).
So let $\varphi_C=\varphi_E\tilde{w}_2'$ $u_2=\overline{v}_2\tilde{w}_2'$. Note that $u_2\in W$,
and the diagram
$$
\xymatrix@R=2em{
(B\ar@{=}[dd]&\ar[l]_{w_2u_2}C)\ar[r]^{v_1\varphi_C} & (A_1'\ar[r]^{w_1'}&B'\ar@{=}[dd])
\\
	& C\ar@{=}[u]\ar[d]^{u_2}\ar[r]^{\varphi_C}&D\ar[d]_{v_2}\ar[u]^{v_1}
\\
(B&A_2\ar[l]^{w_2})\ar[r]_g&(A_2'\ar[r]_{w_2'}&B')
}
$$
commutes in $\bfC$ and satisfies all the conditions to represent a double cell
in $\CW$.
\end{proof}

The following lemma implies that the vertical arrow category is a posetal groupoid.
 
\begin{lma}\label{connectedcomp}
 There is a vertical arrow $\xymatrix@1{(A\ar[r]^w&B)\ar[r]|\bullet &(A'\ar[r]^{w'}&B')}$
if and only if $B=B'$ and furthermore, this arrow is unique.
\end{lma}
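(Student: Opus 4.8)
The plan is to prove Lemma \ref{connectedcomp} in two halves: the existence statement (a vertical arrow between $(w)$ and $(w')$ forces $B=B'$, and conversely $B=B'$ gives one) and the uniqueness statement (any two such vertical arrows are equal in $\CW_0$). For the forward direction of existence, note that a vertical arrow $\xymatrix@1{(w_1)\ar[r]|\bullet&(w_2)}$ is represented by a commutative diagram with $w_1u_1=w_2u_2$, and the very shape of that diagram (with the equality $B=B$ down the right side) already records that the codomains of $w_1$ and $w_2$ coincide; so there is nothing to prove beyond unwinding the definition. For the converse, suppose $w_1\colon A_1\to B$ and $w_2\colon A_2\to B$ both lie in $W$. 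I would apply condition {\bf CF2} to the cospan $\xymatrix@1{A_1\ar[r]^{w_1}&B&\ar[l]_{w_2}A_2}$ to obtain an object $C$ with arrows $\xymatrix@1{A_1&\ar[l]_{u_1}C\ar[r]^{u_2}&A_2}$, $u_2\in W$, such that $w_1u_1=w_2u_2$; since $W$ is closed under composition ({\bf CF1}) this composite lies in $W$, so $(u_1,C,u_2)$ is a legitimate representative of a vertical arrow from $(w_1)$ to $(w_2)$.

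For uniqueness, suppose $(u_1,C,u_2)$ and $(v_1,D,v_2)$ both represent vertical arrows $(w_1)\rightarrow(w_2)$, so $w_1u_1=w_2u_2\in W$ and $w_1v_1=w_2v_2\in W$. I must produce an object $E$ with arrows $\xymatrix@1{C&\ar[l]_{r_1}E\ar[r]^{r_2}&D}$ making the bow-tie diagram in the definition commute, with $w_1u_1r_1=w_1v_1r_2=w_2v_2r_2=w_2u_2r_1\in W$. The natural move is: first form a common "cover" of $C$ and $D$ over $A_1$ by applying {\bf CF2} to the cospan $\xymatrix@1{C\ar[r]^{u_1}&A_1&\ar[l]_{v_1}D}$ — but {\bf CF2} requires one leg to be in $W$, and $u_1,v_1$ need not be. So instead I would take an ordinary pullback-free approach: use {\bf CF2} on a cospan where a $W$-arrow is available, namely push everything down to $B$. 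Concretely, from $w_1u_1=w_2u_2\in W$ and the analogous equality for $D$, both $C$ and $D$ map into $A_2$ (via $u_2$, $v_2$) with $w_2u_2, w_2v_2\in W$. Apply {\bf CF2} to $\xymatrix@1{C\ar[r]^{w_2u_2}&B&\ar[l]_{w_2v_2}D}$ — wait, both legs land in $B$ and at least one ($w_2u_2$) is in $W$ — to get $F$ with $\xymatrix@1{C&\ar[l]_{p}F\ar[r]^{q}&D}$, $p\in W$ or $q\in W$ as {\bf CF2} dictates, and $w_2u_2p=w_2v_2q$. Then $w_2(u_2p)=w_2(v_2q)$; since $w_2\in W$, condition {\bf CF3} applied to the pair $u_2p, v_2q$ (which become equal after composing with $w_2$) yields $\xymatrix@1{E\ar[r]^{\tilde w}&F}$ in $W$ with $u_2p\tilde w=v_2q\tilde w$. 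A second application of {\bf CF3}, this time to $u_1p\tilde w$ and $v_1q\tilde w$ (equal after composing with $w_1\in W$, since $w_1u_1p\tilde w = w_2u_2p\tilde w = w_2v_2q\tilde w = w_1v_1q\tilde w$), shrinks $E$ further so that $u_1p\tilde w = v_1q\tilde w$ as well. Setting $r_1 = p\tilde w\tilde{\tilde w}$ and $r_2 = q\tilde w\tilde{\tilde w}$ (absorbing both {\bf CF3}-arrows), all four legs of the bow-tie commute, and the common composite down to $B$ equals $w_1u_1r_1$, which lies in $W$ because it is a composite of $w_1u_1\in W$ with an arrow built from {\bf CF2} and {\bf CF3} outputs that are all in $W$ (using {\bf CF1} for closure under composition).

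The main obstacle I anticipate is the bookkeeping in the uniqueness argument: {\bf CF2} only gives a commuting square over $B$, not simultaneous commutativity of the individual triangles $r_1 \mapsto u_1$ vs.\ $r_2\mapsto v_1$ and $r_1\mapsto u_2$ vs.\ $r_2\mapsto v_2$ at the level of $A_1$ and $A_2$, so the two successive applications of {\bf CF3} are essential and must be sequenced correctly — first equalizing the maps into $A_2$, then into $A_1$ — and one must check that the second application does not disturb the first (it does not, since {\bf CF3} only postcomposes with a further arrow, preserving any equality already achieved) and that the hypothesis $w_1u_1p\tilde w = w_1v_1q\tilde w$ needed to invoke {\bf CF3} the second time really does follow from the first equalization together with $w_1u_i = w_2u_i'$-type identities. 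I would also need to double-check that every intermediate composite that is asserted to be "in $W$'' genuinely is, invoking {\bf CF1} each time, and that the final diagram is literally of the form displayed in the definition of equivalence of vertical arrows in Section \ref{construction}. The rest is routine diagram-chasing.
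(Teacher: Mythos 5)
Your proof is correct and takes essentially the same route as the paper's: the forward implication is definitional, existence is \textbf{CF2} together with \textbf{CF1}, and uniqueness proceeds by squaring the two representatives over $B$ with \textbf{CF2} and then equalizing with \textbf{CF3}. If anything your uniqueness argument is more complete than the paper's, which performs only one \textbf{CF3} step---equalizing the legs into $A_2$, i.e.\ producing $\tilde w$ with $u'r\tilde w = v's\tilde w$---and then asserts that the whole bow-tie diagram commutes. Strictly speaking the upper triangle $ur\tilde w = vs\tilde w$ requires a second invocation of \textbf{CF3} (this time with $w_1$ as the $W$-arrow, using $w_1ur\tilde w = w_1vs\tilde w$), which is exactly what you supply; you also correctly observe that this second equalization preserves the first because \textbf{CF3} only postcomposes with a further $W$-arrow, and that \textbf{CF1} gives the needed membership of the common composite into $B$ in $W$. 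So same strategy, with a gap in the paper's write-up carefully filled in on your side.
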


\begin{proof}
It is obvious that the existence of such a vertical arrow implies that $B=B'$.

Now suppose that we have objects $\xymatrix@1{(A\ar[r]^w&B)}$ and $\xymatrix@1{(A'\ar[r]^{w'}&B)}$
in $\CW$.
Since $W$ satisfies condition {\bf CF2} above, there is a commutative square
$$
\xymatrix@R=1.5em{C\ar[r]^{u}\ar[d]_{u'}&A\ar[d]^{w}
\\
A'\ar[r]_{w'} &B
}$$
in $\bfC$ with $u'\in W$ and consequently, $wu=w'u'\in W$.
So $$\xymatrix@R=1.5em{A\ar[r]^w & B\ar@{=}[dd]
\\
C\ar[u]^{u}\ar[d]_{u'}
\\
A'\ar[r]_{w'}&B
}$$
represents a vertical arrow as required.

To show that this arrow is unique, suppose that we have two representatives
$$
\xymatrix@R=1.5em{
A\ar[r]^w & B\ar@{=}[dd] &&A\ar[r]^w & B\ar@{=}[dd]
\\
C\ar[u]^{u}\ar[d]_{u'}&&\mbox{and}&D\ar[u]^{v}\ar[d]_{v'}&
\\
A'\ar[r]_{w'}&B&&A'\ar[r]_{w'}&B.
}$$
By conditions {\bf CF1} and {\bf CF2}, there are arrows  $\xymatrix@1{C&E\ar[l]_r\ar[r]^s& D}$ 
such that $wur=wvs\in W$. Now it follows that $w'v's=wvs=wur=w'u'r$ and $w'\in W$. 
By condition {\bf CF3}, there is an arrow $\xymatrix@1{E'\ar[r]^{\tilde w}& E}$ such that
$v's\tilde{w}=u'r\tilde{w}$. 
So the following diagram commutes:
$$
\xymatrix{
&A&
\\
C \ar[ur]^{u}\ar[dr]_{u'} & E'\ar[l]_{r\tilde{w}}\ar[r]^{s\tilde{w}}&D\ar[ul]_{v}\ar[dl]^{v'}
\\
&A'
}
$$
and hence $[u,C,u']=[v,D,v']$.
\end{proof}

Note that this implies that $\pi_0$ of the vertical arrow category of $\bfC\W$
is isomorphic to the set of objects of $\bfC$. 

The category of double cells in $\CW$ with vertical composition (and horizontal arrows as objects) is also
posetal, as stated in the following lemma.

\begin{lma}\label{uniquedblcell}
For any pairs of horizontal arrows and vertical arrows fitting together as in
$$
\xymatrix{
(w_1)\ar[d]|\bullet_{[u_1,C,u_2]} \ar[r]^{f_1} & (w'_1) \ar[d]|\bullet^{[v_1,D,v_2]}
\\
(w_2)\ar[r]_{f_2} & (w_2')
}
$$
there is at most one double cell in $\bfC\W$ that fills this.
\end{lma}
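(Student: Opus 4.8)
The plan is to show that any two representatives of double cells with the same boundary data become equal after the defining equivalence relation is applied. Suppose we have two double cells filling the given square, represented by commutative diagrams
$$
\xymatrix@C=4em@R=1.8em{
B\ar@{=}[dd] &A_1\ar[l]_{w_1}\ar[r]^{f_1} & A_1'\ar[r]^{w_1'} & B'\ar@{=}[dd]
\\
& C\ar[u]_{u_1}\ar[d]^{u_2}\ar[r]^\varphi &C' \ar[u]_{u_1'}\ar[d]^{u_2'} &
\\
B & A_2\ar[l]^{w_2}\ar[r]_{f_2} & A_2'\ar[r]_{w_2'} & B'
}
$$
and the analogous diagram with $(D,D',\psi)$ and vertical legs $v_i,v_i'$. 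The first step is to use Lemma \ref{reps}: replacing the second representative if necessary, we may assume both representatives use the \emph{same} representative $(v_1,D,v_2)=(u_1,C,u_2)$ for the domain vertical arrow $[u_1,C,u_2]$, and likewise the same representative $(u_1',C',u_2')$ for the codomain vertical arrow. So after this reduction we are comparing two diagrams that differ only in the middle horizontal map: $\varphi\colon C\to C'$ versus $\varphi'\colon C\to C'$, both satisfying $u_1'\varphi=f_1 u_1$, $u_2'\varphi=f_2 u_2$ and the same for $\varphi'$.

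The second step is to exploit the conditions on $W$ to equalize $\varphi$ and $\varphi'$ on a common refinement. From $u_1'\varphi = u_1'\varphi'$ (both equal $f_1 u_1$) and $u_1'\in$ (a composite landing in $W$ after postcomposing with $w_1'$): more precisely, $w_1' u_1' \in W$, so by {\bf CF3} applied to the pair $\varphi,\varphi'\colon C\to C'$ together with $w_1'u_1'\colon C'\to B'$ — noting $w_1'u_1'\varphi = w_1'f_1u_1 = w_1'u_1'\varphi'$ — there is an arrow $\xymatrix@1{E\ar[r]^{\tilde w}&C}$ in $W$ with $\varphi\tilde w=\varphi'\tilde w$. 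Then take $r=s=\tilde w\colon E\to C$, $r'=s'=\varphi\tilde w=\varphi'\tilde w\colon E\to C'$, and $\chi=\varphi\tilde w$. One checks $w_1 u_1 r = w_1 u_1\tilde w\in W$ (as $u_1\tilde w$ with $w_1\in$ gives something in $W$ by {\bf CF1}, since $w_1u_1\in W$ and $\tilde w\in W$), and similarly $w_1'u_1'r'\in W$; the required commutativity diagram of the shape (\ref{dblcellequiv}) then holds because all the relevant triangles reduce to the equations already known plus $\varphi\tilde w=\varphi'\tilde w$. This exhibits the two representatives as equivalent, so they define the same double cell.

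The main obstacle I expect is bookkeeping: carefully verifying that the arrow $\tilde w$ produced by {\bf CF3} makes \emph{all} of the many faces of diagram (\ref{dblcellequiv}) commute simultaneously, and that the various composites are genuinely in $W$ (using closure under composition {\bf CF1} in the right order). There is also a small subtlety in invoking Lemma \ref{reps} twice — once to align domains and once to align codomains — and in making sure that the second application does not disturb the alignment achieved by the first; this is handled by noting that Lemma \ref{reps} lets us precompose the middle objects with arrows from $W$ without changing the represented cell, and such precompositions are compatible with both the domain and codomain legs at once. Once these routine checks are in place, uniqueness follows immediately.
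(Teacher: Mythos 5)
Your overall plan --- reduce to a common middle representative and then use \textbf{CF3} to equalize the two maps after a further refinement --- is the right strategy, and the \textbf{CF3} computation (noting $w_1'u_1'\varphi = w_1'f_1u_1 = w_1'u_1'\varphi'$ with $w_1'u_1'\in W$, hence $\varphi\tilde w=\varphi'\tilde w$ for some $\tilde w\in W$) is correct and is exactly what the paper uses. The gap is in the reduction step. Lemma~\ref{reps} lets you replace the \emph{codomain} representative by any chosen one, but when you do so the \emph{domain} representative is forced to change --- it becomes a $W$-refinement $(u_1r,E,u_2r)$ of the original, not a representative you get to pick. There is no domain-side analogue of Lemma~\ref{reps} to invoke, and the construction of $\bbComp$ and of $\CW$ are directional here because the middle arrow $\varphi\colon C\to C'$ points from the domain side to the codomain side. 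So the assertion that ``we may assume both representatives use the same representative $(u_1,C,u_2)$ for the domain'' is not licensed by Lemma~\ref{reps}, and the closing remark about invoking it twice, while the right instinct, does not actually follow from its statement.

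The paper repairs exactly this point: after applying Lemma~\ref{reps} once, only the codomain representatives agree, and the two cells have domain representatives $(u_1',u_2')$ and $(u_1'',u_2'')$ which are possibly distinct. Unwinding the equivalence $[u_1',u_2']=[u_1,u_2]=[u_1'',u_2'']$ produces a common refinement with arrows $r'$ and $r''$ satisfying $u_i'r'=u_i''r''$ and $w_1u_1'r'\in W$, and then \textbf{CF3} is applied to $\varphi r'$ and $\psi r''$ rather than to $\varphi$ and $\varphi'$ directly. Your proof needs this extra ``common refinement via the vertical-arrow equivalence'' step before the \textbf{CF3} application; once it is inserted, your argument coincides with the paper's.
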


\begin{proof}
Suppose that both $(\varphi)$ and $(\psi)$ fit in this frame.
By Lemma \ref{reps} we may assume that  $\varphi$ and $\psi$ 
are of the following form:
$$
\xymatrix{
\ar@{=}[dd] & \ar[l]_{w_1}\ar[r]^{f_1}&\ar[r]^{w_1'} & \ar@{=}[dd]&&\ar@{=}[dd] & \ar[l]_{w_1}\ar[r]^{f_1}&\ar[r]^{w_1'} & \ar@{=}[dd]
\\
&\ar[u]^{u_1'}\ar[d]_{u_2'}\ar[r]^\varphi & \ar[u]_{v_1}\ar[d]^{v_2}&&\mbox{and}&&\ar[u]^{u_1''}\ar[d]_{u_2''}\ar[r]^\psi & \ar[u]_{v_1}\ar[d]^{v_2}
\\
&\ar[l]^{w_2}\ar[r]_{f_2}&\ar[r]_{w_2'}& &&&\ar[l]^{w_2}\ar[r]_{f_2}&\ar[r]_{w_2'}&
}
$$
Since $[u_1',u_2']=[u_1,u_2]=[u_1'',u_2'']$, there are arrows
$r'$ and $r''$ such that $w_1u_1'r',w_1u_1''r''\in W$
and 
the following diagram commutes
$$
\xymatrix{
&
\\
\ar[ur]^{u_1'}\ar[dr]_{u_2'}&\ar[l]_{r'}\ar[r]^{r''}&\ar[ul]_{u_1''}\ar[dl]^{u_2''}
\\
&
}
$$
Note that $w_1'v_1\varphi r'=w_1'f_1u_1'r'=w_1'f_1u_1''r''=w_1'v_1\psi r''$.
Since $w_1'v_1\in W$, we can apply condition {\bf CF3} to obtain an arrow $\tilde{w}$
such that $\varphi r'\tilde{w}=\psi r''\tilde{w}$. Finally,
we see that 
$$
\xymatrix@C=3em{
\ar@{=}[dd] & \ar[l]_{w_1}\ar[r]^{f_1}&\ar[r]^{w_1'} & \ar@{=}[dd]
		&&\ar@{=}[dd] & \ar[l]_{w_1}\ar[r]^{f_1}&\ar[r]^{w_1'} & \ar@{=}[dd]
\\
&\ar[u]^{u_1'}\ar[d]_{u_2'}\ar[r]^\varphi & \ar[u]_{v_1}\ar[d]^{v_2}
		&&\sim&&\ar[u]^{u_1'r'\tilde{w}}\ar[d]_{u_2'r'\tilde{w}}\ar[r]^{\varphi r'\tilde{w}} & \ar[u]_{v_1}\ar[d]^{v_2}&&=
\\
&\ar[l]^{w_2}\ar[r]_{f_2}&\ar[r]_{w_2'}& &&&\ar[l]^{w_2}\ar[r]_{f_2}&\ar[r]_{w_2'}&
\\
\ar@{=}[dd] & \ar[l]_{w_1}\ar[r]^{f_1}&\ar[r]^{w_1'} & \ar@{=}[dd]
		&&\ar@{=}[dd] & \ar[l]_{w_1}\ar[r]^{f_1}&\ar[r]^{w_1'} & \ar@{=}[dd]
\\
&\ar[u]^{u_1''r''\tilde{w}}\ar[d]_{u_2''r''\tilde{w}}\ar[r]^{\psi r'' \tilde{w}} & \ar[u]_{v_1}\ar[d]^{v_2}&
		&\sim&&\ar[u]^{u_1''}\ar[d]_{u_2''}\ar[r]^\psi & \ar[u]_{v_1}\ar[d]^{v_2}
\\
&\ar[l]^{w_2}\ar[r]_{f_2}&\ar[r]_{w_2'}& 
		&&&\ar[l]^{w_2}\ar[r]_{f_2}&\ar[r]_{w_2'}&
}
$$
So $(\varphi)$ and $(\psi)$ represent the same double cell.
\end{proof}

\begin{rmk}\rm Since the vertical arrow category of $\CW$ is posetal and groupoidal, this lemma implies that the
vertical category $(\CW)_1$  of horizontal arrows and double cells is also posetal and groupoidal.
\end{rmk}

\begin{thm}
The double category $\CW$ is weakly globular.
\end{thm}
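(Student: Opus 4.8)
The plan is to verify the two defining conditions of Definition \ref{wklglb} directly from the description of $\CW$ given in Section \ref{construction}, using the basic structural lemmas already proved. First I would record what the path-component category $\CW_0$ looks like: by Lemma \ref{connectedcomp} there is a (unique) vertical arrow $(w)\to(w')$ exactly when $w$ and $w'$ have the same codomain $B$, so $\CW_0$ is a posetal groupoid whose connected components are indexed by the objects of $\bfC$ (this is the observation recorded in the remark after Lemma \ref{connectedcomp}). Hence the canonical functor $\gamma\colon\CW_0\to\CW_0^d$ to the discrete category of path components is an equivalence of categories: it is essentially surjective by construction and fully faithful precisely because each hom-set of $\CW_0$ is a singleton when the components agree and empty otherwise. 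That establishes the weak globularity condition.

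Next I would check the induced Segal maps condition, i.e.\ that for each $n\ge 2$ the comparison functor
$$
\pro{\CW_1}{\CW_0}{n}\longrightarrow\pro{\CW_1}{\CW_0^d}{n}
$$
is an equivalence of categories. I would treat the case $n=2$ in detail; the general case is the same argument applied iteratively (or one notes the pullback over $\CW_0^d$ is just the pullback over $\pi_0$ and iterates). An object of $\CW_1\times_{\CW_0^d}\CW_1$ is a pair of horizontal arrows $(w_1)\supar{f}(w_1')$, $(w_2)\supar{g}(w_2')$ whose codomain objects agree up to path component, i.e.\ the codomain of $w_1'$ equals that of $w_2$ in $\bfC$; an object of $\CW_1\times_{\CW_0}\CW_1$ is the same data together with the vertical arrow $(w_1')\to(w_2)$ witnessing this, which by Lemma \ref{connectedcomp} is \emph{unique}. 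So the comparison functor is a bijection on objects. For morphisms, a morphism in the pullback over $\CW_0^d$ is a pair of double cells whose codomain horizontal arrows are connected in path components, while a morphism in the pullback over $\CW_0$ additionally requires the vertical-arrow data to match up; again uniqueness of vertical arrows (and uniqueness of the filling double cell, Lemma \ref{uniquedblcell}) forces this matching automatically. Thus the comparison functor is full and faithful as well, hence an (iso)equivalence.

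The one genuine point that needs care — and which I expect to be the main obstacle — is that the comparison functors in the Segal condition are required to be equivalences, \emph{not} isomorphisms of categories, and in fact that is all one needs: the vertical arrow category being posetal groupoidal makes the two pullbacks differ only by choices of canonical (unique) vertical arrows, so the natural comparison is at worst an equivalence and here is literally an isomorphism. The subtle part is making sure the morphism-level matching really is automatic; this is exactly where Lemma \ref{uniquedblcell} (uniqueness of a double cell filling a given frame) and Lemma \ref{isofib} (existence of a cell with prescribed codomain vertical arrow) are used, so I would invoke both when verifying that every morphism of the $\CW_0^d$-pullback lifts uniquely to one of the $\CW_0$-pullback. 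I would also note in passing that the same two lemmas guarantee $\gamma$ genuinely \emph{induces} the Segal comparison, i.e.\ that it is compatible with the pullback structure, which is implicit in the statement of Definition \ref{wklglb}. With the weak globularity condition and the induced Segal maps condition both established, $\CW$ is a weakly globular double category.
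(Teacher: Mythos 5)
Your overall plan — check the two conditions in Definition \ref{wklglb} directly, using Lemmas \ref{connectedcomp}, \ref{isofib} and \ref{uniquedblcell} — is the right one, and your verification of the weak globularity condition via Lemma \ref{connectedcomp} is fine. But your treatment of the induced Segal maps condition contains a genuine error about what the pullback $\CW_1\times_{\CW_0}\CW_1$ is. You describe an object of that pullback as ``the same data together with the vertical arrow $(w_1')\to(w_2)$ witnessing this,'' and conclude that the comparison functor is a bijection on objects, hence an isomorphism of categories. That is the \emph{pseudo}-pullback (or iso-comma object), not the strict pullback that appears in the Segal condition. In the strict pullback, an object is a pair of horizontal arrows $h_1,h_2$ with $d_1(h_1)=d_0(h_2)$ \emph{as objects of $\CW_0$} — i.e.\ the codomain object of $h_1$ must be literally the same arrow $(w)$ of $W$ as the domain object of $h_2$. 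There is no extra vertical-arrow datum to add, and the $\CW_0$-pullback is a proper full subcategory of the $\CW_0^d$-pullback (for instance, $h_1\colon(w_1)\to(w_1')$ and $h_2\colon(w_2)\to(w_2')$ with $w_1'\neq w_2$ but having the same codomain in $\bfC$ give an object of the latter and not the former). So the comparison is not a bijection on objects and not an isomorphism of categories.

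What is actually true — and what the paper's proof records, if tersely — is that the comparison is a fully faithful inclusion which is \emph{essentially surjective}, hence an equivalence. Fullness/faithfulness is the part of your argument that is correct: morphisms in the $\CW_0^d$-pullback are pairs of double cells with no condition on the middle, and since $\CW_0$ is posetal groupoidal (Lemma \ref{connectedcomp}) the two candidate middle vertical arrows automatically agree, so every such pair already lies in the $\CW_0$-pullback. Essential surjectivity is where Lemma \ref{isofib} enters: given $(h_1,h_2)$ with only a path-component match in the middle, the unique vertical arrow from $d_1(h_1)$ to $d_0(h_2)$ together with $h_2$ admits a lift to a double cell with codomain $h_2$, whose domain $h_2'$ satisfies $d_0(h_2')=d_1(h_1)$; and this lifting cell is vertically invertible by Lemma \ref{uniquedblcell}, so $(h_1,h_2')\cong(h_1,h_2)$ in the $\CW_0^d$-pullback. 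You invoke Lemma \ref{isofib} for ``every morphism lifting uniquely,'' but its real job here is this object-level essential surjectivity. With these corrections the proof goes through and is essentially the argument the paper gives.
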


\begin{proof}
By Lemma \ref{connectedcomp} the vertical arrow category of $\CW$ is groupoidal
and posetal as required. Furthermore,  the
codomain $d_1\colon (\CW)_1\rightarrow (\CW)_0$ is an isofibration.
For any pair of a horizontal arrow  $\xymatrix@1{(w_2)\ar[r]^g&(w_2')}$ and a vertical arrow 
$\xymatrix@1@C=3.5em{(w_1')\ar[r]|\bullet^{[v_1,D,v_2]}&(w_2')}$, there is a double cell 
$$
\xymatrix{
(w_1)\ar[r]^f\ar[d]|\bullet_{[u_1,C,u_2]}\ar@{}[dr]|{(\varphi)} & (w_1')\ar[d]|\bullet^{[v_1,D,v_2]}
\\
(w_2)\ar[r]_g & (w_2')
}
$$
by Lemma \ref{isofib}. And this double cell is vertically invertible since the vertical category $(\CW)_1$ is posetal groupoidal
by Lemma \ref{uniquedblcell}.
\end{proof}

\subsection{Composition}
{\em Composition of vertical arrows} is defined using condition {\bf CF2}. 
For a pair of composable vertical arrows,
$$
\xymatrix@R=1.2em{A_1\ar[r]^{w_1} & B\ar@{=}[dd]
\\
C\ar[u]^{u_1}\ar[d]_{u_2}&
\\
A_2\ar[r]_{w_2} & B\ar@{=}[dd]
\\
D\ar[u]^{v_1}
\ar[d]_{v_2} &
\\
A_3\ar[r]_{w_3} & B,
}
$$
there is a commutative square in $\bfC$ of the form
$$
\xymatrix{
E\ar[d]_{\bar{u}_2}\ar[r]^{\bar{v}_1}&C\ar[d]^{w_2u_2}
\\
D\ar[r]_{w_2v_1} & A_2
}
$$ 
with $u_2\bar{v}_1=v_1\bar{u}_2\in W$ (by Condition {\bf CF2}).
So $w_1u_1\bar{v}_1=w_2u_2\bar{v}_1=w_2v_1\bar{u}_2=w_3v_2\bar{u}_2$, and
a representative for the vertical composition is given by
$$
\xymatrix@R=1.6em{
A_1\ar[r]^{w_1} & B\ar@{=}[dd]
\\
D\ar[u]^{u_1\bar{v}_1}\ar[d]_{v_2\bar{u}_2}
\\
A_3\ar[r]_{w_3} &B
}
$$

Note that this is well-defined on equivalence classes, doesn't depend on the choice of the
square in condition {\bf CF2}, and is associative and unital, because the vertical arrow category is posetal.

Analogously, to define {\em vertical composition of double cells}, 
$$
\xymatrix@R=1.6em{
(B\ar@{=}[dd] & A_1)\ar[l]_{w_1}\ar[r]^{f_1} & (A_1'\ar[r]^{w_1'} & B')\ar@{=}[dd]
\\
&C\ar[u]^{u_1}\ar[d]_{u_2}\ar@{}[r]|{(\varphi)}&C'\ar[u]^{u_1'}\ar[d]_{u_2'}
\\
(B\ar@{=}[dd]&A_2)\ar[l]^{w_2}\ar[r]_{f_2}&(A_2'\ar[r]_{w_2'} & B')\ar@{=}[dd]
\\
&D\ar[u]^{v_1}
\ar[d]_{v_2} \ar@{}[r]|{(\psi)}&D'\ar[u]^{v_1'}
\ar[d]_{v_2'}
\\
(B&A_3)\ar[l]^{w_3}\ar[r]_{f_3}&(A_3'\ar[r]_{w_3} & B)
}
$$
we only need to give a representative to fit the square with
the composed boundary arrows. Without loss of generality we may assume that
the double  cells we are composing are such that
$\varphi\colon C\rightarrow C'$ and $\psi\colon D\rightarrow D'$ fit in the diagrams given.
So suppose that
we have commutative squares
$$
\xymatrix{
E\ar[r]^{\bar{v}_1}\ar[d]_{\bar{u}_2} & C\ar[d]^{u_2} & E'\ar[r]^{\bar{v}_1'}\ar[d]_{\bar{u}_2'} & C\ar[d]^{u_2'}
\\
D\ar[r]_{v_1} & A_2 & D'\ar[r]_{v_1'}& A_2',
}
$$
giving the following composition of the vertical arrows involved:
$$
\xymatrix@C=4em@R=1.6em{
(B\ar@{=}[dd] & \ar[l]_{w_1} A_1)\ar[r]^{f_1} & (A_1'\ar[r]^{w_1'}& B')\ar@{=}[dd]
\\
& E\ar[u]^{u_1\bar{v}_1}\ar[d]_{v_2\bar{u}_2}\ar@{}[r]|{(\psi)\cdot(\varphi)} 
	& E'\ar[u]^{u_1'\bar{v}_1'}\ar[d]_{v_2'\bar{u}_2'}
\\
(B & \ar[l]^{w_3} A_3) \ar[r]_{f_3} & (A_3'\ar[r]_{w_3'} & B')
}
$$
We will now construct $(\xi)=(\psi)\cdot(\varphi)$.
Note that it is sufficient to find an arrow $\xymatrix@1{\bar{E}\ar[r]^r&E}$ in $W$
with an arrow $\xi\colon \bar{E}\rightarrow E'$
such that $\bar{v}_1'\xi =\varphi \bar{v}_1r$ and $\bar{u}_2'\xi=\psi\bar{u}_2r$.

Apply condition {\bf CF2} to obtain a commutative square
$$\xymatrix{
\tilde{E}\ar[d]_{\bar{\bar{v}}_1'}\ar[rr]^{\bar{\varphi}} &&E'\ar[d]^{\bar{v}_1'}
\\
E\ar[r]_{\bar{v}_1} & C\ar[r]_\varphi & C'.
}
$$ in $\bfC$ with $\bar{\bar{v}}_1'\in W$.
Then
\begin{eqnarray*}
w_2'v_1'\bar{u}_2'\bar{\varphi}&=&w_2'u_2'\bar{v}_1'\bar{\varphi}\\
&=&w_2'u_2'\varphi\bar{v}_1\bar{\bar{v}}_1'\\
&=&w_2'f_2u_2\bar{v}_1 \bar{\bar{v}}_1'\\
&=&w_2'f_2v_1\bar{u}_2\bar{\bar{v}}_1'\\
&=&w_2'v_1'\psi\bar{u}_2\bar{\bar{v}}_1'
\end{eqnarray*}
Since $w_2'v_1'\in W$, there is an arrow $s\colon \bar{E}\rightarrow \tilde{E}$
such that $\bar{u}_2'\bar{\varphi}s=\psi\bar{u}_2\bar{\bar{v}}_1's$ (by condition {\bf CF3}).
So let $r=\bar{\bar{v}}_1's$ and $\xi=\bar{\varphi}s$.

{\em Composition of horizontal arrows} in $\bfC\W$ is defined directly in terms of composition of arrows in $\bfC$:
the horizontal composition of $\xymatrix{(B&\ar[l]_{w}A)\ar[r]^f &(A'\ar[r]^{w'}&B')}$
and $\xymatrix{(B'& \ar[l]_{w'}A)\ar[r]^{f'} &(A''\ar[r]^{w''}&B'')}$
is $\xymatrix{(B&\ar[l]_{w}A)\ar[r]^{f'f} &(A'\ar[r]^{w''}&B'')}$.

{\em Horizontal composition of double cells} is defined as follows.
Let 
$$
\xymatrix{
\ar@{=}[dd] & \ar[l]_{w_1}\ar[r]^{f_1}&\ar[r]^{w_1'} & \ar@{=}[dd]
		&&\ar@{=}[dd] & \ar[l]_{w_1'}\ar[r]^{g_1}&\ar[r]^{w_1''} & \ar@{=}[dd]
\\
&\ar[u]^{u_1}\ar[d]_{u_2}\ar[r]^\varphi & \ar[u]_{u_1'}\ar[d]^{u_2'}&&\mbox{and}&&\ar[u]^{v_1}\ar[d]_{v_2}\ar[r]^\psi & \ar[u]_{v_1'}\ar[d]^{v_2'}
\\
&\ar[l]^{w_2}\ar[r]_{f_2}&\ar[r]_{w_2'}& &&&\ar[l]^{w_2'}\ar[r]_{g_2}&\ar[r]_{w_2''}&
}
$$
be two horizontally composable double cells.
This means that $[u_1',u_2']=[v_1,v_2]$, so there exist arrows $r$ and $s$ such that
$w_1'u_1'r\in W$ and the following diagram commutes,
$$
\xymatrix{
&
\\
\ar[ur]^{u_1'} \ar[dr]_{u_2'}& \ar[l]_r\ar[r]^s & \ar[ul]_{v_1}\ar[dl]^{v_2}
\\
&
}
$$
By conditions {\bf CF2} and {\bf CF3} there is a commutative square of the form
$$
\xymatrix{
\ar[d]_{\bar{r}}\ar[r]^{\bar{\varphi}} & \ar[d]^r\\
\ar[r]_\varphi&
}
$$
with $\bar{r}\in W$.
Now it is not difficult to check that
$$
\xymatrix{
\ar@{=}[dd] & \ar[l]_{w_1}\ar[r]^{f_1}&\ar[r]^{w_1'} & \ar@{=}[dd]
		&&\ar@{=}[dd] & \ar[l]_{w_1}\ar[r]^{f_1}&\ar[r]^{w_1'} & \ar@{=}[dd]
\\
&\ar[u]^{u_1}\ar[d]_{u_2}\ar[r]^\varphi & \ar[u]_{u_1'}\ar[d]^{u_2'}&
		&\sim&&\ar[u]^{u_1\bar{r}}\ar[d]_{u_2\bar{r}}\ar[r]^{\bar{\varphi}} & \ar[u]_{u_1'r}\ar[d]^{u_2'r}
\\
&\ar[l]^{w_2}\ar[r]_{f_2}&\ar[r]_{w_2'}& &&&\ar[l]^{w_2}\ar[r]_{f_2}&\ar[r]_{w_2'}&
}
$$
and
$$
\xymatrix{
\ar@{=}[dd] & \ar[l]_{w_1'}\ar[r]^{g_1}&\ar[r]^{w_1''} & \ar@{=}[dd]
		&&\ar@{=}[dd] & \ar[l]_{w_1'}\ar[r]^{g_1}&\ar[r]^{w_1''} & \ar@{=}[dd]
\\
&\ar[u]^{v_1}\ar[d]_{v_2}\ar[r]^\psi & \ar[u]_{v_1'}\ar[d]^{v_2'}&
		&\sim&&\ar[u]^{v_1s}\ar[d]_{v_2s}\ar[r]^{\psi s} & \ar[u]_{v_1'}\ar[d]^{v_2'}
\\
&\ar[l]^{w_2'}\ar[r]_{g_2}&\ar[r]_{w_2''}& &&&\ar[l]^{w_2'}\ar[r]_{g_2}&\ar[r]_{w_2''}&
}
$$
Since $u_1'r=v_1s$ and $u_2'r=v_2s$, the horizontal composition is given by
$$
\xymatrix{
\ar@{=}[dd] &\ar[l]_{w_1}\ar[r]^{g_1f_1} & \ar[r]^{w_1''}&\ar@{=}[dd]
\\
&\ar[u]^{u_1\bar{r}}\ar[d]_{u_2\bar{r}}\ar[r]^{\psi s\bar{\varphi}}& \ar[u]_{v_1'}\ar[d]^{v_2'}
\\
&\ar[l]^{w_2}\ar[r]_{g_2f_2}&\ar[r]_{w_2''} &\rlap{\,.}
}
$$

Middle four interchange holds because the double cells are posetal in the vertical direction.

\begin{rmk}
{\rm For any objects $w_1,w_2\in \CW_{00}$, the hom-set $\CW_h(w_1,w_2)$ is small 
since it is isomorphic to ${\bfC}(d_0(w_1),d_0(w_2))$  and $\CW_v(A,B)$ is small since
it contains at most one element. Furthermore, for any given pair of horizontal or vertical arrows,
there is a set of double cells  with those arrows as domain and codomain, since given any frame 
of horizontal and vertical arrows, there is at most one double cell that fills it.
This shows that although $\bfC(W^{-1})$ is generally not a bicategory with small hom-sets (and consequently, 
its weakly globular double category of marked paths does not have small horizontal hom-sets either),
the double category $\CW$ does. This is one of the advantages of $\CW$ over $\bfC(W^{-1})$.}
\end{rmk}

\subsection{The fundamental bicategory of $\CW$}
In this section we establish the relationship between the weakly globular double category $\bfC\W$
and the bicategory of fractions $\bfC(W^{-1})$. We will construct a biequivalence of bicategories
$$\omega\colon\Bic(\CW)\rightarrow \bfC(W^{-1}),$$
with pseudo-inverse $\alpha\colon \bfC(W^{-1})\rightarrow \Bic(\CW)$.

The objects of the associated bicategory $\Bic(\CW)$ are the connected components 
of the vertical arrow category of 
$\bfC\W$. By Lemma \ref{connectedcomp} the map 
$$\omega_0\colon\Bic(\CW)_0\rightarrow \bfC(W^{-1})_0=\bfC_0,$$
sending the connected component of $\xymatrix@1{(A\ar[r]^w&B)}$ to $B$ is an isomorphism.

An arrow in $\Bic(\CW)$ corresponds to a horizontal arrow in $\CW$, so it is given by a diagram
$\xymatrix@1{B&A\ar[l]_{w}\ar[r]^f & A'\ar[r]^{w'}&B'}$ in $\bfC$ with $w,w'\in W$.
We define 
$$\omega_1(\xymatrix{B&A\ar[l]_{w}\ar[r]^f & A'\ar[r]^{w'}&B'})=(\xymatrix{B&A\ar[l]_-w\ar[r]^-{w'f}&B'}).$$

A 2-cell $\varphi$ from $\xymatrix@1{B&A_1\ar[l]_{w_1}\ar[r]^{f_1} & A_1'\ar[r]^{w_1'}&B'}$
to $\xymatrix@1{B&A_2\ar[l]_{w_2}\ar[r]^{f_2} & A_2'\ar[r]^{w_2'}&B'}$
in $\Bic(\CW)$
corresponds to an equivalence class of commutative diagrams of the form
$$
\xymatrix@R=1.5em{
B\ar@{=}[dd]&A_1\ar[l]_{w_1}\ar[r]^{f_1} & A_1'\ar[r]^{w_1'}&B'\ar@{=}[dd]
\\
&C\ar[u]^{u_1}\ar[d]_{u_2}\ar[r]^\varphi & C'\ar[u]_{v_1}\ar[d]^{v_2}
\\
B&A_2\ar[l]_{w_2}\ar[r]^{f_2} & A_2'\ar[r]^{w_2'}&B'
}$$
This means that $w_1'f_1u_1=w_1'v_1\varphi=w_2'v_2\varphi=w_2'f_2u_2$ and $w_1u_1=w_2u_2$, so
we define 
$$\omega_2(\varphi)=\left(\raisebox{3.2em}{$
\xymatrix@R=1.8em{ & A_1\ar[dl]_{w_1}\ar[dr]^{w_1'f_1}
\\
B&C\ar[u]^{u_1}\ar[d]_{u_2} & B'
\\
&A_2\ar[ul]^{w_2}\ar[ur]_{w_2'f_2}
}$}\right)$$
The fact that this is well defined on equivalence classes of double cells
follows from the following lemma about the 2-cells in a bicategory of fractions of a category.

\begin{lma}
For  any category $\bfC$ with a class of arrows $W$ satisfying the conditions {\bf CF1}, {\bf CF2}, and {\bf CF3} 
above, the bicategory of fractions $\bfC(W^{-1})$ has at most one 2-cell between any two arrows.
\end{lma}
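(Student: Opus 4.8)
The plan is to prove this directly from the Gabriel--Zisman conditions \textbf{CF1}--\textbf{CF3}; the argument is the familiar ``essential uniqueness of comparison cells'' specialised to the case in which the ambient bicategory is locally discrete. Recall that, since $\bfC$ is a category viewed as a locally discrete bicategory, an arrow $A\to B$ in $\bfC(W^{-1})$ is a span $A\xleftarrow{\,w\,}P\xrightarrow{\,g\,}B$ with $w\in W$, and a $2$-cell from $(P_1,w_1,g_1)$ to $(P_2,w_2,g_2)$ is represented by an apex $Q$ together with arrows $a\colon Q\to P_1$ and $b\colon Q\to P_2$ satisfying $w_1a=w_2b\in W$ and $g_1a=g_2b$ (the coherence $2$-cells in the general definition are forced to be identities here). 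Two such triples $(Q,a,b)$ and $(Q',a',b')$ represent the same $2$-cell exactly when there is an object $R$ with arrows $s\colon R\to Q$, $t\colon R\to Q'$ such that $as=a't$, $bs=b't$ and $w_1as\in W$.

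Given two $2$-cells between fixed parallel arrows $(P_1,w_1,g_1)$ and $(P_2,w_2,g_2)$, I would pick representatives $(Q,a,b)$ and $(Q',a',b')$ and proceed as follows. First apply \textbf{CF2} to $Q\xrightarrow{\,w_1a\,}A\xleftarrow{\,w_1a'\,}Q'$ (admissible since $w_1a'\in W$) to get $s\colon R\to Q$ in $W$ and $t\colon R\to Q'$ with $w_1as=w_1a't$; by \textbf{CF1}, from $w_1a\in W$ and $s\in W$ we obtain $w_1as\in W$. Since $w_1\in W$ and $w_1(as)=w_1(a't)$, condition \textbf{CF3} yields $\tilde w\colon R'\to R$ in $W$ with $as\tilde w=a't\tilde w$. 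Next, using $w_2b=w_1a$, $w_2b'=w_1a'$ together with this equality, we get $w_2(bs\tilde w)=w_2(b't\tilde w)$, and a second application of \textbf{CF3} (now with $w_2\in W$) produces $\tilde w'\colon R''\to R'$ in $W$ with $bs\tilde w\tilde w'=b't\tilde w\tilde w'$.

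Setting $\sigma=s\tilde w\tilde w'$ and $\sigma'=t\tilde w\tilde w'$, we then have $a\sigma=a'\sigma'$, $b\sigma=b'\sigma'$, and $w_1a\sigma\in W$ by \textbf{CF1} (since $w_1as\in W$ and $\tilde w,\tilde w'\in W$), while $g_1a\sigma=g_2b\sigma$ and $g_1a'\sigma'=g_2b'\sigma'$ hold automatically. Thus $(Q,a,b)$ and $(Q',a',b')$ are equivalent, and there is at most one $2$-cell between the given arrows. I expect the only real work to be the bookkeeping: tracking which composites are required to lie in $W$ and feeding the correct ones into \textbf{CF2} and \textbf{CF3}. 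There is no conceptual obstacle; indeed, the statement could alternatively be deduced from the fact, recalled earlier, that $\bfC(W^{-1})$ is biequivalent to the locally discrete category $\bfC[W^{-1}]$, so that each hom-category of $\bfC(W^{-1})$ is equivalent to a discrete category and hence, being a groupoid, has at most one arrow between any two objects.
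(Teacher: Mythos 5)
Your argument is correct and is essentially the paper's: both apply \textbf{CF2} to produce a common refinement $R$ of the two representing apexes, then apply \textbf{CF3} twice (once for $w_1$, once for $w_2$) to equalize the legs after a further restriction along an arrow of $W$. You merely make explicit the sequential composition of the two \textbf{CF3} outputs $\tilde w,\tilde w'$, which the paper compresses into the phrase ``applied to both $w_1$ and $w_2$.''
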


\begin{proof}
The arrows in this bicategory are spans $\xymatrix@1{A&S\ar[l]_w\ar[r]^f & B}$ with $w\in W$.
A 2-cell from  $\xymatrix@1{A&S_1\ar[l]_{w_1}\ar[r]^{f_1} & B}$
to $\xymatrix@1{A&S_2\ar[l]_{w_2}\ar[r]^{f_2} & B}$ is represented by a commutative diagram of the form
$$
\xymatrix@R=1.3em{
&S_1\ar[dl]_{w_1}\ar[dr]^{f_1}\\
A &T\ar[u]_{u_1}\ar[d]^{u_2}&B\\
&S_2\ar[ul]^{w_2}\ar[ur]_{f_2}
}
$$
Two such diagrams,
\begin{equation}\label{two cells}
\xymatrix@R=1.3em{
&S_1\ar[dl]_{w_1}\ar[dr]^{f_1}&&& &S_1\ar[dl]_{w_1}\ar[dr]^{f_1}\\
A &T\ar[u]_{u_1}\ar[d]^{u_2}&B&\mbox{and} & A &T'\ar[u]_{u'_1}\ar[d]^{u'_2}&B\\
&S_2\ar[ul]^{w_2}\ar[ur]_{f_2} &&& &S_2\ar[ul]^{w_2}\ar[ur]_{f_2}
}
\end{equation}
 represent the same 2-cell when there exist arrows $t$ and $t'$ such that the
 diagram 
\begin{equation}\label{equivalence}
\xymatrix@R=1.3em{
&S_1
\\
T\ar[ur]^{u_1} \ar[dr]_{u_2}& \bar{T}\ar[l]_t\ar[r]^{t'} & T'\ar[ul]_{u_1'}
\ar[dl]^{u_2'} 
\\
&S_2
}
\end{equation}
commutes and $w_1u_1t\in W$.

Note that for any two 2-cells as in (\ref{two cells}), we can find $t$ and $t'$ as in (\ref{equivalence})
by first using condition {\bf CF2} to find a commutative square
$$\xymatrix{
\ar[r]^r\ar[d]_{r'} &\ar[d]^{w_1u_1}
\\
\ar[r]_{w_1u_1'}&\rlap{\,.}
}
$$
We have then also that $w_2u_2r=w_1u_1r=w_1u_1'r'=w_2u_2'r'$.
By condition {\bf CF3} applied to both $w_1$ and $w_2$ we find that there
is an arrow $\tilde{w}\in W$ such that  $u_1r\tilde{w}=u_1'r'\tilde{w}$ and $u_2r\tilde{w}=u_2'r'\tilde{w}$.
So $t=r\tilde{w}$ and $t'=r'\tilde{w}$ fit in the diagram (\ref{equivalence}).
\end{proof}

\begin{thm}\label{equiv}
The bicategories, $\Bic(\bfC\W)$ and $\bfC(W^{-1})$ are equivalent in the 2-category
$\NHom$.
\end{thm}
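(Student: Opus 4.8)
The plan is to construct the two homomorphisms $\omega\colon\Bic(\CW)\to\bfC(W^{-1})$ and $\alpha\colon\bfC(W^{-1})\to\Bic(\CW)$ explicitly, verify each is a homomorphism of bicategories, and then show they are pseudo-inverse to each other. Since we have already defined $\omega_0$, $\omega_1$, $\omega_2$ in the text above, the first step is to check that $\omega$ respects the bicategory structure: it sends identities to identities (the unit on $\bar{A}$ in $\Bic(\CW)$ is $\mbox{Id}_{\mu_0(\bar A)}$, which $\omega_1$ sends to the span $(B\xleftarrow{1}B\xrightarrow{1}B)$, the identity in $\bfC(W^{-1})$ up to canonical 2-cell), and it sends the composite of two horizontal arrows in $\CW$ to a span isomorphic to the composite span in $\bfC(W^{-1})$. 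Recall that composition in $\Bic(\CW)$ uses the induced Segal maps condition to replace a staircase path by a composable one; we must trace through what $\omega_1$ does to such a composite and produce the comparison 2-cell, and this comparison comes for free from the fact that $\bfC(W^{-1})$ has at most one 2-cell between any two parallel arrows (the lemma just proved). So the coherence axioms for $\omega$ as a homomorphism are automatic: every diagram of invertible 2-cells we must check commutes because there is nowhere for it not to commute. The same triviality argument applies to checking functoriality of $\omega_2$ on vertical and horizontal composition of double cells.

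Next I would define $\alpha\colon\bfC(W^{-1})\to\Bic(\CW)$. On objects, $\alpha_0(A)=\overline{(\mbox{Id}_A)}$, the connected component of the object $(A\xrightarrow{\id_A}A)$ of $\CW$. On arrows, a span $\xymatrix@1{A&S\ar[l]_w\ar[r]^f&B}$ in $\bfC(W^{-1})$ is sent to the horizontal arrow $\xymatrix@1{(\mbox{Id}_A)& &\ar[l]&\ar[r]^f&(\mbox{Id}_B)}$ in $\CW$ whose underlying arrow of $\bfC$ is $f$ and whose domain object of $\CW$ is $(S\xrightarrow{w}A)$ — precisely, $\alpha_1$ sends the span to the horizontal arrow $(\xymatrix@1{A& S\ar[l]_w\ar[r]^f& B};w,\id_B)$ in the notation of the construction, using that $\overline{(S\xrightarrow{w}A)}=\overline{(\mbox{Id}_A)}$ by Lemma \ref{connectedcomp}. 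On 2-cells, a 2-cell between spans (which is represented by a commutative roof) translates directly into a double cell in $\CW$ via the square \eqref{square}. Again one checks $\alpha$ is a homomorphism, and again all coherence data is forced since both the hom-categories of $\bfC(W^{-1})$ are posetal (at most one 2-cell) and, as established by Lemma \ref{uniquedblcell} together with Lemma \ref{connectedcomp}, the hom-categories of $\Bic(\CW)$ are posetal as well.

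Finally I would exhibit the pseudo-inverse relations. The composite $\omega_0\alpha_0$ is the identity on $\bfC_0$ on the nose, and $\alpha_0\omega_0$ sends $\overline{(A\xrightarrow{w}B)}$ to $\overline{(\mbox{Id}_B)}$, which equals $\overline{(A\xrightarrow{w}B)}$ by Lemma \ref{connectedcomp} since both objects have $B$ as codomain — so on objects both composites are literally the identity, not merely equivalent. On arrows, $\omega_1\alpha_1$ sends a span $(A\xleftarrow{w}S\xrightarrow{f}B)$ to $(A\xleftarrow{w}S\xrightarrow{\id_B f=f}B)$, again the identity; and $\alpha_1\omega_1$ sends a horizontal arrow $\xymatrix@1{B&A\ar[l]_w\ar[r]^g&A'\ar[r]^{w'}&B'}$ of $\CW$ to the horizontal arrow with underlying $\bfC$-arrow $w'g$ and domain object $(A\xrightarrow{w}B)$. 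This is generally \emph{not} equal to the original horizontal arrow (whose underlying arrow is $g$ and codomain object is $(A'\xrightarrow{w'}B')$), so here one must produce a vertical transformation (an invertible 2-cell in $\NHom$, i.e. an icon) from $\alpha\omega$ to the identity, whose component at each object is the (unique) vertical arrow identifying $\overline{(A\xrightarrow{w}B)}$ with itself and whose component at each horizontal arrow is the double cell in $\CW$ comparing $w'g$ with $g$; such a double cell exists and is vertically invertible because $(\CW)_1$ is posetal groupoidal (by the remark following Lemma \ref{uniquedblcell}), and the icon coherence is automatic for the same reason. The main obstacle, then, is not any single computation — each is trivialized by posetality — but rather setting up $\alpha$ cleanly and being careful about the direction and naturality of the comparison icon $\alpha\omega\Rightarrow\id$, making sure its components genuinely assemble into an icon (same underlying object-function, which holds since $\alpha\omega$ is the identity on objects) and that nothing is lost in passing between the span presentation of $\bfC(W^{-1})$ and the staircase presentation of composites in $\Bic(\CW)$.
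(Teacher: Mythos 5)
Your overall architecture matches the paper's proof: construct $\omega$ and $\alpha$ explicitly, verify each is a homomorphism, show $\omega\alpha=\mathrm{Id}$ strictly and $\alpha\omega\cong\mathrm{Id}$ via an icon, leaning throughout on local posetality of both bicategories. The definitions of $\alpha$ on objects, arrows, and 2-cells (once one cleans up the notational confusion with $\Dbl$-style marked paths, which isn't the notation for $\CW$) are the same as the paper's, and your identification of the icon $\zeta\colon\alpha\omega\Rightarrow\mathrm{Id}$, with components comparing $w'g$ to $g$, is also the same.

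However, there is a genuine gap in your treatment of $\omega$ as a homomorphism. You assert that the comparison 2-cell $\omega(g)\circ\omega(f)\Rightarrow\omega(g\circ f)$ ``comes for free from the fact that $\bfC(W^{-1})$ has at most one 2-cell between any two parallel arrows.'' Posetality of the hom-categories guarantees \emph{uniqueness} of a comparison 2-cell and automatic \emph{coherence} once you have one, but it does not give you \emph{existence}. Two parallel spans in $\bfC(W^{-1})$ are connected by a 2-cell only if they represent the same arrow in the category of fractions $\bfC[W^{-1}]$; in general parallel arrows of $\bfC(W^{-1})$ need \emph{not} be isomorphic. The comparison between the span you get by composing in $\Bic(\CW)$ first and then applying $\omega_1$, versus composing the images of $\omega_1$ directly in $\bfC(W^{-1})$, involves two genuinely different spans (different apexes, built from different chosen squares), and proving that they are connected by a 2-cell is the substantive core of the verification: the paper does it by an explicit use of {\bf CF2} to build a commuting square and then {\bf CF3} to make a further correction, producing a span between the two apexes that fits the equivalence relation. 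Your argument, as stated, skips this. Similarly, ``one checks $\alpha$ is a homomorphism'' glosses over the paper's observation that $\alpha$ in fact preserves composition \emph{strictly} provided one reuses the same chosen square in $\bfC$ for both constructions; that is a non-trivial compatibility check, not something forced by posetality.
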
 

\begin{proof}
We first show that the functions $\omega_0$, $\omega_1$ and $\omega_2$ defined above give 
a homomorphism of bicategories:
$\omega\colon\Bic(\CW)\rightarrow\bfC(W^{-1})$.
To prove this we only need to give the comparison 2-cells for units and composition. 
(They will automatically be coherent since the hom-categories in $\bfC(W^{-1})$ are posetal.)

We will represent each connected component of $(\CW)_0$ by the identity arrow in it.
So an identity arrow in $\Bic(\CW)$ has the form $\xymatrix@1{B&B\ar[l]_{1_B}\ar[r]^{1_B} & B\ar[r]^{1_B}&B}$,
and $$\omega_1\left(\xymatrix@1{B&B\ar[l]_{1_B}\ar[r]^{1_B} & B\ar[r]^{1_B}&B}\right)=
\left(\xymatrix@1{B&B\ar[l]_{1_B}\ar[r]^{1_B}&B}\right).$$ So $\omega_1$ preserves identities strictly.

For any two composable arrows 
\begin{equation}\label{composable}
 \xymatrix@1{B_1&\ar[l]_{w_1}A_1\ar[r]^{f}&A_2\ar[r]^{w_2}&B_2}
\mbox{ and }\xymatrix@1{B_2&A_3\ar[l]_{w_3}\ar[r]^{g}&A_4\ar[r]^{w_4}&B_3}
\end{equation}
in $\Bic(\CW)$,
the composition is found by first constructing the following double cell in $\CW$,
$$
\xymatrix@R=1.5em{
(B_1\ar@{=}[dd] 
	&C)\ar[l]_{w_1\bar{\bar{w_3}}}\ar@{=}[d]\ar[r]^{\bar{w}_2\bar{f}} 
	& (A_3\ar[r]^{w_3}&B_2)\ar@{=}[dd]
\\
&C\ar[d]_{\bar{\bar{w}}_3}\ar[r]^{\bar{f}} &D\ar[u]_{\bar{w}_2}\ar[d]^{\bar{w}_3} 
\\
(B_1&\ar[l]_{w_1}A_1)\ar[r]^{f}&(A_2\ar[r]^{w_2}&B_2)
}$$
(using chosen squares in $\bfC$ to obtain $\bar{w}_2$, $\bar{w}_3$, $\bar{\bar{w}}_3$ and $\bar{f}$)
and then composing the domain horizontal arrow with $\xymatrix@1{(B_2&A_3)\ar[l]_{w_3}\ar[r]^{g}&(A_4\ar[r]^{w_4}&B_3)}$ to get
$$
 \xymatrix@1{(B_1&\ar[l]_{w_1\bar{\bar{w}}_3}C)\ar[rr]^{g\bar{w}_2\bar{f}}&&(A_4\ar[r]^{w_4}&B_3)\rlap{\,.}}
$$
So $\omega_1$ applied to the composite gives
\begin{equation}\label{imageofcomp}
 \xymatrix@C=4em{B_1&C\ar[l]_{w_1\bar{\bar{w}}_3}\ar[r]^{w_4g\bar{w}_2\bar{f}}&B_3\rlap{\,.}}
\end{equation}
On the other hand, the composite of the images of the arrows in (\ref{composable}) under $\omega$
is
\begin{equation}\label{compofimages}
 \xymatrix@C=4em{B_1&E\ar[l]_{w_1\hat{w}_3}\ar[r]^{w_4g\widehat{w_2f}}&B_3\rlap{\,,}}
\end{equation}
where $\hat{w}_3$ and $\widehat{w_2f}$ are obtained using chosen squares.

To construct a 2-cell from (\ref{imageofcomp}) to (\ref{compofimages}) we consider the following diagram
containing the three chosen squares used above:
$$
\xymatrix{
E\ar@/_1ex/[ddr]_{\hat{w}_3}\ar@/^2ex/[drrr]^{\widehat{w_2f}}
\\
&C\ar[d]_{\bar{\bar{w}}_3}\ar[r]^{\bar{f}} & D\ar[d]_{\bar{w}_3}\ar[r]^{\bar{w}_2} & A_3\ar[d]^{w_3}
\\
& A_1\ar[r]_f&A_2\ar[r]_{w_2} & B_2.
}
$$
We want to construct a span $E\leftarrow G\rightarrow C$ which will make this diagram commutative.

Using Condition {\bf CF2} we find a commutative square 
$$
\xymatrix{
F\ar[r]^{\bar{\hat{w}}_3}\ar[d]_{\hat{\bar{\bar{w}}}_3} & C\ar[d]^{\bar{\bar{w}}_3}
\\
E\ar[r]_{\hat{w}_3} & A_1}
$$
and using Condition {\bf CF3} we find an arrow $\xymatrix@1{G\ar[r]^{\tilde{w}_3}&F}$ in $W$
such that all parts of the following diagram commute:
$$
\xymatrix@R=1.8em{
E\ar@/_2ex/[3,2]_{\hat{w}_3}\ar@/^2ex/[2,4]^{\widehat{w_2f}}
\\
& G\ar[ul]|{\hat{\bar{\bar{w}}}_3\tilde{w}_3}\ar[dr]|{\bar{\hat{w}}_3\tilde{w}_3}
\\
&&C\ar[d]_{\bar{\bar{w}}_3}\ar[r]^{\bar{f}} & D\ar[d]_{\bar{w}_3}\ar[r]^{\bar{w}_2} & A_3\ar[d]^{w_3}
\\
&& A_1\ar[r]_f&A_2\ar[r]_{w_2} & B_2\rlap{\,.}
}
$$
So the required comparison 2-cell is given by
$$
\xymatrix@C=6em@R=1.8em{
&C\ar[dl]_{w_1\bar{\bar{w}}_3}\ar[dr]^{w_4g\bar{w}_2\bar{f}}
\\
B_1 & G\ar[u]_{\bar{\hat{w}}_3\tilde{w}_3}\ar[d]^{\hat{\bar{\bar{w}}}_3\tilde{w}_3} & B_3
\\
&E\ar[ul]^{w_1\hat{w}_3}\ar[ur]_{w_4g\widehat{w_2f}}&.
}$$

We will now construct a pseudo-inverse $\alpha\colon\bfC(W^{-1})\rightarrow\Bic(\bfC\{W\})$
for $\omega$. 

On objects, $\alpha(A)=(1_A)$. On arrows, $$\alpha(\xymatrix@1{A&C\ar[l]_w\ar[r]^f & B})=
( \xymatrix@1{A&C\ar[l]_w\ar[r]^f &B\ar[r]^{1_B} & B}),$$ and on 2-cells,
$$
\alpha\left(\raisebox{3.1em}{$\xymatrix{&\ar[dl]_w\ar[dr]^f\\&\ar[u]^v\ar[d]_{v'}&\\&\ar[ul]^{w'}\ar[ur]_{f'}}$}\right)=
\raisebox{3.5em}{$\xymatrix{\ar@{=}[dd]&\ar[l]_w\ar[r]^f & \ar@{=}[r]&\ar@{=}[dd]
\\ &\ar[r]^{fv}\ar[u]_{v}\ar[d]^{v'} &\ar@{=}[d]\ar@{=}[u] \\&\ar[l]^{w'}\ar[r]_{f'} & \ar@{=}[r] &\rlap{\,.}}$}
$$
We will show that $\alpha$ is a homomorphism of bicategories.
First, note that $\alpha$ preserves units strictly:
$\alpha(\xymatrix@1{&\ar[l]_{1_A}\ar[r]^{1_A}&})=(\xymatrix@1{&\ar[l]_{1_A}\ar[r]^{1_A} & \ar[r]^{1_A}&})$.
Composition is also preserved strictly as long as we choose the same chosen squares.
We will now show this.
So let $\xymatrix@1{A&\ar[l]_{w_1}S\ar[r]^{f_1} & B}$ and $\xymatrix@1{B&T\ar[l]_{w_2}\ar[r]^{f_2}& C}$
be a composable pair of arrows in $\bfC(W^{-1})$. Let 
\begin{equation}\label{chosen}
\xymatrix{
U\ar[r]^{\bar{f}_1}\ar[d]_{\bar{w}_2} & T\ar[d]^{w_2}
\\
S\ar[r]_{f_1} & B
}\end{equation}
be a chosen square. Then the composition in $\bfC(W^{-1})$ is 
$\xymatrix@1{A & U\ar[l]_{w_1\bar{w}_2}\ar[r]^{f_2\bar{f}_1} & C}$,
and $$\alpha(\xymatrix@1{A & U\ar[l]_{w_1\bar{w}_2}\ar[r]^{f_2\bar{f}_1} & C})=
(\xymatrix@1{A&U\ar[l]_{w_1\bar{w}_2}\ar[r]^{f_2\bar{f}_1}&C\ar@{=}[r]&C}).$$
To calculate the composition in $\Bic(\CW)$ of 
$$\alpha(\xymatrix@1{A&\ar[l]_{w_1}S\ar[r]^{f_1} & B})=
(\xymatrix@1{A&S\ar[l]_{w_1}\ar[r]^{f_1}&B\ar@{=}[r]&B})$$ and 
$$\alpha(\xymatrix@1{B&\ar[l]_{w_2}T\ar[r]^{f_1} & C})=
(\xymatrix@1{B&T\ar[l]_{w_2}\ar[r]^{f_2}&C\ar@{=}[r]&C}),$$ we note that we can use 
the same chosen square (\ref{chosen})
to construct a pair of horizontal arrows in $\CW$ that is composable there:
there is a double cell
$$
\xymatrix{
(\ar@{=}[dd]&)\ar[l]_{w_1\bar{w}_2}\ar[r]^{\bar{f}_1} & (\ar[r]^{w_2}&)\ar@{=}[dd]
\\
&\ar[d]_{\bar{w}_2}\ar@{=}[u]\ar[r]^{\bar{f}_1} &\ar[d]^{w_2}\ar@{=}[u]
\\
(&\ar[l]^{w_1})\ar[r]_{f_1} & (\ar@{=}[r]&)
}
$$
Composing the top row of this double cell with $\xymatrix@1{(&)\ar[l]_{w_2}\ar[r]^{f_2}&(\ar@{=}[r]&)}$
gives us
$$\xymatrix{(&)\ar[l]_{w_1\bar{w}_2}\ar[r]^{f_2\bar{f}_1} & (\ar@{=}[r]&)}.$$

Now it remains for us to calculate the composites $\alpha\circ\omega$ and $\omega\circ\alpha$.
First, a straightforward calculation shows that $\omega\alpha=\mbox{Id}_{\bfC(W^{-1})}$.
The other composition requires a bit more attention.

On objects, $\alpha\omega(\overline{(\stackrel{w}{\rightarrow})})=\overline{(\stackrel{1_B}{\rightarrow})}$
for $A\stackrel{w}{\rightarrow}B$, and $\overline{(\stackrel{w}{\rightarrow})}=\overline{(\stackrel{1_B}{\rightarrow})}$
in $\Bic(\CW)$. So  $\alpha\omega$ is the identity on objects.

On arrows, $\alpha\omega(\xymatrix@1{&\ar[l]_{w_1}\ar[r]^f&\ar[r]^{w_2}&})=
(\xymatrix@1{&\ar[l]_{w_1}\ar[r]^{w_2f}&\ar@{=}[r]&})$. And on 2-cells, 
\begin{eqnarray*}
\alpha\omega \left(\raisebox{3.5em}{$\xymatrix{\ar@{=}[dd]&\ar[l]_{w_1}\ar[r]^{f} &\ar[r]^{w_2}&\ar@{=}[dd]
\\
&\ar[u]_{u_1}\ar[r]^h\ar[d]^{v_1} & \ar[u]_{u_2}\ar[d]^{v_2}
\\
&\ar[l]_{w_1'}\ar[r]_{f'} & \ar[r]_{w_2'}& }$}\right)&=&
\alpha\left(\raisebox{3.1em}{$\xymatrix{&\ar[dl]_{w_1}\ar[dr]^{w_2f}
\\
&\ar[u]_{u_1}\ar[d]^{v_1}&
\\
&\ar[ul]^{w_1'}\ar[ur]_{w_2'f'}}$}\right)
\\
&=& \raisebox{3.5em}{$\xymatrix{\ar@{=}[dd]&\ar[l]_{w_1}\ar[r]^{w_2f} &\ar@{=}[r]&\ar@{=}[dd]
\\
&\ar[u]_{u_1}\ar[r]^{w_2fu_1}\ar[d]^{v_1} & \ar@{=}[u]\ar@{=}[d]
\\
&\ar[l]_{w_1'}\ar[r]_{w_2'f'} & \ar@{=}[r]&\rlap{\,.}
}$}
\end{eqnarray*}

So we see that $\alpha\circ\omega$ is not the identity, but there
is an invertible icon $\zeta\colon\alpha\omega\Rightarrow\mbox{Id}_{\smBic(\bfC(W)^{-1})}$
such that the component of $\zeta$ at $\xymatrix@1{&\ar[l]_{w_1}\ar[r]^f&\ar[r]^{w_2}&}$ is
$$
\xymatrix{
\ar@{=}[dd]&\ar[l]_{w_1}\ar[r]^{w_2f} &\ar@{=}[r]&\ar@{=}[dd]
\\
&\ar@{=}[u]\ar[r]^f\ar@{=}[d] & \ar[u]_{w_2}\ar@{=}[d]
\\
&\ar[l]^{w_1}\ar[r]_{f} & \ar[r]_{w_2}& 
}
$$

The fact that the naturality squares of 2-cells commute follows from the fact that this bicategory is
locally posetal. 

We conclude that $\omega$ and $\alpha$ form a biequivalence of bicategories $\Bic(\CW)\simeq\bfC(W^{-1})$.
\end{proof}

\begin{cor}\label{2-equiv}
 $\CW$ and $\Dbl(\bfC(W^{-1}))$ are equivalent in the 2-category $\WGDbl_{\mbox{\scriptsize\rm ps},v}$.
\end{cor}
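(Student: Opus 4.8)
The plan is to deduce this corollary from Theorem \ref{equiv} by applying the biequivalence $\Dbl\colon\NHom\to\WGDbl_{\ps,v}$ that was recalled in the introduction and in Section \ref{dblbic}. Since $\Dbl$ is (part of) a biequivalence of $2$-categories, it preserves internal equivalences: if two objects of $\NHom$ are equivalent in $\NHom$, then their images under $\Dbl$ are equivalent in $\WGDbl_{\ps,v}$. By Theorem \ref{equiv} we have an equivalence $\Bic(\CW)\simeq \bfC(W^{-1})$ in $\NHom$, so $\Dbl(\Bic(\CW))\simeq \Dbl(\bfC(W^{-1}))$ in $\WGDbl_{\ps,v}$. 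It therefore remains only to identify $\Dbl(\Bic(\CW))$ with $\CW$ up to equivalence in $\WGDbl_{\ps,v}$, i.e.\ to invoke that the composite $\Dbl\circ\Bic$ is equivalent to the identity $2$-functor on $\WGDbl_{\ps,v}$.

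Concretely, first I would recall from the biequivalence $\Bic\colon\WGDbl_{\ps,v}\simeq\NHom\colon\Dbl$ of \cite{PP1} that for every weakly globular double category $\bbX$ there is a pseudo-natural vertical equivalence $\bbX\simeq \Dbl(\Bic(\bbX))$ in $\WGDbl_{\ps,v}$ (this is precisely the unit/counit of the biequivalence being an equivalence). Applying this with $\bbX=\CW$ gives $\CW\simeq\Dbl(\Bic(\CW))$ in $\WGDbl_{\ps,v}$. Second, I would apply $\Dbl$ to the equivalence of Theorem \ref{equiv}: because $\Dbl$ is a $2$-functor (in particular it sends internal equivalences to internal equivalences, and a pseudo-inverse and the invertible transformations witnessing the equivalence map across), from $\omega\colon\Bic(\CW)\to\bfC(W^{-1})$ with pseudo-inverse $\alpha$ we obtain $\Dbl(\omega)\colon\Dbl(\Bic(\CW))\to\Dbl(\bfC(W^{-1}))$ which is an equivalence in $\WGDbl_{\ps,v}$ with pseudo-inverse $\Dbl(\alpha)$. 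Composing the two equivalences $\CW\simeq\Dbl(\Bic(\CW))\simeq\Dbl(\bfC(W^{-1}))$ and using that equivalences in a $2$-category compose yields the claim.

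The only real point requiring care—and the main (mild) obstacle—is making sure that ``equivalence in $\NHom$'' is the correct notion to feed into the biequivalence so that it comes out as ``equivalence in $\WGDbl_{\ps,v}$'' rather than something weaker; here one uses that $\Dbl$ is a genuine $2$-functor that is part of a biequivalence, hence it preserves and reflects internal equivalences, and that the $2$-category $\WGDbl_{\ps,v}$ has a well-behaved notion of equivalence (vertical equivalence) along which composition of equivalences works. All of this is already in place from \cite{PP1} and from Section \ref{dblbic}, so the corollary is genuinely a formal consequence of Theorem \ref{equiv}; the proof is a short paragraph rather than a computation. I would write:

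\begin{proof}
By Theorem \ref{equiv}, $\omega\colon\Bic(\CW)\to\bfC(W^{-1})$ is an equivalence in $\NHom$ with pseudo-inverse $\alpha$. Since $\Dbl\colon\NHom\to\WGDbl_{\ps,v}$ is a $2$-functor which is part of a biequivalence, it sends this equivalence to an equivalence $\Dbl(\omega)\colon\Dbl(\Bic(\CW))\to\Dbl(\bfC(W^{-1}))$ in $\WGDbl_{\ps,v}$, with pseudo-inverse $\Dbl(\alpha)$. On the other hand, the biequivalence $\Bic\colon\WGDbl_{\ps,v}\simeq\NHom\colon\Dbl$ of \cite{PP1} provides a vertical equivalence $\CW\simeq\Dbl(\Bic(\CW))$ in $\WGDbl_{\ps,v}$. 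Composing these two equivalences gives an equivalence $\CW\simeq\Dbl(\bfC(W^{-1}))$ in $\WGDbl_{\ps,v}$.
\end{proof}
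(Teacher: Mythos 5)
Your argument is correct and follows the same route as the paper's own proof: apply $\Dbl$ to the equivalence of Theorem \ref{equiv} to obtain a vertical equivalence $\Dbl(\Bic(\CW))\simeq\Dbl(\bfC(W^{-1}))$, and then compose with the vertical equivalence $\CW\simeq\Dbl(\Bic(\CW))$ coming from the biequivalence $\Bic\dashv\Dbl$ of \cite{PP1}. Your write-up is simply a more detailed unpacking of the same two steps.
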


\begin{proof}
Apply $\Dbl$ to the equivalence of Theorem \ref{equiv} and then compose the resulting vertical equivalence of 
weakly globular double categories with the equivalence of $\CW$ with $\Dbl\Bic(\CW)$.
\end{proof}

\section{The Universal Property of $\CW$} \label{fracns}

The goal of this section is to describe the vertical and horizontal universal properties 
of the weakly globular double category $\CW$ we constructed in the previous section. 
First we derive the universal property of $\CW$ inherited from
$\bfC(W^{-1})$ by Corollary \ref{2-equiv}. This is a property in terms of 
pseudo-functors and vertical transformations.
The second universal property we give for $\CW$ is in terms of strict double functors 
and horizontal transformations. The two properties together determine $\CW$ 
up to horizontal and vertical equivalence.

\subsection{The vertical universal property of $\CW$} 
Recall that Theorem \ref{univ1-thm} states that composition with 
$\Dbl(J_W)\colon \Dbl(\bfC)\rightarrow\Dbl(\bfC(W^{-1}))$
induces an equivalence of categories,
$$
\WGDbl_{\ps,v}(\Dbl(\bfC(W^{-1})),\bbX)\simeq\WGDbl_{\ps,v,W}(\Dbl(\bfC),\bbX)
$$
for any weakly globular double category $\bbX$.

To obtain a vertical universal property for $\CW$, note that $\CW$ is vertically equivalent to $\Dbl(\bfC(W^{-1}))$.
We compose $\Dbl(J_W)$ with the equivalence $\xymatrix@1{\Dbl(\bfC(W^{-1}))\ar[r]^-{\sim}&\CW}$
to get a pseudo-functor $\tilde{J}_W\colon \Dbl(\bfC)\rightarrow\CW$ with the following universal property:

\begin{thm}\label{D:firstthm}
Composition with the pseudo-functor 
$\tilde{J}_W\colon \Dbl(\bfC)\rightarrow \CW,$
gives rise to an equivalence of categories,
$$
\WGDbl_{\ps,v}(\CW,\bbD)\simeq\WGDbl_{\ps,v,W}(\Dbl(\bfC),\bbD).
$$
\end{thm}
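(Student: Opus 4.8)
The plan is to obtain Theorem \ref{D:firstthm} as a formal consequence of Theorem \ref{univ1-thm} and Corollary \ref{2-equiv}, together with the standard fact that composing one leg of an adjoint-style equivalence with a vertical equivalence of weakly globular double categories does not change the conclusion up to equivalence of hom-categories. First I would fix, using Corollary \ref{2-equiv}, a vertical equivalence in $\WGDbl_{\ps,v}$,
$$
E\colon \Dbl(\bfC(W^{-1}))\longrightarrow \CW, \qquad E'\colon \CW\longrightarrow \Dbl(\bfC(W^{-1})),
$$
with vertically invertible pseudo-natural isomorphisms $E'E\cong \id$ and $EE'\cong\id$, and set $\tilde J_W = E\circ \Dbl(J_W)$. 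The target equivalence then factors as
$$
\WGDbl_{\ps,v}(\CW,\bbD)\xrightarrow{\;(-)\circ E\;}\WGDbl_{\ps,v}(\Dbl(\bfC(W^{-1})),\bbD)\xrightarrow{\;(-)\circ \Dbl(J_W)\;}\WGDbl_{\ps,v}(\Dbl(\bfC),\bbD),
$$
and I would prove separately that (i) precomposition with the equivalence $E$ is an equivalence of categories, and (ii) the image of the composite lands in, and is essentially surjective onto, $\WGDbl_{\ps,v,W}(\Dbl(\bfC),\bbD)$.

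For step (i): precomposition $(-)\circ E$ has a candidate quasi-inverse $(-)\circ E'$, and the natural isomorphisms $\id\cong E'E$, $\id\cong EE'$ induce, by whiskering, natural isomorphisms $(F\circ E)\circ E'\cong F$ and $(G\circ E')\circ E\cong G$ of pseudo-functors; that these whiskered $2$-cells are again vertical transformations (indeed vertically invertible ones) is exactly the statement that $\WGDbl_{\ps,v}$ is a $2$-category and that vertical transformations whisker with pseudo-functors on both sides, which is recorded in Section \ref{dblbic}. So $(-)\circ E$ is an equivalence of categories, for any $\bbD$. Composing with the equivalence from Theorem \ref{univ1-thm}, we conclude that $(-)\circ\tilde J_W$ is an equivalence onto the full subcategory of $\WGDbl_{\ps,v}(\Dbl(\bfC),\bbD)$ that is the essential image of $(-)\circ\Dbl(J_W)$ along $(-)\circ E$; since $(-)\circ E$ is an equivalence it preserves essential images, so this subcategory is precisely $\WGDbl_{\ps,v,W}(\Dbl(\bfC),\bbD)$.

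The remaining content is therefore step (ii), which is really a bookkeeping check that the essential image is stable and correctly identified, i.e.\ that $\WGDbl_{\ps,v,W}(\Dbl(\bfC),\bbD)$ — the category of $W$-pseudo-functors and vertical $W$-transformations of Definitions \ref{W-ps-functor} and \ref{W-trafo} — is closed under vertical isomorphism of pseudo-functors. For objects this follows from Lemma \ref{pre-comp-uniq}: if $F$ is a $W$-pseudo-functor and $F\cong F'$ in $\WGDbl_{\ps,v}$, then $F'$ sends each $W$-doubly marked path to a horizontal arrow vertically isomorphic to a precompanion, and a horizontal arrow vertically isomorphic to a precompanion is itself a precompanion (as in the proof of Lemma \ref{lrprecomps}), so $F'$ is a $W$-pseudo-functor; the transported precompanion structure is the one given by Lemma \ref{pre-comp-uniq}. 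For morphisms one checks that conjugating a $W$-transformation by such vertical isomorphisms again yields a $W$-transformation, which is the assertion of the Remark following Definition \ref{W-trafo}. Finally one notes, as observed just before Theorem \ref{univ1-thm}, that horizontal composition of a $W$-transformation with a pseudo-functor is again a $W$-transformation, so that the quasi-inverse $(-)\circ E'$ indeed maps $\WGDbl_{\ps,v,W}(\Dbl(\bfC),\bbD)$ back into itself. I expect the main obstacle to be purely notational: verifying that the various whiskered and conjugated $2$-cells satisfy the (vertical) $W$-transformation axioms of Definition \ref{W-trafo} is a diagram chase using the compatibility equations there together with the coherence of the structure isomorphisms of $E$, and while conceptually routine it is the only place where real work occurs.
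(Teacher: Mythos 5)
Your proposal is correct and takes essentially the same route as the paper: the paper presents this theorem as an immediate consequence of Theorem \ref{univ1-thm} together with Corollary \ref{2-equiv}, precomposing the equivalence $\Dbl(\bfC(W^{-1}))\simeq\CW$ with $\Dbl(J_W)$ exactly as you do, and gives no further argument. One small remark: your step (ii) is more work than needed — since Theorem \ref{univ1-thm} already asserts that $(-)\circ\Dbl(J_W)$ is an equivalence of categories \emph{onto} $\WGDbl_{\ps,v,W}(\Dbl(\bfC),\bbD)$, the composite with the equivalence $(-)\circ E$ from step (i) is automatically an equivalence onto the same category, so there is no need to separately verify that the target subcategory is closed under vertical isomorphism or to track essential images.
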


Recall that $\WGDbl_{v,{\ps},W}(\Dbl(\bfC),\bbD)$ is the category of $W$-pseudo-functors (which 
send $W$-doubly marked paths to precompanions), and vertical $W$-transformations
as described in Definition \ref{W-trafo}. 

Since furthermore, $H(\bfC)$ is equivalent to $\Dbl(\bfC)$, we may compose $\tilde{J}_W$ with this equivalence and obtain 
$\calJ_{W}\colon H(\bfC)\rightarrow \CW$.
This functor has the universal property expressed in the following corollary.

\begin{cor}
 Composition with $\calJ_{W}\colon H\bfC\rightarrow \CW$ gives rise to an equivalence of categories,
$$
\WGDbl_{\ps,v}(\CW,\bbD)\simeq\WGDbl_{\ps,v,W}(H\bfC,\bbD).
$$
\end{cor}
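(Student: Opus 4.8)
The plan is to reduce the statement to Theorem~\ref{D:firstthm} by transporting along the vertical equivalence between $H\bfC$ and $\Dbl(\bfC)$ recalled above. Write $p\colon\Dbl(\bfC)\to H\bfC$ for the strict comparison functor and $e\colon H\bfC\to\Dbl(\bfC)$ for its pseudo-functor pseudo-inverse, so that $p\circ e\cong\id_{H\bfC}$ and $e\circ p\cong\id_{\Dbl(\bfC)}$ in $\WGDbl_{\ps,v}$, and recall that by construction $\calJ_W=\tilde J_W\circ e$. Since $e$ is an equivalence in the $2$-category $\WGDbl_{\ps,v}$, precomposition with $e$ is an equivalence of categories $\WGDbl_{\ps,v}(\Dbl(\bfC),\bbD)\to\WGDbl_{\ps,v}(H\bfC,\bbD)$, with quasi-inverse precomposition with $p$ and with the connecting natural isomorphisms obtained by whiskering the pseudo-natural isomorphisms $p\circ e\cong\id$ and $e\circ p\cong\id$. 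As $F\circ\calJ_W=(F\circ\tilde J_W)\circ e$, the functor $(-)\circ\calJ_W$ is the composite of $(-)\circ\tilde J_W$ --- which by Theorem~\ref{D:firstthm} is an equivalence onto $\WGDbl_{\ps,v,W}(\Dbl(\bfC),\bbD)$ --- followed by $(-)\circ e$; so it remains to check that $(-)\circ e$ restricts to an equivalence of that subcategory with $\WGDbl_{\ps,v,W}(H\bfC,\bbD)$.

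For this I would first record the effect of $p$ and $e$ on the relevant arrows. The strict functor $p$ sends a $W$-doubly marked path in $\Dbl(\bfC)$ --- one whose marked segment composes to an arrow $w\in W$ --- to the horizontal arrow $w$ of $H\bfC$, which lies in $W$ by {\bf CF1}; and for every $w\in W$ there is a $W$-doubly marked path, e.g.\ the length-one path marked at both ends, mapped to $w$ by $p$. Dually, $e$ sends an arrow $w\in W$, viewed as a horizontal arrow of $H\bfC$, to a $W$-doubly marked path in $\Dbl(\bfC)$ (since $p(e(w))=w$ forces its marked segment to compose to $w$, using that in $H\bfC$ two horizontal arrows are vertically isomorphic only if equal). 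Combining this with the fact that pseudo-functors between weakly globular double categories preserve precompanions (the last proposition of Section~\ref{comppre}), one gets: a pseudo-functor $G\colon\Dbl(\bfC)\to\bbD$ sends all $W$-doubly marked paths to precompanions if and only if $G\circ e$ sends every arrow of $W$ to a precompanion; and a pseudo-functor $F\colon H\bfC\to\bbD$ sends every arrow of $W$ to a precompanion if and only if $F\circ p$ is a $W$-pseudo-functor in the sense of Definition~\ref{W-ps-functor}. Hence, taking $\WGDbl_{\ps,v,W}(H\bfC,\bbD)$ to be the full subcategory on the pseudo-functors that send arrows of $W$ to precompanions, with morphisms the vertical transformations carrying the analogue (for arrows of $W$) of the $W$-transformation structure of Definition~\ref{W-trafo} --- equivalently, those whose image under $(-)\circ p$ is a vertical $W$-transformation --- the functors $(-)\circ e$ and $(-)\circ p$ restrict to mutually quasi-inverse equivalences between $\WGDbl_{\ps,v,W}(\Dbl(\bfC),\bbD)$ and $\WGDbl_{\ps,v,W}(H\bfC,\bbD)$. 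Composing with the first step finishes the proof.

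The one point demanding genuine care is the transport of the $W$-transformation data of Definition~\ref{W-trafo} --- the cells $\alpha_w^l,\alpha_w^r$ and the four compatibility equations --- along the pseudo-natural isomorphisms witnessing $H\bfC\simeq\Dbl(\bfC)$: one must verify that these cells exist for $F$ exactly when they exist for $F\circ p$ and that the equations correspond. I expect this to be the main obstacle, but only in the sense of bookkeeping: the vertical arrow categories of $H\bfC$, $\Dbl(\bfC)$ and $\bbD$ are posetal groupoids, so the cells in question, when they exist, are unique and the compatibility equations are then automatic, and the task is simply to organise the diagram chase. If one prefers, this can be side-stepped by declaring $\WGDbl_{\ps,v,W}(H\bfC,\bbD)$ to be the essential image of $(-)\circ\calJ_W$ and then verifying, by the arrows-of-$W$ analysis above, that it coincides with the concretely described subcategory.
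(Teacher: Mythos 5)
Your proposal takes the same route as the paper: write $\calJ_W = \tilde J_W \circ e$, invoke Theorem~\ref{D:firstthm} for precomposition with $\tilde J_W$, and observe that precomposition with the vertical equivalence $e\colon H\bfC\to\Dbl(\bfC)$ is itself an equivalence of hom-categories, so that the $W$-restricted subcategory transports across. The paper leaves this as a one-line corollary, and your expansion of the transport step --- tracking $W$-doubly marked paths under $p$ and $e$, and using that pseudo-functors preserve precompanions --- is a correct and reasonable elaboration; your identification of the objects of $\WGDbl_{\ps,v,W}(H\bfC,\bbD)$ matches the paper's subsequent remark.

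One local error in your bookkeeping paragraph: the cells $\alpha_w^l,\alpha_w^r$ of Definition~\ref{W-trafo} are double cells, i.e.\ elements of $\bbD_{11}$, not vertical arrows, and the weak globularity condition only forces $\bbD_0$ to be a posetal groupoid, not $\bbD_1$. So it is not true in general that ``the cells in question, when they exist, are unique and the compatibility equations are then automatic'' (in $\Dbl(\calB)$, for instance, the double cells carry genuine $2$-cells of $\calB$). The fallback you offer --- taking $\WGDbl_{\ps,v,W}(H\bfC,\bbD)$ to be the essential image of $(-)\circ\calJ_W$ and matching it afterwards with the concrete description --- is what the paper implicitly does and is the clean way to close the argument; the posetality remark should simply be deleted rather than relied on.
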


\begin{rmks}\rm
 \begin{enumerate}
 \item In this corollary, $\WGDbl_{\ps,v,W}(H\bfC,\bbD)$ has as objects pseudo-functors which send the elements of $W$ to
precompanions. Note that this vertical universal property is easier to express than the one in terms of $\Dbl(\bfC)$, as given in Theorem \ref{D:firstthm}.
 \item This characterization of $\CW$ determines this weakly globular double category up to a vertical equivalence.
\item
It is straightforward to check that the composition $\calJ_{W}\colon H\bfC\rightarrow\CW$ used in the last corollary
is the obvious inclusion functor, sending and object of $C$ to the object in $\CW$ represented by its identity arrow. 
It is clear that this is a strict functor of weakly globular double categories. 
 \end{enumerate}
\end{rmks}

In the rest of this section we will discuss 
the universal property of the functor $\calJ_{W}\colon H\bfC\rightarrow\CW$  
in terms of 2-categories of strict functors and horizontal transformations.
This will then characterize $\CW$ up to a horizontal equivalence.
This characterization will be in terms of companions rather than precompanions.

\subsection{Companions and conjoints in $\CW$}
As we saw in an Section \ref{compquasi}, companions in weakly globular double categories are closely related to 
quasi units in bicategories. Requiring in the universal property  that the elements of $W$ themselves become 
companions is too strong, 
but requiring that they become companions after composition with a horizontal isomorphism 
turns out to be just right for a universal property with respect to strict functors and horizontal transformations.

For this part we will make the added assumption on the class $W$ that it satisfies the 3 for 2 property:
if $f,g,h$ are arrows in $\bfC$ such that $gf=h$ and two of $f,g$ and $h$ are in $W$, so is the third.

The following lemma characterizes the horizontal and vertical arrows in $\bfC\W$ that have 
companions or conjoints.

\begin{lma}\label{compchar}
\begin{enumerate}
 \item 
A vertical arrow in $\CW$ has a horizontal companion if and only if it has a representative of the form
\begin{equation}\label{vertcomp}
\xymatrix@R=1.5em{
(A_1\ar[r]^{wu} & B)\ar@{=}[dd]
\\
C\ar@{=}[u]\ar[d]_u 
\\
(A_2\ar[r]_{w}&B)
}
\end{equation}
\item
A vertical arrow in $\CW$ has a horizontal conjoint if and only if it has a representative of the form
\begin{equation}\label{vertconj}
\xymatrix@R=1.5em{
(A_1\ar[r]^{w} & B)\ar@{=}[dd]
\\
A_2\ar@{=}[d]\ar[u]^u 
\\
(A_2\ar[r]_{wu}&B)
}
\end{equation}
\item
A horizontal arrow in $\CW$ has a vertical companion and conjoint if and only if
it is of the form
$$
 \xymatrix{
(B&\ar[l]_{wu} A_2)\ar[r]^u&(A_1\ar[r]^{w} & B)\rlap{\,.}
}
$$ 
\end{enumerate}
\end{lma}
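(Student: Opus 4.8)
The plan is to prove the three parts of Lemma~\ref{compchar} in turn, relying on the explicit description of objects, vertical arrows, horizontal arrows and double cells in $\CW$ from Section~\ref{construction}, together with the fact (Lemma~\ref{uniquedblcell} and the following remark) that the vertical category $(\CW)_1$ of horizontal arrows and double cells is posetal and groupoidal. The latter is what makes the binding cell equations in Definition~\ref{friends} automatic once one produces binding cells of the right shape: both sides of each equation in (\ref{compcondns}) or (\ref{conjcondns}) are double cells with the same horizontal source and target and the same vertical source and target, so they must coincide. Thus for each part the real content is to identify which shapes of arrows admit binding cells at all.

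For part (1), first I would observe that a vertical arrow $[u_1,C,u_2]\colon(w_1)\to(w_2)$ has a companion precisely when there is a horizontal arrow $h\colon(w_1)\to(w_2)$ (given by some $f\colon A_1\to A_2$ in $\bfC$) and double cells $\psi,\chi$ as in Definition~\ref{friends}. Writing out what a double cell $\psi$ of the form $\mathrm{id}\Rightarrow$ (something) means in $\CW$, using the diagram (\ref{square}) with identity horizontal domain, forces a compatibility between $f$, $w_1$, $w_2$, $u_1$, $u_2$; similarly for $\chi$. I expect this to collapse, after using Lemma~\ref{reps} to choose convenient representatives and the posetal/groupoidal nature of $(\CW)_1$, to exactly the condition that $[u_1,C,u_2]$ has a representative $(\,1,C,u\,)$ with top leg an identity, i.e.\ of the form (\ref{vertcomp}), in which case the companion is the horizontal arrow induced by $u\colon A_1=C\to A_2$ and the binding cells are the evident commutative diagrams. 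Part (2) is the formal dual, obtained by the same argument with the roles of the two legs of the vertical span exchanged; alternatively I would invoke Remark~\ref{compconj}: since the vertical category is a posetal groupoid, a vertical arrow has a conjoint iff its inverse has a companion, and the inverse of a vertical arrow with representative (\ref{vertconj}) has representative (\ref{vertcomp}) (after relabelling), so part (2) follows immediately from part (1).

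For part (3), I would combine parts (1) and (2): a horizontal arrow $h$ with vertical companion $v$ and vertical conjoint $v'$ must have $v$ of the form (\ref{vertcomp}) and $v'$ of the form (\ref{vertconj}), and the binding cell equations (together with uniqueness of cells) force $v'$ to be the inverse of $v$ in the groupoid $(\CW)_0$. Matching the two normal forms (\ref{vertcomp}) and (\ref{vertconj}) for $v$ and $v^{-1}=v'$ then pins down the codomain and domain objects of $h$ to be $(A_2\xrightarrow{wu}B)$ and $(A_1\xrightarrow{w}B)$ respectively, and the horizontal arrow $h$ itself to be the one induced by $u\colon A_2\to A_1$; conversely, for such an $h$ one reads off the binding cells directly. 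Here the $3$-for-$2$ assumption on $W$ is used to guarantee that the arrows appearing (such as $wu$ and $w$, or the comparison arrows produced by {\bf CF2}/{\bf CF3}) genuinely lie in $W$ so that the diagrams really represent arrows and cells of $\CW$.

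The main obstacle I anticipate is the bookkeeping in part (1): translating ``$\psi$ is a double cell with identity horizontal domain'' and ``$\chi$ is a double cell with identity horizontal codomain'' into equational constraints via the rather elaborate equivalence relation (\ref{dblcellequiv}) on double cells, and then showing that these constraints are satisfiable if and only if the vertical arrow admits the normal form (\ref{vertcomp}). This is where Lemma~\ref{reps} (choosing a representative with a prescribed codomain leg) and the Gabriel--Zisman conditions {\bf CF2}, {\bf CF3} do the work of producing the common refinement that witnesses the required equivalence of diagrams. Once part~(1) is cleanly established, parts (2) and (3) should follow with little further effort by duality and by assembling the two halves.
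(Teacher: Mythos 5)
Your proposal follows the same route as the paper: produce explicit binding cells for the "if" direction of (1), use a chosen representative of the binding cell $\chi$ with identity codomain span to extract the normal form for the "only if" direction, deduce (2) from (1) via Remark~\ref{compconj}, and obtain (3) by combining (1) and (2). Your observation that the posetality of $(\CW)_1$ makes the binding cell equations automatic is a clean way to discharge the check the paper "leaves to the reader," but it is a verification detail rather than a different method.
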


\begin{proof}
To prove the first part, the companion for (\ref{vertcomp}) is
the horizontal arrow
$$
\xymatrix@1{(B&\ar[l]_{wu} A_1)\ar[r]^{u}&(A_2\ar[r]^w&B)}
$$
with binding cells
\begin{equation}\label{bindingcells}
\xymatrix@R=1.5em{
&(B\ar@{=}[dd] & A_1)\ar[l]_{wu}\ar@{=}[r] & (A_1\ar[r]^{wu}&B)\ar@{=}[dd]
\\
\ar@{}[r]|(0.2){\textstyle \psi_{u,w}=}&&A_1\ar@{=}[d]\ar@{=}[u] \ar@{=}[r] & A_1\ar@{=}[u]\ar[d]_{u}&
\\
&(B&\ar[l]^{wu}A_1)\ar[r]_{u} & (A_2\ar[r]_w & B)
\\
&(B\ar@{=}[dd] & A_1)\ar[l]_{wu}\ar[r]^u & (A_2\ar[r]^w & B)\ar@{=}[dd]
\\
\ar@{}[r]|(0.2){\textstyle \chi_{u,w}=}& & A_1\ar[d]_u\ar@{=}[u] \ar[r]^u & A_2\ar@{=}[u]\ar@{=}[d]&
\\
&(B&\ar[l]^{w}A_2)\ar@{=}[r] & (A_2\ar[r]_w & B)
}
\end{equation}
We leave it up to the reader to check that these cells satisfy the equations (\ref{compcondns}).

Conversely, suppose that a vertical arrow
\begin{equation}\label{vertarrow}
\xymatrix@R=1.5em{
A_1\ar[r]^{w_1} & B\ar@{=}[dd]
\\
C\ar[u]^{u_1}\ar[d]_{u_2} 
\\
A_2\ar[r]_{w_2}&B
}
\end{equation}
has a companion $\xymatrix@1{(B&\ar[l]_{w_1}A_1)\ar[r]^{f}&(A_2\ar[r]^{w_2}&B)}$. Then there is 
a binding cell
$$
\xymatrix@R=1.5em{
(B\ar@{=}[dd] & A_1)\ar[l]_{w_1}\ar[r]^f & (A_2\ar[r]^{w_2} & B)\ar@{=}[dd]
\\
 & C\ar[d]_{u_2}\ar[u]^{u_1} \ar@{}[r]|{(\chi)} & A_2\ar@{=}[u]\ar@{=}[d]&
\\
(B&\ar[l]^{w_2}A_2)\ar@{=}[r] & (A_2\ar[r]_{w_2} & B)
}
$$
So there is an arrow $(r\colon D\rightarrow C)\in W$ and an arrow $\chi\colon D\rightarrow A_2$
such that the following diagram commutes,
$$
\xymatrix@R=1.5em{
(B\ar@{=}[4,0] & A_1)\ar[l]_{w_1}\ar[r]^f & (A_2\ar[r]^{w_2} & B)\ar@{=}[4,0]
\\
 & C\ar[u]^{u_1}  & &
\\
&D\ar[u]^r \ar[d]_{r} \ar[r]^{\chi} & A_2 \ar@{=}[uu] \ar@{=}[dd]
\\
& C\ar[d]_{u_2}  & &
\\
(B&\ar[l]^{w_2}A_2)\ar@{=}[r] & (A_2\ar[r]_{w_2} & B)\rlap{\quad.}
}
$$

So $fu_1r= \chi=u_2r$. It follows that 
(\ref{vertarrow}) is equivalent to
$$
\xymatrix@R=1.5em{
A_1\ar[r]^{w_1} & B\ar@{=}[dd]
\\
A_1\ar@{=}[u]\ar[d]_{f} 
\\
A_2\ar[r]_{w_2}&B
}
$$
This completes the proof of the first part.

The second part follows from the first part, together with Remark \ref{compconj}:
a vertical arrow in $\CW$ has a horizontal conjoint if and only if its inverse has this
horizontal arrow as companion.

The last part follows from the proofs of the previous two parts.
\end{proof}

It is easy to see that the vertical companions and conjoints generate the category of vertical arrows:
$$
\xymatrix@R=1.5em{
\ar@{=}[dd]B&\ar[l]_{w_1} A_1
\\
&C\ar[u]_{u_1}\ar[d]^{u_2}
\\
B&\ar[l]^{w_2} A_2
}
$$
is the vertical composition of 
$$
\xymatrix@R=1.5em{
\ar@{=}[dd]B&\ar[l]_{w_1} A_1 && \ar@{=}[dd]B&\ar[l]_{w_2u_2} C 
\\
&C\ar[u]_{u_1}\ar@{=}[d]&\mbox{and} & &C\ar@{=}[u]\ar[d]^{u_2}
\\
B&\ar[l]^{w_1u_1} C&& B&\ar[l]^{w_2} A_2
}$$
which are composable since $w_1u_1=w_2u_2$.

The following lemma shows that the companion binding cells and 
their vertical inverses (the conjoint binding cells)
generate all the cells of $\CW$.

\begin{lma}\label{dblcellfac}
Each double cell in $\CW$ can be written as a pasting diagram of companion binding 
cells as in (\ref{bindingcells}) and their
vertical inverses.
\end{lma}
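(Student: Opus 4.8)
The plan is to take an arbitrary double cell in $\CW$ and decompose it, using the representations established in Lemmas \ref{connectedcomp}, \ref{uniquedblcell}, \ref{compchar} and the factorization of vertical arrows discussed just before this statement. First I would recall that, by the remark following Lemma \ref{compchar}, every vertical arrow of $\CW$ factors as a vertical composite of a companion vertical arrow (of the form \eqref{vertcomp}) followed by a conjoint vertical arrow (of the form \eqref{vertconj}). Given a double cell
$$
\xymatrix{
(w_1)\ar[d]|\bullet_{[u_1,C,u_2]}\ar[r]^{f_1}\ar@{}[dr]|{(\varphi)} & (w_1')\ar[d]|\bullet^{[u_1',C',u_2']}\\
(w_2)\ar[r]_{f_2} & (w_2'),
}
$$
I would apply this factorization to both its left vertical boundary $[u_1,C,u_2]$ and its right vertical boundary $[u_1',C',u_2']$. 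This splits the cell horizontally into a stack of cells whose vertical boundaries are each either a companion vertical arrow or a conjoint vertical arrow. Since $(\CW)_1$ is posetal and groupoidal (by Lemma \ref{uniquedblcell} and the following remark), there is at most one double cell filling any compatible frame, so it suffices to exhibit \emph{some} pasting of binding cells with the correct boundary; uniqueness then forces equality with $(\varphi)$.

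Next I would handle the four basic building blocks separately: a cell whose left and right boundaries are both companion vertical arrows, both conjoint vertical arrows, one of each, or identities. For the case where both boundaries are companions, using Lemma \ref{reps} I may choose a representative in which the cell has the shape appearing in \eqref{vertcomp} on both sides, i.e. of the form
$$
\xymatrix@R=1.5em@C=3em{
(B\ar@{=}[dd] & A_1)\ar[l]_{w_1u_1}\ar@{=}[u]\ar[r]^{f_1} & (A_1'\ar[r]^{w_1'u_1'}&B')\ar@{=}[dd]
\\
&A_1\ar[u]\ar[d]_{u_1}\ar[r]^{f_1} &A_1'\ar[u]\ar[d]^{u_1'}
\\
(B&A_2)\ar[l]^{w_2}\ar[r]_{f_2}&(A_2'\ar[r]_{w_2'}&B')
}
$$
where commutativity of the underlying diagram in $\bfC$ (together with the $3$-for-$2$ property of $W$) forces $f_2 u_1 = u_1' f_1$. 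Such a cell is then visibly a horizontal composite of the companion binding cell $\psi$ from \eqref{bindingcells} (or rather the evident instance of it), a vertical identity cell $1_{f_1}$, and the conjoint binding cell $\chi$ of the companion on the right — precomposed and postcomposed appropriately. The mixed case and the conjoint–conjoint case are dual, using the vertical inverses of the binding cells as provided by Remark \ref{compconj}. The identity boundaries case is just a vertical identity cell $1_f$, which equals $\psi_{1,w}$ horizontally composed with $\chi_{1,w}$ in the trivial instance where $u$ is an identity.

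I would expect the main obstacle to be bookkeeping: keeping track of which representatives of the vertical arrows are chosen at each stage so that the horizontal and vertical composites of the binding cells actually line up on the nose, and verifying that the vertical composition of the companion and conjoint factors of a given vertical arrow reproduces that arrow (which is where the $3$-for-$2$ hypothesis on $W$ enters, to guarantee the intermediate objects sit correctly). However, because $(\CW)_1$ is posetal and groupoidal, none of these alignments need to be checked by a direct diagram chase: it is enough that the pasting one writes down has the prescribed frame, and then Lemma \ref{uniquedblcell} does the rest. So the proof reduces to: (i) factor each vertical boundary as companion-then-conjoint; (ii) apply Lemma \ref{reps} to normalize the representative of each resulting cell; (iii) recognize each normalized piece as a horizontal composite of instances of \eqref{bindingcells} and their vertical inverses; (iv) invoke posetality to conclude. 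The details of (ii) and (iii) are routine and I would leave them, as elsewhere in the paper, largely to the reader.
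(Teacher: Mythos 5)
Your strategy differs from the paper's in a useful way: you propose to factor each vertical boundary of the cell into its generating companion and conjoint pieces, then argue abstractly---via the posetality and groupoidality of $(\CW)_1$ from Lemma \ref{uniquedblcell}---that any pasting of binding cells with the prescribed frame must agree with the given cell, so it suffices to exhibit one. The paper instead directly writes out the entire $4\times 2$ array of binding cells whose pasting yields the generic cell \eqref{generic}. Your route trades the explicit verification that the pasted composite equals $(\varphi)$ for an appeal to uniqueness of fillers, which is a legitimate and slightly cleaner organizing principle. That said, the content your step (iii) leaves to the reader---recognizing each normalized piece as a horizontal composite of binding cells---is essentially the entire content of the paper's explicit array, so the amount of work is comparable; you've shifted where the bookkeeping happens rather than eliminated it.

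One genuine error: you state that every vertical arrow factors as a \emph{companion followed by a conjoint}, citing the remark preceding the lemma, but that remark exhibits the opposite order. A companion vertical arrow of the form \eqref{vertcomp} goes from $(wu)$ to $(w)$ (shrinking the $W$-arrow), and a conjoint of the form \eqref{vertconj} goes from $(w)$ to $(wu)$ (enlarging it). A companion-then-conjoint factorization of $[u_1,C,u_2]\colon (w_1)\rightarrow(w_2)$ would require a common \emph{quotient} $w'$ with $w_1=w'u$ and $w_2=w'v$, which the fraction axioms do not supply. What does exist, by {\bf CF2}, is a common refinement, giving a \emph{conjoint-then-companion} factorization through $(w_1u_1)=(w_2u_2)$---this is exactly what the displayed factorization in the paper shows. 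Your case analysis needs to be adjusted accordingly, though nothing deeper breaks: after swapping the order, each resulting piece still has frames built from companion/conjoint vertical arrows, Lemma \ref{reps} and Lemma \ref{isofib} still give you the normalizations and fillers you need, and the posetality argument closes the gap.
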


\begin{proof}
Let 
\begin{equation}\label{generic}
\xymatrix@R=1.5em{
(B\ar@{=}[dd] & \ar[l]_{w_1}A_1)\ar[r]^{f_1}&(A_1'\ar[r]^{w_1'} & B')\ar@{=}[dd]
\\
&C\ar[u]_{u_1}\ar[r]^{\xi}\ar[d]^{u_2}&C'\ar[u]_{u_1'}\ar[d]^{u_2'} &
\\
(B & A_2)\ar[l]^{w_2}\ar[r]_{f_2}&( A_2'\ar[r]_{w_2'}& B')
}
\end{equation}
be a representative of a double cell in $\CW$.
This cell can be written as a pasting of the following 
array of double cells by first composing the double cells in each row
horizontally and then composing the resulting double cells vertically.
$$
\xymatrix@R=1.5em{
(B\ar@{=}[dd] & \ar[l]_{w_1}A_1)\ar@{=}[r]&(A_1\ar[r]^{w_1} & B)\ar@{=}[dd]
	&(B\ar@{=}[dd] & \ar[l]_{w_1}A_1)\ar[r]^{f_1}&(A_1'\ar[r]^{w_1'} & B')\ar@{=}[dd]
\\
&C\ar[u]_{u_1}\ar[r]^{u_1}\ar@{=}[d]&A_1\ar@{=}[u]\ar@{=}[d]&
	&&A_1\ar@{=}[u]\ar@{=}[d]\ar[r]^{f_1}&A_1'\ar@{=}[u]\ar@{=}[d] &
\\
(B & C)\ar[l]^{w_1u_1}\ar[r]_{u_1}& (A_1\ar[r]_{w_1}& B)
	& (B & A_1)\ar[l]^{w_1}\ar[r]_{f_1}& (A_1'\ar[r]_{w_1'}& B')
}
$$
$$\xymatrix@R=1.5em{
(B\ar@{=}[dd] & \ar[l]_{w_1u_1} C)\ar[r]^{\xi}&(C'\ar[r]^{w_1'u_1'} & B')\ar@{=}[dd]
	& (B'\ar@{=}[dd] & \ar[l]_{w_1'u_1'}C')\ar[r]^{u_1'}&(A_1'\ar[r]^{w_1'} & B')\ar@{=}[dd]
\\
&C\ar@{=}[u]\ar[r]^{\xi}\ar@{=}[d]&C'\ar@{=}[u]\ar@{=}[d]&
	&&C'\ar@{=}[u]\ar@{=}[r]\ar@{=}[d]&C'\ar[u]_{u_1'}\ar@{=}[d] &
\\
(B & C)\ar[l]^{w_1u_1}\ar[r]_{\xi}& (C'\ar[r]_{w_1'u_1'}& B')
	& (B' & C')\ar[l]^{w_1'u_1'}\ar@{=}[r]& (C'\ar[r]_{w_1'u_1'}& B')
}
$$
$$\xymatrix@R=1.5em{
(B\ar@{=}[dd] & \ar[l]_{w_1u_1}C)\ar[r]^{\xi}&(C'\ar[r]^{w_2'u_2'} & B')\ar@{=}[dd]
	& (B'\ar@{=}[dd] & \ar[l]_{w_2'u_2'}C')\ar@{=}[r]&(C'\ar[r]^{w_2'u_2'} & B')\ar@{=}[dd]
\\
&C\ar@{=}[u]\ar[r]^{\xi}\ar@{=}[d]&C'\ar@{=}[u]\ar@{=}[d] &
	&&C'\ar@{=}[u]\ar@{=}[r]\ar@{=}[d]&C'\ar@{=}[u]\ar[d]^{u_2'} &
\\
(B & C)\ar[l]^{w_2u_2}\ar[r]_{\xi}& (C'\ar[r]_{w_2'u_2'}& B')
	& (B' & C')\ar[l]^{w_2'u_2'}\ar[r]_{u_2'}& (A_2'\ar[r]_{w_2'}& B')
}
$$
$$\xymatrix@R=1.5em{
(B\ar@{=}[dd] & \ar[l]_{w_2u_2}C)\ar[r]^{u_2}&(A_2\ar[r]^{w_2} & B)\ar@{=}[dd]
	& (B\ar@{=}[dd] & \ar[l]_{w_2}A_2)\ar[r]^{f_2}&(A_2'\ar[r]^{w_2'} & B')\ar@{=}[dd]
\\
&C\ar@{=}[u]\ar[r]^{u_2}\ar[d]^{u_2}&A_2\ar@{=}[u]\ar@{=}[d] &
	&& A_2\ar@{=}[u]\ar[r]^{f_2}\ar@{=}[d]&A_2'\ar@{=}[u]\ar@{=}[d] &
\\
(B & A_2)\ar[l]^{w_2} \ar@{=}[r]& (A_2\ar[r]_{w_2}& B)
	& (B & A_2)\ar[l]^{w_2} \ar[r]_{f_2}& (A_2'\ar[r]_{w_2'}& B')\rlap{\,.}
}
$$ 
Each one of these cells is either a horizontal or vertical identity cell or a companion binding cell or the 
inverse of a companion binding cell.
\end{proof}

\subsection{$W$-friendly functors and transformations}
As we have seen in 
Lemma \ref{compchar} and Lemma \ref{dblcellfac},  the strict inclusion functor $\calJ_{W}\colon H\bfC\rightarrow \CW$
adds companions and conjoints to $H\bfC$.
We want to make precise in what sense it does so freely.
The goal of this section is to introduce a notion of $W$-friendly double functor out of $H\bfC$ and a notion of
$W$-friendly horizontal transformation between these $W$-friendly double functors,
such that $\calJ_W$ the universal $W$-friendly double functor out of $H\bfC$
in the sense that we have an equivalence of categories,
$$\WGDbl_{\st,h}(\CW,\bbD)\simeq\WGDbl_{\st,h,W}(H\bfC,\bbD),$$ 
where $\WGDbl_{\st,h,W}(H\bfC,\bbD)$
is the category of $W$-friendly functors and $W$-friendly horizontal transformations.

The first question is for which arrows it adds the companions and conjoints.
It is tempting to think that it does this for the horizontal arrows in the image of $W$.
However, those would be the arrows of the form
$$\xymatrix@1{(1_A)\ar[r]^w&(1_B)}$$
and the arrows that obtain companions and conjoints are of the form
$$
\xymatrix@1{(uw)\ar[r]^w&(u)\rlap{\,.}}
$$
In particular, there are companions and conjoints for arrows
of the form 
\begin{equation}\label{w}
\xymatrix@1{(w)\ar[r]^w&(1_B)\rlap{\,,}}
\end{equation}
but not for
$$\xymatrix@1{(1_A)\ar[r]^w&(1_B)}$$
unless $A=B$ and $w=1_B$.
The arrow in (\ref{w}) can be factored as
$$
\xymatrix@1{(B&A\ar[l]_w)\ar[r]^{1_A}&(A\ar[r]^{1_A}&A)\ar[r]^w &(B\ar[r]^{1_B}&B)}
$$
We will denote the first arrow in this factorization by $\varphi_w\colon (w)\rightarrow(1_A)$.
Note that this is a horizontal isomorphism in $\CW$.
Our first step will be to show that the elements of the family $\{\varphi_w\}$ for $w\in W$ form the components of an invertible 
natural transformation.
To express this property we need the following comma category derived from $W$.

\begin{dfn}\label{nabla}
{\rm Let $\bfC$ be a category with a class $W$ of arrows.
We define $\nabla W$ to be the category with arrows in $W$ as objects and an 
arrow from $\xymatrix@1{A\ar[r]^w&B}$
to $\xymatrix@1{A'\ar[r]^{w'}&B}$ is given by a commutative triangle of arrows in $\bfC$:
$$
\xymatrix{
A\ar[dr]_w\ar[rr]^v&&A'\ar[dl]^{w'}
\\
&B
}
$$
We denote this arrow by $(v,w')\colon w\rightarrow w'$.
There is a functor $D_0\colon \nabla W\rightarrow \bfC$ defined by $D_0(w)=d_0(w)$ (the domain)
on objects and $D_0(v,w')=v$.}
\end{dfn}

\begin{rmk}
 {\rm When $W$ satisfies the 3 for 2 property, the commutativity of the triangle in Definition
\ref{nabla} implies that $v\in W$. Furthermore, by condition {\bf CF1}, $W$ forms a subcategory of $\bfC$ 
which contains all objects of $\bfC$. So we can view $\nabla W$ as the comma category $(1_W\downarrow L_W)$,
where $L_W\colon d(C_0)\rightarrow W$ is the inclusion functor of the discrete category on the objects of $\bfC$
into the category $W$. In this notation, $D_0$ is just the first projection functor 
$(1_W\downarrow L_W)\rightarrow W$ followed by the inclusion into $\bfC$.}
\end{rmk}

Let $\Phi\colon\nabla W\rightarrow \Comp(\CW)$
be the functor that sends an arrow
$$
\xymatrix@R=1.5em{
A\ar[dr]_{wu}\ar[rr]^u&&A'\ar[dl]^{w}
\\
&B
}
$$
to the horizontal companion arrow $\xymatrix@1{(B&A\ar[l]_{wu})\ar[r]^u&(A'\ar[r]^{w}&B)}$
with the vertical companion and binding cells in $\CW$ as indicated in the proof of Lemma \ref{compchar}.
Now we can view $\varphi$ as a natural transformation in the following diagram
$$
\xymatrix{
\nabla W\ar[d]_{D_0}\ar[r]^-\Phi\ar@{}[dr]|{\stackrel{\varphi}{\Leftarrow}} & \Comp(\CW)\ar[d]^{h_-}
\\
\bfC\ar[r]_{h\JC}&h\CW\rlap{\,.}
}
$$
This is almost enough to describe $\JC$ as a $W$-friendly functor.
The problem is that the information given thus far is not enough to describe 
$W$-friendly transformations between  $W$-friendly functors.
Such transformations need to have components that interact well with the companion binding cells.
In order to ensure this, $\Phi$ needs to be viewed as a double functor $V\nabla W\rightarrow \bbComp(\CW)$.
This leads us to the following definition.

\begin{dfn}
{\rm A {\em $W$-friendly structure} $(\Gamma,\gamma)$ for a functor $G\colon H\bfC\rightarrow \bbD$ consists of 
a functor $\Gamma\colon V\nabla W\rightarrow \bbComp(\bbD)$ together with 
an invertible natural transformation $\gamma$,
$$
\xymatrix{
\nabla W\ar[d]_{D_0}\ar[r]^-{v\Gamma}\ar@{}[dr]|{\stackrel{\,\,\sim}{\Leftarrow}}_(.57)\gamma 
  & v\bbComp(\bbD)\ar[d]^{h_-}
\\
\bfC\ar[r]_{hG} & h\bbD,}
$$
where $h_-$ is the identity on objects and takes the horizontal component of each arrow (companion pair).
We will also refer to $G$ (and to the pair $(G,\Gamma)$) as a {\em $W$-friendly functor}.}
\end{dfn}

\begin{prop}\label{canonicalfriendly}
There is a canonical $W$-friendly structure $$(\Phi\colon V\nabla W\rightarrow\bbComp(\CW), \varphi)$$ 
for the functor $\calJ_{\bfC}\colon H\bfC\rightarrow\CW$.
\end{prop}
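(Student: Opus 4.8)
The statement to prove is Proposition~\ref{canonicalfriendly}: that the data $(\Phi, \varphi)$ described above is a $W$-friendly structure for $\calJ_W \colon H\bfC \to \CW$. According to the definition of $W$-friendly structure, this requires three things: (1) that $\Phi \colon V\nabla W \to \bbComp(\CW)$ is a well-defined functor of double categories; (2) that $\varphi$ is a well-defined natural transformation of the indicated shape; and (3) that $\varphi$ is invertible. The plan is to verify each in turn, relying heavily on Lemma~\ref{compchar} (which identifies the companion structures) and Lemma~\ref{dblcellfac} (which shows cells factor through binding cells), together with the fact that $\CW$ has posetal vertical arrow category and posetal vertical category of horizontal arrows (so most equations hold automatically).

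\textbf{Step 1: $\Phi$ is a double functor.} First I would spell out $\Phi$ on all four types of cells of $V\nabla W$. On objects, $\Phi$ sends $w \in W$ (an object of $V\nabla W$, hence of $\nabla W$) to the object $(w)$ of $\CW$ (equivalently of $\bbComp(\CW)$, which has the same objects). On horizontal arrows: $V\nabla W$ has only identity horizontal arrows, so $\Phi$ sends the identity horizontal arrow on $w$ to the identity horizontal arrow on $(w)$. On vertical arrows: a vertical arrow of $V\nabla W$ is an arrow $(v,w')\colon wu \to w$ of $\nabla W$, i.e., a commutative triangle exhibiting $wu = (w)(u)$ with $u = v$; $\Phi$ sends it to the companion pair in $\bbComp(\CW)$ whose horizontal part is $\xymatrix@1{(B & A\ar[l]_{wu})\ar[r]^u & (A'\ar[r]^w & B)}$ and whose vertical part is the vertical arrow of $\CW$ represented by the diagram $(\ref{vertcomp})$ with binding cells $\psi_{u,w}, \chi_{u,w}$ from $(\ref{bindingcells})$ — this is well-defined and is indeed a companion pair by Lemma~\ref{compchar}(1) and its proof. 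On double cells: again $V\nabla W$ has only the double cells forced by $V$ of a $2$-category, namely the $2$-cells of $\nabla W$ (there are none here besides identities, since $\nabla W$ is a mere category), so these go to identity cells of $\bbComp(\CW)$. Then I would check functoriality: preservation of vertical identities is immediate from the formula $1_w \mapsto (\mathrm{Id}, 1, \iota, \iota)$; preservation of vertical composition amounts to checking that the composite in $\bbComp(\CW)$ of $\Phi(v',w'')$ after $\Phi(v,w')$ (with composite binding cells as in the definition of $\bbComp$) equals $\Phi$ applied to the composite triangle $(v'v, w'')$ — but both the underlying vertical arrow of $\CW$ and the underlying horizontal companion arrow are the evident composites, and the binding-cell components agree because the vertical category of horizontal arrows of $\CW$ is posetal (Remark after Lemma~\ref{uniquedblcell}), so there is nothing to compute. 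This step should be routine; the only content is bookkeeping of the $\bbComp$ structure.

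\textbf{Step 2: $\varphi$ is a natural transformation, and Step 3: it is invertible.} The natural transformation $\varphi$ lives in the square with $D_0 \colon \nabla W \to \bfC$ on the left, $v\Phi \colon \nabla W \to v\bbComp(\CW)$ on top, $h_- \colon v\bbComp(\CW) \to h\CW$ on the right, and $h\calJ_W \colon \bfC \to h\CW$ on the bottom. Its component at an object $w\colon A \to B$ of $\nabla W$ must be a horizontal arrow of $\CW$ (i.e., an arrow of $h\CW$) from $h_-(v\Phi(w))$ — which is the horizontal companion arrow associated to the object $(w)$, namely I would identify it carefully — to $h\calJ_W(D_0(w)) = h\calJ_W(A) = (1_A)$. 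The factorization displayed in the text, $\xymatrix@1{(w)\ar[r]^w & (1_B)}$ factoring as $\varphi_w$ followed by arrows through $(1_A)$, shows that $\varphi_w \colon (w) \to (1_A)$ is the relevant horizontal arrow, and it is noted there that $\varphi_w$ is a horizontal isomorphism in $\CW$ (its inverse being given by $w$ itself read the other way, up to the identifications in $\CW_h$); so invertibility (Step 3) is essentially immediate once the component is correctly identified, since $\CW_h((w),(1_A)) \cong \bfC(A,A) \ni 1_A$ and the inverse is realized inside $\bfC$. For naturality I would take an arrow $(v,w')\colon w \to w'$ in $\nabla W$ and check the square in $h\CW$: this says $\varphi_{w'}$ composed with $hG$ of $v$ equals $h_-$ of $v\Phi(v,w')$ composed with $\varphi_w$; unwinding, both sides are a single arrow of $\bfC$ (by the hom-set isomorphism $\CW_h \cong \bfC$-homs), and commutativity reduces to the commutativity of the defining triangle $w'v = wu = w$ in $\bfC$, i.e., to the definition of $\nabla W$. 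So naturality is also essentially bookkeeping.

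\textbf{Main obstacle.} The genuinely delicate point — and the one I would spend the most care on — is \emph{Step 1}, specifically verifying that the binding cells $\psi_{u,w}$ and $\chi_{u,w}$ of $(\ref{bindingcells})$ assemble functorially under vertical composition, i.e., that the $\bbComp(\CW)$-composite of two such companion pairs coincides \emph{on the nose} (not merely up to iso) with the companion pair assigned to the composite triangle, including the binding-cell data. In an arbitrary double category this would require a real computation with the explicit composition formula for binding cells $(\psi'\circ 1)\cdot(\mathrm{id}\circ\psi)$; here it is saved by posetality of the vertical category of horizontal arrows of $\CW$ (Lemma~\ref{uniquedblcell} and the following remark), which forces any two parallel double cells to be equal, so the equality is automatic once the boundary data match. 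I would therefore present Step~1's functoriality as: "the underlying vertical arrows compose correctly by the definition of vertical composition in $\CW$ and the posetality in Lemma~\ref{connectedcomp}; the underlying horizontal companion arrows compose correctly since composition of horizontal arrows in $\CW$ is composition in $\bfC$ and $(w'u')u = w'(u'u)$; and the binding-cell components then agree because $(\CW)_1$ is posetal groupoidal." The rest is routine verification of the two naturality/invertibility points in Step~2, each of which collapses to a commuting triangle in $\bfC$ via the identification $\CW_h(w_1,w_2)\cong\bfC(d_0 w_1, d_0 w_2)$.
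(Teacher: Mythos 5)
Your plan is correct and follows essentially the same route as the paper: the paper's proof simply records the data $(\Phi,\varphi)$ and leaves the routine checks implicit, and you correctly identify that functoriality of $\Phi$ (in particular, compatibility of the binding cells with vertical composition in $\bbComp(\CW)$) and naturality of $\varphi$ collapse to boundary bookkeeping because the vertical arrow category of $\CW$ is posetal groupoidal and $(\CW)_1$ is posetal groupoidal (Lemmas \ref{connectedcomp} and \ref{uniquedblcell}). One small slip in Step~2: $h_-(v\Phi(w))$ is the \emph{object} $(w)$ of $h\CW$ (since $h_-$ is the identity on objects), not ``the horizontal companion arrow associated to $(w)$''; you immediately recover and correctly identify $\varphi_w\colon (w)\to(1_A)$ as the horizontal isomorphism corresponding to $1_A\in\bfC(A,A)$, so nothing downstream is affected, but the phrasing as written is a type error.
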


\begin{proof}
On objects, $\Phi(w)=(w)$. On horizontal arrows, $\Phi$ is the identity.
On vertical arrows,
$\Phi((u,w))=(h_{(u,w)},v_{(u,w)},\psi_{(u,w)},\chi_{(u,w)})$,
where 
\begin{eqnarray*}
h_{(u,w)}&=&\xymatrix@1{(&)\ar[l]_{wu}\ar[r]^u&(\ar[r]^w&)}\\
v_{(u,w)}&=&\xymatrix@R=1.4em{(\ar[r]^{wu}\ar@{=}[d] &)\ar@{=}[dd]\\\ar[d]_u\\(\ar[r]_w&)}
\end{eqnarray*}
and the binding cells are
$$
\xymatrix@R=1.4em{
(\ar@{=}[dd]&\ar[l]_{wu})\ar@{=}[r] & (\ar[r]^{wu}& \ar@{=}[dd])
      && (\ar@{=}[dd]&\ar[l]_{wu})\ar[r]^u&(\ar[r]^w&\ar@{=}[dd])
\\
&\ar@{=}[u]\ar@{=}[d]\ar@{=}[r] & \ar@{=}[u]\ar[d]^u &&\mbox{and}
      & & \ar@{=}[u]\ar[r]^u\ar[d]^u & \ar@{=}[u]\ar@{=}[d]
\\
(&\ar[l]^{wu})\ar[r]_u&(\ar[r]_w&) && (&\ar[l]^w)\ar@{=}[r] & (\ar[r]_w&)\rlap{\,.}
}
$$
Furthermore, the invertible natural transformation $\varphi$
in 
$$
\xymatrix@R=1.9em{
\nabla W\ar[d]_{D_0}\ar[r]^-{v\Phi} \ar@{}[dr]|{\stackrel{\,\,\sim}{\Leftarrow}}_(.57)\varphi 
  & v\bbComp(\CW)\ar[d]^{h_-}
\\
\bfC\ar[r]_{hF} & h\CW}
$$
has components $\varphi_w$,
$$
\xymatrix{(&\ar[l]_w)\ar@{=}[r] &(\ar@{=}[r]&).}
$$
\end{proof}

Let $L\colon\CW\rightarrow \bbD$ be a functor of double categories.
Composition with $\calJ_{\bfC}\colon H\bfC\rightarrow \CW$ gives rise to a
functor $L\calJ_{\bfC}\colon H\bfC\rightarrow \bbD$ with the $W$-friendly structure
$(\bbComp(L)\circ\Phi,\lambda)$. Here, 
$\bbComp(L)\circ\Phi\colon V\nabla W\rightarrow \bbComp(\bbD)$
sends $w$ to $L(w)$ and a vertical arrow $(u,v)\colon\xymatrix@1{vu\ar[r]|\bullet^u&v}$
to 
$$\left(L\left(\xymatrix{&\ar[l]_{vu}\ar[r]^u & \ar[r]^v&}\right),
L\left(\raisebox{1.95em}{$\xymatrix@R=1.4em{\ar@{=}[dd]&\ar[l]_{vu}\\&\ar@{=}[u]\ar[d]^u \\ 
&\ar[l]^{v}}$}\right),
L\left(\raisebox{1.95em}{$\xymatrix@R=1.4em{\ar@{=}[dd]& \ar[l]_{vu}\ar@{=}[r]& \ar[r]^{vu}&\ar@{=}[dd]\\
& \ar@{=}[u]\ar@{=}[d]
		&\ar@{=}[u]\ar[d]^u \\ &\ar[l]^{vu}\ar[r]_u&\ar[r]_v&}$}\right),\right.
		$$
		$$\left.
L\left(\raisebox{1.95em}{$\xymatrix@R=1.4em{\ar@{=}[dd]& \ar[l]_{vu}\ar[r]^u
		& \ar[r]^{v}&\ar@{=}[dd]\\& \ar@{=}[u]\ar[d]^u
		&\ar@{=}[u]\ar@{=}[d] \\ &\ar[l]^{v}\ar@{=}[r]&\ar[r]_v&}$}\right)\right).$$
Furthermore, the natural transformation $\lambda$ has components 
$\lambda_w=L\left(\xymatrix@1@C=1.5em{&\ar[l]_w\ar@{=}[r]&\ar@{=}[r] &}\right)$.

The appropiate horizontal transformations between $W$-friendly functors 
are described in the following definition.

\begin{dfn}
{\rm For $W$-friendly functors
$$(G,\Gamma,\gamma), (L,\Lambda,\lambda)\colon 
H\bfC\rightrightarrows\bbD, V\nabla W\rightrightarrows \bbComp(\bbD),$$
a {\em $W$-friendly horizontal transformation} is a pair 
$(a\colon G\Rightarrow L,\alpha\colon\Gamma\Rightarrow\Lambda)$
of horizontal transformations
such that the following square commutes: 
\begin{equation}\label{W-friendly}
\xymatrix@R=1.7em{
h_-\circ v\Gamma\ar[d]_\gamma\ar[r]^{h_-\circ v\alpha}& h_-\circ v\Lambda\ar[d]^{v\lambda}
\\
hG\circ D_0\ar[r]_{ha\circ D_0} & hL\circ D_0.
}
\end{equation}}
\end{dfn}

\subsection{The horizontal universal property of $\CW$} 
We can now state and prove the universal property of $\calJ_{W}\colon H\bfC\rightarrow \CW$ in terms of $W$-friendly double functors 
and $W$-friendly horizontal transformations.
 
 \begin{thm}\label{main}
 Composition with $\calJ_{W}\colon H\bfC\rightarrow \CW$ induces an equivalence of 
 categories $$\WGDbl_{\st,h}(\CW,\bbD)\simeq\WGDbl_{\st,h,W}(H\bfC,\bbD),$$ where $\WGDbl_{\st,h,W}(H\bfC,\bbD)$
 is the category of $W$-friendly functors and $W$-friendly horizontal transformations.
 \end{thm}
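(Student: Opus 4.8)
The plan is to build the functor $$\WGDbl_{\st,h}(\CW,\bbD)\to\WGDbl_{\st,h,W}(H\bfC,\bbD)$$ by precomposition with $\calJ_W$ (together with the induced map on $W$-friendly structures, via $\bbComp$, as described just before the statement), and then to construct a quasi-inverse. Given a $W$-friendly functor $(G,\Gamma,\gamma)\colon H\bfC\to\bbD$ I would define a strict functor $\widetilde{G}\colon\CW\to\bbD$ as follows. On objects: $\widetilde G(w)=\Gamma(w)$ (the companion pair chosen by $\Gamma$ has underlying object $\Gamma(w)$; here I use that an object of $\CW$ is an arrow $w\in W$, hence an object of $\nabla W$). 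On horizontal arrows $\xymatrix@1{(w)\ar[r]^f&(w')}$, which are just arrows $f$ in $\bfC$: send them to $G(f)$ read via the isomorphisms $\gamma$, i.e. the composite $\Gamma(w)\xrightarrow{\gamma_w} GA\xrightarrow{Gf} GA'\xrightarrow{\gamma_{w'}^{-1}}\Gamma(w')$ (using that $h$ of a companion pair is its horizontal arrow and $\gamma$ lives over $h$). On vertical arrows and double cells: use Lemma~\ref{compchar} and Lemma~\ref{dblcellfac}. Every vertical arrow of $\CW$ is a vertical composite of a companion and a conjoint, each of which comes (up to the unique vertical arrow) from an arrow $(u,w)$ of $\nabla W$; send the companion to $\Gamma(u,w)$, the conjoint to the vertical inverse of $\Gamma(u,w)$ (using Remark~\ref{compconj} in $\bbD$). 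Every double cell is, by Lemma~\ref{dblcellfac}, a pasting of identity cells, companion binding cells $\psi_{u,w},\chi_{u,w}$, and their vertical inverses; send these to the binding cells recorded in $\Gamma(u,w)$ (which exist precisely because $\Gamma$ lands in $\bbComp(\bbD)$) and their inverses, and extend by functoriality.

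\textbf{Well-definedness.} The main work is checking this is well-defined. There are two sources of ambiguity: the choice of representative of a vertical arrow / double cell in $\CW$, and the choice of factorization in Lemmas~\ref{compchar} and~\ref{dblcellfac}. For the first, I would use that the vertical arrow category of $\CW$ is posetal (Lemma~\ref{connectedcomp}) and $(\CW)_1$ is posetal (Lemma~\ref{uniquedblcell}), together with the fact that $\bbComp(\bbD)$ over a weakly globular $\bbD$ has its vertical arrows determined up to the special invertible cells of Lemma~\ref{pre-comp-uniq}/the uniqueness of companions; since the vertical category of $\bbComp(\bbD)$ restricted to the image is again controlled by the unique vertical arrows of $\bbD$, different representatives are forced to the same image. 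For the second, I would observe that any two factorizations of a vertical arrow into a companion-then-conjoint (or of a cell into binding cells) differ by insertion of binding cell / inverse-binding-cell pairs that cancel, and that $\Gamma$ being a functor into $\bbComp(\bbD)$ makes these cancellations compatible; this is exactly the kind of ``2-dimensional coherence forced by posetality'' argument already used repeatedly in Section~\ref{CW-con}. Strictness of $\widetilde G$ (preservation of horizontal and vertical composition and identities on the nose) then follows because both composition in $\CW$ is built from the $\bfC$-level data plus the binding-cell bookkeeping, and $\Gamma$ and $\gamma$ respect exactly that bookkeeping; the functoriality axioms for $\Gamma$ and naturality of $\gamma$ supply each needed equation.

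\textbf{The two composites.} For $\widetilde G\circ$ being a section up to iso: the composite $\WGDbl_{\st,h,W}(H\bfC,\bbD)\to\WGDbl_{\st,h}(\CW,\bbD)\to\WGDbl_{\st,h,W}(H\bfC,\bbD)$ sends $(G,\Gamma,\gamma)$ to $(\widetilde G\circ\calJ_W,\bbComp(\widetilde G)\circ\Phi,\ldots)$; restricting $\widetilde G$ along the inclusion $\calJ_W$ (which by the final Remark sends an object of $\bfC$ to its identity arrow) recovers $G$ on the nose (since $\gamma_{1_A}$ is an identity by the shape of $\varphi_{1_A}$), and $\bbComp(\widetilde G)\circ\Phi=\Gamma$, $\gamma$ back again, by construction; so this composite is (isomorphic to, in fact equal to) the identity. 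For the other composite $\WGDbl_{\st,h}(\CW,\bbD)\to\cdots\to\WGDbl_{\st,h}(\CW,\bbD)$, $L\mapsto\widetilde{(L\calJ_W)}$: the $W$-friendly structure on $L\calJ_W$ is $(\bbComp(L)\circ\Phi,\lambda)$ as spelled out before the statement, and running the construction on it reproduces $L$ on objects and horizontal arrows directly (Proposition~\ref{canonicalfriendly}), and on vertical arrows and cells because Lemmas~\ref{compchar},~\ref{dblcellfac} express everything in $\CW$ through the companions/conjoints and binding cells that $\Phi$ names, so $\widetilde{(L\calJ_W)}$ and $L$ agree on generators and hence (by strictness) everywhere; one then checks this identification is natural in $L$. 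Finally I would verify the 2-functor preserves horizontal transformations: a $W$-friendly horizontal transformation $(a,\alpha)$ out of $H\bfC$ is turned into a horizontal transformation $\widetilde a\colon\widetilde G\Rightarrow\widetilde L$ of $\CW$ by the same generator-by-generator recipe (components on objects $(w)$ from $\alpha$, the naturality square~\eqref{W-friendly} guaranteeing compatibility with the $\gamma$'s and hence with the companion binding cells), and conversely restriction along $\calJ_W$ recovers $(a,\alpha)$; faithfulness and fullness on 2-cells again reduce to posetality of $(\CW)_1$ and $(\bbD)_1$.

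\textbf{Main obstacle.} I expect the genuine difficulty to be the well-definedness of $\widetilde G$ on double cells: one must show the pasting of $\Gamma$-images assigned by one factorization from Lemma~\ref{dblcellfac} equals that assigned by another, and equally that it is independent of the $\CW$-representative. Both hinge on a careful use of the interchange law in $\bbComp(\bbD)$ and the functor axioms for $\Gamma$, and while the statements of Lemmas~\ref{compchar},~\ref{dblcellfac},~\ref{uniquedblcell} do most of the conceptual work, assembling them into a clean coherence argument is where the real care is needed; the rest of the proof is bookkeeping of the now-familiar kind.
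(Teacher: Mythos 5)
Your proposal follows the same route as the paper: lift $(G,\Gamma,\gamma)$ to $\tilde G\colon\CW\to\bbD$ by setting $\tilde G(w)=\Gamma(w)$, $\tilde G(f)=\gamma_{w'}^{-1}G(f)\gamma_w$ on horizontal arrows, using Lemma~\ref{compchar} to express vertical arrows as companion-then-conjoint and sending them through $\Gamma$, and using the generator factorization of Lemma~\ref{dblcellfac} for double cells; fullness/faithfulness on transformations is likewise reduced to the posetality lemmas, exactly as in the paper. The one thing worth noting is that you flag the well-definedness of $\tilde G$ on double cells as the genuine obstacle, whereas the paper sidesteps this by exhibiting a single explicit pasting (Figure~\ref{imagedblcell}) rather than arguing factorization-independence; your instinct that this is where the real care lies is correct, and your appeal to posetality of $(\CW)_1$ and $(\bbD)_1$ is the right tool to dispose of it.
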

 
 \begin{proof}
Composition with $\calJ_W$ 
sends a functor $L\colon\CW\rightarrow \bbD$ of weakly globular double categories
to the triple $(L\calJ_W, L\circ\Phi,L\varphi)$, i.e., the functor $L\calJ_W$ with $W$-friendly 
structure $(L\Phi,L\varphi)$, as described above.

 To show that composition with $\calJ_W$ is essentially surjective on objects,
 we show how to lift a functor $G\colon H\bfC\rightarrow \bbD$ with 
 a $W$-friendly structure $(\Gamma,\gamma)$ to a functor $\tilde{G}\colon\CW\rightarrow \bbD$.
 
 Define $\tilde G$ on objects by $\tilde{G}(\xymatrix@1{A\ar[r]^w&B})=\Gamma(w)$.
 On vertical arrows, $\tilde G$ is defined by
 $$\tilde{G}\left(\raisebox{2.65em}{$\xymatrix@R=1.5em{(\ar[r]^{w_1}&)\ar@{=}[dd]
\\
\ar[u]^{u_1}\ar[d]_{u_2} &
\\
(\ar[r]_{w_2}&)}$}\right)=v_{\Gamma(u_2,w_2)}\cdot(v_{\Gamma(u_1,w_1)})^{-1}
 $$
 (where the notation is as in the proof of Proposition \ref{canonicalfriendly})
 and on horizontal arrows, $\tilde G$ is defined by
 $\tilde{G}\left(\xymatrix@1{(&\ar[l]_w)\ar[r]^f&(\ar[r]^v&)}\right)=\gamma_v^{-1}G(f)\gamma_w.$
 To define $\tilde{G}$ on double cells, we use the factorization of a generic double
 cell in $\CW$ given in the proof of  Lemma \ref{dblcellfac}. 
The result of applying $\tilde{G}$ to (\ref{generic}) is given in Figure \ref{imagedblcell}.
\begin{figure}[ht]
 $$\xymatrix@R=1.4em{
 \Gamma(w_1)\ar[d]|\bullet^{v^{-1}_{\Gamma(u_1,w_1)}}\ar@{=}[rrr]\ar@{}[drrr]|{\chi^{-1}_{\Gamma(u_1,w_1)}} 
		&&&\Gamma(w_1)\ar[r]^{\gamma_{w_1}} \ar@{=}[d]& GA_1 \ar[r]^{Gf_1} 
		& GA_1'\ar[r]^{\gamma_{w_1'}^{-1}} 
		& \Gamma(w_1') \ar@{=}[d]
\\
\Gamma(w_1u_1) \ar[rrr]_{h_{\Gamma(u_1,w_1)}}  \ar@{=}[d] 
		&&& \Gamma(w_1) \ar[r]^{\gamma_{w_1}} \ar@{=}[d]
		& GA_1 \ar[r]^{Gf_1} & GA_1'\ar[r]^{\gamma_{w_1'}^{-1}} 
		& \Gamma(w_1') \ar@{=}[d]
\\
\Gamma(w_1u_1) \ar[r]^-{\gamma_{w_1u_1}} \ar@{=}[d] & GC \ar@{=}[d] \ar[r]^{Gu_1} 
		&GA_1\ar[r]^{\gamma_{w_1}^{-1}}
		& \Gamma(w_1) \ar[r]^{\gamma_{w_1}} &  GA_1 \ar[r]^{Gf_1} 
		& GA_1' \ar@{=}[d] \ar[r]^{\gamma_{w_1'}^{-1}} 
		& \Gamma(w_1') \ar@{=}[d]
\\
\Gamma(w_1u_1) \ar[r]^-{\gamma_{w_1u_1}} \ar@{=}[5,0] & GC \ar@{=}[5,0] \ar[r]^{G\xi} 
		& GC' \ar@{=}[dd] \ar[r]^-{\gamma^{-1}_{w_1'u_1'}}
		& \Gamma(w_1'u_1') \ar@{=}[d] \ar[r]^-{\gamma_{w_1'u_1'}} 
		& GC' \ar[r]^{Gu_1'} & GA_1' \ar[r]^{\gamma_{w_1'}^{-1}} 
		& \Gamma(w_1') \ar@{=}[d]
\\
&&& \Gamma(w_1'u_1')\ar@{=}[d] \ar[rrr]^{h_{\Gamma(u_1',w_1')}}\ar@{}[drrr]|{\psi^{-1}_{\Gamma(w_1')}}
		&&& \Gamma(w_1') \ar[d]|\bullet^{v^{-1}_{\Gamma(w_1')}}
\\
&& GC'\ar@{=}[d] \ar[r]^-{\gamma_{w_1'u_1'}^{-1}} &  \Gamma(w_1'u_1') \ar@{=}[d] \ar@{=}[rrr] 
		&&& \Gamma(w_1'u_1') \ar@{=}[d]
\\
&& GC'\ar@{=}[dd] \ar[r]^-{\gamma_{w_2'u_2'}^{-1}} 
		&  \Gamma(w_2'u_2') \ar@{=}[d] \ar@{=}[rrr] \ar@{}[drrr]|{\psi_{\Gamma(u_2',w_2')}} 
		&&& \Gamma(w_2'u_2') \ar[d]|\bullet^{v_{\Gamma(u_2',w_2')}}
\\
&&& \Gamma(w_2'u_2')\ar@{=}[d] \ar[rrr]_{h_{\Gamma(u_2',w_2')}} &&& \Gamma(w_2')\ar@{=}[d]
\\
\Gamma(w_2u_2) \ar[r]^-{\gamma_{w_2u_2}} \ar@{=}[d]& GC \ar[r]^{G\xi} \ar@{=}[d] 
		& GC'\ar@{=}[d] \ar[r]^-{\gamma_{w_2'u_2'}^{-1}} 
		& \Gamma(w_2'u_2') \ar[r]^-{\gamma_{w_2'u_2'}} & GC' \ar[r]^{Gu_2'} 
		&GA_2'\ar[r]^{\gamma_{w_2'}^{-1}} \ar@{=}[d] &\Gamma(w_2') \ar@{=}[d]
\\
\Gamma(w_2u_2) \ar[r]^-{\gamma_{w_2u_2}} \ar[d]|\bullet^{v_{\Gamma(u_2,w_2)}} 
			  \ar@{}[drrr]|{\chi_{\Gamma(u_2,w_2)}} 
		& GC \ar[r]^{Gu_2} & GA_2 \ar[r]^{\gamma_{w_2}^{-1}} 
		& \Gamma(w_2) \ar@{=}[d] \ar[r]^{\gamma_{w_2}} 
		& GA_2\ar[r]^{Gf_2} & GA_2' \ar[r]^{\gamma_{w_2'}^{-1}} 
		& \Gamma(w_2')\ar@{=}[d]
\\
\Gamma(w_2)\ar@{=}[rrr] &&& \Gamma(w_2) \ar[r]^{\gamma_{w_2}} & GA_2\ar[r]^{Gf_2} 
		& GA_2' \ar[r]^{\gamma_{w_2'}^{-1}} & \Gamma(w_2')
 }$$
 \caption{The image of a generic double cell in $\CW$ under $\tilde{G}$.}\label{imagedblcell}
\end{figure}
 It is not hard to see that there is an invertible horizontal transformation
 $\bar\gamma\colon \tilde{G}\circ \JC\Rightarrow G$ with components $\bar{\gamma}_{A}=\gamma_{1_A}$.
 Furthermore, it is straightforward to check that $\tilde{G}\Phi=\Gamma$, and $(\bar\gamma,\mbox{id})$ 
is an invertible 
$W$-friendly transformation from $(\tilde{G}\JC,\tilde{G}\Phi,\tilde{G}\varphi)$ to $(G,\Gamma,\gamma)$.
 
We now want to show that composition with $\JC$ is fully faithful on arrows, i.e., horizontal transformations.
So we will show that horizontal transformations $$b\colon G\Rightarrow L\colon \CW\rightrightarrows\bbD$$
are in one-to-one correspondence with
$W$-friendly transformations $$(b\JC,\beta)\colon (G\JC,G\Phi,G\varphi)\Rightarrow(L\JC,L\Phi,L\varphi)$$ 
 where $\beta\colon G\Phi\Rightarrow L\Phi$ is defined by $\beta_w=b_w$.
 The condition that the square in (\ref{W-friendly}) commutes for $(b\JC,\beta)$
 is equivalent to the following square of horizontal arrows commuting for each $w\in W$:
 $$
 \xymatrix@C=3.5em{
 G(w)\ar[d]_{G(\stackrel{v}{\longleftarrow}\Longrightarrow\Longrightarrow)}\ar[r]^{b_v}
	&L(v)\ar[d]^{G(\stackrel{v}{\longleftarrow}\Longrightarrow\Longrightarrow)}
 \\
 G(\mbox{id}_A)\ar[r]_{b_{\mbox{\scriptsize id}_A}} &L(\mbox{id}_A)
 }
 $$
 and the commutativity of this diagram follows immediately from the vertical naturality of $b$. 
 
Conversely, given a $W$-friendly horizontal transformation 
$(a\colon G\JC\Rightarrow L\calJ_W,\alpha\colon G\Phi\Rightarrow L\Phi)$,
we get $\tilde{a}\colon G\Rightarrow L$ with components $\tilde{a}_w=\alpha_w$ and
$$
\tilde{a}\left(\raisebox{3.2em}{$\xymatrix{(\ar[r]^{wu}&)\ar@{=}[dd]
\\
\ar@{=}[u]\ar[d]_{u} &
\\
(\ar[r]_{w}&)}$}\right) = \raisebox{4.2em}{$\xymatrix@C=3.9em@R=1.25em{G(wu)\ar[rr]|{h_{\alpha(wu)}} \ar@{=}[d] 
		&& L(wu) \ar@{=}[d]\ar@{=}[r] \ar@{}[dr]|{L(\psi_{\Phi(u,v)})} & L(wu)\ar[d]|\bullet^{L(v_{\Phi(u,w)})}
\\
G(wu)  \ar@{=}[d]\ar@{=}[r] & G(wu)\ar@{=}[d]  \ar[r]|{h_{\alpha(wu)}} & L(wu)\ar[r]|{L(h_{\Phi(u,w)})}
	& L(w)\ar@{=}[d]
\\
G(wu) \ar@{=}[r] \ar[d]|\bullet_{G(v_{\Phi(u,v)})} 
	& G(wu) \ar[d]|\bullet_{G(v_{\Phi(u,w)})} \ar@{}[dr]|{G(\chi_{\Phi(u,w)})} \ar[r]|{G(h_{\Phi(u,w)})} 
	& G(w)\ar@{=}[d]\ar[r]|{h_{\alpha(w)}}&L(w)\ar@{=}[d]\\
G(w)\ar@{=}[r] & G(w)\ar@{=}[r] & G(w)\ar[r]|{h_{\alpha(w)}} &L(w)}$}
$$
Note that it is sufficient to define $\tilde{a}$ on these particular vertical arrows, since all others are 
generated by these and their inverses.
 \end{proof}

\begin{rmks}{\rm
\begin{enumerate}
 \item  One might be tempted to think that if we take $W=\bfC_1$,  the class of all arrows in $\bfC$,
then every horizontal arrow in $\bfC\{W\}$
will have  a companion and a conjoint. Unfortunately that is not the case: only arrows
of the form $\xymatrix{(&\ar[l]_{uv})\ar[r]^u &(\ar[r]^v&)}$
get a companion and a conjoint. However, if we restrict ourselves to such horizontal arrows (for any chosen class $W$),
take all vertical arrows and the full sub double category of double cells, 
we do get a double category in which every horizontal arrow has a companion and a conjoint.
That double category is the transpose of what Shulman \cite{Shul} has called a fibrant double category and
what was called a gregarious double category in \cite{DPP-spans2}.
\item
The vertical and horizontal property together determine $\CW$ up to both horizontal 
and vertical equivalence of weakly globular double categories,
but note that for the vertical equivalence the arrows are pseudo-functors, where for
the horizontal equivalence the arrows are strict functors.
\item The construction of $\CW$  given in Section \ref{construction}
can be extended to 2-categories, where the universal properties of $\calC\{W\}$
can be formulated using the horizontal double category $H\calC$. It can even be extended 
to arbitrary weakly globular double categories. 
We will present these results in a sequel to this paper,
as they involve a lot of technical details.
\end{enumerate}}

\end{rmks}

\section{Conclusions}\label{conc}
In this paper we studied what the horizontal and vertical universal properties of a weakly globular double category of fractions
should be and we have seen how this determines this weakly globular double category both up to horizontal and vertical equivalence.
Both companions and precompanions play an important role in the description of its universal properties.
We have seen that the notion of precompanion generalizes both the notions of companion and of horizontal isomorphism in a double category.

For double categories with strict functors, the hom-object $\mbox{\bf DblCat}(\bbC,\bbD)$ can be given the structure of a double category
where the horizontal arrows are horizontal transformations and the vertical arrows are vertical transformations, and modifications 
are given as families of double cells that are functorial and natural in appropriate ways (the precise definition can be found in \cite{EE}).
This leads to the question whether the universal property of $\CW$ as a coinverter can be expressed in terms of double categories rather than 2-categories. This is indeed possible, but requires a further study of the modifications for weakly globular double categories.
We will discuss this in a separate paper.

\end{document}